\definecolor{myblue}{rgb}{0.1 0.1 0.6}
\newtheorem{theorem}{Theorem}[section]
\newtheorem{proposition}[theorem]{Proposition}
\newtheorem{corollary}[theorem]{Corollary}
\newtheorem{lemma}[theorem]{Lemma}
\newtheorem{definition}[theorem]{Definition}
\theoremstyle{remark}
\newenvironment{remark}
  {\pushQED{\qed}\remarkx}
  {\popQED\endremarkx}
\newcommand{\mc}{\mathcal}
\newcommand{\id}{\operatorname{id}}
\newcommand{\lin}{\operatorname{lin}}
\newcommand{\op}{{\operatorname{op}}}
\newcommand{\Tr}{{\operatorname{Tr}}}
\newcommand{\im}{{\operatorname{Im}}}
\newcommand{\vnten}{\bar\otimes}
\newcommand{\aut}{\operatorname{Aut}}
\newcommand{\qaut}{\operatorname{QAut}}
\newcommand{\sym}{\operatorname{Sym}}
\begin{document}
\title{Quantum graphs: different perspectives, homomorphisms and quantum automorphisms}
\author{Matthew Daws}
\maketitle

\begin{abstract}
We undertake a study of the notion of a quantum graph over arbitrary finite-dimensional $C^*$-algebras $B$ equipped with arbitrary faithful states.  Quantum graphs are realised principally as either certain operators on $L^2(B)$, the quantum adjacency matrices, or as certain operator bimodules over $B'$.  We present a simple, purely algebraic approach to proving equivalence between these settings, thus recovering existing results in the tracial state setting.  For non-tracial states, our approach naturally suggests a generalisation of the operator bimodule definition, which takes account of (some aspect of) the modular automorphism group of the state.  Furthermore, we show that each such ``non-tracial'' quantum graphs corresponds to a ``tracial'' quantum graph which satisfies an extra symmetry condition.  We study homomorphisms (or CP-morphisms) of quantum graphs arising from UCP maps, and the closely related examples of quantum graphs constructed from UCP maps.  We show that these constructions satisfy automatic bimodule properties.  We study quantum automorphisms of quantum graphs, give a definition of what it means for a compact quantum group to act on an operator bimodule, and prove an equivalence between this definition, and the usual notion defined using a quantum adjacency matrix.  We strive to give a relatively self-contained, elementary, account, in the hope this will be of use to other researchers in the field.
\end{abstract}

\section{Introduction}

The notion of a \emph{Quantum} (or sometimes \emph{non-commutative}) \emph{Graph} is a non-commutative
generalisation of a classical (finite) graph which has been defined and explored in a number of different
presentations.  The initial definition arose from the study of quantum communication channels,
\cite{dsw}, and treats them as operator systems. The series of papers \cite{kw, weaver1, weaver2} extend
this definition to arbitrary von Neumann algebras (instead of just $\mathbb M_n$), here being more motivated
by a general notion of a \emph{quantum relation}.   A graphical approach, using string diagrams, was given in 
\cite{mrv}, showing that a notion of the adjacency matrix of quantum graph bijects with the quantum relation
definition.  Here again motivation comes from quantum computing considerations.  Also here
links with Quantum Groups arose, which has been further explored in \cite{bcehpsw}.
Recent papers exploring quantum graphs are \cite{grom, matsuda}, while generalisations to the non-tracial
case have been given in \cite{bcehpsw, matsuda}, and a more minimal set of axioms explored in \cite{bevw, bhinw, cw}.
Papers making use of quantum graphs include \cite{btw, cds, cw, eifler, ekt, ganesan, stahlke} to list but a few.

This paper revisits the bijection between adjacency matrices and quantum relations, and develops a new,
elementary, algebraic approach, eschewing graphical methods.  We take the time to survey the area, 
presenting the different approaches, and then show that they are equivalent.  Our different approach has
the benefit of seamlessly allowing us to treat the non-tracial situation; we further can work with
arbitrary faithful states, not $\delta$-forms.  We present a large number of new results.
Once we have established our approach to equivalence, it becomes clear
that in fact quantum graphs over a non-tracial state correspond to ``tracial'' quantum graphs satisfying
an extra symmetry, see Theorem~\ref{thm:reduce_tracial_case}.
We give a definition of the quantum automorphism group of a quantum graph which operates at the quantum relation
level.  We explore how quantum channels give rise to quantum graphs, showing how an analysis of the Stinespring
dilation yields generalisations and improvements of results from \cite{dsw, weaver2}.

The reader is warned that the term ``quantum graph'' has (at least) two meanings, the other meaning is more established (for example, the introductory text \cite{bk}) and considers a (classical) graph where each edge is identified with an interval of the real line, and the whole structure is equipped with a differential operator, the adjacency structure of the edges giving boundary conditions.  To the author's knowledge, there is no non-trivial inter-relation between the two fields of study, and while one can imagine possible generalisations involving both concepts, this appears to be research which has yet to be done.

Let us now quickly introduce the principal objects we shall study, and then outline the organisation of the paper.

For us a \emph{graph} $G$ is a (finite) vertex set $V=V_G$ together with a set of edges
$E=E_G\subseteq V\times V$.  Thus, in full generality, we allow directed graphs, and loops at vertices
(but not multiple edges between vertices, and each loop, if present, is undirected).  In this formalism,
an \emph{undirected graph} is when $(x,y)\in E \implies (y,x)\in E$; the graph has \emph{no loops} when
$(x,x)\not\in E$ for all $x\in V$; and the graph has \emph{a loop at every vertex} when $(x,x)\in E$ for
all $x\in V$.  We will typically only be interested in the case of no loops or a loop at every vertex,
and there is an obvious bijection between these cases.  Thus having a loop at every vertex can be viewed 
as simply a convention, and not some profound condition.

Let $B$ be a finite-dimensional $C^*$-algebra equipped with a faithful state $\psi$.  We shall say
that we are in the \emph{classical} case when $B$ is commutative, so $B \cong C(V) \cong \ell^\infty(V)$
for some finite set $V$ and $\psi$ corresponds to the counting measure on $V$.
All the definitions of a quantum graph correspond to a genuine graph on vertex set $V$ in the classical
case.  Briefly, the principal definitions of a quantum graph are: 
\begin{itemize}
\item a self-adjoint operator $A$ on $L^2(B)$ satisfying certain conditions corresponding to, in the classical
case, being $\{0,1\}$-valued, being undirected, and having a loop at every vertex.  See \cite{mrv, grom}
with generalisations to non-tracial states in \cite{bcehpsw, bevw, matsuda}; see Section~\ref{sec:quan_adj_mat}.
\item when $B\subseteq\mc B(H)$ for some Hilbert space $H$, a unital, self-adjoint subspace $S$ of
$\mc B(H)$ which is a bimodule over $B'$, the commutant of $B$, meaning that for $a,b\in B', x\in S$ also
$axb\in S$.  This definition is essentially independent of the choice of $H$.
We call such $S$ an \emph{operator bimodule}.  For $B=\mathbb M_n$ this is \cite{duan, dsw}, and in general,
\cite{kw, weaver1, weaver2}; see Section~\ref{sec:op_bimod}.
\item A completely positive map $B\rightarrow B$ satisfying similar axioms to the first case, explored
at least in the case $B=\mathbb M_n$.  See \cite{cw}; see Section~\ref{sec:super_ops_defn}.
\end{itemize}
We remark that examples of the second case, when $B=\mc B(H)$, arise naturally in the theory of quantum
communication channels (this being the motivation of \cite{duan, dsw}).  We explore what happens for general
$B$ (beyond $B=\mathbb M_n$) in Section~\ref{sec:from_quan_channels}.

In the literature, it is common to restrict to the case when $\psi$ is a \emph{$\delta$-form}, compare
Remark~\ref{rem:quantum_space} and Definition~\ref{defn:delta_form}.
However, this condition will play no role for us, excepting perhaps meaning that our definition
of the empty quantum graph is slightly more complicated.  See also Remark~\ref{rem:no_splitting_delta_form}
which shows that seemingly we cannot just reduce to the $\delta$-form case, in general.

When the state $\psi$ is a trace (perhaps a tracial $\delta$-form) these axiomatisations are
well-known to be equivalent, see \cite[Section~VII]{mrv} and \cite[Section~1]{grom}, the proofs using
graphical techniques, and it is not clear what happens in the non-tracial case
(but see \cite{matsuda} for some recent progress in adapting graphical ideas to the more general situation).
In contrast, we present a simple, purely algebraic approach which allows us to give a quick, direct
proof of the equivalence of the first two definitions.  Our arguments are motivated by thinking about
the problem using operator-algebraic techniques (e.g. working with commutants and projections, and using
aspects of Tomita--Takesaki theory).  Indeed, it has already been recognised that a key intermediate step
between the first two axiomatisations is:

\begin{itemize}
\item An (orthogonal) projection $e\in B\otimes B^\op$ satisfying certain symmetry axioms.  Here
$B^\op$ is the algebra $B$ with the \emph{opposite} product.  To see why such a projection $e$ might arise,
see Theorem~\ref{thm:non_sa_op_bimods} for example.
\end{itemize}

For example, \cite{grom} takes this as the principal definition.
We also extend the ``completely positive'' approach suggested in \cite{cw} to arbitrary (finite-dimensional)
$C^*$-algebras.  The technical tool here is a version of the Choi matrix correspondence, and
we present a version which works for any $B$; while not technically hard, this might be of independent interest.

Our approach to these equivalences is Section~\ref{sec:equiv}.  
A major benefit of our approach is that, with relatively little work, it suggests an approach which works
for any faithful state $\psi$, tracial or not.  Here we take the first definition (of operators on $L^2(B)$)
as primary, and show how to adapt the 2nd (operator bimodule) definition so that we still obtain a
correspondence.  This is an new definition, and a major benefit of the operator bimodule approach
is that it is extremely easy to find non-trivial examples in this setting.
We first introduce our key bijection(s)
and begin to indicate how to pass from adjacency matrices to projections.  This can be viewed as an algebraic
version of the graphical methods, which are suitably ``twisted'' by the modular automorphism group, in the
non-tracial setting.  It is well-known that projections in a von Neumann algebra correspond to closed
subspaces invariant for the commutant, and we quickly explore in Section~\ref{sec:proj_to_ss} what happens with
(non-self-adjoint) idempotents in place of projections.  In Section~\ref{sec:main_equivs} we state and prove
our main results: Theorem~\ref{thm:qu_ad_mats_axiom_1} which works with just the ``idempotency axiom'',
Theorem~\ref{thm:qu_ad_mats_axioms_12} which deals with general adjacency matrices, and
Proposition~\ref{prop:qu_ad_mats_axiom_5} deals with the ``loop at every vertex'' axiom.  
Along the way we prove a number of auxiliary results: for example, the adjacency matrix commutes with the
modular operator and conjugation, Proposition~\ref{prop:A_comms_J_nabla}.

In Section~\ref{sec:std_egs} we see how standard examples present under these equivalences.
In Section~\ref{sec:from_quan_channels} we explore how quantum channels give rise to quantum graphs,
extending known constructions to the case of general $B$.  Here a careful study of Stinespring dilations
are key to our results.  In Section~\ref{sec:no_loops} we look at the condition of having a loop at
every vertex (or that of having no loops) and produce bijections between these situations; we also look
at graph complements.  Here our main task is to clarify some comments in the literature, and to deal with
the non-$\delta$-form case.  In Section~\ref{sec:small_examples} we quickly look at examples over
$B=\mathbb M_2$ and $\mathbb M_2\oplus\mathbb M_2$.

In Section~\ref{sec:hm} we survey a number of different notions of ``homomorphism'' which have appeared
in the literature, mostly concentrating upon \cite{stahlke, weaver2}.  By extending our careful study
of Stinespring dilations from Section~\ref{sec:from_quan_channels} we improve results of \cite{weaver2}
by showing that ``pushouts'' and ``pullbacks'' are automatically bimodules over the appropriate algebra
(this also avoids the need to use the techniques of \cite{weaver1}).  This area is less settled that the
main definitions, and in Section~\ref{sec:hm_further_ideas} we survey some other notions
of ``homomorphism'', ``subgraph'' and so forth.

We finally turn our attention to automorphisms.  In Section~\ref{sec:auts_quan_graphs} we quickly look at
automorphisms of quantum graphs: this is well-known in the adjacency matrix picture, but we also look
at operator bimodules, and prove an equivalence.  This also motivates the quantum case considered in
Section~\ref{sec:qauts_quan_graphs}.  Again, for quantum adjacency matrices, this is well-known.
However, we find some of the literature here a little hard to follow.  For example, \cite[Section~4]{grom}
is readable, but only looks at the Hopf $*$-algebra case, while we work with the $C^*$-algebra picture
of compact quantum groups.  We hope that this section will hence be helpful to the non-expert.
After some set-up and technical calculation, in Section~\ref{sec:qaut_op_bimod} we propose a definition
of a coaction on an operator bimodule,
and our main result, Theorem~\ref{thm:bimod_quan_action}, shows that we then obtain exactly the same notion
as that of a coaction on a quantum adjacency matrix.
Here our result that every ``generalised'' quantum graph corresponds to a ``tracial'' quantum graph,
see Remark~\ref{rem:gen_by_A_itself}, is vital in linking the two pictures of the quantum automorphism group.
As operator bimodules are almost trivial to give examples of, we hope that this definition of a quantum
automorphism group might aid, for example, in understanding which quantum automorphism groups can arise.

It is notable that in our list of recent papers ``using'' quantum graphs, almost all use the Operator
Bimodule approach, while papers looking at quantum automorphisms use the adjacency matrix approach.
It is hence useful to have a comprehensive ``bridge'' between these different perspectives.

Let us end this introduction by saying that almost everything we do is finite-dimensional (but clearly
motivated in many places by infinite-dimensional theory), an exception being results in Section~\ref{sec:hm}.
Of course, operator bimodules are an infinite-dimensional framework, \cite{weaver1}, but it seems unclear
what to expect of an ``infinite-dimensional'' adjacency matrix: many of the core ideas of Section~\ref{sec:equiv}
are finite-dimensional.

\subsection{Notational conventions}\label{sec:conventions}

We shall attempt to introduce terminology and technical background results, as needed, rather than collecting
them up-front.  However, let us fix some conventions here.  The inner-product on a Hilbert space $H$ will be
written $(\cdot|\cdot)$ and is assumed linear in the \emph{right} variable.  Write $\mc B(H),\mc B_0(H)$ for
the algebras of bounded, respectively compact, operators on $H$.  The identity on $H$ is denoted by $1_H=1$, while
the identity map on other spaces is denoted by $\id$.

For $\xi,\eta\in H$, denote by $\omega_{\xi,\eta}$ the functional $\mc B(H)\rightarrow\mathbb C$ given by
$x\mapsto (\xi|x(\eta))$.  Then $\omega_{\xi,\eta}$ is a normal functional, and the closed span of such functionals gives the space of trace-class operators, which we denote by $\mc B(H)_*$.  Denote by $\theta_{\xi,\eta}$ the rank-one operator $H\rightarrow H; \zeta\mapsto
(\xi|\zeta)\eta$.  Then $\theta_{\xi_1,\eta_1} \circ\theta_{\xi_2,\eta_2} = (\xi_1|\eta_2) \theta_{\xi_2,\eta_1}$
which is a little awkward, but it means we are consistent in our convention that sesquilinear maps are linear on
the right.
We denote by $\Tr$ the non-normalised trace on $\mc B(H)$, so $\Tr(\theta_{\xi,\eta}) = (\xi|\eta)$, and
if $H\cong\mathbb C^n$ then $\Tr(1_H)=n$.

Let $\overline H$ be the \emph{conjugate Hilbert space} to $H$, so vectors in $\overline H$ are $\{ \overline\xi:
\xi\in H\}$ with the same addition, but with the complex-conjugate scalar multiplication $\lambda \overline\xi
= \overline{ \overline\lambda \xi }$.  For $x\in\mc B(H)$ we define $x^\top, \overline x \in \mc B(\overline H)$
by
\[ x^\top(\overline\xi) = \overline{ x^*\xi }, \quad
\overline x (\overline\xi) = \overline{ x\xi } \qquad (\overline\xi\in\overline H). \]
Thus $(x^\top)^* = (x^*)^\top = \overline x$.  The map $x\mapsto x^\top$ is a linear anti-homomorphism $\mc B(H)
\rightarrow\mc B(\overline H)$.  If $(e_i)_{i=1}^n$ is an orthonormal basis for $H$ then $(\overline e_i)_{i=1}^n$
is an orthonormal basis for $\overline H$, so with respect to these bases we have that $\mc B(H) \cong \mathbb M_n$
and $\mc B(\overline H) \cong\mathbb M_n$, and $x\mapsto x^\top$ is simply taking the transpose of a matrix.

Throughout the paper, $B$ will denote a finite-dimensional $C^*$-algebra, and $\psi$ will be a state on $B$,
always assumed faithful, with further conditions imposed if necessary.

\subsection{Acknowledgements}

The author thanks Simon Schmidt and Christian Voigt for helpful conversations at an early stage of this
project, and thanks Larissa Kroell and Monica Abu Omar for useful correspondence, along with the anonymous referees for helpful comments.

The author is partially supported by EPSRC grant EP/T030992/1.
For the purpose of open access, the author has applied a CC BY public copyright licence to any Author Accepted Manuscript version arising.

No data were created or analysed in this study.

\section{Quantum adjacency matrices}\label{sec:quan_adj_mat}

We introduce the notion of a quantum adjacency matrix.  We first briefly study finite-dimensional
$C^*$-algebras.  Any finite-dimensional $C^*$-algebra $B$ has the form
\[ B \cong \bigoplus_{k=1}^n \mathbb M_{n_k}. \]
The addition and multiplication in $B$ respects this direct sum decomposition, and so often it suffices
to consider just a matrix algebra $\mathbb M_n$.  We write $e_{ij} \in \mathbb M_n$ for the matrix units, so
a typical matrix $a = (a_{ij})$ is $a = \sum_{i,j} a_{ij} e_{ij}$, and the trace satisfies $\Tr(e_{ij}) =
\delta_{i,j}$ the Kronecker delta.  We write $a^\top$ for the transpose of $a$ (compare with Section~\ref{sec:conventions}).

For $\psi\in \mathbb M_n^*$ there is $Q\in\mathbb M_n$ with $\psi(a) = \Tr(Qa)$ for each $a\in\mathbb M_n$;
indeed, set $Q_{ij} = \psi(e_{ji})$.
Then $\psi$ being positive corresponds to $Q$ being positive.  If $\psi$ is positive, then $\psi$ being faithful
corresponds to $Q$ being invertible, and $\psi$ is a state exactly when $\Tr(Q)=1$.  The same remarks hence apply
to $B$, choosing $Q$ in each matrix factor.

\begin{definition}
Given $B,\psi$ as throughout the paper, we shall denote by $Q\in B$ the unique positive invertible element with
$\psi(a) = \Tr(Qa)$ for each $a\in B$.  Let $L^2(B) = L^2(B,\psi)$ denote the GNS space for $\psi$.
\end{definition}

Thus $L^2(B)$ is a finite-dimensional Hilbert space.  Let $\Lambda:B\rightarrow L^2(B)$ be the GNS map, which
in this situation is a linear isomorphism.  If necessary, we write $\pi:B\rightarrow\mc B(L^2(B))$ for the GNS
representation, but we shall often suppress this.  Hence
\[ \pi(a)\Lambda(b) = a\Lambda(b) = \Lambda(ab), \quad \Lambda(a) = a\Lambda(1), \quad
(\Lambda(a)|\Lambda(b)) = \psi(a^*b) = \Tr(Q a^* b)
\qquad (a,b\in B). \]

Thus also $\Lambda\otimes\Lambda$ is a linear isomorphism between $B\otimes B$ and $L^2(B)\otimes L^2(B)$.
The multiplication map $m:B\otimes B\rightarrow B$ may hence be regarded as a (bounded) operator
$L^2(B)\otimes L^2(B) \rightarrow L^2(B)$.  The identity $1\in B$ induces a linear map $\mathbb C \rightarrow B;
\lambda \mapsto \lambda 1$.

\begin{definition}
Let $m:L^2(B)\otimes L^2(B) \rightarrow L^2(B)$ be the multiplication map, and $m^*:L^2(B) \rightarrow
L^2(B)\otimes L^2(B)$ its Hilbert space adjoint.  Let $\eta:\mathbb C\rightarrow L^2(B)$ be induced by the identity
of $B$, and let $\eta^*: L^2(B)\rightarrow\mathbb C$ be its Hilbert space adjoint.
\end{definition}

Hence $\eta:\lambda \mapsto \lambda\Lambda(1)$ and $\eta^*(\xi) = (\Lambda(1)|\xi)$ for $\xi\in L^2(B)$, so
$\eta^*(\Lambda(a)) = \psi(a)$ for $a\in B$.  Similarly, $m(\Lambda(a)\otimes\Lambda(b)) = \Lambda(ab)$.

\begin{remark}\label{rem:quantum_space}
In the literature (for example, \cite{bcehpsw, bevw, grom, matsuda, mrv})
it is very common to assume that additionally $\psi$ is a \emph{$\delta$-form}
(see Definition~\ref{defn:delta_form})
and in this case, we use the terminology that $(B,\psi)$ is a \emph{Finite Quantum Space}.
However, being a $\delta$-form will actually play no role in our arguments, and so it seems more sensible to
not add this as a seemingly unnecessary hypothesis.  To avoid confusion with the existing literature, we hence
avoid the ``Quantum Space'' terminology.
\end{remark}

We can now proceed with the definition of a quantum adjacency matrix.

\begin{definition}\label{defn:adj_op}
A \emph{quantum adjacency matrix} is a self-adjoint operator $A\in\mc B(L^2(B))$ satisfying:
\begin{enumerate}[series=qam_defn]
\item\label{defn:quan_adj_mat:idem} $m(A\otimes A)m^* = A$;
\item\label{defn:quan_adj_mat:undir} $(1\otimes\eta^* m)(1\otimes A\otimes 1)(m^*\eta\otimes 1) = A$;
\item\label{defn:quan_adj_mat:reflexive} $m(A\otimes 1)m^*=1$.
\end{enumerate}
\end{definition}

By symmetry we also define variations of these axioms:
\begin{enumerate}[resume*=qam_defn]
\item\label{defn:quan_adj_mat:undir:other} $(\eta^* m\otimes 1)(1\otimes A\otimes 1)(1\otimes m^*\eta) = A$;
\item\label{defn:quan_adj_mat:reflexive:other} $m(1\otimes A)m^*=1$.
\end{enumerate}
Notice that, by taking the adjoint, axiom (\ref{defn:quan_adj_mat:undir}) holds for $A$ if and only if axiom (\ref{defn:quan_adj_mat:undir:other}) holds for $A^*$.  Hence, if $A$ is self-adjoint, then axioms (\ref{defn:quan_adj_mat:undir}) and (\ref{defn:quan_adj_mat:undir:other}) are equivalent; in fact, these axioms are equivalent for any $A$, see Proposition~\ref{prop:undir_axioms_same} below.
We will show later that, in the presence of the other axioms,
also (\ref{defn:quan_adj_mat:reflexive}) and (\ref{defn:quan_adj_mat:reflexive:other}) are equivalent, see
Proposition~\ref{prop:qu_ad_mats_axiom_3}.  To our knowledge, the only discussion of these ``variations'' is
in \cite[Section~2.2]{matsuda}; in particular, here (\ref{defn:quan_adj_mat:undir:other}) is termed $A$ being
``self-transpose''.

While the terminology is to call $A$ a ``matrix'', it is of course merely a linear map, as $L^2(B)$ usually
carries no canonical choice of basis, and in our approach, we shall not think of $A$ as an actual matrix,
except in the classical situation which motivates the more general theory.

There is some further discussion of variations of these axioms in \cite[Definition~2.19]{matsuda}, and
we single out the following for later study.

\begin{definition}\label{defn:adj_op:real}
Consider an operator $A\in\mc B(L^2(B))$.  Let
$A_0:B\rightarrow B$ be the linear map induced by $A$, if we regard $B$ and $L^2(B)$ as being linearly
isomorphic.  We say that $A$ is \emph{real} when $A_0(a^*) = A_0(a)^*$ for each $a\in B$.
\end{definition}

Let us see what happens in the classical situation, where $B=\ell^\infty(V) = C(V)$ for a finite set
$V$, and $\psi$ is given by the uniform measure on $V$.
Let $(\delta_v)_{v\in V}$ be the obvious basis of $B$ consisting of idempotents, so $\psi(\delta_v)=1$ for each
$v$, and $m(\Lambda(\delta_v) \otimes\Lambda(\delta_u)) = \delta_{v,u} \Lambda(\delta_v)$
and hence $m^*\Lambda(\delta_v) = \Lambda(\delta_v) \otimes \Lambda(\delta_v)$.
Also $\eta$ is given by the vector $\Lambda(1) = \sum_v \Lambda(\delta_v)$, and
$\eta^*\Lambda(\delta_v)=1$ for each $v\in V$.  Finally, we see that $L^2(B) \cong \mathbb C^V$ with orthonormal
basis $(e_v)_{v\in V}$ where $e_v=\Lambda(\delta_v)$.  Hence $A$ may be identified with a matrix $(A_{u,v})$.
Indeed, given two operators $A_1,A_2\in \mc B(L^2(B))$,
\[ m(A_1\otimes A_2)m^*(e_v) = m(A_1(e_v) \otimes A_2(e_v))
= \sum_{u,w} A^{(1)}_{u,v} A^{(2)}_{w,v} m(e_u\otimes e_w)
= \sum_u A^{(1)}_{u,v} A^{(2)}_{u,v} e_u, \]
and so $m(A_1\otimes A_2)m^*$ has matrix $(A^{(1)}_{u,v} A^{(2)}_{u,v})$.  Thus $m(A_1\otimes A_2)m^*$ is the
entry-wise, or \emph{Schur}, or \emph{Hadamard}, product of the matrices $A_1, A_2$.  Thus axiom
(\ref{defn:quan_adj_mat:idem}) corresponds to $A$ being Schur-idempotent, equivalently, that $A$ is
$0,1$-valued.  As the adjoint on $B=C(V)$ is the pointwise complex conjugation, $A$ being real,
Definition~\ref{defn:adj_op:real}, corresponds to the matrix of $A$ being real-valued, which is of course
automatic if $A$ is Schur-idempotent.

Similarly, we calculate that
\begin{align*}
(1\otimes\eta^* m) & (1\otimes A\otimes 1)(m^*\eta\otimes 1)(e_v)
= \sum_u (1\otimes\eta^* m)(1\otimes A\otimes 1)(e_u\otimes e_u\otimes e_v) \\
&= \sum_{u,w} (1\otimes\eta^* m)(e_u \otimes A_{w,u} e_w \otimes e_v)
= \sum_u e_u \otimes \eta^*(A_{v,u} e_v)
= \sum_u A_{v,u} e_u
= A^\top(e_v)
\end{align*}
Thus axiom (\ref{defn:quan_adj_mat:undir}) is equivalent to $A^\top = A$.  In this situation, axiom 
(\ref{defn:quan_adj_mat:undir:other}) gives the same condition.  Finally,
\begin{align*}
m(A\otimes 1)m^*(e_v) &= m(A(e_v) \otimes e_v) = A_{v,v} e_v
\end{align*}
and so axiom (\ref{defn:quan_adj_mat:reflexive}) (and here also (\ref{defn:quan_adj_mat:reflexive:other}))
is that $A$ has $1$ down the main diagonal.

\begin{proposition}
Let $V$ be a finite set, and let $B=C(V)$.  An operator $A\in\mc B(L^2(B))$ which satisfies axiom
(\ref{defn:quan_adj_mat:idem}) corresponds to a directed graph structure on $V$.  That $A$ is self-adjoint,
equivalently, that $A$ satisfies axiom (\ref{defn:quan_adj_mat:undir}) or (\ref{defn:quan_adj_mat:undir:other}),
corresponds to the graph being undirected.  That $A$ satisfies axiom (\ref{defn:quan_adj_mat:reflexive})
or (\ref{defn:quan_adj_mat:reflexive:other}) corresponds to the graph having a loop at every vertex.
\end{proposition}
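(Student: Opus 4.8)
The plan is to assemble the entry-wise computations carried out immediately before the statement, which have already reduced each axiom to an explicit condition on the matrix $(A_{u,v})$ of $A$ in the orthonormal basis $(e_v)_{v\in V}$. For axiom~(\ref{defn:quan_adj_mat:idem}), the preceding computation shows that $m(A\otimes A)m^*$ has matrix $(A_{u,v}^2)$, so the axiom reads $A_{u,v}^2 = A_{u,v}$ for all $u,v$. Since the only complex solutions of $z^2 = z$ are $0$ and $1$, this is exactly the statement that $A$ is $\{0,1\}$-valued. I would then define the edge set $E = \{(u,v)\in V\times V : A_{u,v}=1\}$, and observe that this sets up a bijection between operators satisfying~(\ref{defn:quan_adj_mat:idem}) and directed graph structures on $V$, the matrix of $A$ being precisely the (directed) adjacency matrix of $(V,E)$.

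Next I would treat the symmetry axioms, under the standing hypothesis that~(\ref{defn:quan_adj_mat:idem}) holds. The computation before the statement shows that axiom~(\ref{defn:quan_adj_mat:undir}) is exactly $A^\top = A$, i.e.\ $A_{u,v}=A_{v,u}$; the mirror-image computation (interchanging the two tensor legs) gives the identical condition for axiom~(\ref{defn:quan_adj_mat:undir:other}), as already noted in the text. Because~(\ref{defn:quan_adj_mat:idem}) forces the entries into $\{0,1\}\subseteq\mathbb R$, the matrix of $A$ is real, so $A^* = \overline A^\top = A^\top$, whence self-adjointness $A=A^*$ coincides with $A=A^\top$. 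Thus all three of ``$A$ self-adjoint'', (\ref{defn:quan_adj_mat:undir}), and~(\ref{defn:quan_adj_mat:undir:other}) say $A_{u,v}=A_{v,u}$, which under the edge-set correspondence is $(u,v)\in E \iff (v,u)\in E$, i.e.\ the graph is undirected.

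For the reflexivity axioms the computation before the statement gives $m(A\otimes 1)m^*(e_v)=A_{v,v}e_v$, so axiom~(\ref{defn:quan_adj_mat:reflexive}) says $A_{v,v}=1$ for every $v$; the symmetric computation $m(1\otimes A)m^*(e_v)=A_{v,v}e_v$ shows axiom~(\ref{defn:quan_adj_mat:reflexive:other}) is identical. Under the edge-set correspondence this reads $(v,v)\in E$ for all $v$, i.e.\ the graph has a loop at every vertex.

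There is no real obstacle here, since the substantive computations were done before the statement; the only point that needs care is the distinction between the plain transpose $A^\top$ appearing in axiom~(\ref{defn:quan_adj_mat:undir}) and the adjoint $A^*$ defining self-adjointness. These agree precisely because axiom~(\ref{defn:quan_adj_mat:idem}) forces real entries---the same observation recorded for Definition~\ref{defn:adj_op:real}---so I would be careful to invoke~(\ref{defn:quan_adj_mat:idem}) as a standing hypothesis before identifying self-adjointness with symmetry. The general, basis-free equivalences of the primed and unprimed axioms are the content of Propositions~\ref{prop:undir_axioms_same} and~\ref{prop:qu_ad_mats_axiom_3}, but in the classical case the direct entry-wise computations above make them transparent.
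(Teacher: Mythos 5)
Your proposal is correct and follows essentially the same route as the paper: both proofs reduce each axiom to the entry-wise conditions computed just before the statement (Schur-idempotency, matrix symmetry, unit diagonal) and then read off the graph-theoretic meaning via the edge set $E = \{(u,v) : A_{u,v}=1\}$. Your explicit remark that axiom~(\ref{defn:quan_adj_mat:idem}) forces real entries, so that $A^* = A^\top$ and self-adjointness coincides with symmetry, is a worthwhile point of care that the paper's terse proof leaves implicit.
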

\begin{proof}
A $0,1$-valued matrix corresponds to a directed graph structure on $V$ by setting the edge set to
be $E = \{ (u,v) : A_{u,v} = 1 \}$.  The results now follows from the discussion above.
\end{proof}

\begin{remark}
We already see that there is a certain redundancy between requiring that $A$ be \emph{self-adjoint} and
that $A$ satisfy axiom (\ref{defn:quan_adj_mat:undir}) (and/or (\ref{defn:quan_adj_mat:undir:other})).
This is discussed a little more in \cite{matsuda} after Definition~2.19, in particular,
\cite[Lemma~2.22]{matsuda}.  To an extent, this issue occurs even for
general, non-commutative $B$, compare Remark~\ref{rem:needs_axioms_joined}.
\end{remark}

If we prefer to have a loop at no vertex, then given the above discussion, it is natural to consider the
axioms:
\begin{enumerate}[resume*=qam_defn]
\item\label{defn:quan_adj_mat:irrefl} $m(A\otimes 1)m^*=0$;
\item\label{defn:quan_adj_mat:irrefl:other} $m(1\otimes A)m^*=0$.
\end{enumerate}

Again, we show later that these axioms are equivalent, in the presence of the other axioms.

We give some simple examples for quantum adjacency matrices.

\begin{definition}\label{defn:complete_qg}
The \emph{complete quantum graph} is given by the adjacency matrix $A = \theta_{\Lambda(1), \Lambda(1)}$, so that
$A\Lambda(a) = \eta^*(\Lambda(a)) \eta(1) = \psi(a) \Lambda(1)$ for each $a\in B$.
\end{definition}

A direct computation shows that this $A$ indeed satisfies all the axioms
(\ref{defn:quan_adj_mat:idem}) through (\ref{defn:quan_adj_mat:reflexive:other}).

We study the map $mm^*$ a little further.  Suppose for the moment that $B=\mathbb M_n$.  Given $i,j$
set $m^*\Lambda(e_{ij}Q^{-1}) = \sum_t \Lambda(a_t) \otimes \Lambda(b_t)$, so for $x,y\in B$,
\begin{align*}
\sum_t \Tr(Qx^*a_t) \Tr(Qy^*b_t)
&= \sum_t (\Lambda(x)|\Lambda(a_t)) (\Lambda(y)|\Lambda(b_t))
= (\Lambda(x)\otimes \Lambda(y)|m^*\Lambda(e_{ij}Q^{-1})) \\
&= \Tr(Q(xy)^*e_{ij}Q^{-1})
= \Tr(y^* x^* e_{ij})
= (y^*x^*)_{ji} = \sum_k (y^*)_{jk} (x^*)_{ki} \\
&= \sum_k \Tr(e_{kj} y^*) \Tr(e_{ik} x^*)
= \sum_k \Tr(Q x^* e_{ik} Q^{-1}) \Tr(Q y^* e_{kj} Q^{-1}).
\end{align*}
As $x,y$ are arbitrary, this shows that
\[ m^*\Lambda(e_{ij}Q^{-1}) = \sum_k \Lambda(e_{ik} Q^{-1}) \otimes \Lambda(e_{kj} Q^{-1}). \]
Thus also
\[ m m^*\Lambda(e_{ij}Q^{-1}) = \sum_k \Lambda(e_{ik} Q^{-1} e_{kj} Q^{-1})
=\sum_k \Lambda(Q^{-1}_{kk} e_{ij} Q^{-1})
\quad\text{and so}\quad
mm^* = \delta^2 1 \]
where $\delta^2 = \Tr(Q^{-1})$.  As $Q^{-1}$ is positive, $\delta^2$ is indeed positive.
In the general case, $B$ is a direct
sum of matrix factors, and $Q = (Q_k)$ say.  Then $mm^*$ respects the direct sum decomposition, and on each
matrix factor is the identity times $\Tr(Q_k^{-1})$, say $mm^* = (\Tr(Q_k^{-1}) 1_k)$.

As the following appears extensively in the existing literature, we make a formal definition.

\begin{definition}\label{defn:delta_form}
When $mm^* = \delta^2 1$, that is, $\Tr(Q_k^{-1})=\delta^2$ for each $k$,
we say that $\psi$ is a \emph{$\delta$-form}.
\end{definition}

We now define the empty graph.

\begin{definition}\label{defn:empty_qg}
The \emph{empty quantum graph} is given by the adjacency matrix $A = (\Tr(Q_k^{-1})^{-1} 1_k) = (mm^*)^{-1}$.
\end{definition}

A moment's thought reveals that if $A\in\mc B(L^2(B))$ respects the direct sum decomposition of $L^2(B)$ which
is induced by that of $B$, then all of the axioms also respect the decomposition: that is, it suffices to check
the axioms on each matrix block.  With this observation, a direct calculation shows that the empty quantum graph
adjacency matrix satisfies all the axioms
(\ref{defn:quan_adj_mat:idem}) through (\ref{defn:quan_adj_mat:reflexive:other}); in this calculation it is
helpful to show that $(1\otimes\eta^* m)(m^*\eta\otimes 1)=1$.

The notion of a quantum adjacency matrix was first considered in \cite{mrv}, in the special case that $\psi$
is a trace and a $\delta$-form.  The paper \cite{mrv} makes extensive use of graphical methods, and the
axioms are stated purely in terms of graphical methods.  These methods seem to make extensive use of the trace
condition, but \cite[Section~2]{matsuda} shows how these ideas can be adapted to the non-tracial (but
still $\delta$-form) setting.  When $\psi$ is merely a $\delta$-form, the definition we have given is
equivalent to that given in \cite[Definition~3.4]{bcehpsw}, except that we have chosen to normalise $A$
differently.  Again when $\psi$ is a $\delta$-form, \cite[Definition~3.3]{bevw} just considers axiom
(\ref{defn:quan_adj_mat:idem}) (and without the requirement that $A$ be self-adjoint).
A different starting point is chosen in \cite[Definition~1.8]{grom} which corresponds to the projections
$e \in B\otimes B^\op$ which we consider in Section~\ref{sec:equiv} below, again when $\psi$ is a tracial
$\delta$-form, and again graphical methods are used to make links to adjacency matrices.

\section{Adjacency super-operators}\label{sec:super_ops_defn}

We slightly abuse notation and in this section consider $m$ and $m^*$ as maps $B\otimes B\rightarrow B$
and $B\rightarrow B\otimes B$ respectively.

\begin{definition}\label{defn:adj_superop}
A \emph{quantum adjacency super-operator} is a completely positive
map $A:B\rightarrow B$ with $m(A\otimes A)m^*=A$.
\end{definition}

This terminology is our own; we chose ``super-operator'' from its use in Quantum Information Theory to refer
to a linear map between $C^*$-algebras rather than Hilbert spaces, as is the case here.  Notice that the functional
$\psi$ enters the definition in the formation of the map $m^*$.

We have stated this definition with the minimal axiom, $m(A\otimes A)m^*=A$, corresponding to axiom
(\ref{defn:quan_adj_mat:idem}) in Definition~\ref{defn:adj_op}.  In the obvious way, any other axiom can be
added.

This definition is made in \cite[Definition~1.8]{cw}, in the case when $B = \mathbb M_n$ is a single matrix
block and $\psi$ is a trace.  In \cite[Section~1.2]{cw} it is briefly argued that for this $B$, this
definition agrees with the other notions of a quantum graph.  At the level of generality stated here, the
definition is considered in \cite[Proposition~2.23]{matsuda}.  We shall give a different approach to
\cite[Proposition~2.23]{matsuda} below, using ideas much closer to the approach of \cite{cw}.
Indeed, we shall see that the super-operator $A$ in Definition~\ref{defn:adj_superop} agrees with the operator
$A$ in Definition~\ref{defn:adj_op}, if we identify $B$ and $L^2(B)$ (see Theorem~\ref{thm:super_op_to_op_new}
for a precise statement).

In the non-tracial situation, \cite[Section~2]{bhinw} makes further study of the quantum adjacency super-operators.
We make links between this and our approach in Remark~\ref{rem:cp_case_in_general}.

\section{Operator bimodules}\label{sec:op_bimod}

The following definitions make perfect sense in the infinite-dimensional setting, and so we shall briefly work
in more generality.  We follow \cite[Section~1]{kw} and \cite{weaver1}.

\begin{definition}\label{defn:quan_rel}
Let $M\subseteq\mc B(H)$ be a von Neumann algebra.  A \emph{quantum relation} on $M$ is a
$W^*$-bimodule over $M'$, that is, a weak$^*$-closed subspace $S\subseteq \mc B(H)$ with
$M'SM' = \{ xsy : x,y\in M', s\in S \} \subseteq S$.
\end{definition}

We also call such an $S$ an \emph{operator bimodule}.  As stated, this definition depends on $M\subseteq\mc B(H)$
and not just on $M$.  However, the definition is essentially independent of the choice of $H$, compare
\cite[Theorem~2.7]{weaver1}.  This will also follow from Theorem~\ref{thm:non_sa_op_bimods}.

\begin{definition}
A quantum relation $S$ on $M\subseteq\mc B(H)$ is \emph{reflexive} if $M'\subseteq S$, and is \emph{symmetric} if
$S^* = \{ x^* : x\in S \}$ equals $S$.
\end{definition}

Notice that as $M'SM'\subseteq S$, we have that $S$ is reflexive if and only if $1\in S$.  Recall that an \emph{operator system} is a unital, self-adjoint subspace of $\mc B(H)$, for some Hilbert space $H$.  \cite{kw, weaver1} define a \emph{quantum graph} on $M\subseteq\mc B(H)$ to be a reflexive, symmetric quantum relation on $M$.  That is, a weak$^*$-closed operator system which is an operator bimodule over $M'$.

We shall now specialise to the finite-dimensional setting, and also introduce some terminology of our own, separating out the conditions.

\begin{definition}\label{defn:quan_graph}
Given a finite-dimensional $C^*$-algebra $B$, a \emph{quantum graph} on $B$ is given by an embedding
$B\subseteq\mc B(H)$ for some finite-dimensional $H$, and a subspace $S\subseteq\mc B(H)$ which is an
operator bimodule over $B'$.  We define further that $S$ is:
\begin{enumerate}
\item\label{defn:quan_graph:undir} \emph{undirected} when $S$ is self-adjoint;
\item\label{defn:quan_graph:} \emph{has all loops} when $1\in S$.
\end{enumerate}
\end{definition}

There is no dependence on our functional $\psi$.
Notice that it is rather easy to give examples: for example, when
$B=\mathbb M_n$ we can choose $H=\mathbb{C}^n$, and so $B'=\mathbb C$.  Hence a quantum graph is simply a
(self-adjoint, unital) subspace of $\mathbb M_n$.
There are two extremal choices if we assume undirected and that we have all loops.
We shall see later in Section~\ref{sec:std_egs} that this definition agrees with
Definitions~\ref{defn:complete_qg} and~\ref{defn:empty_qg}.

\begin{definition}\label{defn:qg:basic_egs}
Given $B$ and any $B\subseteq\mc B(H)$, the \emph{empty quantum graph} is $S = B'$.
The \emph{complete quantum graph} is $S = \mc B(H)$.
\end{definition}

In the classical situation, when $B = C(V)$ for a finite set $V$, we choose $H=\ell^2(V)$ so $B$ has basis
$(\delta_v)$ with $\delta_v \delta_u = \delta_{v,u} \delta_v$, and $H$ has orthonormal basis $(e_v)$ with
$\delta_v(e_u) = \delta_{v,u} e_u$.  We hence identify $\mc B(H)$ with matrices over $V$, and we see that
$B' = B$.  Thus, given an operator bimodule $S$ over $B'$, if $x=(x_{u,v})\in S$ also $\delta_v x \delta_u 
= x_{v,u} e_{v,u} \in S$.  So $S$ is the span of the matrix units it contains.  We hence obtain a bijection
between such $S$ and arbitrary subsets $E\subseteq V\times V$.

\begin{proposition}
With $B = C(V)$, quantum graphs over $B$ biject with graphs on vertex set $V$.  Furthermore, the quantum graph
is undirected if and only if the graph is, and the quantum graph has all loops if and only if the graph has a
loop at every vertex.
\end{proposition}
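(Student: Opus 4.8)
The plan is to leverage the explicit computation already carried out in the paragraph preceding the proposition, where the setup $B = C(V)$, $H = \ell^2(V)$, $B' = B$ is fixed. The key observation to exploit is that $B' = B$ equals the algebra of diagonal matrices with respect to the basis $(e_v)$, so that the bimodule condition $B' S B' \subseteq S$ forces $S$ to be spanned by the matrix units $e_{v,u}$ it contains. This reduces an \emph{a priori} infinite-dimensional-looking condition to a purely combinatorial one.

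First I would make precise the correspondence $S \mapsto E_S$ in both directions. Given an operator bimodule $S$, define $E_S = \{(v,u) \in V \times V : e_{v,u} \in S\}$; conversely, given $E \subseteq V \times V$, set $S_E = \lin\{e_{v,u} : (v,u) \in E\}$. To see these are mutually inverse, I would argue that any $x = (x_{u,v}) \in S$ satisfies $\delta_v x \delta_u = x_{v,u} e_{v,u} \in S$ (this is exactly the displayed computation in the excerpt), whence each matrix unit appearing in $x$ with nonzero coefficient lies in $S$; thus $S = S_{E_S}$. That $E_{S_E} = E$ is immediate since the matrix units are linearly independent. This establishes the bijection.

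Next I would verify the two symmetry/structure clauses. For the undirected claim: $S$ is self-adjoint, $S^* = S$, if and only if $e_{v,u} \in S \iff e_{v,u}^* = e_{u,v} \in S$, which is precisely the condition $(v,u) \in E_S \iff (u,v) \in E_S$, i.e.\ that the graph is undirected in the sense defined in the introduction. For the all-loops claim: $1 \in S$, and since $1 = \sum_v e_{v,v}$ decomposes into distinct matrix units, the bimodule argument shows $1 \in S$ if and only if $e_{v,v} \in S$ for every $v$, i.e.\ $(v,v) \in E_S$ for all $v$, which is exactly having a loop at every vertex. Here I would use that $\delta_v \cdot 1 \cdot \delta_v = e_{v,v}$ extracts each diagonal matrix unit from $1$.

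The step requiring the most care is simply nailing down that the bimodule condition genuinely forces $S$ to be a span of matrix units, rather than containing some diagonally-constrained combination — but since $B'$ separates matrix units by the two-sided multiplication $\delta_v x \delta_u = x_{v,u}e_{v,u}$, this is routine once spelled out, and indeed the excerpt has already recorded it. I do not anticipate a genuine obstacle; the content is entirely the translation between linear-algebraic and combinatorial language, and the work was effectively done in the preamble, so the proof can reasonably be a one- or two-sentence appeal to that discussion.
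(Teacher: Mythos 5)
Your proposal is correct and follows essentially the same route as the paper: both rest on the preceding computation $\delta_v x \delta_u = x_{v,u}e_{v,u}$ showing $S$ is the span of the matrix units it contains, giving the bijection $S \leftrightarrow E$, and then translate self-adjointness into symmetry of $E$ and $1 \in S$ (via extracting the diagonal units) into a loop at every vertex. The only difference is that you spell out the mutual inverse property and the extraction $\delta_v \cdot 1 \cdot \delta_v = e_{v,v}$ explicitly, which the paper leaves implicit.
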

\begin{proof}
We have already established a bijection between operator bimodules $S$ over $B'$ and subsets $E\subseteq
V\times V$, that is, graphs on vertex set $V$.  Then $S$ is undirected when it is self-adjoint, which clearly
corresponds to the condition that $(u,v)\in E \implies (v,u)\in E$.  Further, $S$ has all loops when $1\in S$,
so that $e_{v,v}\in S$ for each $v$, so that $(v,v)\in E$ for all $v$.
\end{proof}

This result also holds for infinite $V$, compare \cite[Proposition~2.5]{weaver1}.

This notion of a quantum graph arose both from Weaver's work in \cite{weaver1}, see also \cite{weaver2},
but also from Quantum Information Theory considerations, \cite{dsw}, also \cite{duan, stahlke}.

\section{Equivalence of definitions}\label{sec:equiv}

We present a unified way to move between quantum adjacency matrices and the operator bimodule notion of a quantum
graph in the tracial case, and also show how to obtain a correspondence in the general setting, by adjusting
the definition of an operator bimodule to take account of (some aspect of) the functional $\psi$.  Once we have
these results, we obtain a further link with the usual definition of an operator bimodule, 
Section~\ref{sec:back_to_qgs}.  We then also look at super-operators.
We make some remarks about the existing literature.

We start by looking at a ``toy version'' of Tomita--Takesaki theory, which in our finite-dimensional situation
is easy: all of the following claimed properties may be verified by direct computation.
Recall the GNS construction $(L^2(B), \Lambda)$, and our invertible positive operator $Q\in B$.  We use
functional calculus to define $Q^z$ for all $z\in\mathbb C$.  We define $\sigma_z(a) = Q^{iz} a Q^{-iz}$ for
$a\in B$.  This is the (analytic extension of the) modular automorphism group for $\psi$.  Indeed, for
$a,b,c\in B$,
\begin{align*}
(\Lambda(a)|\Lambda(bc)) &= \psi(a^*bc) = \Tr(Qa^*bc) = \Tr(cQa^*b) = \Tr(Q Q^{-1} cQa^*b)
= (\Lambda(a Q c^* Q^{-1})|\Lambda(b)) \\
&= (\Lambda(a \sigma_{-i}(c^*)) | \Lambda(b)).
\end{align*}
Notice that $\sigma_z(a)^* = \sigma_{\overline z}(a^*)$ and $\sigma_w(\sigma_z(a)) = \sigma_{w+z}(a)$ for
each $a,z,w$. The \emph{modular conjugation} is the anti-linear operator $J:L^2(B)\rightarrow L^2(B)$ defined by
\[ J\Lambda(a) = \Lambda(\sigma_{i/2}(a)^*) = \Lambda(\sigma_{-i/2}(a^*)) \qquad (a\in B). \]
One can easily check that $(J\xi|J\eta) = (\eta|\xi)$ for each $\xi,\eta\in L^2(B)$ and that $J^2=1$, so that
$J$ is an isometry.  Furthermore,
\begin{align}
JaJ\Lambda(b) &= J\Lambda(a \sigma_{-i/2}(b^*))
= \Lambda(\sigma_{-i/2}( \sigma_{-i/2}(b^*)^*  a^* )) \notag \\
&= \Lambda(\sigma_{-i/2}( \sigma_{i/2}(b)  a^* ))
= \Lambda(b \sigma_{-i/2}(a^*)) \qquad (a,b\in B)
\label{eq:2}
\end{align}
and so $JaJ$ is the operation of right multiplication by $\sigma_{i/2}(a)^*$.  In fact, $B' = JBJ$
inside $\mc B(L^2(B))$.  Notice that
\[ J\theta_{\xi,\eta}J(\alpha) = J\big( (\xi|J(\alpha)) \eta \big)
= (J(\alpha)|\xi) J(\eta) = (J(\xi)|\alpha) J(\eta) = \theta_{J(\xi), J(\eta)}(\alpha)
\qquad (\xi,\eta,\alpha\in L^2(B)). \]
Recall that the \emph{modular operator} is $\nabla\in\mc B(L^2(B))^+$ given by
$\nabla\Lambda(a) = \Lambda(\sigma_{-i}(a)) = \Lambda(QaQ^{-1})$.  More generally,
$\nabla^z\Lambda(a) = \Lambda(\sigma_{-iz}(a)) = \Lambda(Q^z a Q^{-z})$.

We now define two bijections which will be central to our arguments.

\begin{definition}
Let $B^\op$ be the \emph{opposite} algebra to $B$, so $B^\op$ is the same vector space as $B$, and has the
same $*$-operation, but has the reversed product $a \stackrel{\op}{\cdot} b = ba$.  When it is clear from
context, we shall write the multiplication in $B^\op$ just by juxtaposition.
\end{definition}

As $L^2(B)$ is finite-dimensional, the linear span of the operators $\theta_{\Lambda(a), \Lambda(b)}$,
as $a,b\in B$ vary, gives all of $\mc B(L^2(B))$.  Given $s,t\in\mathbb R$, we define bijections from
$\mc B(L^2(B))$ to $B\otimes B^\op$ by
\[ \Psi_{t,s} = \Psi : \theta_{\Lambda(a), \Lambda(b)} \mapsto \sigma_{it}(a)^* \otimes \sigma_{is}(b),
\qquad
\Psi'_{t,s} = \Psi' : \theta_{\Lambda(a), \Lambda(b)} \mapsto \sigma_{it}(b) \otimes \sigma_{is}(a)^*. \]
As $\theta_{\cdot,\cdot}$ is anti-linear in the first variable, and linear in the second, we see that
$\Psi$ is itself linear, and so extends by linearity to a bijection $\mc B(L^2(B)) \rightarrow
B\otimes B^\op$.  Similarly, $\Psi'$ is linear.  We work with $B\otimes B^\op$ to match the existing literature,
which is natural for $\Psi'$, but we shall shortly see that when working
with $\Psi$, it might have been more natural to consider $B^\op\otimes B$.

\begin{definition}
We write $\sigma$ for the tensor swap map $B\otimes B^\op \rightarrow B\otimes B^\op; a\otimes b\mapsto
b\otimes a$, which is an anti-$*$-homomorphism.
\end{definition}

We now see how the bijections $\Psi$ and $\Psi'$ interact with the constructions which arise in
Definition~\ref{defn:adj_op}.  We let the modular automorphism group act also on $B^\op$.

\begin{proposition}\label{prop:psiiso}
The bijection $\Psi = \Psi_{t,s}$ gives the following correspondences between $A\in\mc B(L^2(B))$ and
$e=\Psi(A)\in B\otimes B^\op$:
\begin{enumerate}
\item\label{prop:psiiso:one}
  $A^*$ corresponds to $(\sigma_{i(s-t)}\otimes\sigma_{i(s-t)})\sigma(e^*)$;
\item\label{prop:psiiso:two}
  $(1\otimes\eta^* m)(1\otimes A\otimes 1)(m^*\eta\otimes 1)$ corresponds to
  $(\sigma_{-i(s+t)} \otimes \sigma_{i(t+s-1)})\sigma(e)$.
\item\label{prop:psiiso:twoa}
  $(\eta^*m\otimes 1)(1\otimes A\otimes 1)(1\otimes m^*\eta)$ corresponds to
  $(\sigma_{i(1-t-s)} \otimes \sigma_{i(s+t)})\sigma(e)$.
\item\label{prop:psiiso:three}
  we have that $\Psi(m(A_1\otimes A_2)m^*) = e_2e_1$ for $e_i = \Psi(A_i)$, $i=1,2$.
\end{enumerate} 
\end{proposition}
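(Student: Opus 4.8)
The plan is to exploit that the rank-one operators $\theta_{\Lambda(a),\Lambda(b)}$ linearly span $\mc B(L^2(B))$ and that $\Psi$ is linear, so it suffices to verify each of the four identities on a single $A = \theta_{\Lambda(a),\Lambda(b)}$ (and, for~(\ref{prop:psiiso:three}), on $A_i = \theta_{\Lambda(a_i),\Lambda(b_i)}$). Two elementary facts about rank-one operators do most of the work: the composition rule $X\theta_{\xi,\eta}Y = \theta_{Y^*\xi,X\eta}$, and the tensor rule $\theta_{\xi_1,\eta_1}\otimes\theta_{\xi_2,\eta_2} = \theta_{\xi_1\otimes\xi_2,\,\eta_1\otimes\eta_2}$. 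Throughout I will simplify using only the modular relations $\sigma_w\sigma_z = \sigma_{w+z}$ and $\sigma_z(c)^* = \sigma_{\overline z}(c^*)$ recorded before the statement, together with $(\theta_{\xi,\eta})^* = \theta_{\eta,\xi}$.

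Parts~(\ref{prop:psiiso:one}) and~(\ref{prop:psiiso:three}) are then immediate. For~(\ref{prop:psiiso:three}), the tensor rule gives $A_1\otimes A_2 = \theta_{\Lambda(a_1)\otimes\Lambda(a_2),\,\Lambda(b_1)\otimes\Lambda(b_2)}$, and the composition rule with $X=m$, $Y=m^*$ collapses $m(A_1\otimes A_2)m^*$ to $\theta_{\Lambda(a_1a_2),\Lambda(b_1b_2)}$ since $m(\Lambda(x)\otimes\Lambda(y)) = \Lambda(xy)$. Applying $\Psi$ and using that $\sigma_{it}$ is a homomorphism, that $*$ is an anti-homomorphism, and that the product in $B\otimes B^\op$ reverses the second leg identifies this with $e_2e_1$. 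For~(\ref{prop:psiiso:one}) one applies $\Psi$ to $A^* = \theta_{\Lambda(b),\Lambda(a)}$ and checks that the modular exponents in $(\sigma_{i(s-t)}\otimes\sigma_{i(s-t)})\sigma(e^*)$ cancel to give the same element.

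Parts~(\ref{prop:psiiso:two}) and~(\ref{prop:psiiso:twoa}) carry the real content. Here I would evaluate the composite operator on a general vector $\Lambda(c)$, pushing it successively through the three factors; the two outer factors contribute only the scalar $\eta^* m(\Lambda(b)\otimes\Lambda(c)) = \psi(bc)$ (respectively $\psi(cb)$), while the middle factor $A$ pairs $\Lambda(a)$ against one leg of $m^*\Lambda(1)$. Writing $m^*\Lambda(1) = \sum_t \Lambda(p_t)\otimes\Lambda(q_t)$, everything reduces to the partial-contraction identities
\[ \sum_t (\Lambda(a)|\Lambda(q_t))\,\Lambda(p_t) = \Lambda(\sigma_{-i}(a^*)), \qquad \sum_t (\Lambda(a)|\Lambda(p_t))\,\Lambda(q_t) = \Lambda(a^*). \]
These I would prove directly from the defining bilinear form $(\Lambda(x)\otimes\Lambda(y)\mid m^*\Lambda(1)) = \Tr(Qy^*x^*)$: testing the left-hand vector against an arbitrary $\Lambda(x)$ (resp. $\Lambda(y)$) turns it into $\Tr(Qa^*x^*)$ (resp. $\Tr(Qy^*a^*)$), which cyclicity of the trace and $Qa^*Q^{-1} = \sigma_{-i}(a^*)$ rewrite in the form $\Tr(Qx^*\,\cdot\,)$. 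With this in hand the composite operator is again a single $\theta$ — for~(\ref{prop:psiiso:two}) it is $\theta_{\Lambda(b^*),\,\Lambda(\sigma_{-i}(a^*))}$ — and applying $\Psi_{t,s}$ and collecting the modular shifts produces the stated $(\sigma_{-i(s+t)}\otimes\sigma_{i(t+s-1)})\sigma(e)$; part~(\ref{prop:psiiso:twoa}) is the mirror-image computation.

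I expect the main obstacle to be exactly the partial-contraction identities together with the bookkeeping of modular exponents. Identifying the contracted vector as $\Lambda(\sigma_{-i}(a^*))$ rather than as some opaque combination of the $p_t,q_t$ is what makes the whole proposition fall out cleanly; the alternative of computing $m^*\Lambda(1)$ explicitly (via the $\mathbb M_n$ formula $m^*\Lambda(e_{ij}Q^{-1}) = \sum_k \Lambda(e_{ik}Q^{-1})\otimes\Lambda(e_{kj}Q^{-1})$ extended across the direct sum) is available as a fallback but is messier. After that, the only real risk is arithmetic slips in combining the various $\sigma_{i(\cdots)}$, each of which is a mechanical use of $\sigma_w\sigma_z=\sigma_{w+z}$ and $\sigma_z(\cdot)^*=\sigma_{\overline z}(\cdot^*)$.
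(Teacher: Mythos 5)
Your proposal is correct and is essentially the paper's own argument: both reduce by linearity to $A=\theta_{\Lambda(a),\Lambda(b)}$, both identify the composite operators in parts (\ref{prop:psiiso:two}) and (\ref{prop:psiiso:twoa}) as the single rank-one operators $\theta_{\Lambda(b^*),\Lambda(\sigma_{-i}(a^*))}$ and $\theta_{\Lambda(\sigma_{-i}(b^*)),\Lambda(a^*)}$, and both finish by the same bookkeeping of modular exponents under $\Psi_{t,s}$. The only difference is organizational: the paper verifies these identifications by computing matrix coefficients $(\Lambda(x)|T\Lambda(y))$, whereas you evaluate $T\Lambda(c)$ directly and isolate the (correct) partial-contraction identities as standalone lemmas --- the same computation in vector form rather than sesquilinear-form form.
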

\begin{proof}
By linearity, it suffices to consider the case when $A=\theta_{\Lambda(a), \Lambda(b)}$ and so
$e = \sigma_{it}(a)^* \otimes \sigma_{is}(b)$.  Then $A^* = \theta_{\Lambda(b), \Lambda(a)}$ which
corresponds to
\[ \sigma_{it}(b)^* \otimes \sigma_{is}(a)
= \sigma_{i(s-t)}\sigma_{-is}(b^*) \otimes \sigma_{i(s-t)}\sigma_{it}(a)
= (\sigma_{i(s-t)}\otimes\sigma_{i(s-t)})\sigma(e^*), \]
as claimed for (\ref{prop:psiiso:one}).

Set $T = (1\otimes\eta^* m)(1\otimes A\otimes 1)(m^*\eta\otimes 1)$.  Let $m^*\eta = m^*\Lambda(1)
= \sum_i \xi_i \otimes \eta_i$, so that $(\Lambda(xy)|\Lambda(1)) = \sum_i (\Lambda(x)|\xi_i)
(\Lambda(y)|\eta_i)$ for each $x,y\in B$.  Then for $x,y\in B$,
\begin{align*}
(\Lambda(x)|T\Lambda(y)) &=
\sum_i \big( \Lambda(x)\otimes\Lambda(1) \big| (1\otimes m) (1\otimes A\otimes 1)
   \xi_i\otimes\eta_i\otimes\Lambda(y) \big) \\
&= \sum_i \big( \Lambda(x)\otimes\Lambda(1) \big| (1\otimes m)
   \xi_i\otimes\Lambda(b)\otimes\Lambda(y) \big) (\Lambda(a)|\eta_i) \\
&= \sum_i (\Lambda(x)|\xi_i) (\Lambda(1)|\Lambda(by)) (\Lambda(a)|\eta_i) \\
&= (\Lambda(xa)|\Lambda(1)) (\Lambda(1)|\Lambda(by))
= (\Lambda(x)|\Lambda(\sigma_{-i}(a^*)) ) (\Lambda(b^*)|\Lambda(y)).
\end{align*}
Hence $T = \theta_{\Lambda(b^*), \Lambda(\sigma_{-i}(a^*))}$ and so
\[ \Psi(T) = \sigma_{it}(b^*)^* \otimes \sigma_{is}(\sigma_i(a)^*)
= \sigma_{-it-is}\sigma_{is}(b) \otimes \sigma_{it+is-i}\sigma_{-it}(a^*)
= (\sigma_{-i(t+s)} \otimes \sigma_{i(t+s-1)}) \sigma(e), \]
showing (\ref{prop:psiiso:two}).

Similarly, if we now set $T = (\eta^*m\otimes 1)(1\otimes A\otimes 1)(1\otimes m^*\eta)$, then for $x,y\in B$,
\begin{align*}
(\Lambda(x)|T\Lambda(y)) &=
\sum_i \big( \Lambda(1)\otimes\Lambda(x) \big| (m\otimes 1) (1\otimes A\otimes 1)
   \Lambda(y)\otimes\xi_i\otimes\eta_i \big) \\
&= \sum_i \big( \Lambda(1)\otimes\Lambda(x) \big| \Lambda(yb)\otimes\eta_i \big) (\Lambda(a)|\xi_i) \\
&= \sum_i (\Lambda(1)|\Lambda(yb)) ( \Lambda(x) | \eta_i \big) (\Lambda(a)|\xi_i) \\
&= (\Lambda(\sigma_{-i}(b^*))|\Lambda(y)) (\Lambda(x)|\Lambda(a^*)).
\end{align*}
Thus $T = \theta_{\Lambda(\sigma_{-i}(b^*)), \Lambda(a^*)}$ and so
\[ \Psi(T) = (\sigma_{it}\sigma_{-i}(b^*))^* \otimes \sigma_{is}(a^*)
= \sigma_{-it+i-is}\sigma_{is}(b) \otimes \sigma_{is+it}\sigma_{-it}(a^*) 
= \sigma_{-i(t+s)+i} \otimes \sigma_{i(s+t)}) \sigma(e), \]
showing (\ref{prop:psiiso:twoa}).

Given $A_k = \theta_{\Lambda(a_k), \Lambda(b_k)}$, corresponding to $e_k$, for $k=1,2$, we see that
\[ m(A_1\otimes A_2)m^*\Lambda(c)
= \theta_{\Lambda(a_1 a_2), \Lambda(b_1 b_2)} \Lambda(c) \qquad (c\in B), \]
and so $m(A_1\otimes A_2)m^*$ corresponds to
\[ \sigma_{it}(a_1a_2)^* \otimes \sigma_{is}(b_1b_2)
= \sigma_{it}(a_2)^* \sigma_{it}(a_1)^* \otimes \sigma_{is}(b_2) \sigma_{is}(b_1)
= e_2 e_1, \]
as we work in $B\otimes B^\op$.  This shows (\ref{prop:psiiso:three}).
\end{proof}

We can immediately use this result to show one equivalence, removing the self-adjointness assumption from the discussion after Definition~\ref{defn:adj_op}.  I thank Monica Abu Omar for this observation.

\begin{proposition}\label{prop:undir_axioms_same}
Let $A\in\mc B(L^2(B))$ be any operator.  Then axioms (\ref{defn:quan_adj_mat:undir}) and
(\ref{defn:quan_adj_mat:undir:other}) of Definition~\ref{defn:adj_op} are equivalent.
\end{proposition}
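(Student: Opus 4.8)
The plan is to turn both axioms into conditions on the single element $e = \Psi(A) \in B\otimes B^\op$ and to observe that the two conditions are manifestly the same. Fix the parameters $t,s$ (any choice works) and set $e = \Psi_{t,s}(A)$. Since $\Psi$ is a linear bijection, axiom (\ref{defn:quan_adj_mat:undir}) holds for $A$ precisely when the operator appearing in it has the same $\Psi$-image as $A$; by Proposition~\ref{prop:psiiso}\,(\ref{prop:psiiso:two}) this is exactly the equation $F(e)=e$, where $F=(\sigma_{-i(s+t)}\otimes\sigma_{i(s+t-1)})\sigma$. In the same way, Proposition~\ref{prop:psiiso}\,(\ref{prop:psiiso:twoa}) shows that axiom (\ref{defn:quan_adj_mat:undir:other}) holds if and only if $G(e)=e$, where $G=(\sigma_{i(1-s-t)}\otimes\sigma_{i(s+t)})\sigma$.

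The key step is to check that $F$ and $G$ are mutually inverse bijections of $B\otimes B^\op$. This is a short computation: using that $\sigma$ is an involution with $\sigma(\sigma_z\otimes\sigma_w)=(\sigma_w\otimes\sigma_z)\sigma$, together with $\sigma_w\sigma_z=\sigma_{w+z}$, I would compute
\[ F\circ G = (\sigma_{-i(s+t)}\otimes\sigma_{i(s+t-1)})(\sigma_{i(s+t)}\otimes\sigma_{i(1-s-t)})\sigma^2 = \sigma_0\otimes\sigma_0 = \id, \]
where the two modular exponents in each tensor factor have been arranged precisely to cancel. Hence $G=F^{-1}$. Since an element is fixed by an invertible map exactly when it is fixed by its inverse, we get $F(e)=e \iff F^{-1}(e)=e \iff G(e)=e$, and tracing back through the bijection $\Psi$ yields the equivalence of the two axioms.

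I expect the only real work to be the bookkeeping of the modular exponents in the composition $F\circ G$; once the cancellation is spotted the conclusion is immediate. It is worth flagging that self-adjointness of $A$ does not actually enter this route, the equivalence $F(e)=e\iff G(e)=e$ holding for any $A$. The hypothesis can, however, be made to enter transparently by an alternative argument that avoids Proposition~\ref{prop:psiiso} altogether: writing $T_1$ and $T_2$ for the operators on the left-hand sides of (\ref{defn:quan_adj_mat:undir}) and (\ref{defn:quan_adj_mat:undir:other}), a direct adjoint computation gives $T_1^* = (\eta^* m\otimes 1)(1\otimes A^*\otimes 1)(1\otimes m^*\eta)$, which for self-adjoint $A$ is exactly $T_2$. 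Then axiom (\ref{defn:quan_adj_mat:undir}) reads $T_1=A$, and taking adjoints gives $T_2=T_1^*=A^*=A$, which is axiom (\ref{defn:quan_adj_mat:undir:other}); the converse is identical. Either packaging proves the statement, and I would present the first since it reuses the correspondences just established.
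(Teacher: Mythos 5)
Your proof is correct, and both of your arguments take genuinely different routes from the paper's. The paper also works through Proposition~\ref{prop:psiiso}, but it uses the hypothesis $A=A^*$ in an essential way: self-adjointness is recorded as the relation $e = (\sigma_{i(s-t)}\otimes\sigma_{i(s-t)})\sigma(e^*)$, which is invoked twice in a chain of modular-exponent manipulations converting the fixed-point equation for axiom (\ref{defn:quan_adj_mat:undir}) into the one for axiom (\ref{defn:quan_adj_mat:undir:other}). Your main route instead observes that the two maps $F=(\sigma_{-i(s+t)}\otimes\sigma_{i(s+t-1)})\sigma$ and $G=(\sigma_{i(1-s-t)}\otimes\sigma_{i(s+t)})\sigma$ are mutually inverse bijections of $B\otimes B^\op$; your exponent bookkeeping does check out, since $\sigma(\sigma_z\otimes\sigma_w)=(\sigma_w\otimes\sigma_z)\sigma$ and $\sigma^2=\id$ make both compositions collapse to $\sigma_0\otimes\sigma_0=\id$, and fixed-point sets of a bijection and its inverse coincide. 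This is both shorter and strictly stronger than the paper's argument: it shows that axioms (\ref{defn:quan_adj_mat:undir}) and (\ref{defn:quan_adj_mat:undir:other}) are equivalent for \emph{every} $A\in\mc B(L^2(B))$, so the self-adjointness hypothesis in the statement is redundant for this particular equivalence. This stronger conclusion is consistent with the classical case (both axioms there say $A^\top=A$) and with Remark~\ref{rem:real_vs_other_axioms}, where, using $\Psi_{0,1/2}$ and $f=(\sigma_{-i/2}\otimes\sigma_{-i/2})(e)$, axiom (\ref{defn:quan_adj_mat:undir}) reads $e=\sigma(f)$ and axiom (\ref{defn:quan_adj_mat:undir:other}) reads $f=\sigma(e)$, and applying $\sigma$ turns one into the other. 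Your alternative argument, via the adjoint identity $T_1^*=(\eta^*m\otimes 1)(1\otimes A^*\otimes 1)(1\otimes m^*\eta)$, is also correct and is the most elementary of the three proofs, needing neither $\Psi_{t,s}$ nor Proposition~\ref{prop:psiiso}; it does, however, genuinely use $A=A^*$, so it matches the paper's hypotheses rather than improving on them. Either write-up suffices; the first is preferable precisely because it exposes the redundant hypothesis.
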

\begin{proof}
Let $e = \Psi_{s,t}(A)$ (for some choice of $s,t$), so the previous proposition shows that axiom (\ref{defn:quan_adj_mat:undir}) is equivalent to $(\sigma_{-i(s+t)} \otimes \sigma_{i(t+s-1)})\sigma(e)=e$.  Applying $\sigma$, this is in turn equivalent to
\[ (\sigma_{i(t+s-1)} \otimes \sigma_{-i(s+t)})(e) = \sigma(e), \]
and then applying $\sigma_{-i(t+s-1)} \otimes \sigma_{i(s+t)}$, this is equivalent to
\[ e = (\sigma_{-i(t+s-1)} \otimes \sigma_{i(s+t)})\sigma(e)
= (\sigma_{i(1-t-s)} \otimes \sigma_{i(s+t)})\sigma(e), \]
which by the previous proposition, is equivalent to axiom (\ref{defn:quan_adj_mat:undir:other}).
\end{proof}

There is an analogous equivalence for $\Psi'$.

\begin{proposition}\label{prop:psiprimeiso}
The bijection $\Psi' = \Psi'_{t,s}$ gives the following correspondences between $A\in\mc B(L^2(B))$ and
$e=\Psi'(A)\in B\otimes B^\op$:
\begin{enumerate}
\item\label{prop:psiprimeiso:one}
  $A^*$ corresponds to $(\sigma_{i(t-s)}\otimes\sigma_{i(t-s)})\sigma(e^*)$;
\item\label{prop:psiprimeiso:two}
  $(1\otimes\eta^* m)(1\otimes A\otimes 1)(m^*\eta\otimes 1)$ corresponds to
  $(\sigma_{i(s+t-1)} \otimes \sigma_{-i(s+t)})\sigma(e)$.
\item\label{prop:psiprimeiso:twoa}
  $(\eta^*m\otimes 1)(1\otimes A\otimes 1)(1\otimes m^*\eta)$ corresponds to
  $(\sigma_{i(t+s)} \otimes \sigma_{i(1-s-t)})\sigma(e)$.
\item\label{prop:psiprimeiso:three}
  we have that $\Psi'(m(A_1\otimes A_2)m^*) = e_1e_2$ for $e_i = \Psi'(A_i)$, $i=1,2$.
\end{enumerate} 
\end{proposition}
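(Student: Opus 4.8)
The plan is to deduce Proposition~\ref{prop:psiprimeiso} from Proposition~\ref{prop:psiiso} by exploiting the elementary relation between the two bijections, rather than redoing the rank-one computations from scratch. First I would observe that $\Psi'$ is obtained from $\Psi$ by post-composing with the swap map $\sigma$ and interchanging the two real parameters; concretely, for all $t,s\in\mathbb R$,
\[ \Psi'_{t,s} = \sigma\circ\Psi_{s,t}. \]
This is immediate on the spanning operators $\theta_{\Lambda(a),\Lambda(b)}$: applying $\sigma$ to $\Psi_{s,t}(\theta_{\Lambda(a),\Lambda(b)}) = \sigma_{is}(a)^*\otimes\sigma_{it}(b)$ recovers exactly $\Psi'_{t,s}(\theta_{\Lambda(a),\Lambda(b)}) = \sigma_{it}(b)\otimes\sigma_{is}(a)^*$. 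Since $\sigma^2=\id$ this also reads $\sigma\circ\Psi'_{t,s}=\Psi_{s,t}$.

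Next, writing $e=\Psi'_{t,s}(A)$ and $\tilde e=\Psi_{s,t}(A)=\sigma(e)$, for each of the four operations in the statement I would apply $\sigma$ to the formula of Proposition~\ref{prop:psiiso} evaluated with the parameters $t,s$ exchanged. The computation needs only three bookkeeping facts about the swap map: that $\sigma$ is an anti-homomorphism, $\sigma(xy)=\sigma(y)\sigma(x)$; that it commutes with the adjoint, $\sigma(x^*)=\sigma(x)^*$; and that it intertwines the diagonal modular action as $\sigma\circ(\sigma_{iz}\otimes\sigma_{iw}) = (\sigma_{iw}\otimes\sigma_{iz})\circ\sigma$. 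For items (\ref{prop:psiprimeiso:one})--(\ref{prop:psiprimeiso:twoa}) the last identity turns each expression $(\sigma_{i\alpha}\otimes\sigma_{i\beta})\sigma(\cdot)$ appearing in Proposition~\ref{prop:psiiso} into $(\sigma_{i\beta}\otimes\sigma_{i\alpha})(\cdot)$; substituting $\tilde e=\sigma(e)$ (and $\tilde e^*=\sigma(e^*)$ for (\ref{prop:psiprimeiso:one})) then reintroduces a $\sigma$ and yields the asserted exponents, and one checks that the combined effect of swapping $t\leftrightarrow s$ and swapping the two tensor legs reproduces precisely the exponents listed. For the multiplicativity statement (\ref{prop:psiprimeiso:three}), Proposition~\ref{prop:psiiso}(\ref{prop:psiiso:three}) gives $\Psi_{s,t}(m(A_1\otimes A_2)m^*)=\tilde e_2\tilde e_1$, and the anti-homomorphism property $\sigma(\tilde e_2\tilde e_1)=\sigma(\tilde e_1)\sigma(\tilde e_2)=e_1e_2$ produces the product in the natural (rather than reversed) order.

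The only real obstacle is avoiding sign and ordering errors: one must keep straight the interplay between the parameter interchange $t\leftrightarrow s$ coming from $\Psi'_{t,s}=\sigma\circ\Psi_{s,t}$ and the factor interchange coming from conjugating the modular action by $\sigma$. A safe alternative, should the reduction prove fiddly to present, is to rerun the proof of Proposition~\ref{prop:psiiso} directly: the rank-one forms computed there --- namely $A^*=\theta_{\Lambda(b),\Lambda(a)}$, the two sandwich operators as $\theta_{\Lambda(b^*),\Lambda(\sigma_{-i}(a^*))}$ and $\theta_{\Lambda(\sigma_{-i}(b^*)),\Lambda(a^*)}$, and $m(A_1\otimes A_2)m^*=\theta_{\Lambda(a_1a_2),\Lambda(b_1b_2)}$ --- do not depend on which bijection is used, so it suffices to apply $\Psi'$ in place of $\Psi$ at the final step and simplify using $\sigma_{iz}(a)^*=\sigma_{-iz}(a^*)$ and $\sigma_{iz}\sigma_{iw}=\sigma_{i(z+w)}$.
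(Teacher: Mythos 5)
Your proposal is correct and matches the paper's own proof: the paper likewise observes $\Psi'_{t,s}=\sigma\circ\Psi_{s,t}$ and deduces all four items from Proposition~\ref{prop:psiiso}, with your swap-map bookkeeping simply spelling out what the paper calls "immediate" (and your fallback of rerunning the rank-one computations is exactly the paper's remark that it "could be proved in an entirely analogous way").
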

\begin{proof}
This could be proved in an entirely analogous way.  However, we also observe that $\Psi'_{t,s}
= \sigma \circ \Psi_{s,t}$, from which these claims follow immediately from Proposition~\ref{prop:psiiso}.
\end{proof}

The way to prove the equivalence between Definitions~\ref{defn:adj_op} and~\ref{defn:quan_graph} requires
two steps:
\begin{itemize}
\item we use $\Psi_{t,s}$ or $\Psi'_{t,s}$, for carefully chosen $t,s$, to associate $A$ to an
\emph{idempotent} (ideally, an \emph{orthogonal projection}) $e$;
\item associate idempotents (or projections) $e$ to subspaces $S\subseteq\mc B(H)$.
\end{itemize}
We will shortly deal with the second point in detail, before then returning to the first point.
However, the reader can already see that if $\psi$ is a trace, then each $\sigma_z$ is trivial, and it is
immediate that $A=A^*$ satisfying axioms (\ref{defn:quan_adj_mat:idem}) and (\ref{defn:quan_adj_mat:undir}) of Definition~\ref{defn:adj_op}
corresponds to $e\in B\otimes B^\op$ with $\sigma(e^*) = e, e^2=e$ and $e = \sigma(e)$, or equivalently,
$e=e^*=e^2=\sigma(e)$, this using any choice of $\Psi$ and $\Psi'$.  In the non-tracial case,
it is not possible to choose any $s,t$ which gives such a direct bijection, though of course the benefit is that
we obtain an interesting, different, theory in this case.

\begin{remark}\label{rem:existing_bijects}
When $\psi$ is a tracial $\delta$-form, it was already shown in \cite[Theorem~7.7]{mrv} that a self-adjoint $A$
satisfying axioms (\ref{defn:quan_adj_mat:idem}) and (\ref{defn:quan_adj_mat:undir}) of Definition~\ref{defn:adj_op} (which are exactly
the diagrams (47) in \cite[Definition~5.1]{mrv}) then we obtain an undirected quantum graph, in the
sense of Definition~\ref{defn:quan_graph}.  The proof is graphical, and indirect, as it passes by way of
``projectors'', equivalently by \cite[Remark~7.3]{mrv} projections in $B^\op\otimes B$.  Examining the proof of
\cite[Theorem~7.7]{mrv} shows that $A = \theta_{\Lambda(a), \Lambda(b)}$ corresponds to $b\otimes a^*$, that is,
$\Psi'_{0,0}(A)$.  (Of course, $\psi$ is tracial here, so $\Psi'_{0,0} = \Psi'_{t,s}$ for all $t,s$).
Using $B^\op\otimes B$ instead of $B\otimes B^\op$ is just a convention.

Elsewhere in the literature, compare for example \cite[Lemma~1.6]{cw} or \cite[Definition~1.12]{grom},
the relation between $A\in\mc B(L^2(B))$ and $e\in B\otimes B^\op$ is given by $e = (A\otimes 1)m^*\eta$,
and this again is $\Psi'_{0,0}$.  To make sense of the expression $e = (A\otimes 1)m^*\eta$, we identify, as
linear spaces, $B\otimes B^\op$ with $B\otimes B$, and in turn, with $L^2(B)\otimes L^2(B)$.

We shall shortly see that $\Psi$ and $\Psi'$ are essentially interchangeable, but it is helpful to have the
choice between them for different arguments.
\end{remark}

Finally, we consider the real condition, Definition~\ref{defn:adj_op:real}.

\begin{lemma}\label{lem:what_real_means_general}
Given $A\in\mc B(L^2(B))$, let $A_0:B\rightarrow B$ be the associated linear map.  Let $A^r \in \mc B(L^2(B))$
correspond to the linear map $B\rightarrow B; a\mapsto A_0(a^*)^*$.  For $e = \Psi_{t,s}(A)$, we have that
$\Psi_{t,s}(A^r) = (\sigma_{i-2it}\otimes\sigma_{2is})(e^*)$.  For $e = \Psi'_{t,s}(A)$, we have that
$\Psi'_{t,s}(A^r) = (\sigma_{2it}\otimes\sigma_{i-2is})(e^*)$.
\end{lemma}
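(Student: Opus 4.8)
The plan is to verify both identities on the rank-one operators $A=\theta_{\Lambda(a),\Lambda(b)}$, which span $\mc B(L^2(B))$, exactly as in the proof of Proposition~\ref{prop:psiiso}. One should first observe that $A\mapsto A^r$ is \emph{anti}-linear in $A$ (the assignment $a\mapsto A_0(a^*)^*$ involves two adjoints), and that the claimed right-hand side $A\mapsto(\sigma_{i-2it}\otimes\sigma_{2is})(\Psi_{t,s}(A)^*)$ is likewise anti-linear, since $\Psi_{t,s}$ is linear and $e\mapsto e^*$ is anti-linear. As two anti-linear maps agreeing on a spanning set agree everywhere, it suffices to treat rank-one $A$.

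First I would identify $A^r$ explicitly as a rank-one operator. For $A=\theta_{\Lambda(a),\Lambda(b)}$ we have $A\Lambda(c)=\psi(a^*c)\Lambda(b)$, so $A_0\colon c\mapsto\psi(a^*c)b$, and hence the map underlying $A^r$ is $c\mapsto A_0(c^*)^*=\overline{\psi(a^*c^*)}\,b^*=\psi(ca)\,b^*$, using $\overline{\psi(x)}=\psi(x^*)$. The crux is to recognise $c\mapsto\psi(ca)b^*$ as some $\theta_{\Lambda(x),\Lambda(y)}$, which requires $y=b^*$ and $\psi(x^*c)=\psi(ca)$ for all $c$. Here the modular structure enters: since $\sigma_i(a)=Q^{-1}aQ$, cyclicity of the trace gives $\psi(ca)=\Tr(Qca)=\Tr(Q\,\sigma_i(a)\,c)=\psi(\sigma_i(a)c)$, so $x^*=\sigma_i(a)$, i.e. $x=\sigma_i(a)^*=\sigma_{-i}(a^*)$. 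Thus
\[ A^r=\theta_{\Lambda(\sigma_{-i}(a^*)),\,\Lambda(b^*)}. \]

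With $A^r$ in hand, the first identity is routine bookkeeping using only $\sigma_w\circ\sigma_z=\sigma_{w+z}$ and $\sigma_z(x)^*=\sigma_{\overline z}(x^*)$. Applying $\Psi_{t,s}$ gives $\Psi_{t,s}(A^r)=\sigma_{it}(\sigma_{-i}(a^*))^*\otimes\sigma_{is}(b^*)=\sigma_{i(1-t)}(a)\otimes\sigma_{is}(b^*)$. On the other side, from $e=\sigma_{it}(a)^*\otimes\sigma_{is}(b)$ one computes $e^*=\sigma_{it}(a)\otimes\sigma_{-is}(b^*)$, and applying $\sigma_{i-2it}\otimes\sigma_{2is}$ yields $\sigma_{i(1-t)}(a)\otimes\sigma_{is}(b^*)$, which matches, establishing the first claim.

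For the $\Psi'$ statement I would avoid repeating the computation and instead exploit $\Psi'_{t,s}=\sigma\circ\Psi_{s,t}$, as in the proof of Proposition~\ref{prop:psiprimeiso}. Setting $e=\Psi'_{t,s}(A)$ gives $\Psi_{s,t}(A)=\sigma(e)$ and $\Psi_{s,t}(A)^*=\sigma(e^*)$, so the first claim (with $t,s$ interchanged) reads $\Psi_{s,t}(A^r)=(\sigma_{i-2is}\otimes\sigma_{2it})(\sigma(e^*))$. Applying $\sigma$ once more, and using that $\sigma$ intertwines $\sigma_z\otimes\sigma_w$ with $\sigma_w\otimes\sigma_z$ together with $\sigma^2=\id$, produces $\Psi'_{t,s}(A^r)=(\sigma_{2it}\otimes\sigma_{i-2is})(e^*)$, as required. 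The only genuine obstacle is the explicit identification of $A^r$ in the second paragraph: this is the single point where the modular automorphism group is really needed and where sign and index errors are most likely, but once $A^r=\theta_{\Lambda(\sigma_{-i}(a^*)),\Lambda(b^*)}$ is pinned down, everything else is mechanical manipulation of the one-parameter group.
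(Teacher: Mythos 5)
Your proof is correct: the identification $A^r=\theta_{\Lambda(\sigma_{-i}(a^*)),\Lambda(b^*)}$, the subsequent bookkeeping with the one-parameter group, and the deduction of the $\Psi'_{t,s}$ formula from the $\Psi_{s,t}$ one via the swap map all check out, and the overall skeleton (reduce to rank-one operators, pin down $A^r$, then mechanical manipulation) is the same as the paper's. Where you genuinely diverge is at the step you yourself call the crux. The paper identifies the rank-one form of $A^r$ \emph{spatially}: it observes that $A^r = \varsigma A \varsigma$, where $\varsigma:\Lambda(a)\mapsto\Lambda(a^*)$ is the Tomita conjugation $\varsigma=\nabla^{-1/2}J=J\nabla^{1/2}$, and then computes using $J\theta_{\xi,\eta}J=\theta_{J\xi,J\eta}$ and conjugation by $\nabla^{\pm 1/2}$. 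You instead work \emph{algebraically}: you compute the underlying map $c\mapsto\psi(ca)b^*$ on $B$ and recognise it as a rank-one operator via the KMS-type identity $\psi(ca)=\psi(\sigma_i(a)c)$, which follows from cyclicity of $\Tr$ and $\sigma_i(a)=Q^{-1}aQ$. Your route is more elementary, never invoking $J$ or $\nabla$; the paper's route is slightly slicker and records the structurally useful fact that $A\mapsto A^r$ is conjugation by the Tomita operator. One small point in your favour: you explicitly observe that both sides of the claimed identity are conjugate-linear in $A$, so agreement on the spanning set of rank-one operators suffices; the paper's closing phrase ``by linearity'' elides exactly this point.
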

\begin{proof}
We wish to consider the anti-linear bijection $s:L^2(B)\rightarrow L^2(B); \Lambda(a) \mapsto \Lambda(a^*)$.
A calculation shows that $s = \nabla^{-1/2} J = J \nabla^{1/2}$.  A further calculation then establishes that
when $A = \theta_{\Lambda(a), \Lambda(b)}$,
\begin{align*}
A^r &= s A s = \nabla^{-1/2} J \theta_{\Lambda(a), \Lambda(b)} J \nabla^{1/2}
= \nabla^{-1/2} \theta_{J\Lambda(a), J\Lambda(b)} \nabla^{1/2}
= \theta_{\nabla^{1/2}J\Lambda(a), \nabla^{-1/2}J\Lambda(b)} \\
&= \theta_{\nabla\Lambda(a^*), \Lambda(b^*)}
=  \theta_{\Lambda(\sigma_{-i}(a^*)), \Lambda(b^*)}.
\end{align*}
Thus
\begin{align*}
\Psi_{t,s}(A^r) &= \sigma_{it}(\sigma_{-i}(a^*))^* \otimes \sigma_{is}(b^*)
= \sigma_{i-it}(a) \otimes \sigma_{is}(b^*) \\
&= (\sigma_{i-2it}\otimes\sigma_{2is})(\sigma_{it}(a) \otimes \sigma_{-is}(b^*))
= (\sigma_{i-2it}\otimes\sigma_{2is})(e^*).
\end{align*}
By linearity, this relation holds for all $A$, as claimed.

As $\Psi'_{t,s} = \sigma\Psi_{s,t}$, with $\sigma$ the tensor swap map,
we see that $\Psi'_{t,s}(A^r) = \sigma\Psi_{s,t}(A_r)
= \sigma (\sigma_{i-2is}\otimes\sigma_{2it}) \Psi_{s,t}(A)^*
= (\sigma_{2it}\otimes\sigma_{i-2is}) \sigma \Psi_{s,t}(A)^*
= (\sigma_{2it}\otimes\sigma_{i-2is}) \Psi'_{t,s}(A)^*$, as claimed.
\end{proof}

\subsection{From projections to subspaces}\label{sec:proj_to_ss}

In this section, $H$ will be a finite-dimensional Hilbert space with $B\subseteq\mc B(H)$ (which is equivalent,
of course, to there being an injective $*$-homomorphism $B\rightarrow \mc B(H)$ which allows us to identify $B$
with its image in $\mc B(H)$).  Then $B^\op$ is naturally a subalgebra of $\mc B(\overline H)$ where we identify
$a\in B^\op$ with $a^\top \in \mc B(\overline H)$.  Consequently, $B\otimes B^\op \subseteq
\mc B(H\otimes \overline H)$.  For the following, notice that $(B')^\op$ is also naturally a subalgebra of
$\mc B(\overline H)$, where we identify $x\in (B')^\op \subseteq\mc B(H)$ with $x^\top$.  A simple calculation
shows that then $(B')^\op = (B^\op)'$ as subalgebras of $\mc B(\overline H)$.  Hence $B' \otimes (B')^\op
\subseteq\mc B(H\otimes \overline H)$.

\begin{lemma}\label{lem:app_tomita}
The commutant of $B\otimes B^\op \subseteq \mc B(H\otimes\overline H)$ is $B'\otimes (B')^\op$.
\end{lemma}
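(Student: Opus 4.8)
The plan is to prove the general commutation fact $(M \otimes N)' = M' \otimes N'$ for the two von Neumann subalgebras $M = B \subseteq \mc B(H)$ and $N = B^\op \subseteq \mc B(\overline H)$, and then to substitute the identification $(B^\op)' = (B')^\op$ recorded just before the statement. I would first dispose of the easy inclusion: if $x \in B'$ and $y \in (B^\op)'$, then for all $a \in B$ and $b \in B^\op$ we have $(a \otimes b)(x \otimes y) = ax \otimes by = xa \otimes yb = (x \otimes y)(a \otimes b)$, so that $B' \otimes (B^\op)' \subseteq (B \otimes B^\op)'$; since $(B^\op)' = (B')^\op$, this is exactly the inclusion $\supseteq$ we want. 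Note here that $a \otimes 1$ and $1 \otimes b$ genuinely lie in $B \otimes B^\op$, since $1$ belongs to both $B$ and $B^\op$.

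For the reverse inclusion I would exploit that everything is finite-dimensional and argue by ``slicing''. Identify $\mc B(H \otimes \overline H) = \mc B(H) \otimes \mc B(\overline H)$ and take $T \in (B \otimes B^\op)'$. Writing $T = \sum_i x_i \otimes y_i$ with the $y_i$ linearly independent, the relation $(a \otimes 1)T = T(a \otimes 1)$ for $a \in B$ becomes $\sum_i (ax_i - x_i a) \otimes y_i = 0$, and linear independence of the $y_i$ forces $ax_i = x_i a$ for every such $a$, so each $x_i \in B'$ and hence $T \in B' \otimes \mc B(\overline H)$. Writing $T$ instead with linearly independent left factors and commuting with $1 \otimes b$ for $b \in B^\op$ shows symmetrically that $T \in \mc B(H) \otimes (B^\op)'$.

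It remains to combine these two memberships, which is where the only real content lies. I would prove the linear-algebra lemma that for subspaces $V \subseteq \mc B(H)$ and $W \subseteq \mc B(\overline H)$ one has $(V \otimes \mc B(\overline H)) \cap (\mc B(H) \otimes W) = V \otimes W$ inside $\mc B(H) \otimes \mc B(\overline H)$; this is the standard fact about intersections of tensor products of vector spaces, provable by extending a basis of $V$ to one of $\mc B(H)$ and a basis of $W$ to one of $\mc B(\overline H)$, then expanding an element of the intersection in the resulting product basis and matching coefficients. Applying it with $V = B'$ and $W = (B^\op)'$ gives $T \in B' \otimes (B^\op)'$, completing the reverse inclusion, and substituting $(B^\op)' = (B')^\op$ yields $(B \otimes B^\op)' = B' \otimes (B')^\op$ as claimed. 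I expect the intersection lemma to be the main obstacle, but in finite dimensions it reduces to a routine basis computation rather than requiring the full commutation theorem for von Neumann algebras.
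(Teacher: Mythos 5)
Your proof is correct, but it takes a genuinely different route from the paper. The paper disposes of the lemma in one line by citing Tomita's commutation theorem for tensor products of von Neumann algebras, $(M\vnten N)'=M'\vnten N'$ (Theorem~5.9, Chapter~IV of Takesaki), applied to $M=B\subseteq\mc B(H)$ and $N=B^\op\subseteq\mc B(\overline H)$, followed by the identification $(B^\op)'=(B')^\op$. You instead re-prove the finite-dimensional case of that commutation theorem from scratch: the easy inclusion by checking commutation on elementary tensors, then the slicing argument (writing $T$ with linearly independent right, respectively left, tensor factors) to obtain $T\in B'\otimes\mc B(\overline H)$ and $T\in\mc B(H)\otimes(B^\op)'$, and finally the intersection identity $(V\otimes\mc B(\overline H))\cap(\mc B(H)\otimes W)=V\otimes W$, proved by extending bases. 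All three steps are sound, and the intersection lemma really is routine in finite dimensions. What each approach buys: the paper's citation is shorter and rests on a theorem valid for arbitrary von Neumann algebras, where your slicing argument would fail outright (elements of a von Neumann tensor product are not finite sums of elementary tensors, and the general commutation theorem is deep); your argument, by contrast, keeps everything self-contained and shows that in the finite-dimensional setting only elementary linear algebra is needed, which is very much in the spirit of the paper's stated aim of giving elementary, algebraic proofs.
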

\begin{proof}
The result follows from Tomita's theorem, \cite[Theorem~5.9, Chapter~IV]{tak1}: $(B\otimes B^\op)'
= B' \otimes (B^\op)' \cong B' \otimes (B')^\op$.
\end{proof}

As $H$ is finite-dimensional, every operator on $\mc B(H)$ is in the linear span of the rank-one operators,
and so there is a linear isomorphism
\[ \mc B(H) \cong H\otimes\overline{H};\quad \theta_{\xi,\eta}\mapsto \eta\otimes\overline\xi
\qquad (\xi,\eta\in H). \]
Indeed, this is simply the GNS map for the functional $\Tr$ on $\mc B(H)$, or equivalently, comes from
identifying $\mc B(H)$ with the Hilbert-Schmidt operators on $H$.

\begin{remark}
We make links with the existing literature, and in particular \cite[Proposition~2.23]{weaver1} where a bijection
between quantum relations $S\subseteq\mc B(H)$ and left ideals in $B\otimes B^\op$ is established (when $B$ is
finite-dimensional).  A key technical
tool for Weaver is the action $\Phi$ of $B\otimes B^\op$ on $\mc B(H)$ given by
\[ \Phi_{a\otimes b}(x) = abx \qquad (a\otimes b\in B\otimes B^\op, x\in\mc B(H)). \]
Once we identify $\mc B(H)$ with $H\otimes\overline H$, we find that
\[ (a\otimes b)\cdot (\eta\otimes\overline\xi) \cong \Phi_{a\otimes b}(\theta_{\xi,\eta})
= a \theta_{\xi,\eta} b = \theta_{b^*(\xi), a(\eta)}
\cong a(\eta) \otimes b^\top(\overline\xi). \]
That is, we obtain the natural action of $B\otimes B^\op$ on $H\otimes\overline H$.

Elsewhere in the literature, links with completely bounded maps (and implicitly with the (extended) Haagerup tensor
product) are made, compare \cite{gp}, or \cite[Section~2.4]{ganesan}.
In contrast, our perspective in this
section is that it is Hilbert Space techniques which are vitally important, for which $H$ being finite-dimensional
seems essential.
\end{remark}

An elementary result from von Neumann algebra theory is that invariant subspaces for a von Neumann algebra $M$
biject with projections in the commutant $M'$.  We would like to have a version of this result for (possibly)
non-self-adjoint idempotents.

\begin{lemma}\label{lem:non_sa_idem}
Let $M \subseteq\mc B(H)$ be a von Neumann algebra.  Let $e\in\mc B(H)$ be an idempotent, $e^2=e$, and let
$H_0 = \im(e), H_1=\ker(e)$.  Then $H = H_0 \oplus H_1$ (not necessarily orthogonal).  Furthermore,
$e\in M$ if and only if $H_0$ and $H_1$ are invariant subspaces for $M'$.
\end{lemma}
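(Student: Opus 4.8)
The first claim, that $H = H_0 \oplus H_1$, is the standard algebraic fact about idempotents, and I would dispatch it directly. For arbitrary $\xi \in H$, write $\xi = e\xi + (\xi - e\xi)$; here $e\xi \in \im(e) = H_0$, while $e(\xi - e\xi) = e\xi - e^2\xi = 0$ shows $\xi - e\xi \in \ker(e) = H_1$, so $H = H_0 + H_1$. For directness, if $\zeta \in H_0 \cap H_1$ then $\zeta = e\mu$ for some $\mu$ and also $e\zeta = 0$; but then $\zeta = e\mu = e^2\mu = e\zeta = 0$. Thus $H = H_0 \oplus H_1$. I would record the useful by-product that $e$ acts as the identity on $H_0$ (since $\xi_0 = e\mu$ gives $e\xi_0 = e^2\mu = \xi_0$) and as zero on $H_1$.

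For the equivalence, the forward implication is a one-line commutation argument. Assuming $e \in M$, take any $x \in M'$, so that $xe = ex$. For $\xi \in H_0$, write $\xi = e\mu$; then $x\xi = xe\mu = ex\mu \in \im(e) = H_0$. For $\xi \in H_1$, we have $e(x\xi) = x(e\xi) = 0$, so $x\xi \in \ker(e) = H_1$. Hence both $H_0$ and $H_1$ are invariant for $M'$.

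For the reverse implication, the plan is to show that $e$ commutes with every element of $M'$ and then invoke von Neumann's bicommutant theorem, $M = M''$, to conclude $e \in M$. Given $x \in M'$ and $\xi \in H$, I would decompose $\xi = \xi_0 + \xi_1$ with $\xi_0 \in H_0$ and $\xi_1 \in H_1$. Invariance of the two subspaces gives $x\xi_0 \in H_0$ and $x\xi_1 \in H_1$, so applying $e$ (identity on $H_0$, zero on $H_1$) yields $ex\xi = x\xi_0 = xe\xi$. As $\xi$ was arbitrary this gives $ex = xe$, and since $x \in M'$ was arbitrary we obtain $e \in (M')' = M'' = M$.

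The calculations throughout are routine; the only conceptual ingredient is the bicommutant theorem, which is precisely what licenses the passage from ``$e$ commutes with $M'$'' to ``$e \in M$'' in the reverse direction. I anticipate no genuine obstacle, the statement being essentially the non-self-adjoint analogue of the familiar correspondence between invariant subspaces and projections in the commutant.
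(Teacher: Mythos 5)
Your proof is correct and follows essentially the same route as the paper's: the decomposition $\xi = e\xi + (\xi - e\xi)$ with trivial intersection, the one-line commutation argument for the forward direction, and the bicommutant theorem combined with the $H_0 \oplus H_1$ decomposition for the converse. The only cosmetic difference is that you explicitly record that $e$ acts as the identity on $H_0$ and as zero on $H_1$, which the paper uses implicitly.
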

\begin{proof}
Any $\xi\in H$ is written as $\xi = e(\xi) + (\xi-e(\xi)) \in H_0 + H_1$, and if $\xi\in H_0\cap H_1$ then
$\xi=e(\xi)=0$.  Thus $H = H_0 \oplus H_1$ as a direct sum of Banach spaces (that is, a not necessarily
orthogonal direct sum).  If $e\in M$ then given $n\in M'$, for $\xi\in H_0$ we see that $n(\xi) = ne(\xi)
= en(\xi)$ so $n(\xi)\in H_0$, while for $\xi\in H_1$ we have that $en(\xi) = ne(\xi) = 0$ so $n(\xi)\in H_1$.
Conversely, if $H_0,H_1$ are $M'$-invariant, then to show that $e\in M$, by the bicommutant theorem, it
suffices to show that $ne=en$ for each $n\in M'$.  Given $\xi\in H$ write $\xi = \xi_0 + \xi_1 \in H_0
\oplus H_1$, so $ne(\xi) = n(\xi_0) = en(\xi_0)$ as $n(\xi_0)\in H_0$, while $n(\xi) = n(\xi_0) + n(\xi_1)
\in H_0\oplus H_1$ so $en(\xi) = en(\xi_0)$ because $en(\xi_1)=0$.  Hence $ne(\xi) = en(\xi)$ as required.
\end{proof}

We now collect some definitions which will allow us to talk about properties of subspaces of $H\otimes\overline H$
which correspond to the properties of idempotents $e\in B\otimes B^\op$ which occurred in the previous section.

\begin{definition}\label{defn:subsp_defs}
Given $H = H_0 \oplus H_1$ not necessarily orthogonal, we shall say that the idempotent $e\in \mc B(H)$
with $\im(e)=H_0, \ker(e)=H_1$ is the \emph{projection onto $H_0$ along $H_1$}.

We define an anti-linear map
\[ J_0: H\otimes\overline H \rightarrow H\otimes\overline H; \quad
\eta\otimes\overline\xi \mapsto \xi\otimes\overline\eta. \]
This corresponds to the adjoint on $\mc B(H)$, and is in fact the modular conjugation with respect to the
functional $\Tr$ on $\mc B(H)$.

Let $\hat 1\in H\otimes\overline H$ correspond to the identity operator $1\in\mc B(H)$.
\end{definition}

Let $(e_i)$ be some orthonormal basis for $H$.  As $1 = \sum_i \theta_{e_i, e_i}$ we find that
\[ \hat 1 = \sum_i e_i\otimes\overline{e_i}, \]
and that this is independent of the choice of $(e_i)$.

\begin{theorem}\label{thm:non_sa_op_bimods}
There is a bijection between idempotents $e\in B\otimes B^\op$ and decompositions
$H\otimes\overline H = V \oplus W$ where $V$ and $W$ are $B'\otimes (B')^\op$-invariant subspaces of
$H\otimes\overline H$, given by $e$ being the projection onto $V$ along $W$.  Furthermore:
\begin{enumerate}
\item\label{thm:non_sa_op_bimods:one}
$e = \sigma(e^*)$ if and only if $J_0(V)=V$ and $J_0(W)=W$;
\item\label{thm:non_sa_op_bimods:two}
$m(e)=1$ if and only if $\hat 1\in V$.  (Here $m:B\otimes B^\op\rightarrow B$ is the multiplication map).
\end{enumerate}
\end{theorem}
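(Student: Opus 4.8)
The main bijection follows directly from Lemma~\ref{lem:non_sa_idem} combined with Lemma~\ref{lem:app_tomita}. First I would take the von Neumann algebra in Lemma~\ref{lem:non_sa_idem} to be $M = B\otimes B^\op \subseteq \mc B(H\otimes\overline H)$, so that by Lemma~\ref{lem:app_tomita} its commutant is $M' = B'\otimes (B')^\op$. An idempotent $e\in B\otimes B^\op$ then has a range $V=\im(e)$ and kernel $W=\ker(e)$ giving a (not necessarily orthogonal) direct sum decomposition $H\otimes\overline H = V\oplus W$, and Lemma~\ref{lem:non_sa_idem} says precisely that $e$ lies in $M$ if and only if both $V$ and $W$ are invariant for $M' = B'\otimes(B')^\op$. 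Conversely, any such invariant decomposition determines the projection onto $V$ along $W$, which by the same lemma lies in $B\otimes B^\op$; this is manifestly inverse to the previous assignment. This establishes the stated bijection.

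For part (\ref{thm:non_sa_op_bimods:one}), the key observation is that $J_0$ implements the swap-adjoint operation $\sigma(\,\cdot^*)$ under the identification $\mc B(H)\cong H\otimes\overline H$. Concretely, $J_0$ corresponds to the adjoint on $\mc B(H)$ (as recorded in Definition~\ref{defn:subsp_defs}), and I would check by a direct computation on a product idempotent $a\otimes b$ that the operator $\sigma(e^*)$ acts on $H\otimes\overline H$ exactly as $J_0 e J_0$; that is, $\sigma(e^*) = J_0\,e\,J_0$ as operators. Granting this, the equation $e=\sigma(e^*)$ becomes $e = J_0 e J_0$, i.e.\ $e$ commutes with the (anti-linear, involutive) map $J_0$. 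Since $V=\im(e)$ and $W=\ker(e)$, the relation $J_0 e = e J_0$ forces $J_0(V)=V$ and $J_0(W)=W$; conversely, if $J_0$ preserves both $V$ and $W$ then, because $J_0^2=1$ and $e$ is the projection onto $V$ along $W$, the operators $J_0 e J_0$ and $e$ have the same range and kernel, hence coincide. This gives the equivalence.

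For part (\ref{thm:non_sa_op_bimods:two}), I would use that $\hat 1\in H\otimes\overline H$ corresponds to $1\in\mc B(H)$, and relate the multiplication map $m:B\otimes B^\op\to B$ to the action of $B\otimes B^\op$ on $H\otimes\overline H$ described in the earlier remark, namely $(a\otimes b)\cdot\hat 1 \cong a\,1\,b = ab = m(a\otimes b)$ viewed in $\mc B(H)$. Thus applying $e$ to $\hat 1$ recovers (the image in $H\otimes\overline H$ of) $m(e)\in B\subseteq\mc B(H)$. Since $e$ is the projection onto $V$ along $W$, we have $e(\hat 1)=\hat 1$ exactly when $\hat 1\in V$, and this happens if and only if $m(e)=1$. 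The main obstacle is the bookkeeping in part (\ref{thm:non_sa_op_bimods:one}): one must verify carefully that $\sigma$ and the $*$-operation together translate into conjugation by $J_0$ under the Hilbert--Schmidt identification, tracking the transpose that appears when embedding $B^\op\hookrightarrow\mc B(\overline H)$ via $a\mapsto a^\top$; once this identity is pinned down, the rest is immediate.
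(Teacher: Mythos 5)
Your proposal is correct and follows essentially the same route as the paper: the main bijection by combining Lemma~\ref{lem:non_sa_idem} with Lemma~\ref{lem:app_tomita}, part~(\ref{thm:non_sa_op_bimods:one}) via the identity $\sigma(e^*)=J_0eJ_0$ checked on elementary tensors, and part~(\ref{thm:non_sa_op_bimods:two}) by identifying $e(\hat 1)$ with $m(e)$ under $\mc B(H)\cong H\otimes\overline H$ (the paper does this last step by an explicit orthonormal-basis computation, you by citing the module-action identification from the preceding remark, which amounts to the same calculation).
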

\begin{proof}
The bijection follows from Lemma~\ref{lem:non_sa_idem} combined with Lemma~\ref{lem:app_tomita}.

When $e=a\otimes b$ acting as $a\otimes b^\top$ on $H\otimes\overline H$, so that $e(\eta\otimes\overline\xi)
= a(\eta) \otimes \overline{b^*(\xi)}$, we see that $\sigma(e^*) = b^* \otimes a^*$, so that 
\[ \sigma(e^*)(\eta\otimes\overline\xi) = b^*(\eta) \otimes \overline{a(\xi)}
= J_0 e J_0 (\eta\otimes\overline\xi). \]
Thus in general $\sigma(e^*) = J_0 e J_0$, and so
$e = \sigma(e^*)$ if and only if $e J_0 = J_0 e$.  If this holds, then 
$J_0(V) = J_0e(H\otimes\overline H) = eJ_0(H\otimes\overline H) = V$, while also if $\xi\in W$ then
$eJ_0(\xi) = J_0e(\xi) = 0$ so $J_0(\xi)\in\ker(e)=W$ and hence $W = J_0(W)$ (as $J_0^2 = 1$).

Conversely, if $J_0(V) = V$ and $J_0(W) = W$, then given $\xi \in H\otimes\overline H$ take the decomposition
$\xi = v + w \in V\oplus W$.  By assumption, $J_0(\xi)$ has decomposition $J_0(v) + J_0(w) \in V \oplus W$.
Hence $e(\xi) = v$ and $eJ_0(\xi) = J_0(v)$, and so
\[ \sigma(e^*)(\xi) = J_0 e J_0 (\xi) = J_0 J_0(v) = v = e(\xi). \]
Hence $\sigma(e^*) = e$, and we have shown (\ref{thm:non_sa_op_bimods:one}).

Again, when $e=a\otimes b$, and with $(e_i)$ some orthonormal basis of $H$,
\begin{align*}
e(\hat 1) &= \sum_i a(e_i) \otimes \overline{b^*(e_i)}
= \sum_{i,j,k} (e_j|a(e_i)) e_j \otimes (\overline{e_k}|\overline{b^*(e_i)}) \overline{e_k}
= \sum_{i,j,k} (a^*(e_j)|e_i)(e_i|b(e_k)) e_j \otimes \overline{e_k} \\
&= \sum_{j,k} (a^*(e_j)|b(e_k)) e_j \otimes \overline{e_k}
= \sum_{j,k} (e_j|m(e)(e_k)) e_j \otimes \overline{e_k}.
\end{align*}
Thus, if $m(e)=1$ then $e(\hat 1) = \hat 1$.  Conversely, if $e(\hat 1)=\hat 1$ then $(e_j|m(e)(e_k))
= \delta_{j,k}$ for each $j,k$ so that $m(e)=1$.  So (\ref{thm:non_sa_op_bimods:two}) holds.
\end{proof}

In the orthogonal case, we recover, in particular, \cite[Proposition~2.23]{weaver1}.

\begin{corollary}\label{corr:sa_op_bimods}
There is a bijection between $B'$-operator bimodules $S\subseteq\mc B(H)$ and projections $e\in B\otimes B^\op$,
under which $S$ is self-adjoint if and only if $e = \sigma(e)$, and $1\in S$ if and only if $m(e)=1$.
\end{corollary}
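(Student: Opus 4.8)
The plan is to read off the corollary from Theorem~\ref{thm:non_sa_op_bimods} by restricting the bijection there to \emph{orthogonal} projections and re-expressing the $H\otimes\overline H$ side in terms of subspaces of $\mc B(H)$. First I would note that a projection is exactly a self-adjoint idempotent $e=e^*=e^2$, and that under the decomposition $H\otimes\overline H=V\oplus W$ with $V=\im(e),\,W=\ker(e)$ supplied by the theorem, self-adjointness of $e$ is equivalent to $W=V^\perp$. Thus projections $e\in B\otimes B^\op$ correspond precisely to \emph{orthogonal} $B'\otimes(B')^\op$-invariant decompositions. Since $B'\otimes(B')^\op$ is a von Neumann algebra, hence $*$-closed, the orthogonal complement of an invariant subspace is again invariant, so such decompositions are parametrised by a single closed $B'\otimes(B')^\op$-invariant subspace $V$, with $W=V^\perp$ determined. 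This is the familiar correspondence between invariant subspaces and projections in the commutant $(B'\otimes(B')^\op)'=B\otimes B^\op$ (Lemma~\ref{lem:app_tomita}).

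Next I would identify these invariant subspaces with $B'$-bimodules. Using the linear isomorphism $\mc B(H)\cong H\otimes\overline H,\ \theta_{\xi,\eta}\mapsto\eta\otimes\overline\xi$, the computation recorded before Lemma~\ref{lem:non_sa_idem} shows that $c\otimes d\in B'\otimes(B')^\op$ acts by $\eta\otimes\overline\xi\mapsto c(\eta)\otimes d^\top(\overline\xi)$, which corresponds on $\mc B(H)$ to the two-sided multiplication $x\mapsto cxd$. Hence a subspace $V\subseteq H\otimes\overline H$ is $B'\otimes(B')^\op$-invariant if and only if the corresponding $S\subseteq\mc B(H)$ satisfies $B'SB'\subseteq S$, that is, is an operator bimodule over $B'$. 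Combined with the previous paragraph this yields the asserted bijection between $B'$-operator bimodules $S$ and projections $e\in B\otimes B^\op$.

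It remains to translate the two extra conditions, and here I would invoke the two parts of Theorem~\ref{thm:non_sa_op_bimods}. For self-adjointness, since $e=e^*$ we have $\sigma(e)=\sigma(e^*)$, so $e=\sigma(e)$ is the same as the hypothesis $e=\sigma(e^*)$ of part~(\ref{thm:non_sa_op_bimods:one}), which holds iff $J_0(V)=V$ and $J_0(W)=W$. As $J_0$ is the anti-linear isometric involution implementing the adjoint on $\mc B(H)$ (Definition~\ref{defn:subsp_defs}), the condition $J_0(V)=V$ says exactly $S^*=S$; moreover, with $W=V^\perp$ the second condition $J_0(V^\perp)=V^\perp$ is automatic since $J_0$ is an isometric involution, so nothing is lost. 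Thus $S$ is self-adjoint iff $e=\sigma(e)$. For the reflexive condition, $\hat 1$ corresponds to $1\in\mc B(H)$, so part~(\ref{thm:non_sa_op_bimods:two}) gives $m(e)=1\iff\hat1\in V\iff 1\in S$. I expect the only genuinely substantive point to be in the first paragraph: verifying that passing from arbitrary idempotents to orthogonal projections collapses the two-subspace datum $(V,W)$ of the theorem to the single invariant subspace $V$, and that the self-adjointness clause, which nominally constrains both $V$ and $W$, reduces to a constraint on $V$ alone in the orthogonal setting.
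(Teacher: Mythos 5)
Your proposal is correct and follows essentially the same route as the paper: both specialise Theorem~\ref{thm:non_sa_op_bimods} to the orthogonal case via the correspondence $\mc B(H)\cong H\otimes\overline H$, observe that for a projection the clause $e=\sigma(e^*)$ becomes $e=\sigma(e)$ and that $J_0(V)=V$ forces $J_0(V^\perp)=V^\perp$, and read off unitality from $\hat 1\leftrightarrow 1$. The only cosmetic difference is that you justify the invariance of $V^\perp$ under $B'\otimes(B')^\op$ by $*$-closedness explicitly (a point the paper leaves implicit when it sets $V=S$, $W=S^\perp$), which is a sound and welcome addition.
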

\begin{proof}
Under the bijection between $\mc B(H)$ and $H\otimes \overline H$, we see that $B'$-operator bimodules
correspond to $B'\otimes (B')^\op$-invariant subspaces of $H\otimes \overline H$.  These biject with
projections $e\in B\otimes B^\op$, given by taking $V=S$ and $W=S^\perp$ in the previous theorem.

If $J_0(S)=S$ then for $\xi\in S$ and $\eta\in S^\perp$, we see that $(\xi|J_0(\eta)) = (\eta|J_0(\xi)) = 0$
as $J_0(\xi)\in S$.  Hence also $J_0(S^\perp) \subseteq S^\perp$ (so we have equality as $J_0^2=1$).  As
$e=e^*$, we conclude that $e = \sigma(e)$ is equivalent to $J_0(S)=S$, which viewing $S$ as a subspace of
$\mc B(H)$ corresponds to $S$ being self-adjoint.  Similarly, as $\hat 1$ corresponds to the unit $1\in\mc B(H)$,
we see that $S$ is unital if and only if $m(e)=1$.
\end{proof}

\subsection{From adjacency matrices to subspaces}\label{sec:main_equivs}

We now put together the results of the previous two sections to establish bijections between our two main
definitions.  We first consider just looking at the first axiom (\ref{defn:quan_adj_mat:idem}), $m(A\otimes A)m^*=A$.

\begin{theorem}\label{thm:qu_ad_mats_axiom_1}
For any $t\in\mathbb R$, consider $\Psi = \Psi_{t,t}$ or $\Psi'=\Psi'_{t,t}$.  These maps
$\mc B(L^2(B)) \rightarrow B\otimes B^\op$ give a bijection between:
\begin{enumerate}
\item\label{thm:qu_ad_mats_axiom_1:one}
self-adjoint $A\in\mc B(L^2(B))$ satisfying axiom (\ref{defn:quan_adj_mat:idem});
\item\label{thm:qu_ad_mats_axiom_1:two}
idempotents $e\in B\otimes B^\op$ with $e = \sigma(e^*)$;
\item\label{thm:qu_ad_mats_axiom_1:threea}
given $B\subseteq\mc B(H)$, decompositions $H\otimes\overline H = V\oplus W$ with $V,W$ being
$B'\otimes (B')^\op$-invariant, with $J_0(V)=V, J_0(W)=W$;
\item\label{thm:qu_ad_mats_axiom_1:three}
given $B\subseteq\mc B(H)$, decompositions $\mc B(H) = V\oplus W$ with $V,W$ self-adjoint, $B'$-bimodules.
\end{enumerate}
\end{theorem}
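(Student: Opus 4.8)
The plan is to establish the theorem by chaining together the correspondences already developed, treating the equivalences between consecutive items as the natural links. The core observation is that axiom (\ref{defn:quan_adj_mat:idem}) is precisely the statement that $A$ is idempotent under the ``product'' $m(\cdot\otimes\cdot)m^*$, and Proposition~\ref{prop:psiiso}(\ref{prop:psiiso:three}) tells us that $\Psi = \Psi_{t,t}$ intertwines this product with the ordinary product in $B\otimes B^\op$ (up to order): $\Psi(m(A\otimes A)m^*) = e^2$ where $e=\Psi(A)$. Hence axiom (\ref{defn:quan_adj_mat:idem}) translates exactly to $e^2 = e$. This handles the multiplicative structure, and is where the choice $s=t$ matters, since it makes the modular factors in Proposition~\ref{prop:psiiso}(\ref{prop:psiiso:three}) cancel cleanly.

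Next I would handle the self-adjointness condition. Using Proposition~\ref{prop:psiiso}(\ref{prop:psiiso:one}) with $s=t$, the modular twist $\sigma_{i(s-t)}$ becomes trivial, so $A = A^*$ corresponds exactly to $e = \sigma(e^*)$. This gives the equivalence of (\ref{thm:qu_ad_mats_axiom_1:one}) and (\ref{thm:qu_ad_mats_axiom_1:two}) for the choice $\Psi = \Psi_{t,t}$; for $\Psi' = \Psi'_{t,t}$ the analogous statements follow from Proposition~\ref{prop:psiprimeiso}(\ref{prop:psiprimeiso:one}) and (\ref{prop:psiprimeiso:three}), noting that there the product is $e_1 e_2$ rather than $e_2 e_1$, but idempotency $e^2=e$ and the self-adjointness relation are insensitive to this reordering. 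I would remark explicitly that both $\Psi$ and $\Psi'$ deliver the same bijection at the level of conditions, even though the intermediate formulas differ.

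The remaining links are essentially bookkeeping already established earlier. The equivalence of (\ref{thm:qu_ad_mats_axiom_1:two}) and (\ref{thm:qu_ad_mats_axiom_1:threea}) is exactly Theorem~\ref{thm:non_sa_op_bimods}: idempotents $e\in B\otimes B^\op$ biject with decompositions $H\otimes\overline H = V\oplus W$ into $B'\otimes(B')^\op$-invariant subspaces via $e$ being the projection onto $V$ along $W$, and part (\ref{thm:non_sa_op_bimods:one}) of that theorem identifies $e = \sigma(e^*)$ with the conditions $J_0(V)=V$ and $J_0(W)=W$. Finally, the equivalence of (\ref{thm:qu_ad_mats_axiom_1:threea}) and (\ref{thm:qu_ad_mats_axiom_1:three}) comes from the linear isomorphism $\mc B(H)\cong H\otimes\overline H$ of Definition~\ref{defn:subsp_defs}, under which $B'\otimes(B')^\op$-invariant subspaces correspond to $B'$-bimodules (as spelled out in the remark following Lemma~\ref{lem:app_tomita}), and $J_0$ corresponds to the adjoint operation, so $J_0$-invariance of $V$ and $W$ is precisely self-adjointness of the corresponding bimodules.

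I expect the only genuinely delicate point to be the verification that $s=t$ is the correct and essentially forced choice making all the modular twists vanish simultaneously in the idempotency and self-adjointness conditions; once that alignment is confirmed via Proposition~\ref{prop:psiiso}, every other step is a direct appeal to an already-proven correspondence. I would therefore keep the proof short, citing Proposition~\ref{prop:psiiso} (or Proposition~\ref{prop:psiprimeiso}), Theorem~\ref{thm:non_sa_op_bimods}, and the identification of $\mc B(H)$ with $H\otimes\overline H$, rather than recomputing anything.
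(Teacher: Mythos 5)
Your proposal is correct and follows essentially the same route as the paper: equivalence of (\ref{thm:qu_ad_mats_axiom_1:one}) and (\ref{thm:qu_ad_mats_axiom_1:two}) via Proposition~\ref{prop:psiiso} (or Proposition~\ref{prop:psiprimeiso}), then Theorem~\ref{thm:non_sa_op_bimods} for (\ref{thm:qu_ad_mats_axiom_1:threea}), and the identification $\mc B(H)\cong H\otimes\overline H$ for (\ref{thm:qu_ad_mats_axiom_1:three}). One small slip in your commentary: the choice $s=t$ is forced by the self-adjointness correspondence in Proposition~\ref{prop:psiiso}(\ref{prop:psiiso:one}), not by the multiplicativity in part (\ref{prop:psiiso:three}), which carries no modular factors and holds for all $s,t$; your actual argument nevertheless applies both parts correctly.
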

\begin{proof}
Proposition~\ref{prop:psiiso} shows that $\Psi_{t,t}$ gives a bijection between $A$ and $e$ such that
$A^*$ corresponds to $\sigma(e^*)$ and $m(A\otimes A)m^*$ corresponding to $e^2$.  
Similarly, Proposition~\ref{prop:psiprimeiso} does the same for $\Psi'_{t,t}$.
Hence (\ref{thm:qu_ad_mats_axiom_1:one}) and (\ref{thm:qu_ad_mats_axiom_1:two}) are equivalent.
We now use Theorem~\ref{thm:non_sa_op_bimods} to see that such $e$
correspond to projections of $H\otimes\overline H$ onto $V$ along $W$ where $V,W$ are $J_0$-invariant,
and $B'\otimes (B')^\op$-invariant, which gives equivalence (\ref{thm:qu_ad_mats_axiom_1:threea}).
This is exactly condition (\ref{thm:qu_ad_mats_axiom_1:three}) using $H\otimes\overline H$ in place
of $\mc B(H)$.
\end{proof}

\begin{remark}
In particular, this applies to the bijection $\Psi'_{0,0}$ as used elsewhere in the literature (in the
tracial case), see Remark~\ref{rem:existing_bijects}.  However, we notice that the condition we obtain is that
$e=e^2=\sigma(e^*)$, and certainly not that $e$ need be self-adjoint.
Compare also Remark~\ref{rem:needs_axioms_joined} below.

In light of just the use of axiom (\ref{defn:quan_adj_mat:idem}) in \cite{bevw}, we note that the same proof
shows that we can remove that condition that $A=A^*$ if we remove the conditions $e=\sigma(e^*)$, that $V,W$
are $J_0$-invariant, and the condition that $V,W$ be self-adjoint.
\end{remark}

To make further progress, it really does seem necessary to take account of the modular automorphism group.
First a lemma.

\begin{lemma}\label{lem:proj_commuting_op}
Let $H$ be a Hilbert space, $E\in\mc B(H)$ the (orthogonal) projection onto a subspace $V\subseteq H$,
and let $T\in\mc B(H)$ be invertible.  The following are equivalent:
\begin{enumerate}
\item $T^{-1} E T = E$;
\item $T(V) = V$ and $T(V^\perp)=V^\perp$;
\item $T(V)\subseteq V$ and $T(V^\perp)\subseteq V^\perp$.
\end{enumerate}
When $T = T^*$, these conditions are equivalent to $T(V)\subseteq V$.
\end{lemma}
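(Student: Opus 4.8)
The plan is to first observe that the identity $T^{-1}ET = E$ is, since $T$ is invertible, equivalent to the commutation relation $ET = TE$; this is the pivot around which everything else turns. I would then prove the chain of implications (1) $\Rightarrow$ (2) $\Rightarrow$ (3) $\Rightarrow$ (1), which is the most economical route given that (2) $\Rightarrow$ (3) is immediate.

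For (1) $\Rightarrow$ (2): assuming $ET=TE$, note that for $\xi\in V$ we have $E\xi=\xi$, so $T\xi = TE\xi = ET\xi \in \im(E)=V$, giving $T(V)\subseteq V$; and for $\xi\in V^\perp = \ker(E)$ we have $ET\xi = TE\xi = 0$, so $T\xi\in\ker(E)=V^\perp$, giving $T(V^\perp)\subseteq V^\perp$. The reverse inclusions, hence equality, follow because $T$ is invertible and $T^{-1}$ also commutes with $E$ (from $ET=TE$ one gets $T^{-1}E = ET^{-1}$), so the same argument applied to $T^{-1}$ yields $T^{-1}(V)\subseteq V$ and $T^{-1}(V^\perp)\subseteq V^\perp$. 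For (3) $\Rightarrow$ (1): writing an arbitrary $\xi\in H$ as $\xi = v+w$ with $v\in V$, $w\in V^\perp$, the hypotheses give $Tv\in V$ and $Tw\in V^\perp$, so that $ET\xi = Tv = TE\xi$; since $\xi$ was arbitrary, $ET=TE$, which is (1). This closes the cycle.

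For the final claim, I would handle the self-adjoint case separately. That (3) implies $T(V)\subseteq V$ is trivial, so the content is the converse: assuming $T=T^*$ and $T(V)\subseteq V$, I want to deduce $T(V^\perp)\subseteq V^\perp$, which with $T(V)\subseteq V$ is (3). For this, take $w\in V^\perp$ and any $v\in V$; using the convention that $(\cdot|\cdot)$ is linear in the right variable and that $T=T^*$, we have $(v\mid Tw) = (Tv\mid w) = 0$ because $Tv\in V$ and $w\in V^\perp$. As $v\in V$ was arbitrary, this shows $Tw\in V^\perp$, completing the argument.

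None of the steps present a genuine obstacle; this is elementary operator theory once the key reduction $T^{-1}ET=E \Leftrightarrow ET=TE$ is in hand. The only point requiring a little care is the use of invertibility in (1) $\Rightarrow$ (2) to upgrade the inclusions $T(V)\subseteq V$, $T(V^\perp)\subseteq V^\perp$ to equalities (so that the non-self-adjoint case genuinely needs both inclusions, and cannot be reduced to a single one as in the self-adjoint case), and the bookkeeping of the inner-product convention in the last paragraph.
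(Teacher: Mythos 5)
Your proof is correct and follows essentially the same route as the paper's: both establish (1) $\Rightarrow$ (2) by applying the commutation relation to vectors of $V$ and $V^\perp=\ker E$ and then upgrading to equalities via the same argument for $T^{-1}$, both prove (3) $\Rightarrow$ (1) by decomposing $\xi = E\xi + (1-E)\xi$, and both handle the self-adjoint case with the identical inner-product computation. The only cosmetic difference is that you first rewrite $T^{-1}ET=E$ as $ET=TE$, whereas the paper manipulates $T^{-1}ET=E$ directly.
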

\begin{proof}
Suppose $T^{-1} E T = E$.  For $\xi\in V$, we have $\xi = E(\xi) = T^{-1}ET(\xi)$ so $T(\xi) = ET(\xi)$,
so $T(\xi)\in V$.  For $\xi\in V^\perp$, we have $0 = E(\xi) = T^{-1}ET(\xi)$ so $ET(\xi)=0$ so
$T(\xi)\in V^\perp$.  As also $TET^{-1}=E$ we get that also $T^{-1}(V)\subseteq V$ and $T^{-1}(V^\perp)
\subseteq V^\perp$.  Thus $T(V) = V$ and $T(V^\perp)=V^\perp$.

If $T(V)\subseteq V$ and $T(V^\perp)\subseteq V^\perp$, then for $\xi\in H$ we have that $\xi = E(\xi)
+ (1-E)(\xi) \in V\oplus V^\perp$ and so also $T(\xi) = TE(\xi) + T(1-E)(\xi) \in V \oplus V^\perp$.
Hence $ET(\xi) = TE(\xi)$, for each $\xi$, and so $T^{-1}ET=E$.

When $T$ is self-adjoint, if $T(V)\subseteq V$ then for $\xi\in V^\perp, \eta\in V$, we see that
$(T(\xi)|\eta) = (\xi|T(\eta)) = 0$ because $T(\eta)\in V$.  Thus $T(V^\perp) \subseteq V^\perp$.
\end{proof}

We now come to the first of two key theorems which establish a bijection between
Definition~\ref{defn:adj_op} and a generalisation of Definition~\ref{defn:quan_graph}.
Recall that
\[ \Psi_{0,1/2}:\theta_{\Lambda(a), \Lambda(b)} \mapsto a^*\otimes\sigma_{i/2}(b),
\qquad
\Psi'_{1/2,0}:\theta_{\Lambda(a), \Lambda(b)} \mapsto \sigma_{i/2}(b)\otimes a^*. \]
Recall the meaning of $J_0$ from Definition~\ref{defn:subsp_defs}.

\begin{theorem}\label{thm:qu_ad_mats_axioms_12}
Consider the isomorphisms $\Psi_{0,1/2}$ or $\Psi'_{1/2,0}$.  These give a bijection between:
\begin{enumerate}
\item\label{thm:qu_ad_mats_axioms_12:one}
self-adjoint $A\in\mc B(L^2(B))$ satisfying axioms (\ref{defn:quan_adj_mat:idem}) and
(\ref{defn:quan_adj_mat:undir}) (or, equivalently, axioms (\ref{defn:quan_adj_mat:idem}) and
(\ref{defn:quan_adj_mat:undir:other})) of Definition~\ref{defn:adj_op};
\item\label{thm:qu_ad_mats_axioms_12:two}
projections $e\in B\otimes B^\op$ with $e = \sigma(e)$ and such that $e = (\sigma_z\otimes\sigma_z)(e)$
for all $z\in\mathbb C$;
\item\label{thm:qu_ad_mats_axioms_12:three}
given $B\subseteq\mc B(H)$, subspaces $V \subseteq H\otimes\overline H$ which are
$B'\otimes (B')^\op$-invariant, and which satisfy $J_0(V) = V$ and
$(Q\otimes (Q^{-1})^\top)(V)\subseteq V$ (equivalently, equal to $V$);
\item\label{thm:qu_ad_mats_axioms_12:four}
given $B\subseteq\mc B(H)$, self-adjoint $B'$-bimodules $S\subseteq\mc B(H)$ with $Q S Q^{-1}
\subseteq S$ (equivalently, equals $S$).
\end{enumerate}
\end{theorem}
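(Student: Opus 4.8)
The plan is to take $\Psi = \Psi_{0,1/2}$ as the primary isomorphism and to prove the chain of equivalences (1) $\Leftrightarrow$ (2) $\Leftrightarrow$ (3) $\Leftrightarrow$ (4); the case of $\Psi'_{1/2,0}$ then follows at once, since $\Psi'_{1/2,0} = \sigma\circ\Psi_{0,1/2}$ and, as $\sigma$ commutes with each $\sigma_w\otimes\sigma_w$ and preserves the $*$-operation, every condition appearing in (2) is invariant under $\sigma$. Specialising Proposition~\ref{prop:psiiso} to $t=0$, $s=1/2$, and writing $\tau_w = \sigma_w\otimes\sigma_w$ for brevity, the translation reads: axiom (\ref{defn:quan_adj_mat:idem}) becomes $e^2 = e$; self-adjointness of $A$ becomes $e = \tau_{i/2}\sigma(e^*)$; and axiom (\ref{defn:quan_adj_mat:undir}) becomes $\tau_{-i/2}\sigma(e) = e$, i.e. $\sigma(e) = \tau_{i/2}(e)$. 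The goal of the first (and main) step is to show that these three relations together are equivalent to ``$e$ is a projection, $\sigma(e)=e$, and $\tau_z(e)=e$ for all $z\in\mathbb C$''.

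For the forward direction I would first combine self-adjointness with undirectedness: substituting $\sigma(e)=\tau_{i/2}(e)$ into $e = \tau_{i/2}\sigma(e^*)$ and using $(\sigma_{i/2}\otimes\sigma_{i/2})(x)^* = (\sigma_{-i/2}\otimes\sigma_{-i/2})(x^*)$ gives $e = \tau_{i/2}\tau_{-i/2}(e^*) = e^*$, so that with $e^2=e$ the element $e$ is a genuine projection. Since $\sigma$ preserves $*$, $\sigma(e) = \tau_{i/2}(e)$ is then self-adjoint, forcing $\tau_{i/2}(e) = \tau_{-i/2}(e)$. \textbf{This is where the crux lies:} I would upgrade this single modular identity to full invariance $\tau_z(e)=e$ for all $z$ using the finite-dimensional spectral decomposition $Q = \sum_\lambda \lambda\, p_\lambda$. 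Decomposing $e$ into its components lying in $(p_\lambda\,\cdot\, p_\mu)\otimes(p_{\lambda'}\,\cdot\, p_{\mu'})$, the map $\tau_z$ scales such a component by $(\lambda\lambda'/\mu\mu')^{iz}$; the identity $\tau_{i/2}(e)=\tau_{-i/2}(e)$ says $(\lambda\lambda'/\mu\mu')^{-1/2} = (\lambda\lambda'/\mu\mu')^{1/2}$ on every nonzero component, hence $\lambda\lambda' = \mu\mu'$ there, and therefore $(\lambda\lambda'/\mu\mu')^{iz}=1$ for all $z$, giving $\tau_z(e)=e$. Feeding this back into $\sigma(e)=\tau_{i/2}(e)$ yields $\sigma(e)=e$. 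The converse is a direct check: the three target conditions immediately reproduce $e^2=e$, $\sigma(e)=\tau_{i/2}(e)$ and $e=\tau_{i/2}\sigma(e^*)$.

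For (2) $\Leftrightarrow$ (3) I would realise $e$, a self-adjoint idempotent, as the orthogonal projection onto $V = \im(e)$ with $W = V^\perp$, and apply Theorem~\ref{thm:non_sa_op_bimods}. Its item (\ref{thm:non_sa_op_bimods:one}), together with the observation that $e=e^*$ and $\sigma(e)=e$ give $\sigma(e^*)=e$, identifies $\sigma(e)=e$ with $J_0(V)=V$. For the modular condition I would compute how $\tau_z$ is implemented on $H\otimes\overline H$: since $\tau_z(a\otimes b) = \sigma_z(a)\otimes\sigma_z(b)$ acts as conjugation by $R_z := Q^{iz}\otimes (Q^{-iz})^\top$, we have $\tau_z(e)=e$ iff $e$ commutes with $R_z$. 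The key observation is that $R_z = (R_{-i})^{iz}$ with $R_{-i} = Q\otimes (Q^{-1})^\top$ a positive invertible operator; thus by the functional calculus $e$ commutes with every $R_z$ iff it commutes with $R_{-i}$, and by the self-adjoint case of Lemma~\ref{lem:proj_commuting_op} this is exactly $R_{-i}(V)=V$ (equivalently $\subseteq$), i.e. $(Q\otimes (Q^{-1})^\top)(V)\subseteq V$.

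Finally, (3) $\Leftrightarrow$ (4) is transported across the linear isomorphism $\mc B(H)\cong H\otimes\overline H$, $\theta_{\xi,\eta}\mapsto\eta\otimes\overline\xi$. Here $B'\otimes (B')^\op$-invariance of $V$ corresponds to $S$ being a $B'$-bimodule (via the action $(a\otimes b)\cdot(\eta\otimes\overline\xi)\cong a\theta_{\xi,\eta}b$ recalled in the preceding remark), $J_0(V)=V$ corresponds to $S$ being self-adjoint (as $J_0$ implements the adjoint), and a short rank-one computation shows $R_{-i}=Q\otimes (Q^{-1})^\top$ corresponds to the map $x\mapsto QxQ^{-1}$, so that $(Q\otimes (Q^{-1})^\top)(V)\subseteq V$ becomes $QSQ^{-1}\subseteq S$; invertibility of this conjugation upgrades $\subseteq$ to equality. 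I expect the spectral upgrade in the second paragraph to be the one genuinely non-formal step, every other link being an application of the already-established Proposition~\ref{prop:psiiso}, Theorem~\ref{thm:non_sa_op_bimods} and Lemma~\ref{lem:proj_commuting_op}.
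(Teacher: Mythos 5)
Your proposal is correct and follows essentially the same route as the paper's own proof: translate the axioms via Proposition~\ref{prop:psiiso}, combine self-adjointness with axiom~(\ref{defn:quan_adj_mat:undir}) to see that $e$ is a projection commuting with $Q\otimes Q^{-1}$, upgrade to invariance under all $\sigma_z\otimes\sigma_z$, and then pass to subspaces via Theorem~\ref{thm:non_sa_op_bimods}/Corollary~\ref{corr:sa_op_bimods}, Lemma~\ref{lem:proj_commuting_op}, and the rank-one transport between $\mc B(H)$ and $H\otimes\overline H$. Your spectral-decomposition argument for the upgrade step is just a concrete form of the paper's functional-calculus observation, and your symmetry argument for $\Psi'_{1/2,0}$ (via $\Psi'=\sigma\Psi$ and $\sigma$-invariance of the conditions in~(\ref{thm:qu_ad_mats_axioms_12:two})) replaces the paper's direct appeal to Proposition~\ref{prop:psiprimeiso}, which yields literally the same relations; both variations are sound.
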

\begin{proof}
By Proposition~\ref{prop:psiiso}, $\Psi_{0,1/2}$ gives a bijection between self-adjoint $A$ satisfying
axiom (\ref{defn:quan_adj_mat:undir}), and $e$ with $e = (\sigma_{i/2}\otimes\sigma_{i/2})\sigma(e^*)$ and
$e=(\sigma_{-i/2}\otimes\sigma_{-i/2})\sigma(e)$.  For $\Psi' = \Psi'_{1/2,0}$ we use
Proposition~\ref{prop:psiprimeiso}, and obtain exactly the same results.  For such an $e$,
we have that $e^* = (\sigma_{-i/2}\otimes\sigma_{-i/2})\sigma(e)^* =
(\sigma_{i/2}\otimes\sigma_{i/2})\sigma(e^*) = e$ so $e$ is self-adjoint.  Then that
$e^* = (\sigma_{i/2}\otimes\sigma_{i/2})\sigma(e^*)$ and that $e=e^*$ means that
\[ (\sigma_{i/2}\otimes\sigma_{i/2})\sigma(e) = e = (\sigma_{-i/2}\otimes\sigma_{-i/2})\sigma(e)
\implies \sigma(e) = (\sigma_i\otimes\sigma_i)\sigma(e),  \]
or equivalently, $e = (\sigma_i\otimes\sigma_i)(e)$.  As $\sigma_i(a) = Q^{-1}aQ$ for $a\in B$, and using
the opposite multiplication in $B^\op$, we see that
\[ e = (\sigma_i\otimes\sigma_i)(e) = (Q^{-1}\otimes Q)e(Q\otimes Q^{-1}), \]
and so $e$ commutes with $Q\otimes Q^{-1}$.  By using Functional Calculus, $e$ also commutes with any power,
and we conclude that $(\sigma_{z}\otimes\sigma_{z})(e) = e$ for any $z\in\mathbb C$.  In particular,
then also $e = \sigma(e)$.

Conversely, if $(\sigma_{z}\otimes\sigma_{z})(e) = e$ for all $z$, then by Proposition~\ref{prop:psiiso},
respectively Proposition~\ref{prop:psiprimeiso}, that $e=e^*=\sigma(e)$ implies that $A=A^*$, and that
axiom (\ref{defn:quan_adj_mat:undir}) holds.  Proposition~\ref{prop:undir_axioms_same} tells us that
axioms (\ref{defn:quan_adj_mat:undir}) and (\ref{defn:quan_adj_mat:undir:other}) are equivalent.  Finally, we use that axiom (\ref{defn:quan_adj_mat:idem}) is equivalent to $e^2=e$.
Thus (\ref{thm:qu_ad_mats_axioms_12:one}) and (\ref{thm:qu_ad_mats_axioms_12:two}) are equivalent.

Suppose that (\ref{thm:qu_ad_mats_axioms_12:two}) holds.  By Corollary~\ref{corr:sa_op_bimods}, we know that
$e$ corresponds to a self-adjoint $B'$-bimodule $S$, equivalently, to $V\subseteq H\otimes\overline H$ which is
$B'\otimes (B')^\op$-invariant, and with $J_0(V)=V$.  It hence remains to consider what the additional
condition $(\sigma_z\otimes\sigma_z)(e)=e$ corresponds to.  This condition is equivalent to $e$ commuting with
$Q\otimes Q^{-1}$, so in $\mc B(H\otimes\overline H)$, that it commutes with $Q\otimes (Q^{-1})^\top$.
By Lemma~\ref{lem:proj_commuting_op}, this is equivalent to $(Q\otimes (Q^{-1})^\top)(V)\subseteq V$
(or with equality).  Hence (\ref{thm:qu_ad_mats_axioms_12:two}) and (\ref{thm:qu_ad_mats_axioms_12:three})
are equivalent.

Finally, the bijection from $\mc B(H)$ to $H\otimes\overline{H}$ sends $\theta_{\xi,\eta}$ to
$\eta\otimes\overline\xi$.  Then
\[ (Q \otimes (Q^{-1})^\top)(\eta\otimes\overline\xi)
= Q(\eta) \otimes \overline{ Q^{-1}(\xi) }, \]
which corresponds to $Q \theta_{\xi,\eta} Q^{-1}$.  It follows that (\ref{thm:qu_ad_mats_axioms_12:three})
is equivalent to (\ref{thm:qu_ad_mats_axioms_12:four}).
\end{proof}

We immediately recover the known results in the tracial case, as here the modular automorphism group is
trivial, and so the conditions involving $\sigma_z$ or $Q$ become vacuous.

\begin{remark}\label{rem:psi_psi_prime_agree_A}
As $\Psi' = \sigma\Psi$, and we have shown that $e=\sigma(e)$, it should come as no surprise that
working with $\Psi$ or $\Psi'$ are equivalent, because if $A$ satisfies the first two axioms, then
in fact $\Psi(A) = \Psi'(A)$.
\end{remark}

\begin{remark}\label{rem:dont_need_to_use_Q}
The operator $Q\in B$ appears in parts (\ref{thm:qu_ad_mats_axioms_12:three}) and
(\ref{thm:qu_ad_mats_axioms_12:four}) only because $Q$ implements $(\sigma_t)$.  In particular, if $q$ is any
positive invertible operator on $H$ with $\sigma_t(a) = q^{it} a q^{-it}$ for $a\in B,t\in\mathbb R$, then
we can replace the condition in (\ref{thm:qu_ad_mats_axioms_12:three}) with $(q\otimes (q^{-1})^\top)(V)=V$
and that in (\ref{thm:qu_ad_mats_axioms_12:four}) with $q S q^{-1}=S$.  A special case of this observation
occurs when $H=L^2(B)$ when we could use the operator $\nabla$.
\end{remark}

\begin{remark}\label{rem:needs_axioms_joined}
As we hinted at above, there are really three natural axioms for $A$: that $A$ is self-adjoint, the axiom
(\ref{defn:quan_adj_mat:undir}) (or equivalently (\ref{defn:quan_adj_mat:undir:other})) and the axiom
(\ref{defn:quan_adj_mat:idem}) of Definition~\ref{defn:adj_op}.  Axiom (\ref{defn:quan_adj_mat:idem}) corresponds to $e^2=e$ and hence to
a subspace of $\mc B(H)$ (or more accurately, a choice of subspace and complement, not necessarily the
orthogonal complement).  That $A=A^*$ taken together with axiom (\ref{defn:quan_adj_mat:undir}) then yields that
$e=e^*=\sigma(e)$ (and the condition of being invariant under the modular automorphism group) and these
then correspond to an orthogonal decomposition of $\mc B(H)$, and the subspace $S$ being self-adjoint.

However, we wish to stress that taken individually, it is not clear what $A^*=A$ alone, or axiom 
(\ref{defn:quan_adj_mat:undir}) alone, correspond to, for a subspace of $\mc B(H)$.
\end{remark}

We now consider Definition~\ref{defn:adj_op:real}, that of $A$ being ``real''.  This provides
motivation for looking at only the maps $\Psi_{0,1/2}$ and $\Psi'_{1/2,0}$ (and not other values of $s,t$).
Indeed, perhaps the theory might better be developed by initially looking at just idempotent, real
adjacency matrices $A$, but this would take us far from most of the existing literature, excepting
\cite{matsuda}.

\begin{proposition}\label{prop:what_real_means}
Under the isomorphism $\Psi_{0,1/2}$ or $\Psi'_{1/2,0}$, we obtain a bijection between real $A$  and
$e$ such that $(\sigma_{-i/2}\otimes\sigma_{-i/2})(e)$ is self-adjoint.  In particular, additionally
$A$ also satisfies axiom (\ref{defn:quan_adj_mat:idem}) of Definition~\ref{defn:adj_op} if and only if $(\sigma_{-i/2}\otimes\sigma_{-i/2})(e)$
is a projection.
\end{proposition}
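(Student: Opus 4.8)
The plan is to leverage Lemma~\ref{lem:what_real_means_general}, which already computes how $\Psi_{t,s}$ and $\Psi'_{t,s}$ transform under the ``real'' operation $A \mapsto A^r$. Recall that $A^r$ corresponds to the linear map $a \mapsto A_0(a^*)^*$, so by Definition~\ref{defn:adj_op:real} the operator $A$ is \emph{real} precisely when $A^r = A$. Specialising Lemma~\ref{lem:what_real_means_general} to $(t,s) = (0,1/2)$ gives $\Psi_{0,1/2}(A^r) = (\sigma_{i}\otimes\sigma_{i})(e^*)$, since $i - 2it = i$ and $2is = i$ at these values. Hence the condition $A = A^r$ translates directly into $e = (\sigma_i \otimes \sigma_i)(e^*)$.

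The main step is then to massage this identity into the stated form. Applying $(\sigma_{-i/2}\otimes\sigma_{-i/2})$ to both sides and writing $f = (\sigma_{-i/2}\otimes\sigma_{-i/2})(e)$, I expect the equation $e = (\sigma_i\otimes\sigma_i)(e^*)$ to become exactly $f = f^*$; the key observation is that $(\sigma_z)^* $ in the sense that $\sigma_z(a)^* = \sigma_{\bar z}(a^*)$, so $(\sigma_{-i/2}\otimes\sigma_{-i/2})(e^*) = \big((\sigma_{i/2}\otimes\sigma_{i/2})(e)\big)^*$, and the powers of $\sigma$ combine correctly because the $\sigma_z$ form a one-parameter group. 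For the $\Psi'_{1/2,0}$ case, the corresponding formula from the lemma is $\Psi'_{1/2,0}(A^r) = (\sigma_{i}\otimes\sigma_{i})(e^*)$ as well (using $2it = i$, $i-2is = i$), so the identical manipulation applies, confirming that the two isomorphisms give the same criterion.

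For the ``in particular'' claim, I would combine the above with the fact, established in the proof of Theorem~\ref{thm:qu_ad_mats_axioms_12}, that axiom (\ref{defn:quan_adj_mat:idem}) corresponds under these isomorphisms to $e^2 = e$, i.e.\ $e$ is an idempotent. Setting $f = (\sigma_{-i/2}\otimes\sigma_{-i/2})(e)$, the reality condition already gives $f = f^*$, so $f$ is self-adjoint. Since $\sigma_{-i/2}\otimes\sigma_{-i/2}$ is an algebra automorphism of $B\otimes B^{\op}$, it carries idempotents to idempotents: $e^2 = e$ if and only if $f^2 = f$. Combining self-adjointness of $f$ with $f^2 = f$ shows $f$ is an orthogonal projection, and conversely. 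This gives the stated equivalence.

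The main obstacle I anticipate is purely bookkeeping: tracking the precise exponents of $\sigma_z$ through the conjugation and the relation $\sigma_z(a)^* = \sigma_{\bar z}(a^*)$, and making sure that the automorphism $\sigma_{-i/2}\otimes\sigma_{-i/2}$ is exactly the one that converts the ``twisted'' reality condition into genuine self-adjointness. There is no conceptual difficulty once Lemma~\ref{lem:what_real_means_general} is in hand; the work is in verifying that the half-shifts cancel correctly, which is why the choice of exactly $(0,1/2)$ and $(1/2,0)$ is natural here.
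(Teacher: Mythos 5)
Your proposal is correct and follows essentially the same route as the paper's own proof: both specialise Lemma~\ref{lem:what_real_means_general} at $(t,s)=(0,1/2)$ and $(1/2,0)$ to get the condition $e=(\sigma_i\otimes\sigma_i)(e^*)$, rewrite it as self-adjointness of $f=(\sigma_{-i/2}\otimes\sigma_{-i/2})(e)$ via $\sigma_z(a)^*=\sigma_{\bar z}(a^*)$, and then handle the idempotency claim using that $\sigma_{-i/2}\otimes\sigma_{-i/2}$ is a (non-$*$-preserving) algebra homomorphism with an inverse. Your exponent bookkeeping checks out, so there is nothing to fix.
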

\begin{proof}
From Lemma~\ref{lem:what_real_means_general} we see that $A$ is real if and only if $A = A^r$, and we have that
$\Psi_{0,1/2}(A^r) = (\sigma_i\otimes\sigma_i)(\Psi_{0,1/2}(A)^*)$ and $\Psi'_{1/2,0}(A^r) =
(\sigma_i\otimes\sigma_i) (\Psi'_{1/2,0}(A)^*)$.  Thus, for either isomorphism, $A$ is real if and only if
$(\sigma_i\otimes\sigma_i)(e^*) = e$, equivalently, $(\sigma_{-i/2}\otimes\sigma_{-i/2})(e)
= (\sigma_{i/2}\otimes\sigma_{i/2})(e^*) = (\sigma_{-i/2}\otimes\sigma_{-i/2})(e)^*$, as claimed.

As $A$ satisfying axiom (\ref{defn:quan_adj_mat:idem}) is equivalent to $e^2=e$, and as
$\sigma_{-i/2}\otimes\sigma_{-i/2}$ is an algebra homomorphism (but not of course a $*$-homomorphism)
we see that $e^2=e$ and $f =(\sigma_{-i/2}\otimes\sigma_{-i/2})(e)$ being self-adjoint implies that
$f=f^*=f^2$.  Conversely, if $f =(\sigma_{-i/2}\otimes\sigma_{-i/2})(e)$ is a projection, then $f$
is self-adjoint, and $(\sigma_{-i/2}\otimes\sigma_{-i/2})(e^2) = (\sigma_{-i/2}\otimes\sigma_{-i/2})(e)$
which implies that $e^2 = e$.
\end{proof}

\begin{remark}\label{rem:real_vs_other_axioms}
Using $\Psi_{0,1/2}$ for example, link $A$ and $e$, and set $f = (\sigma_{-i/2}\otimes\sigma_{-i/2})(e)$.
As in the proof of Theorem~\ref{thm:qu_ad_mats_axioms_12},
we see that $A=A^*$ if and only if $e=\sigma(f^*)$, and $A$ is undirected (satisfies axiom
(\ref{defn:quan_adj_mat:undir}) of Definition~\ref{defn:adj_op}) if and only if $e=\sigma(f)$.  Thus $A$ is real and self-adjoint means
that $f=f^*$ and $e=\sigma(f^*)$, implies that $A$ is undirected, in accordance with
\cite[Definition~2.19]{matsuda}.  Similarly, axiom (\ref{defn:quan_adj_mat:undir:other}) is equivalent to
$e = (\sigma_{i/2}\otimes\sigma_{i/2})\sigma(e)$ equivalently, $f = \sigma(e)$.

In our notation, \cite[Lemma~2.22]{matsuda} shows that of the conditions $A=A^*$, axiom
(\ref{defn:quan_adj_mat:undir:other}), and that $A$ is real, any two imply the third.  We translate these
conditions to being that $e=\sigma(f^*)$, $e = \sigma(f)$, and $f=f^*$, and it is now clear that any
two imply the third, giving a different proof of \cite[Lemma~2.22]{matsuda}.
\end{remark}

That $(\sigma_z\otimes\sigma_z)(e) = e$ implies commutation results for the adjacency matrix $A$.

\begin{proposition}\label{prop:A_comms_J_nabla}
Let $A\in\mc B(L^2(B))$ be self-adjoint and satisfy axioms (\ref{defn:quan_adj_mat:idem}) and
(\ref{defn:quan_adj_mat:undir}) of Definition~\ref{defn:adj_op}.  Then $A$ commutes with both the modular operator $\nabla$, and the
modular conjugation $J$.
\end{proposition}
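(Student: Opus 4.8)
The plan is to transport the whole statement into the projection picture and let the symmetry of $e$ do the work. Since $A$ is self-adjoint and satisfies axioms (\ref{defn:quan_adj_mat:idem}) and (\ref{defn:quan_adj_mat:undir}), Theorem~\ref{thm:qu_ad_mats_axioms_12} produces $e = \Psi_{0,1/2}(A) \in B\otimes B^\op$, a self-adjoint projection satisfying $(\sigma_z\otimes\sigma_z)(e) = e$ for every $z\in\mathbb C$. The strategy is to compute $\Psi_{0,1/2}(\nabla^z A \nabla^{-z})$ and $\Psi_{0,1/2}(JAJ)$ explicitly, recognise each as $(\sigma_w\otimes\sigma_w)(e)$ for a suitable $w$ (after using $e^* = e$ in the second case), and hence conclude each equals $e$. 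As $\Psi_{0,1/2}$ is a bijection, this forces $\nabla^z A \nabla^{-z} = A$ and $JAJ = A$, whence $A$ commutes with $\nabla$, and (using $J^2 = 1$) with $J$.

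First I would record two conjugation identities for rank-one operators. For invertible $T\in\mc B(L^2(B))$ one checks in one line that $T\theta_{\xi,\eta}T^{-1} = \theta_{(T^{-1})^*\xi,\,T\eta}$, and the excerpt already records $J\theta_{\xi,\eta}J = \theta_{J\xi,J\eta}$. By linearity of $\Psi_{0,1/2}$ it suffices to verify everything on $A = \theta_{\Lambda(a),\Lambda(b)}$, for which $e = a^*\otimes\sigma_{i/2}(b)$.

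For the modular operator I would apply the first identity with $T = \nabla^z$, using $(\nabla^{-z})^* = \nabla^{-\bar z}$ together with $\nabla^w\Lambda(c) = \Lambda(\sigma_{-iw}(c))$, to obtain $\nabla^z A \nabla^{-z} = \theta_{\Lambda(\sigma_{i\bar z}(a)),\,\Lambda(\sigma_{-iz}(b))}$. Passing through $\Psi_{0,1/2}$ and simplifying via $\sigma_w(c)^* = \sigma_{\bar w}(c^*)$ (so that $\sigma_{i\bar z}(a)^* = \sigma_{-iz}(a^*)$) collapses the result to $(\sigma_{-iz}\otimes\sigma_{-iz})(e)$, which is $e$. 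Hence $\nabla^z A \nabla^{-z} = A$ for all $z$, and $z = 1$ gives $A\nabla = \nabla A$. For the conjugation, $JAJ = \theta_{\Lambda(\sigma_{-i/2}(a^*)),\,\Lambda(\sigma_{-i/2}(b^*))}$, and applying $\Psi_{0,1/2}$ yields $\sigma_{i/2}(a)\otimes b^*$; recognising this as $(\sigma_{i/2}\otimes\sigma_{i/2})(e^*)$ (which follows from $e^* = a\otimes\sigma_{-i/2}(b^*)$), then invoking $e^* = e$ and $(\sigma_{i/2}\otimes\sigma_{i/2})(e) = e$, gives $\Psi_{0,1/2}(JAJ) = e$, so $JAJ = A$ and $JA = AJ$.

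The main obstacle is entirely bookkeeping: keeping the complex conjugations straight in $(\nabla^{-z})^* = \nabla^{-\bar z}$ and in $\sigma_w(c)^* = \sigma_{\bar w}(c^*)$, since these are precisely what force the two tensor legs to transform by the \emph{same} $\sigma_w$ and thereby let the symmetry $(\sigma_w\otimes\sigma_w)(e) = e$ close the argument. There is no conceptual difficulty once the question is moved into the $B\otimes B^\op$ picture; the only mild subtlety beyond the $\nabla$ case is spotting, in the $J$ computation, that $\sigma_{i/2}(a)\otimes b^*$ is exactly $(\sigma_{i/2}\otimes\sigma_{i/2})(e^*)$ so that self-adjointness of $e$ can be used.
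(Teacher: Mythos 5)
Your proposal is correct and follows essentially the same route as the paper's own proof: both pass to $e=\Psi_{0,1/2}(A)$, compute the conjugations $\nabla^{z}A\nabla^{-z}$ and $JAJ$ on rank-one operators to identify them with $(\sigma_w\otimes\sigma_w)(e)$ and $(\sigma_{i/2}\otimes\sigma_{i/2})(e^*)$ respectively, and then invoke $e=e^*$ together with the invariance $(\sigma_z\otimes\sigma_z)(e)=e$ from Theorem~\ref{thm:qu_ad_mats_axioms_12}. The only cosmetic difference is that you conjugate by general $\nabla^z$ where the paper takes $z=i$ directly; the bookkeeping identities you flag ($(\nabla^{-z})^*=\nabla^{-\bar z}$, $\sigma_w(c)^*=\sigma_{\bar w}(c^*)$) are exactly the ones the paper uses implicitly.
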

\begin{proof}
Let $\Psi = \Psi_{0,1/2}$.
Suppose that $A=\theta_{\Lambda(a),\Lambda(b)}$ and let $e = \Psi(A) = a^* \otimes \sigma_{i/2}(b)$.  Then
$(\sigma_z\otimes\sigma_z)(e) = \sigma_{-z}(a)^* \otimes \sigma_{i/2+z}(b)$ which corresponds to
$\theta_{\Lambda(\sigma_{-z}(a)), \Lambda(\sigma_z(b))}$.  Let $z=i$ so as $\Lambda(\sigma_{-i}(a))
= \nabla\Lambda(a)$ and $\Lambda(\sigma_{i}(b)) = \nabla^{-1}\Lambda(b)$, it follows that
$\theta_{\Lambda(\sigma_{-z}(a)), \Lambda(\sigma_z(b))} = \nabla^{-1} A \nabla$.

Similarly, a simple calculation shows that
\[ JAJ = \theta_{J\Lambda(a), J\Lambda(b)}
= \theta_{\Lambda(\sigma_{-i/2}(a^*)), \Lambda(\sigma_{-i/2}(b^*))} \]
and applying $\Psi^{-1}$ we obtain
\[ \sigma_{-i/2}(a^*)^* \otimes \sigma_{i/2}(\sigma_{-i/2}(b^*))
= \sigma_{i/2}(a) \otimes b^*
= (\sigma_{i/2} \otimes \sigma_{i/2})((a^*\otimes \sigma_{i/2}(b))^*)
= (\sigma_{i/2} \otimes \sigma_{i/2})(e^*). \]

By linearity, these relations hold for any $A$ and $e=\Psi(A)$.
Thus, if $A=A^*$ satisfy axioms (\ref{defn:quan_adj_mat:idem}) and
(\ref{defn:quan_adj_mat:undir}) then $e=\Psi(A)$ satisfies $(\sigma_i\otimes\sigma_i)(e)=e$ and so
$\nabla^{-1}A\nabla = A$, equivalently, $A$ commutes with $\nabla$.  Furthermore, as $e = e^*$ and
$(\sigma_{i/2}\otimes\sigma_{i/2})(e)=e$, we conclude that $JAJ=A$.
\end{proof}

We now consider the axioms (\ref{defn:quan_adj_mat:reflexive}) and
(\ref{defn:quan_adj_mat:reflexive:other}) of Definition~\ref{defn:adj_op}.

\begin{lemma}\label{lem:meaning_of_axiom3}
Let $A\in\mc B(L^2(B))$ have the form $A = \theta_{\Lambda(a), \Lambda(b)}$ for some $a,b\in B$.  Then,
with $B$ acting on $L^2(B)$ in the usual way,
\begin{enumerate}
\item $m(A\otimes 1)m^* = ba^*$;
\item $m(1\otimes A)m^* = J \sigma_{i/2}(b)^* \sigma_{i/2}(a) J$, this is,
right multiplication by $\sigma_i(a)^*b$, compare with equation \eqref{eq:2}.
\end{enumerate}
\end{lemma}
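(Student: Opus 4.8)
The plan is to verify both identities by evaluating the operators on a general vector $\Lambda(c)$ and reading off the resulting element of $B$; since $\Lambda$ is a linear isomorphism, it suffices to identify the map $\Lambda(c)\mapsto\Lambda(\,\cdot\,)$ in each case. The only structural input needed is the defining property of $m^*$ together with faithfulness of $\psi$. Writing $m^*\Lambda(c)=\sum_t\Lambda(p_t)\otimes\Lambda(q_t)$, the adjoint relation $(\Lambda(xy)|\Lambda(c))=(\Lambda(x)\otimes\Lambda(y)|m^*\Lambda(c))$ says precisely that $\sum_t\psi(x^*p_t)\psi(y^*q_t)=\psi(y^*x^*c)$ for all $x,y\in B$. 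I never need an explicit formula for the $p_t,q_t$; instead I extract partial contractions of this relation.

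For part (1), applying $A\otimes 1$ and then $m$ to $m^*\Lambda(c)$ gives $m(A\otimes 1)m^*\Lambda(c)=\sum_t\psi(a^*p_t)\Lambda(bq_t)=\Lambda\!\big(b\sum_t\psi(a^*p_t)q_t\big)$, using $A\Lambda(p_t)=\psi(a^*p_t)\Lambda(b)$. The key sub-step is to contract the defining relation in the first leg by setting $x=a$: this yields $\sum_t\psi(a^*p_t)\psi(y^*q_t)=\psi(y^*a^*c)$ for all $y$, which reads $(\Lambda(y)|\sum_t\psi(a^*p_t)\Lambda(q_t))=(\Lambda(y)|\Lambda(a^*c))$, so faithfulness forces $\sum_t\psi(a^*p_t)q_t=a^*c$. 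Hence $m(A\otimes 1)m^*\Lambda(c)=\Lambda(ba^*c)$, i.e. left multiplication by $ba^*$.

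For part (2), the same computation with $1\otimes A$ in place of $A\otimes 1$ gives $m(1\otimes A)m^*\Lambda(c)=\Lambda\!\big((\sum_t\psi(a^*q_t)p_t)\,b\big)$, so everything reduces to identifying $w:=\sum_t\psi(a^*q_t)p_t$. Contracting the defining relation now in the second leg (set $y=a$) yields $(\Lambda(x)|\Lambda(w))=\psi(x^*w)=\psi(a^*x^*c)=(\Lambda(xa)|\Lambda(c))$ for all $x$. This is the point where the modular automorphism group must intervene: unlike in part (1), moving a right multiplication by $a$ across the GNS inner product is governed by the modular mechanism recorded when $\sigma_z$ was introduced, namely $(\Lambda(a)|\Lambda(bc))=(\Lambda(a\sigma_{-i}(c^*))|\Lambda(b))$. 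Concretely, writing $\psi(a^*x^*c)=\Tr(Qa^*x^*c)=\Tr(x^*cQa^*)$ and $\psi(x^*w)=\Tr(x^*wQ)$ and equating for all $x$ gives $cQa^*=wQ$, so $w=cQa^*Q^{-1}=c\,\sigma_{-i}(a^*)=c\,\sigma_i(a)^*$ (using $\sigma_z(a)^*=\sigma_{\overline z}(a^*)$). Therefore $m(1\otimes A)m^*$ is the map $\Lambda(c)\mapsto\Lambda(c\,\sigma_i(a)^*b)$, i.e. right multiplication by $\sigma_i(a)^*b$.

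It then remains to recognise this right multiplication in the claimed form $J\,\sigma_{i/2}(b)^*\sigma_{i/2}(a)\,J$. I would do this using \eqref{eq:2}, which shows that for $X\in B$ the operator $JXJ$ is right multiplication by $\sigma_{i/2}(X)^*$. Applying this to $X=\sigma_{i/2}(b)^*\sigma_{i/2}(a)$ and simplifying by functional calculus, via $\sigma_{i/2}(\sigma_{i/2}(b)^*)=\sigma_{i/2}(\sigma_{-i/2}(b^*))=b^*$ and $\sigma_{i/2}(\sigma_{i/2}(a))=\sigma_i(a)$, gives $\sigma_{i/2}(X)=b^*\sigma_i(a)$, whence $\sigma_{i/2}(X)^*=\sigma_i(a)^*b$, matching the element found above. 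The main obstacle is exactly this modular twist in part (2): in part (1) left multiplication by $a^*$ passes transparently through the inner product, but contracting in the second leg forces a right multiplication, which only crosses the inner product at the cost of a $\sigma_{-i}$; getting this twist right, and reconciling the two equivalent descriptions $\sigma_{-i}(a^*)=\sigma_i(a)^*$, is the one place where care is required.
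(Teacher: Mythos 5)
Your proposal is correct, but it follows a genuinely different route from the paper's proof. The paper argues at the operator level: it writes the identity as a resolution $1=\sum_j\theta_{\Lambda(f_j),\Lambda(f_j)}$ over an orthonormal basis, invokes the product formula $m(\theta_{\Lambda(a_1),\Lambda(b_1)}\otimes\theta_{\Lambda(a_2),\Lambda(b_2)})m^*=\theta_{\Lambda(a_1a_2),\Lambda(b_1b_2)}$ already established in the proof of Proposition~\ref{prop:psiiso}, and then collapses the sum using $\theta_{x\xi,y\eta}=y\,\theta_{\xi,\eta}\,x^*$; in part (2) it first rewrites right multiplication $\Lambda(f_j)\mapsto\Lambda(f_ja)$ as $J\sigma_{-i/2}(a^*)J$ via \eqref{eq:2}, so the $J$-form is the primary output and the description as right multiplication by $\sigma_i(a)^*b$ is the afterthought. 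You instead work vector-wise: you expand $m^*\Lambda(c)=\sum_t\Lambda(p_t)\otimes\Lambda(q_t)$ abstractly, and identify the two partial contractions $\sum_t\psi(a^*p_t)q_t=a^*c$ and $\sum_t\psi(a^*q_t)p_t=c\,\sigma_i(a)^*$ from the adjoint relation defining $m^*$ together with faithfulness, the second via cyclicity of the trace; only at the end do you convert right multiplication by $\sigma_i(a)^*b$ into the $J$-form, so your logical order in part (2) is the reverse of the paper's. Your argument buys self-containedness --- it needs nothing beyond the definition of $m^*$ as a Hilbert-space adjoint and non-degeneracy of $\Tr$, and it makes completely transparent why the modular twist $\sigma_i$ must appear in the second leg but not the first --- whereas the paper's argument is shorter because it reuses machinery (the product formula and the commutant picture $B'=JBJ$) that is already in place, and it yields the form $J\sigma_{i/2}(b)^*\sigma_{i/2}(a)J$ without any further functional-calculus manipulation. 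Both proofs extend to general $A$ by linearity in exactly the same way.
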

\begin{proof}
Let $(f_j)$ be elements of $B$ such that $(\Lambda(f_j))$ is an orthonormal basis for $L^2(B)$, so that
$1 = \sum_j \theta_{\Lambda(f_j), \Lambda(f_j)}$.  Thus
\begin{align*}
m(A\otimes 1)m^*
&= \sum_j m(\theta_{\Lambda(a), \Lambda(b)} \otimes
   \theta_{\Lambda(f_j),\Lambda(f_j)} )m^*
= \sum_j \theta_{\Lambda(af_j), \Lambda(bf_j)} \\
&= \sum_j \theta_{a\Lambda(f_j), b\Lambda(f_j)}
= b \Big( \sum_j \theta_{\Lambda(f_j), \Lambda(f_j)} \Big) a^*
= ba^*,
\end{align*}
as claimed.  Similarly,
\begin{align*}
m(1\otimes A)m^*
&= \sum_j \theta_{\Lambda(f_j a), \Lambda(f_j b)}
= \sum_j \theta_{J\sigma_{-i/2}(a^*)J\Lambda(f_j), J\sigma_{-i/2}(b^*)J\Lambda(f_j)}
= J\sigma_{-i/2}(b^*)J J\sigma_{i/2}(a)J,
\end{align*}
as claimed.
\end{proof}

For the following, recall the meaning of $\hat 1$ from Definition~\ref{defn:subsp_defs}.

\begin{proposition}\label{prop:qu_ad_mats_axiom_3}
With the conventions of Theorem~\ref{thm:qu_ad_mats_axioms_12}, let $B\subseteq\mc B(H)$, and let $(e_j)$
be an orthonormal basis of $H$.  Set
\[ u_0 = (1\otimes (Q^{-1/2})^\top)\hat 1 = \sum_j e_j \otimes\overline{ Q^{-1/2}(e_j) },
\quad u_1 = (Q^{-1/2}\otimes 1) \hat 1 = \sum_j Q^{-1/2}(e_j) \otimes \overline{e_j}, \]
both vectors in $H\otimes \overline H$.

Let $e = \Psi_{0,1/2}(A)$ or $e = \Psi'_{1/2,0}(A)$.  The following are equivalent, assuming the equivalent
conditions of Theorem~\ref{thm:qu_ad_mats_axioms_12} already hold.
\begin{enumerate}
\item\label{prop:qu_ad_mats_axiom_3:one}
  $A\in\mc B(L^2(B))$ satisfies axiom (\ref{defn:quan_adj_mat:reflexive}) or, equivalently,
  axiom (\ref{defn:quan_adj_mat:reflexive:other}) of Definition~\ref{defn:adj_op};
\item\label{prop:qu_ad_mats_axiom_3:two}
  the projection $e$, acting on $H\otimes\overline H$, satisfies
  $e (u_0) = u_0$ or, equivalently, $e (u_1) = u_1$;
\item\label{prop:qu_ad_mats_axiom_3:three}
  the subspace $V\subseteq H\otimes\overline H$ contains $u_0$, equivalently, contains $u_1$;
\item\label{prop:qu_ad_mats_axiom_3:four}
  the subspace $S\subseteq \mc B(H)$ contains $Q^{-1/2}$;
\item\label{prop:qu_ad_mats_axiom_3:five}
  $m(\sigma_{-i/2}\otimes\id)(e) = 1$ where $m:B\otimes B^\op\rightarrow B$ is the multiplication map.
\end{enumerate}
\end{proposition}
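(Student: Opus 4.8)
The plan is to show that all five conditions are equivalent to the single statement that $Q^{-1/2}\in S$, which I reach by a chain of small equivalences rather than a single identification. Throughout I work with $e=\Psi_{0,1/2}(A)$; by Remark~\ref{rem:psi_psi_prime_agree_A} the alternative $\Psi'_{1/2,0}$ yields the same $e$ under the standing hypotheses of Theorem~\ref{thm:qu_ad_mats_axioms_12}, which also supply $e=e^*=\sigma(e)=e^2$ and $(\sigma_z\otimes\sigma_z)(e)=e$. I first dispatch the geometric conditions. Under the isomorphism $\mc B(H)\cong H\otimes\overline H$, $\theta_{\xi,\eta}\mapsto\eta\otimes\overline\xi$, a direct computation shows that both $u_0$ and $u_1$ are the image of the single operator $Q^{-1/2}\in\mc B(H)$. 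Hence, via the bijection of Corollary~\ref{corr:sa_op_bimods} with $V=S$, the three statements $u_0\in V$, $u_1\in V$ and $Q^{-1/2}\in S$ coincide, giving the $u_0$/$u_1$ interchangeability and (\ref{prop:qu_ad_mats_axiom_3:three})$\Leftrightarrow$(\ref{prop:qu_ad_mats_axiom_3:four}); and since $e$ is the projection onto $V$, $e(u_i)=u_i\Leftrightarrow u_i\in V$, giving (\ref{prop:qu_ad_mats_axiom_3:two})$\Leftrightarrow$(\ref{prop:qu_ad_mats_axiom_3:three}).

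Next I link (\ref{prop:qu_ad_mats_axiom_3:five}) to (\ref{prop:qu_ad_mats_axiom_3:two}). The element $g=(\sigma_{-i/2}\otimes\id)(e)$ is generally not idempotent, but the computation in the proof of Theorem~\ref{thm:non_sa_op_bimods}(\ref{thm:non_sa_op_bimods:two}) is linear in its argument, so it still gives $m(g)=1\Leftrightarrow g(\hat 1)=\hat 1$. Acting on $H\otimes\overline H$, $\sigma_{-i/2}$ on the first leg is conjugation by $Q^{1/2}$, so $g=(Q^{1/2}\otimes 1)\,e\,(Q^{-1/2}\otimes 1)$; as $u_1=(Q^{-1/2}\otimes 1)\hat 1$, we get $g(\hat 1)=(Q^{1/2}\otimes 1)e(u_1)$, whence $g(\hat 1)=\hat 1\Leftrightarrow e(u_1)=u_1$. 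Since $m(g)$ is exactly the left-hand side of (\ref{prop:qu_ad_mats_axiom_3:five}), this proves (\ref{prop:qu_ad_mats_axiom_3:five})$\Leftrightarrow$(\ref{prop:qu_ad_mats_axiom_3:two}).

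It remains to bring in the adjacency-matrix axioms in (\ref{prop:qu_ad_mats_axiom_3:one}). Writing $A=\sum_t\theta_{\Lambda(a_t),\Lambda(b_t)}$, so $e=\sum_t a_t^*\otimes\sigma_{i/2}(b_t)$, Lemma~\ref{lem:meaning_of_axiom3} identifies axiom (\ref{defn:quan_adj_mat:reflexive}) with $\sum_t b_ta_t^*=1$ and axiom (\ref{defn:quan_adj_mat:reflexive:other}) with $\sum_t\sigma_i(a_t)^*b_t=1$. Applying the automorphism $\sigma_{i/2}$ shows $m(\sigma_{-i/2}\otimes\id)(e)=\sigma_{i/2}\big(\sum_t\sigma_i(a_t)^*b_t\big)$, so (\ref{defn:quan_adj_mat:reflexive:other}) is equivalent to (\ref{prop:qu_ad_mats_axiom_3:five}). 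To see that (\ref{defn:quan_adj_mat:reflexive}) and (\ref{defn:quan_adj_mat:reflexive:other}) coincide, I introduce the auxiliary linear map $\mu:B\otimes B^\op\to B$, $\mu(c\otimes d)=cQ^{-1/2}d$, and compute, twisting the $\sigma_{i/2}$'s by $Q^{\pm1/2}$, that $\mu(e)=Q^{-1/2}$ is exactly (\ref{defn:quan_adj_mat:reflexive:other}) while $\mu(\sigma(e))=Q^{-1/2}$ is exactly (\ref{defn:quan_adj_mat:reflexive}); since $\sigma(e)=e$ these two conditions agree, completing (\ref{prop:qu_ad_mats_axiom_3:one}) and closing the circle.

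I expect the main obstacle to be this last collapse. Conditions (\ref{prop:qu_ad_mats_axiom_3:two})--(\ref{prop:qu_ad_mats_axiom_3:five}) all reduce, on the nose, to the single reflexivity axiom (\ref{defn:quan_adj_mat:reflexive:other}), i.e.\ to $Q^{-1/2}\in S$; showing that the a priori different axiom (\ref{defn:quan_adj_mat:reflexive}) reduces to the same condition genuinely requires the self-transpose symmetry $\sigma(e)=e$ from Theorem~\ref{thm:qu_ad_mats_axioms_12}, and is not merely formal. The remaining difficulty is purely bookkeeping: correctly tracking the $Q^{\pm1/2}$ factors introduced by $\sigma_{\pm i/2}$ and by passing between $\mc B(H)$ and $H\otimes\overline H$, all of which is unproblematic in this finite-dimensional setting.
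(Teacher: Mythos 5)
Your proof is correct, and it takes a genuinely different route from the paper's. The paper establishes condition (\ref{prop:qu_ad_mats_axiom_3:one}) $\Leftrightarrow$ (\ref{prop:qu_ad_mats_axiom_3:two}) head-on, by two separate inner-product computations against $\hat 1$ (one for each reflexivity axiom, each shown equivalent to $e(u_0)=u_0$), deduces the $u_0/u_1$ interchange from the commutation of $e$ with $Q^{-1/2}\otimes (Q^{1/2})^\top$, and only treats (\ref{prop:qu_ad_mats_axiom_3:five}) at the end, as a corollary of those computations plus modular invariance of $e$. You instead make (\ref{prop:qu_ad_mats_axiom_3:five}) the hub: you reuse the computation from Theorem~\ref{thm:non_sa_op_bimods}(\ref{thm:non_sa_op_bimods:two}) in its linear form ($m(g)=1 \Leftrightarrow g(\hat 1)=\hat 1$ for arbitrary $g$, not just idempotents --- a valid extension, since that identity is linear in $g$), combine it with $(\sigma_{-i/2}\otimes\id)(e) = (Q^{1/2}\otimes 1)e(Q^{-1/2}\otimes 1)$ to get (\ref{prop:qu_ad_mats_axiom_3:five}) $\Leftrightarrow$ (\ref{prop:qu_ad_mats_axiom_3:two}), and handle axiom (\ref{defn:quan_adj_mat:reflexive}) versus (\ref{defn:quan_adj_mat:reflexive:other}) by the auxiliary map $\mu(c\otimes d)=cQ^{-1/2}d$ together with $\sigma(e)=e$ (your computations $\mu(e)=Q^{-1/2} \Leftrightarrow \sum_t \sigma_i(a_t)^*b_t=1$ and $\mu(\sigma(e))=Q^{-1/2} \Leftrightarrow \sum_t b_ta_t^*=1$ both check out). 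Two things your route buys: first, it isolates precisely where each standing hypothesis enters --- idempotency for (\ref{prop:qu_ad_mats_axiom_3:two})$\Leftrightarrow$(\ref{prop:qu_ad_mats_axiom_3:three}), the symmetry $\sigma(e)=e$ only for the collapse of the two reflexivity axioms --- whereas in the paper this is diffused through the computations; second, your observation that $u_0$ and $u_1$ are both the image of $Q^{-1/2}$ under the bijection $\mc B(H)\cong H\otimes\overline H$ shows they are in fact the \emph{same} vector (equivalently, $(x\otimes 1)\hat 1 = (1\otimes x^\top)\hat 1$ for any $x$), so the $u_0/u_1$ ``equivalences'' are tautological; the paper's detour through the commutation relation is unnecessary, although the paper's own computation of (\ref{prop:qu_ad_mats_axiom_3:three})$\Leftrightarrow$(\ref{prop:qu_ad_mats_axiom_3:four}) implicitly contains this fact. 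The trade-off is that the paper's argument is self-contained at the level of explicit matrix coefficients, while yours leans on the earlier structural lemmas; both are sound.
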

\begin{proof}
Let $B\subseteq\mc B(H)$, and let $A = \sum_i \theta_{\Lambda(a_i), \Lambda(b_i)}$.  From
Lemma~\ref{lem:meaning_of_axiom3}, axiom (\ref{defn:quan_adj_mat:reflexive}) is equivalent to
$\sum_i a_i b_i^* = 1$.  For $\xi,\eta\in H$ we see that
\begin{align}
(\xi|\eta) &= \sum_j (\xi|e_j)(\overline\eta|\overline{e_j}) = (\xi\otimes\overline\eta|\hat 1), \notag \\
\sum_i (\xi|a_i b_i^*\eta) &= \sum_{i,j} (a_i^*(\xi)|e_j)(\overline{b_i^*(\eta)}|\overline{e_j})
= \sum_i (a_i^*(\xi) \otimes b_i^\top(\overline\eta)|\hat 1). \label{eq:3}
\end{align}
Using $\Psi = \Psi_{0,1/2}$, we see that $e$ acts as $\sum_i a_i^* \otimes \sigma_{i/2}(b_i)^\top$ and so
from \eqref{eq:3} and using that $\sigma_{i/2}(b) = Q^{-1/2} b Q^{1/2}$,
\begin{align}
\sum_i (\xi|a_i b_i^*\eta) &=
\sum_i (a_i^*(\xi) \otimes (Q^{1/2} \sigma_{i/2}(b_i) Q^{-1/2})^\top(\overline\eta)|\hat 1)
= \big( (1\otimes {Q^{-1/2}}^\top) e (1\otimes {Q^{1/2}}^\top)(\xi\otimes\overline\eta) \big| \hat 1 \big).
\label{eq:4}
\end{align}
Hence $\sum_i a_ib_i^*=1$ if and only if $(1\otimes {Q^{1/2}}^\top) e (1\otimes {Q^{-1/2}}^\top)(\hat 1)
= \hat 1$, equivalently, $(1\otimes {Q^{-1/2}}^\top)(\hat 1) = u_0$ is in the image of $e$, namely $V$.

Consider now $m(1\otimes A)m^* = 1$, which by Lemma~\ref{lem:meaning_of_axiom3} is equivalent to
$\sum_i \sigma_{-i}(b_i^*) a_i = 1$.  Observe that for $\xi,\eta\in H$,
\begin{align}
\sum_i (\xi|\sigma_{-i}(b_i^*) a_i(\eta)) &= \sum_{i,j} (\sigma_i(b_i)(\xi)|e_j)(e_j|a_i(\eta))
= \sum_{i,j} (\sigma_i(b_i)^\top(\overline{e_j})|\overline\xi)(a_i^*(e_j)|\eta) \notag \\
&= \sum_i \big((a_i^* \otimes \sigma_i(b_i)^\top)\hat 1\big|\eta \otimes \overline\xi\big)
= \sum_i \big((a_i^* \otimes (Q^{-1/2}\sigma_{i/2}(b_i)Q^{1/2})^\top)\hat 1\big|\eta \otimes \overline\xi\big)
  \notag \\
&= \big( (1\otimes {Q^{1/2}}^\top) e (1\otimes {Q^{-1/2}}^\top) \hat1 \big| \eta \otimes \overline\xi\big),
\label{eq:6}
\end{align}
while $(\xi|\eta) = (\hat 1|\eta\otimes\overline\xi)$.  Thus $m(1\otimes A)m^* = 1$ is again equivalent
to $e(u_0) = u_0$.

As the equivalent conditions of Theorem~\ref{thm:qu_ad_mats_axioms_12} hold, we know that
$e (Q^{-1/2}\otimes (Q^{1/2})^\top) = (Q^{-1/2}\otimes (Q^{1/2})^\top) e$.  As
$(Q^{-1/2}\otimes (Q^{1/2})^\top) u_0 = u_1$, we see that if $e(u_0) = u_0$ then
$e(u_1) = e(Q^{-1/2}\otimes (Q^{1/2})^\top) (u_0) = (Q^{-1/2}\otimes (Q^{1/2})^\top) e (u_0)
= (Q^{-1/2}\otimes (Q^{1/2})^\top) (u_0) = u_1$, and conversely.  Hence 
(\ref{prop:qu_ad_mats_axiom_3:one}), (\ref{prop:qu_ad_mats_axiom_3:two}) and
(\ref{prop:qu_ad_mats_axiom_3:three}) are equivalent, when using $\Psi$.
Given $\Psi' = \Psi'_{1/2,0}$, by Remark~\ref{rem:psi_psi_prime_agree_A}, we know that in fact
$\Psi(A) = \Psi'(A)$, and so the same results hold.

The bijection between $H\otimes \overline H$ and $\mc B(H)$ sends $\eta\otimes\overline\xi$ to
$\theta_{\xi,\eta}$ and so 
\[ u_0 \mapsto \sum_j \theta_{Q^{-1/2}e_j, e_j} = Q^{-1/2}, \qquad
u_1 \mapsto \sum_j \theta_{e_j, Q^{-1/2}e_j} = Q^{-1/2}. \]
Thus (\ref{prop:qu_ad_mats_axiom_3:three}) and (\ref{prop:qu_ad_mats_axiom_3:four}) are equivalent.

For (\ref{prop:qu_ad_mats_axiom_3:five}) consider that
\[ m(\sigma_{-i}\otimes\sigma_{-i/2})\Psi_{0,1/2} (\theta_{\Lambda(a), \Lambda(b)})
= m(\sigma_{-i}\otimes\sigma_{-i/2})(a^*\otimes\sigma_{i/2}(b))
= m(\sigma_{-i}(a^*)\otimes b) = \sigma_{i}(a)^*b. \]
Thus
$m(\sigma_{-i}\otimes\sigma_{-i/2})\Psi_{0,1/2}(A)=1$ is equivalent to $m(1\otimes A)m^*=1$, by
Lemma~\ref{lem:meaning_of_axiom3}.  As, by Theorem~\ref{thm:qu_ad_mats_axioms_12}, we have that
$(\sigma_{-i/2}\otimes\sigma_{-i/2})\Psi_{0,1/2}(A) = \Psi_{0,1/2}(A)$, we see that this condition is
also equivalent to $m(\sigma_{-i/2}\otimes\id)\Psi_{0,1/2}(A) = 1$.  Hence
(\ref{prop:qu_ad_mats_axiom_3:five}) and (\ref{prop:qu_ad_mats_axiom_3:one}) are equivalent.
\end{proof}

Let us quickly adapt this to the case of the axioms (\ref{defn:quan_adj_mat:irrefl}) and 
(\ref{defn:quan_adj_mat:irrefl:other}) of Definition~\ref{defn:adj_op}.

\begin{proposition}\label{prop:qu_ad_mats_axiom_5}
With the conventions of the previous proposition, the following are equivalent:
\begin{enumerate}
\item\label{prop:qu_ad_mats_axiom_5:one}
  $A\in\mc B(L^2(B))$ satisfies axiom (\ref{defn:quan_adj_mat:irrefl}) or, equivalently,
  axiom (\ref{defn:quan_adj_mat:irrefl:other}) of Definition~\ref{defn:adj_op};
\item\label{prop:qu_ad_mats_axiom_5:two}
  the projection $e$, acting on $H\otimes\overline H$, satisfies
  $e (u_0) = 0$ or, equivalently, $e (u_1) = 0$;
\item\label{prop:qu_ad_mats_axiom_5:three}
  the subspace $V\subseteq H\otimes\overline H$ is orthogonal to $u_0$, equivalently, $u_1$;
%\item\label{prop:qu_ad_mats_axiom_3:four}
%  the subspace $S\subseteq \mc B(H)$ contains $Q^{-1/2}$;
\item\label{prop:qu_ad_mats_axiom_5:five}
  $m(\sigma_{-i/2}\otimes\id)(e) = 0$ where $m:B\otimes B^\op\rightarrow B$ is the multiplication map.
\end{enumerate}
\end{proposition}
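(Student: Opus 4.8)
The plan is to run the proof of Proposition~\ref{prop:qu_ad_mats_axiom_3} essentially verbatim, with the target value $1$ (and the vector $\hat 1$) replaced by $0$ everywhere. Since every identity established there was linear in the relevant right-hand side, passing from the ``loop at every vertex'' condition to the ``no loops'' condition should require no genuinely new computation: I would simply reuse the identities \eqref{eq:3}, \eqref{eq:4} and \eqref{eq:6}, together with Lemma~\ref{lem:meaning_of_axiom3} and the standing symmetry hypotheses coming from Theorem~\ref{thm:qu_ad_mats_axioms_12}.

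For the equivalence of (\ref{prop:qu_ad_mats_axiom_5:one}) and (\ref{prop:qu_ad_mats_axiom_5:two}), writing $A = \sum_i \theta_{\Lambda(a_i),\Lambda(b_i)}$, Lemma~\ref{lem:meaning_of_axiom3} turns axiom (\ref{defn:quan_adj_mat:irrefl}) into $\sum_i a_i b_i^* = 0$ and axiom (\ref{defn:quan_adj_mat:irrefl:other}) into $\sum_i \sigma_{-i}(b_i^*) a_i = 0$, the very same expressions whose equality-to-$1$ versions were analysed before. Feeding the first into \eqref{eq:4} shows $\sum_i a_i b_i^* = 0$ precisely when $(1\otimes (Q^{-1/2})^\top)\,e\,(1\otimes (Q^{1/2})^\top)(\xi\otimes\overline\eta)$ is orthogonal to $\hat 1$ for all $\xi,\eta$; taking adjoints and using that $e=e^*$ and that the $Q^{\pm 1/2}$-twists are self-adjoint and invertible, this collapses to $(1\otimes (Q^{1/2})^\top)\,e(u_0) = 0$, hence to $e(u_0)=0$. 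The parallel manipulation of \eqref{eq:6} shows axiom (\ref{defn:quan_adj_mat:irrefl:other}) is likewise equivalent to $e(u_0)=0$. To pass between $u_0$ and $u_1$ I would invoke, exactly as before, that $e$ commutes with $Q^{-1/2}\otimes (Q^{1/2})^\top$ (from Theorem~\ref{thm:qu_ad_mats_axioms_12}) and that $u_1 = (Q^{-1/2}\otimes (Q^{1/2})^\top)u_0$, giving $e(u_0)=0 \Leftrightarrow e(u_1)=0$; and since $\Psi_{0,1/2}(A)=\Psi'_{1/2,0}(A)$ under these hypotheses, the choice of bijection is immaterial.

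The remaining equivalences I would argue directly. Because $e$ is the orthogonal projection onto $V$, the condition $e(u_0)=0$ says exactly that $u_0\in\ker(e)=V^\perp$, i.e.\ that $V$ is orthogonal to $u_0$ (and similarly for $u_1$), which is (\ref{prop:qu_ad_mats_axiom_5:two}) $\Leftrightarrow$ (\ref{prop:qu_ad_mats_axiom_5:three}). For (\ref{prop:qu_ad_mats_axiom_5:five}) I would recycle the identity $m(\sigma_{-i}\otimes\sigma_{-i/2})\Psi_{0,1/2}(\theta_{\Lambda(a),\Lambda(b)}) = \sigma_i(a)^* b$ from the previous proof, so that $m(\sigma_{-i}\otimes\sigma_{-i/2})(e)$ equals the element whose vanishing is axiom (\ref{defn:quan_adj_mat:irrefl:other}); the invariance $(\sigma_{-i/2}\otimes\sigma_{-i/2})(e)=e$ from Theorem~\ref{thm:qu_ad_mats_axioms_12} then lets me rewrite the left-hand side as $m(\sigma_{-i/2}\otimes\id)(e)$, yielding (\ref{prop:qu_ad_mats_axiom_5:one}) $\Leftrightarrow$ (\ref{prop:qu_ad_mats_axiom_5:five}).

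I do not expect a real obstacle: the whole point is that replacing $1$ by $0$ only simplifies matters. The single spot meriting a moment's care is the step from $(1\otimes (Q^{1/2})^\top)\,e(u_0)=0$ to $e(u_0)=0$, where I rely purely on the invertibility of $Q$. In the reflexive case one had to track how the $Q^{\pm 1/2}$ twists acted on the fixed vector $\hat 1$; here the homogeneity of the zero condition makes those twists drop out immediately, so the argument is in fact cleaner than its predecessor.
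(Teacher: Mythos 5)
Your proposal is correct and follows essentially the same route as the paper, whose proof of this proposition literally consists of continuing the proof of Proposition~\ref{prop:qu_ad_mats_axiom_3} with the target value $1$ (resp.\ $\hat 1$, $u_0$) replaced by $0$: the paper reuses Lemma~\ref{lem:meaning_of_axiom3} and equations~\eqref{eq:3}, \eqref{eq:4}, \eqref{eq:6} to get $e(u_0)=0$, passes between $u_0$ and $u_1$ via the commutation $e(Q^{-1/2}\otimes (Q^{1/2})^\top)=(Q^{-1/2}\otimes (Q^{1/2})^\top)e$, identifies $e(u_0)=0$ with $u_0\perp V$, and handles condition (\ref{prop:qu_ad_mats_axiom_5:five}) by following the earlier argument, exactly as you do.
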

\begin{proof}
We continue with the proof of the previous proposition.  With $A = \sum_i \theta_{\Lambda(a_i), \Lambda(b_i)}$,
from Lemma~\ref{lem:meaning_of_axiom3}, we see that axiom (\ref{defn:quan_adj_mat:irrefl}) is equivalent to
$\sum_i a_i b_i^* = 0$. Looking at equations~\eqref{eq:3} and~\eqref{eq:4} we see that this is equivalent to
$(1\otimes {Q^{1/2}}^\top) e (1\otimes {Q^{-1/2}}^\top)(\hat 1) = 0$, equivalently, $e(u_0)=0$, equivalently,
$u_0$ is in the image of $1-e$.  As $V$ is the image of $e$, equivalently, $u_0$ is orthogonal to $V$.

Now consider axiom (\ref{defn:quan_adj_mat:irrefl:other}).  By similar reasoning, and using calculation
\eqref{eq:6}, this is equivalent again to $e(u_0)=0$.  As $e (Q^{-1/2}\otimes (Q^{1/2})^\top) =
(Q^{-1/2}\otimes (Q^{1/2})^\top) e$, and as $(Q^{-1/2}\otimes (Q^{1/2})^\top) u_0 = u_1$, we see that
$e(u_0) = 0$ if and only if $e(u_1)=0$.  We have shown that conditions (\ref{prop:qu_ad_mats_axiom_5:one}),
(\ref{prop:qu_ad_mats_axiom_5:two}) and (\ref{prop:qu_ad_mats_axiom_5:three}) are equivalent.

For (\ref{prop:qu_ad_mats_axiom_5:five}) we again simply follow the above proof.
\end{proof}

We will address what axioms (\ref{defn:quan_adj_mat:irrefl}) and 
(\ref{defn:quan_adj_mat:irrefl:other}) of Definition~\ref{defn:adj_op} mean for subspaces of $\mc B(H)$ below, see
Proposition~\ref{prop:complements_quan_graph} and the discussion after.

\begin{remark}\label{rem:dont_need_to_use_Q_2}
We continue Remark~\ref{rem:dont_need_to_use_Q}.  In the previous two propositions, the operator $Q$
appears in conditions, and also in the formation of the tensors $u_0, u_1$.  However, again this is only
because $Q$ implements the group $(\sigma_t)$, and so any positive invertible operator $q$ would suffice.
In particular, if $H=L^2(B)$ then we can use $\nabla$ in place of $Q$.
\end{remark}

As an immediate corollary, we can state the known correspondence between Definitions~\ref{defn:adj_op}
and~\ref{defn:quan_graph}.

\begin{corollary}
The bijections $\Psi_{0,1/2}$ and $\Psi'_{1/2,0}$ establish, in the case when $\psi$ is a trace,
a correspondence between quantum adjacency matrices $A$ and quantum graphs $S\subseteq\mc B(H)$,
where $B\subseteq\mc B(H)$.
\end{corollary}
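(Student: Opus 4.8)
The plan is to obtain the tracial correspondence as a direct specialisation of Theorem~\ref{thm:qu_ad_mats_axioms_12} and Proposition~\ref{prop:qu_ad_mats_axiom_3}, exploiting the fact that the modular data trivialise. First I would record the basic simplification: if $\psi$ is a trace, then $\psi(ab)=\psi(ba)$ forces $\Tr((Qa-aQ)b)=0$ for all $a,b\in B$, so $Q$ is central in $B$. Consequently $Q$ acts as a positive scalar on each matrix block and $\sigma_z(a)=Q^{iz}aQ^{-iz}=a$ for every $a\in B$ and $z\in\mathbb C$. In particular the bijections $\Psi_{0,1/2}$ and $\Psi'_{1/2,0}$ collapse to $\theta_{\Lambda(a),\Lambda(b)}\mapsto a^*\otimes b$ and $\theta_{\Lambda(a),\Lambda(b)}\mapsto b\otimes a^*$ respectively.

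Next I would feed this into Theorem~\ref{thm:qu_ad_mats_axioms_12}. Because $\sigma_z=\id$, the auxiliary condition $e=(\sigma_z\otimes\sigma_z)(e)$ in part~(\ref{thm:qu_ad_mats_axioms_12:two}) holds automatically, and the condition $QSQ^{-1}\subseteq S$ in part~(\ref{thm:qu_ad_mats_axioms_12:four}) is vacuous since $Q$ is central. Hence either bijection identifies self-adjoint $A$ satisfying axioms (\ref{defn:quan_adj_mat:idem}) and (\ref{defn:quan_adj_mat:undir}) with self-adjoint $B'$-bimodules $S\subseteq\mc B(H)$, that is, with symmetric quantum relations on $B$.

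The one remaining point, which I expect to be the only place needing genuine argument rather than mere degeneration, is reconciling the reflexivity axiom with the condition $1\in S$ that defines a quantum graph. Proposition~\ref{prop:qu_ad_mats_axiom_3} tells us that axiom (\ref{defn:quan_adj_mat:reflexive}) (equivalently (\ref{defn:quan_adj_mat:reflexive:other})) corresponds to $Q^{-1/2}\in S$, not literally to $1\in S$. To close this gap I would use that centrality of $Q$ places $Q^{\pm1/2}$ in $Z(B)=B\cap B'\subseteq B'$: since $S$ is a $B'$-bimodule and $1$ is the common unit of $B$ and $\mc B(H)$, left multiplication by $Q^{-1/2}$ carries $1$ to $Q^{-1/2}$, while left multiplication by $Q^{1/2}$ carries $Q^{-1/2}$ back to $1$, so $1\in S$ if and only if $Q^{-1/2}\in S$. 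Assembling the three axioms, a quantum adjacency matrix corresponds under $\Psi_{0,1/2}$ (or $\Psi'_{1/2,0}$, which agrees by Remark~\ref{rem:psi_psi_prime_agree_A}) to a self-adjoint, reflexive $B'$-bimodule, i.e.\ to a quantum graph in the sense of Definition~\ref{defn:quan_graph}, which is exactly what is wanted.
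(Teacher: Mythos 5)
Your proof is correct and follows essentially the same route as the paper: specialise Theorem~\ref{thm:qu_ad_mats_axioms_12} (parts (\ref{thm:qu_ad_mats_axioms_12:one}) and (\ref{thm:qu_ad_mats_axioms_12:four})) and Proposition~\ref{prop:qu_ad_mats_axiom_3} (parts (\ref{prop:qu_ad_mats_axiom_3:one}) and (\ref{prop:qu_ad_mats_axiom_3:four})), observing that the modular data trivialise in the tracial case. The only real difference is in how the reflexivity condition is discharged: the paper simply asserts that $Q=1$ and $\sigma_z=\id$ when $\psi$ is a trace, whereas you deduce only that $Q$ is central and then add the bridging step $1\in S \Leftrightarrow Q^{-1/2}\in S$ via $Q^{\pm 1/2}\in Z(B)\subseteq B'$ and the $B'$-bimodule property. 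Your version is in fact the more careful one: under the paper's standing assumption that $\psi$ is a faithful \emph{state}, a tracial $\psi$ gives a central, block-scalar $Q$ but not $Q=1$ (unless $\dim B=1$), so your extra step is precisely what is needed to close the gap between the condition $Q^{-1/2}\in S$ of Proposition~\ref{prop:qu_ad_mats_axiom_3} and the condition $1\in S$ of Definition~\ref{defn:quan_graph}, and your argument for it is correct.
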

\begin{proof}
Immediate from the correspondence between Theorem~\ref{thm:qu_ad_mats_axioms_12} parts
(\ref{thm:qu_ad_mats_axioms_12:one}) and (\ref{thm:qu_ad_mats_axioms_12:four}), and
Proposition~\ref{prop:qu_ad_mats_axiom_3} parts (\ref{prop:qu_ad_mats_axiom_3:one}) and
(\ref{prop:qu_ad_mats_axiom_3:four}), as when $\psi$ is a trace, $Q=1$ and $\sigma_z=\id$ for all $z$.
\end{proof}

\subsection{Moving to the tracial situation}\label{sec:back_to_qgs}

For general $\psi$, we only obtain a correspondence between Definition~\ref{defn:adj_op} (that of a
quantum adjacency matrix) and a generalisation of Definition~\ref{defn:quan_graph} (the operator bimodule
view of quantum graphs) taking account of the extra conditions of Theorem~\ref{thm:qu_ad_mats_axioms_12}
and Proposition~\ref{prop:qu_ad_mats_axiom_3}.  In this section, we show how to use a further bijection
to come in a full circle back to Definition~\ref{defn:quan_graph}.  We also show some more direct links
between adjacency matrices $A$ and the associated subspaces $S$.

For the following, recall Remark~\ref{rem:dont_need_to_use_Q}.

\begin{theorem}\label{thm:reduce_tracial_case}
Let $B\subseteq\mc B(H)$, and let $q\in\mc B(H)$ be positive and invertible implementing $(\sigma_t)$.
Given a subspace $S\subseteq\mc B(H)$ let $S_0 = q^{1/2}S$, so that $S=q^{-1/2}S_0$
and $S\mapsto S_0$ is a bijection.  Then $S$ is a self-adjoint $B'$-bimodule with $q S q^{-1}=S$ if and only
if the same is true of $S_0$.  Furthermore, $q^{-1/2}\in S$ if and only if $1\in S_0$.
Thus if $S$ is associated to a quantum adjacency matrix, then $S_0$ is a quantum graph with the additional
property that $q S_0 q^{-1} = S_0$, and conversely.
\end{theorem}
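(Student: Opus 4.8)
The plan is to verify that each of the four relevant properties — being self-adjoint, being a $B'$-bimodule, satisfying the invariance condition $qSq^{-1}=S$, and containing the appropriate unit — transfers across the bijection $S\mapsto S_0=q^{1/2}S$, and then to read off the statement about quantum adjacency matrices from Theorem~\ref{thm:qu_ad_mats_axioms_12} and Proposition~\ref{prop:qu_ad_mats_axiom_3}, invoked with $q$ in place of $Q$ as permitted by Remarks~\ref{rem:dont_need_to_use_Q} and~\ref{rem:dont_need_to_use_Q_2}.

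The one point requiring care, and the main obstacle, is that replacing $q$ by $q^{1/2}$ is harmless, even though $q$ need neither lie in $B$ nor commute with $B'$. First I would record the underlying fact: the map $x\mapsto qxq^{-1}$ on the vector space $\mc B(H)$ is diagonalizable with strictly positive eigenvalues. Indeed, writing $q=\sum_i\mu_i p_i$ for the spectral decomposition gives $\mc B(H)=\bigoplus_{i,j}p_i\mc B(H)p_j$, and on the block $p_i\mc B(H)p_j$ the map $x\mapsto q^z x q^{-z}$ acts as the scalar $(\mu_i/\mu_j)^z$; in particular $x\mapsto q^{1/2}xq^{-1/2}$ acts as a fixed positive scalar on each eigenspace of $x\mapsto qxq^{-1}$. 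Consequently any subspace $W\subseteq\mc B(H)$ with $qWq^{-1}=W$ is spanned by eigenvectors of $x\mapsto qxq^{-1}$ and hence also satisfies $q^{1/2}Wq^{-1/2}=W$. Applied to $W=S$, this turns the hypothesis $qSq^{-1}=S$ into the set identity $q^{1/2}S=Sq^{1/2}$. Applied to $W=B'$ it requires first knowing $qB'q^{-1}=B'$: since $q$ and $Q$ implement the same automorphisms of $B$, the unitary $u_t=Q^{-it}q^{it}$ lies in $B'$, so $q^{it}B'q^{-it}=Q^{it}u_tB'u_t^*Q^{-it}=B'$ for all real $t$, and linear independence of the characters $t\mapsto(\mu_i/\mu_j)^{it}$ forces each $\mathrm{Ad}(q)$-eigencomponent of an element of $B'$ to lie in $B'$; hence $q^{1/2}B'q^{-1/2}=B'$, i.e. $q^{1/2}B'=B'q^{1/2}$.

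With these two set identities the transfers are immediate. For self-adjointness, $S_0^*=S^*q^{1/2}=q^{1/2}S^*$, so $S_0^*=S_0$ if and only if $S^*=S$. For the bimodule property, $B'S_0B'=B'q^{1/2}SB'=q^{1/2}B'SB'$, so $B'S_0B'\subseteq S_0$ if and only if $B'SB'\subseteq S$. The invariance condition needs no auxiliary fact: from $S_0=q^{1/2}S$ one has $qS_0q^{-1}=q^{3/2}Sq^{-1}$, and left-multiplying by $q^{-1/2}$ shows $qS_0q^{-1}=S_0$ is equivalent to $qSq^{-1}=S$. Finally $1\in S_0=q^{1/2}S$ if and only if $q^{-1/2}\in S$. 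In each case the converse is the same computation run backwards (or the identical argument with $q$ replaced by $q^{-1}$), so $S\mapsto S_0$ is a bijection preserving all the stated properties.

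To conclude, I would combine these equivalences with the existing correspondences: by Theorem~\ref{thm:qu_ad_mats_axioms_12}(\ref{thm:qu_ad_mats_axioms_12:four}) and Proposition~\ref{prop:qu_ad_mats_axiom_3}(\ref{prop:qu_ad_mats_axiom_3:four}), using $q$ for $Q$, the subspace $S$ arises from a quantum adjacency matrix exactly when $S$ is a self-adjoint $B'$-bimodule with $qSq^{-1}=S$ and $q^{-1/2}\in S$. By the equivalences just proved, this holds precisely when $S_0$ is a self-adjoint $B'$-bimodule containing $1$ — that is, a quantum graph in the sense of Definition~\ref{defn:quan_graph} — carrying the extra symmetry $qS_0q^{-1}=S_0$. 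Once the diagonalization observation of the second paragraph is isolated, the remainder is bookkeeping.
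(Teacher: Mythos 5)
Your proposal is correct, and its overall architecture is the same as the paper's: show that conjugation by $q^{1/2}$ preserves $B'$ and (given the invariance hypothesis) interchanges with $S$, transfer the four properties across $S\mapsto q^{1/2}S$, and then quote Theorem~\ref{thm:qu_ad_mats_axioms_12} and Proposition~\ref{prop:qu_ad_mats_axiom_3} with $q$ in place of $Q$, as licensed by Remarks~\ref{rem:dont_need_to_use_Q} and~\ref{rem:dont_need_to_use_Q_2}. Where you genuinely diverge is in the two auxiliary facts. For the commutant, the paper proves in one line that $q^z x q^{-z}\in B'$ for \emph{all} $z\in\mathbb C$ and $x\in B'$, using the analytic extension $q^{-z}bq^z=\sigma_{iz}(b)\in B$ and the computation $q^zxq^{-z}b=bq^zxq^{-z}$; you work only with real parameters, noting $u_t=Q^{-it}q^{it}\in B'$ gives $q^{it}B'q^{-it}=B'$, and then upgrade to the power $1/2$ via the eigencomponent decomposition and linear independence of the characters $t\mapsto\lambda^{it}$. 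For the passage from $qSq^{-1}=S$ to $q^{1/2}Sq^{-1/2}=S$, the paper invokes a functional calculus argument (commutation of the orthogonal projection onto $S$ with a positive operator, as in the proof of Theorem~\ref{thm:qu_ad_mats_axioms_12} via Lemma~\ref{lem:proj_commuting_op}), whereas you diagonalise $\operatorname{Ad}(q)$ explicitly on the blocks $p_i\mc B(H)p_j$ and use that an invariant subspace of a diagonalisable map is spanned by eigenvectors; this is essentially the content of the paper's Lemma~\ref{lem:inv_under_pos}, which the paper proves later by a limiting argument and does not use here. Your route is more elementary and self-contained (no analytic continuation, no functional calculus) at the cost of length, while the paper's computation is shorter and delivers all complex powers at once. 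A small point in your favour: you get $qS_0q^{-1}=S_0\Leftrightarrow qSq^{-1}=S$ by pure algebra, from $qS_0q^{-1}=q^{1/2}(qSq^{-1})$, with no spectral input at all, whereas the paper routes even this equivalence through $q^{1/2}Sq^{-1/2}=S$.
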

\begin{proof}
Firstly, we claim that for any $z\in\mathbb C$, we have that $q^z x q^{-z} \in B'$ for any $x\in B'$.
Indeed, for $b\in B$, we have that $q^{-z} b q^z = \sigma_{iz}(b) \in B$, so $x$ commutes with this, and hence
\[ q^z x q^{-z} b = q^z x q^{-z} b q^z q^{-z} = q^z q^{-z} b q^z x q^{-z} = b q^z x q^{-z}. \]
As $b$ was arbitrary, this indeed shows that $q^z x q^{-z} \in B'$.

Thus, if $S$ is a $B'$-operator bimodule, then for $x,y\in B'$ and $s\in S$, for $s_0 = q^{1/2} s\in S_0$,
we see that
\[ x s_0 y = q^{1/2} q^{-1/2} x q^{1/2} s y \in q^{1/2} S = S_0, \]
as $q^{-1/2} x q^{1/2} \in B'$.  Conversely, if $S_0$ is a $B'$-operator bimodule, then
with $x,y,s,s_0$ as before,
\[ x s y = q^{-1/2} q^{1/2} x q^{-1/2} s_0 y \in q^{-1/2} S_0 = S. \]

Now consider the condition that $q S q^{-1} = S$.  By a functional calculus argument, it follows that also
$q^{1/2} S q^{-1/2} = S$; compare with the proof of Theorem~\ref{thm:qu_ad_mats_axioms_12}.  Thus also
$S_0 = S q^{1/2}$ and so $q^{1/2} S_0 q^{-1/2} = q^{1/2} S = S_0$.  Hence $q S_0 q^{-1} = S_0$.
Furthermore, if also $S$ is self-adjoint, then $S_0^* = (q^{1/2} S)^* = S q^{1/2} = S_0$.
Conversely, if $q S_0 q^{-1} = S_0$ then also $q^{-1} S_0 q = S_0$, so also $q^{-1/2} S_0 q^{1/2} = S_0$.
Thus $q^{1/2} S q^{-1/2} = S_0 q^{-1/2} = q^{-1/2} S_0 = S$, so again $q S q^{-1} = S$.  Similarly, $S_0$
self-adjoint implies that $S$ is self-adjoint.

Clearly $q^{-1/2} \in S$ if and only if $1\in S_0$.
\end{proof}

In particular, we can always apply this result with $q=Q$, and in the special case when $H=L^2(B)$, with
$q=\nabla$.  Let $\psi_0$ be a faithful \emph{trace} on $B$.  Combining Theorem~\ref{thm:qu_ad_mats_axioms_12} and
Proposition~\ref{prop:qu_ad_mats_axiom_3} with the construction just given allows us to do the following.  We
start with a quantum adjacency matrix $A$, with respect to $\psi$, and form the subspace $S$.  Then form $S_0$,
and using $\psi_0$, reverse the process, to form a quantum adjacency matrix $A_0$.  Thus every quantum adjacency
matrix for $\psi$ corresponds (uniquely) to a quantum adjacency matrix $A_0$ for the trace $\psi_0$.

We first of all see what the condition $q S_0 q^{-1}=S_0$ means for the operator $A_0$.  We continue to
write $(\sigma_z)$ for the modular automorphism group of $\psi$.

\begin{lemma}
Let $B\subseteq\mc B(H)$, let $\psi_0$ be a faithful trace on $B$,
and let $S_0\subseteq\mc B(H)$ be a quantum graph associated to the adjacency matrix $A_0\in\mc B(L^2(\psi_0))$.
Furthermore, using that the GNS map $\Lambda_0:B\rightarrow L^2(\psi_0)$ is a linear bijection, let $A_0$
induce a linear map $A_0':B\rightarrow B$.  Then $q S_0q ^{-1}=S_0$ if and only if $A_0' \circ \sigma_{-i}
= \sigma_{-i}\circ A_0'$.
\end{lemma}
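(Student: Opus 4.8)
The plan is to carry the data through the equivalences already established, reducing the geometric condition $qS_0q^{-1}=S_0$ first to a commutation relation for the projection attached to $A_0$, and then to the stated intertwining relation for $A_0'$. First I would record the tracial correspondences. Since $\psi_0$ is a trace, its modular group is trivial, so $\Psi'_{1/2,0}=\Psi'_{0,0}$ sends $A_0$ to a self-adjoint idempotent $e=\Psi'_{0,0}(A_0)\in B\otimes B^\op$, and by Corollary~\ref{corr:sa_op_bimods} this $e$, viewed inside $\mc B(H\otimes\overline H)$, is precisely the orthogonal projection onto $V=S_0$ under the identification $\mc B(H)\cong H\otimes\overline H$. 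Writing $A_0=\sum_i\theta_{\Lambda_0(a_i),\Lambda_0(b_i)}$ yields $e=\sum_i b_i\otimes a_i^*$ and, via the GNS bijection $\Lambda_0$, the map $A_0'(x)=\sum_i\psi_0(a_i^*x)\,b_i$ for $x\in B$.

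Next I would reduce $qS_0q^{-1}=S_0$ to a commutation statement. As computed at the end of the proof of Theorem~\ref{thm:qu_ad_mats_axioms_12}, conjugation $x\mapsto qxq^{-1}$ on $\mc B(H)$ corresponds to the operator $q\otimes(q^{-1})^\top$ on $H\otimes\overline H$; this operator is positive and invertible. Since $e$ is the orthogonal projection onto $S_0$, Lemma~\ref{lem:proj_commuting_op} shows that $qS_0q^{-1}=S_0$ holds if and only if $e$ commutes with $q\otimes(q^{-1})^\top$. I then compute this conjugation at the level of $B\otimes B^\op$: for a simple tensor $b\otimes c$ (acting as $b\otimes c^\top$), using that $q$ implements $(\sigma_t)$ and that $\top$ is an anti-homomorphism, one gets $(q\otimes(q^{-1})^\top)(b\otimes c^\top)(q^{-1}\otimes q^\top)=qbq^{-1}\otimes(qcq^{-1})^\top=\sigma_{-i}(b)\otimes\sigma_{-i}(c)^\top$. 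Hence conjugation by $q\otimes(q^{-1})^\top$ acts on $B\otimes B^\op$ as $\sigma_{-i}\otimes\sigma_{-i}$, and the commutation condition becomes $(\sigma_{-i}\otimes\sigma_{-i})(e)=e$.

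Finally I would translate this back to $A_0'$. The passage from $B\otimes B^\op$ to linear maps $B\to B$ sending $\sum_k c_k\otimes d_k$ to $x\mapsto\sum_k\psi_0(d_k x)\,c_k$ is a linear bijection taking $e$ to $A_0'$. Applying it to $(\sigma_{-i}\otimes\sigma_{-i})(e)=\sum_i\sigma_{-i}(b_i)\otimes\sigma_{-i}(a_i^*)$ gives the map $x\mapsto\sum_i\psi_0(\sigma_{-i}(a_i^*)x)\,\sigma_{-i}(b_i)$. Using the trace property of $\psi_0$ and $\sigma_{-i}(x)=qxq^{-1}$, $\sigma_{i}(x)=q^{-1}xq$, cyclicity gives $\psi_0(\sigma_{-i}(a_i^*)x)=\psi_0(a_i^*\sigma_i(x))$, so this map equals $\sigma_{-i}\bigl(\sum_i\psi_0(a_i^*\sigma_i(x))\,b_i\bigr)=\sigma_{-i}\circ A_0'\circ\sigma_i$. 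By injectivity of the correspondence, $(\sigma_{-i}\otimes\sigma_{-i})(e)=e$ is thus equivalent to $\sigma_{-i}\circ A_0'\circ\sigma_i=A_0'$, i.e. to $A_0'\circ\sigma_{-i}=\sigma_{-i}\circ A_0'$, which is the claim.

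The one genuinely delicate point is the careful bookkeeping of the two states at play: $\psi_0$, the trace underlying the GNS picture of $A_0$, and $\psi$, whose modular group $q$ implements. Cyclicity of $\psi_0$ is used essentially in the last step to move the $q$-factors past $a_i^*$ and $x$; without it the clean conjugate $\sigma_{-i}\circ A_0'\circ\sigma_i$ would not appear. I would also be attentive to the transpose conventions, so that the anti-homomorphism property of $\top$ correctly converts $(q^{-1})^\top c^\top q^\top$ into $(qcq^{-1})^\top$; this is what makes the conjugation on $B\otimes B^\op$ factor as $\sigma_{-i}\otimes\sigma_{-i}$ rather than something twisted.
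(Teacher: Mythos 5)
Your proposal is correct and follows essentially the same route as the paper: both reduce $qS_0q^{-1}=S_0$ to invariance of the projection $e=\Psi'_{0,0}(A_0)$ under $\sigma_z\otimes\sigma_z$ (the paper simply cites the proof of Theorem~\ref{thm:qu_ad_mats_axioms_12} for this step, whereas you re-derive it via Lemma~\ref{lem:proj_commuting_op} and the explicit conjugation computation), and then both translate back to $A_0'$ through the pairing $c\otimes d\mapsto(x\mapsto\psi_0(dx)c)$ using cyclicity of the trace $\psi_0$. The only cosmetic difference is your use of the condition $(\sigma_{-i}\otimes\sigma_{-i})(e)=e$ where the paper uses $(\sigma_i\otimes\sigma_i)(e)=e$; these are equivalent since the two automorphisms are mutually inverse.
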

\begin{proof}
As in the proof of Theorem~\ref{thm:qu_ad_mats_axioms_12}, that $q S_0 q^{-1}=S_0$ is equivalent to
$e = (\sigma_i\otimes\sigma_i)(e)$.  Supposing for the moment that $A_0 = \theta_{\Lambda_0(a), \Lambda_0(b)}$,
we have that $e = b\otimes a^*$ and that $A_0'(c) = \psi_0(a^*c) b$ for $c\in B$.  Then
$\sigma_i(b)\otimes\sigma_i(a^*)$ is associated to the map on $B$ given by 
\begin{align*} c\mapsto \psi_0(\sigma_i(a^*)c) \sigma_i(b)
&= \psi_0(Q^{-1} a^* Q c) \sigma_i(b) = \psi_0(a^* QcQ^{-1}) \sigma_i(b) \\
&= \psi_0(a^* \sigma_{-i}(c)) \sigma_i(b) = \sigma_i\big( A_0'(\sigma_{-i}(c)) \big).
\end{align*}
By linearity, such a relationship holds in general, and so we conclude that $A_0' = \sigma_i \circ A_0'
\circ \sigma_{-i}$, which is equivalent to the claim.
\end{proof}

\begin{remark}
Suppose that $S, S_0$ are linked as before.  In terms of subspaces of
$H\otimes\overline{H}$ this means that $V$ is associated to $V_0 = (q^{1/2}\otimes\id)(V)$.  As $V$ is invariant
under the operator $q^z \otimes (q^{-z})^\top$, for all $z\in\mathbb C$, also $V_0 =
(q^{1/4} \otimes (q^{1/4})^\top)(V)$, and so forth.

Thus $e_0$ is the orthogonal projection onto $V_0$.  We note that
\[ p = (q^{1/4} \otimes (q^{1/4})^\top )e (q^{-1/4} \otimes (q^{-1/4})^\top) \]
is an idempotent with image $V_0$, here of course letting $e$ act on $H\otimes\overline H$ in the usual way.
Then, in $B\otimes B^\op$, we have that $p = (\sigma_{-i/4}\otimes\sigma_{i/4})(e)$.  So in general, $p\not=e_0$
as $p$ is not self-adjoint.

There is a standard way to convert an idempotent into a projection; we follow \cite[Proposition~3.2.10]{dales}
for example, and just give a sketch.  Setting $a = 1 + (p-p^*)^* (p-p^*)$, we see that $a\geq 1$ so $a$ is
invertible.  Then one can show that $q = pp^*a^{-1}$ is a projection with $pq=q, qp=p$ so that $q$ and $p$ have
the same image.  Hence $e_0 = q$.  This is a concrete formula, but it appears difficult to work with, as in
particular, we have no expression for $a^{-1}$.

Then $A\in\mc B(L^2(\psi))$ is equal to ${\Psi'_{1/2,0}}^{-1}(e)$, and $A_0\in\mc B(L^2(\psi_0))$ is
${\Psi'_{1/2,0}}^{-1}(e_0)$, noting of course that really these are two different maps $\Psi'_{1/2,0}$ as they
have different codomains.  We do not see how to give a concrete formula for $A$ in terms of $A_0$.

However, Remark~\ref{rem:gen_by_A_itself} below does give a direct link between $A$ and $S_0$.
\end{remark}

We seek a way to more directly link the adjacency matrix $A$ with the subspace $S\subseteq\mc B(H)$
(and/or $V\subseteq H\otimes\overline H$).  As $H$ is essentially arbitrary (assuming $B$ is faithfully
represented on $H$) this seems hard, but with a special choice of $H$ we can make progress.

We start by looking further at the opposite algebra $B^\op$.  Define a faithful positive functional
$\psi^\op$ on $B^\op$ by $\psi^\op(a) = \psi(a)$ for $a\in B^\op$.

\begin{lemma}\label{lem:Bop_GNS}
Let $(L^2(B), \Lambda, \pi)$ be the GNS construction for $\psi$ on $B$, and similarly
$(L^2(B^\op), \Lambda^\op, \pi^\op)$ for $\psi^\op$.  The map $w:L^2(B^\op) \rightarrow \overline{L^2(B)};
\Lambda^\op(a) \mapsto \overline{\Lambda(a^*)}$ is a unitary with $w \pi^\op(a) w^* = \pi(a)^\top$ for $a\in B$.
Furthermore, $w J^\op w^* = \overline J$, and $w \nabla^\op w^* = \overline\nabla = \nabla^\top$.
\end{lemma}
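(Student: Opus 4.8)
The plan is to verify every assertion by direct computation on the spanning vectors $\Lambda^\op(a)$, after first pinning down the modular data of $(B^\op,\psi^\op)$, which is where the only real content lies. First I would determine the density and modular automorphism group of $\psi^\op$. Since the non-normalised trace $\Tr$ is still tracial on $B^\op$ and the $*$-operation is unchanged, the defining relation $\psi^\op(a) = \Tr(Q^\op \stackrel{\op}{\cdot} a) = \Tr(aQ^\op)$ forces $Q^\op = Q$ (the same positive invertible element, which remains positive in $B^\op$ because positivity is governed by the unchanged $*$-structure). A short calculation with the reversed product then gives $\sigma^\op_z(a) = Q^{-iz} a Q^{iz} = \sigma_{-z}(a)$, and hence $J^\op\Lambda^\op(a) = \Lambda^\op(Q^{-1/2}a^*Q^{1/2})$ and $\nabla^\op\Lambda^\op(a) = \Lambda^\op(Q^{-1}aQ)$.

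Next I would check that $w$ is a well-defined unitary. The inner product on $L^2(B^\op)$ is $(\Lambda^\op(a)|\Lambda^\op(b)) = \psi^\op(a^*\stackrel{\op}{\cdot}b) = \psi(ba^*)$. Since $a\mapsto a^*$, the GNS map, and passage to the conjugate space are each anti-linear, their composite $w$ is linear; and $(\overline{\Lambda(a^*)}|\overline{\Lambda(b^*)}) = (\Lambda(b^*)|\Lambda(a^*)) = \psi(ba^*)$ shows $w$ is isometric, hence unitary as $\Lambda,\Lambda^\op$ are bijections.

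The intertwining relations then follow by evaluating both sides on $\Lambda^\op(b)$ and using the convention $x^\top(\overline\xi) = \overline{x^*\xi}$. For the representation, $w\pi^\op(a)\Lambda^\op(b) = w\Lambda^\op(ba) = \overline{\Lambda(a^*b^*)}$, while $\pi(a)^\top w\Lambda^\op(b) = \pi(a)^\top\overline{\Lambda(b^*)} = \overline{a^*\Lambda(b^*)} = \overline{\Lambda(a^*b^*)}$. For the conjugation, noting that the conjugate $\overline J$ of the anti-linear $J$ acts by $\overline J(\overline\xi) = \overline{J\xi}$, both $wJ^\op\Lambda^\op(a)$ and $\overline J w\Lambda^\op(a)$ compute to $\overline{\Lambda(Q^{1/2}aQ^{-1/2})}$. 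For the modular operator, both $w\nabla^\op\Lambda^\op(a)$ and $\overline\nabla w\Lambda^\op(a)$ give $\overline{\Lambda(Qa^*Q^{-1})}$; and $\overline\nabla = \nabla^\top$ is immediate from self-adjointness of $\nabla$, since $\nabla^\top(\overline\xi) = \overline{\nabla^*\xi} = \overline{\nabla\xi} = \overline\nabla(\overline\xi)$.

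The only genuinely delicate point is the bookkeeping around the opposite algebra and the conjugate Hilbert space: one must keep straight that $\psi^\op$ uses the reversed product in the GNS inner product (which is precisely what makes $w$ land in $\overline{L^2(B)}$ rather than in $L^2(B)$), and that conjugating the \emph{anti-linear} $J$ produces the anti-linear $\overline J$ on $\overline{L^2(B)}$. Once $\sigma^\op_z = \sigma_{-z}$ and $Q^\op = Q$ are established, each remaining identity is a one-line check on the dense set of vectors $\Lambda^\op(a)$.
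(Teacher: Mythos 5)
Your proof is correct and follows essentially the same route as the paper: identify $Q^\op = Q$ and $\sigma^\op_z = \sigma_{-z}$, then verify unitarity and each intertwining relation by direct evaluation on the spanning vectors $\Lambda^\op(a)$ (the paper evaluates $wJ^\op w^*$ and $w\nabla^\op w^*$ on $\overline{\Lambda(a)}$ via $w^*\overline{\Lambda(b)}=\Lambda^\op(b^*)$, which is the same computation read in the other direction). One small slip in your write-up: the GNS map $\Lambda$ is linear, not anti-linear, and a composite of three anti-linear maps would be anti-linear; $w$ is linear precisely because exactly two of the three maps in the composite (the adjoint $a\mapsto a^*$ and the passage $\xi\mapsto\overline\xi$) are anti-linear, so their anti-linearities cancel.
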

\begin{proof}
We have that $(\Lambda^\op(a)|\Lambda^\op(b)) = \psi^\op(ba^*) = \psi(ba^*)$, recalling that the product is
reversed in $B^\op$.  As
\[ (w\Lambda^\op(a)|w\Lambda^\op(b))
= (\overline{\Lambda(a^*)}|\overline{\Lambda(b^*)})
= (\Lambda(b^*)|\Lambda(a^*)) = \psi(ba^*), \]
it follows that $w$ is unitary, and so $w^*\overline{\Lambda(b)} = w^*w \Lambda^\op(b^*) = \Lambda^\op(b^*)$.
Thus
\[ w \pi^\op(a) w^* \overline{\Lambda(b)}
= w \pi^\op(a) \Lambda^\op(b^*) = w \Lambda^\op(b^*a) = \overline{\Lambda(a^*b)}
= \overline{\pi(a^*) \Lambda(b)} = \pi(a)^\top \overline{\Lambda(b)}, \]
as required.

By definition, we have that $\psi^\op(a) = \psi(a) = \Tr(Qa) = \Tr(aQ)$ and so $\psi^\op$ also has density $Q$.
Thus $\sigma_z^\op(a) = Q^{-iz} a Q^{iz} = \sigma_{-z}(a)$, remembering that the multiplication in $B^\op$
is reversed. Thus
\[ w J^\op w^* \overline{\Lambda(a)} = w J^\op\Lambda^\op(a^*)
= w\Lambda^\op(\sigma^\op_{-i/2}(a)) = w\Lambda^\op(\sigma_{i/2}(a))
= \overline{\Lambda(\sigma_{i/2}(a)^*)} = \overline{ J\Lambda(a) }, \]
as claimed.  Similarly,
\[ w \nabla^\op w^*\overline{\Lambda(a)} = w \nabla^\op\Lambda^\op(a^*)
= w \Lambda^\op(Q^{-1} a^* Q) = \overline{ \Lambda(Q a Q^{-1}) }
= \overline{ \nabla\Lambda(a) }, \]
where $\overline\nabla = \nabla^\top$ as $\nabla$ is self-adjoint.
\end{proof}

In the sequel, we shall often suppress $w$, and simply identify $L^2(B^\op)$ with $\overline{L^2(B)}$.
We equip $B\otimes B^\op$ with the faithful functional $\psi\otimes\psi^\op$ and so $L^2(B\otimes B^\op)
\cong L^2(B) \otimes \overline{L^2(B)}$.  As usual, we give $\mc B(L^2(B))$ the functional $\Tr$, and this also
has GNS space $L^2(B) \otimes \overline{L^2(B)}$.

\begin{lemma}
Let $e\in B\otimes B^\op$ be a projection, and let $B\otimes B^\op$ act on $L^2(B) \otimes \overline{L^2(B)}$.
The image of $e$ agrees with the $B'\otimes {B'}^\op$-invariant subspace generated by
$(\Lambda\otimes\Lambda^\op)(e)$.
\end{lemma}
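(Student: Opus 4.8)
The plan is to recognise this as the standard fact that, in the GNS (standard) representation of a von Neumann algebra $M$, the range of a projection $e\in M$ equals the subspace generated from the GNS image of $e$ under the action of the commutant $M'$. Here $M=B\otimes B^\op$ acts on its GNS space $L^2(M)\cong L^2(B)\otimes\overline{L^2(B)}$ for the faithful functional $\psi\otimes\psi^\op$, with cyclic vector $\Omega=\Lambda(1)\otimes\Lambda^\op(1)$; the GNS map is $\Lambda\otimes\Lambda^\op$, so that $(\Lambda\otimes\Lambda^\op)(e)=e\Omega$. By Lemma~\ref{lem:app_tomita} the commutant is $M'=B'\otimes(B')^\op$.

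First I would note that, since $1\in M'$, the $M'$-invariant subspace generated by $e\Omega$ is simply the linear image $M'(e\Omega)=\{x(e\Omega):x\in M'\}$ (no closure is needed, as everything is finite-dimensional). Because $e\in M$ commutes with every element of $M'$, this equals $e(M'\Omega)$.

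The crux is then to show that $\Omega$ is cyclic for $M'$, i.e.\ $M'\Omega=L^2(M)$. I would establish this by direct computation rather than appealing to the abstract ``separating implies cyclic'' principle. Using $B'=JBJ$ from the Tomita--Takesaki discussion together with $J\Lambda(1)=\Lambda(1)$, we get $B'\Lambda(1)=JB\Lambda(1)=J\Lambda(B)=L^2(B)$; the identical argument in the opposite factor, via Lemma~\ref{lem:Bop_GNS} and the identity $(B')^\op=(B^\op)'$, yields $(B')^\op\Lambda^\op(1)=\overline{L^2(B)}$. As $M'=B'\otimes(B')^\op$, applying $M'$ to the simple tensor $\Omega$ spans the full algebraic tensor product $L^2(B)\otimes\overline{L^2(B)}=L^2(M)$.

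Combining these, $M'(e\Omega)=e(M'\Omega)=e\,L^2(M)=\im(e)$, which is the assertion. The only genuine content is the cyclicity of $\Omega$ for the commutant: in infinite dimensions this is the statement that the separating vector $\Omega$ (separating because $\psi\otimes\psi^\op$ is faithful) is cyclic for $M'$, but in the present finite-dimensional setting the explicit computation above renders it elementary.
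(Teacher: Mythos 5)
Your proof is correct, and it organises the argument along a genuinely different route from the paper's. The paper proves the lemma by a single direct computation: for $f=a\otimes b$ acting as $a\otimes b^\top$, it rewrites $f(\Lambda(x)\otimes\overline{\Lambda(y)})$ as an explicit element of $B'\otimes(B')^\op$ (a pair of twisted right multiplications of the form $J\sigma_{-i/2}(\,\cdot\,)J$, via equation~\eqref{eq:2}) applied to the vector $(\Lambda\otimes\Lambda^\op)(f)$; taking linear spans over $x,y$ then yields both inclusions at once, without ever naming a cyclic vector. You instead run the standard von Neumann algebra argument for $M=B\otimes B^\op$ in its GNS representation: you identify $(\Lambda\otimes\Lambda^\op)(e)$ with $e\Omega$ for the GNS vector $\Omega=\Lambda(1)\otimes\Lambda^\op(1)$ of $\psi\otimes\psi^\op$, note that the $M'$-invariant subspace generated by $e\Omega$ equals $M'e\Omega=eM'\Omega$ since $e$ commutes with $M'=B'\otimes(B')^\op$ (Lemma~\ref{lem:app_tomita}), and so reduce everything to cyclicity of $\Omega$ for $M'$, which you verify factor-wise from $B'=JBJ$, $J\Lambda(1)=\Lambda(1)$, and the corresponding facts for $B^\op$ supplied by Lemma~\ref{lem:Bop_GNS} together with $(B')^\op=(B^\op)'$. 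Both routes consume exactly the same Tomita--Takesaki input, and the paper's one-line formula is in effect your commutation-plus-cyclicity argument compressed into a single computation; the difference lies in what each isolates. Your decomposition buys conceptual clarity and a proof that survives verbatim in infinite dimensions after inserting closures (it is precisely the statement that a separating vector for $M$ is cyclic for $M'$, rendered elementary here by finite-dimensionality); the paper's computation buys explicitness, exhibiting the concrete commutant operators that carry $(\Lambda\otimes\Lambda^\op)(e)$ onto the image of $e$, and so stays entirely within elementary linear algebra.
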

\begin{proof}
Consider $f = a\otimes b\in B\otimes B^\op$ which acts as $a\otimes b^\top$ on $L^2(B) \otimes \overline{L^2(B)}$.
Thus, for $x,y\in B$,
\begin{align*}
f(\Lambda(x)\otimes\overline{\Lambda(y)}) &= \Lambda(ax) \otimes \overline{\Lambda(b^*y)}
= J\sigma_{-i/2}(x)J \Lambda(a) \otimes \overline{ J\sigma_{-i/2}(y)J \Lambda(b^*) } \\
&= \big( J\sigma_{-i/2}(x)J \otimes ( J\sigma_{-i/2}(y)^*J )^\top \big) (\Lambda\otimes\Lambda^\op)(f).
\end{align*}
By linearity, this relationship holds for all $f\in B\otimes B^\op$.
As $B' = JBJ$, taking the linear span over $x,y$, we conclude that the image of $e$ equals
$(B'\otimes (B')^\op) (\Lambda\otimes\Lambda^\op)(e)$, as claimed.
\end{proof}

\begin{proposition}\label{prop:A_generates_S}
Let $B$ act on $L^2(B)$, and let $A\in\mc B(L^2(B))$ be self-adjoint and satisfy axioms
(\ref{defn:quan_adj_mat:idem}) and (\ref{defn:quan_adj_mat:undir}) of Definition~\ref{defn:adj_op}.  Let $A$ be associated to the $B'$-operator
bimodule $S \subseteq \mc B(L^2(B))$, and the $B'\otimes{B'}^\op$-invariant subspace $V\subseteq
L^2(B) \otimes \overline{L^2(B)}$.  Set $A' = \nabla^{-1/2} A$.
Then $S$ is the $B'$-operator bimodule generated by $A'$, and $\Lambda_{\Tr}(A')$ is the projection of
$(\Lambda\otimes\Lambda^\op)(1)$ onto $V$.
\end{proposition}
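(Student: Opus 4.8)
The plan is to reduce everything to a direct rank-one computation and then read off both assertions from the results already established. By linearity it suffices to treat $A = \theta_{\Lambda(a),\Lambda(b)}$. Since $\nabla^{-1/2}$ acts on the range vector of $\theta_{\Lambda(a),\Lambda(b)}$, and $\nabla^{-1/2}\Lambda(b) = \Lambda(\sigma_{i/2}(b))$, I would compute $A' = \theta_{\Lambda(a),\Lambda(\sigma_{i/2}(b))}$. As $\Lambda_{\Tr}$ is the GNS map for $\Tr$ on $\mc B(L^2(B))$, sending $\theta_{\xi,\eta}$ to $\eta\otimes\overline\xi$, this gives $\Lambda_{\Tr}(A') = \Lambda(\sigma_{i/2}(b))\otimes\overline{\Lambda(a)}$. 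On the other hand $e = \Psi'_{1/2,0}(A) = \sigma_{i/2}(b)\otimes a^*$, so applying $\Lambda\otimes\Lambda^\op$ and then the identification $w$ of Lemma~\ref{lem:Bop_GNS} (under which $\Lambda^\op(a^*)\mapsto\overline{\Lambda(a)}$) yields $(\Lambda\otimes\Lambda^\op)(e) = \Lambda(\sigma_{i/2}(b))\otimes\overline{\Lambda(a)}$. Hence $\Lambda_{\Tr}(A') = (\Lambda\otimes\Lambda^\op)(e)$, and this identity extends to all $A$ by linearity. (By Remark~\ref{rem:psi_psi_prime_agree_A} the choice of $\Psi'_{1/2,0}$ over $\Psi_{0,1/2}$ is immaterial.)

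For the bimodule claim, I would apply the lemma immediately preceding this proposition: the image $V$ of $e$ (acting as $a\otimes b\mapsto a\otimes b^\top$) is the $B'\otimes(B')^\op$-invariant subspace generated by $(\Lambda\otimes\Lambda^\op)(e)$, which we have just identified with $\Lambda_{\Tr}(A')$. Transporting through the bijection $\mc B(L^2(B))\cong L^2(B)\otimes\overline{L^2(B)}$ afforded by $\Lambda_{\Tr}$ — under which the action $x\otimes y\mapsto x\otimes y^\top$ of $B'\otimes(B')^\op$ corresponds to $T\mapsto xTy$ on $\mc B(L^2(B))$, and $V$ corresponds to $S$ while the generating vector $\Lambda_{\Tr}(A')$ corresponds to $A'$ — I conclude that $S$ is exactly the $B'$-operator bimodule generated by $A'$.

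For the projection claim, the decisive point is that the GNS representation of $B\otimes B^\op$ on $L^2(B)\otimes\overline{L^2(B)}$ coincides with the representation $a\otimes b\mapsto a\otimes b^\top$; this is Lemma~\ref{lem:Bop_GNS}, which gives $w\pi^\op(b)w^* = \pi(b)^\top$, tensored with the identity on the first leg. This is a $*$-representation, and $e$ is a self-adjoint projection in $B\otimes B^\op$ by Theorem~\ref{thm:qu_ad_mats_axioms_12}, so $e$ acts on $L^2(B)\otimes\overline{L^2(B)}$ as the orthogonal projection onto $V = \im(e)$, that is, as precisely the projection named in the statement. Applying it to the cyclic vector $(\Lambda\otimes\Lambda^\op)(1)$ and using the intertwining property of the GNS map then gives
\[ e\big((\Lambda\otimes\Lambda^\op)(1)\big) = (\Lambda\otimes\Lambda^\op)(e) = \Lambda_{\Tr}(A'). \]

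The step requiring the most care is the bookkeeping that identifies the two incarnations of $L^2(B)\otimes\overline{L^2(B)}$ — once as the GNS space of $\psi\otimes\psi^\op$ on $B\otimes B^\op$ and once as the GNS space of $\Tr$ on $\mc B(L^2(B))$ — together with the verification that $e$ acts there as an \emph{orthogonal} projection rather than merely some idempotent; both rest on $e=e^*$ and on the representation being a $*$-representation. Once these identifications are pinned down, both assertions follow from the single rank-one calculation and the preceding lemma.
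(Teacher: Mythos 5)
Your proposal is correct and follows essentially the same route as the paper: the same rank-one computation showing $\Lambda_{\Tr}(\nabla^{-1/2}A) = (\Lambda\otimes\Lambda^\op)(\Psi'_{1/2,0}(A)) = (\Lambda\otimes\Lambda^\op)(e)$, the same appeal to the preceding lemma for the bimodule claim, and the same observation that $e$, being the orthogonal projection onto $V$, sends $(\Lambda\otimes\Lambda^\op)(1)$ to $(\Lambda\otimes\Lambda^\op)(e)$. Your extra care in checking that the GNS representation of $B\otimes B^\op$ coincides with the action $a\otimes b\mapsto a\otimes b^\top$ (via Lemma~\ref{lem:Bop_GNS}) is implicit in the paper but is a worthwhile point to make explicit.
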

\begin{proof}
We use $\Psi'_{1/2,0}$ as in Theorem~\ref{thm:qu_ad_mats_axioms_12}, so that $\theta_{\Lambda(a), \Lambda(b)}$
is associated to $\sigma_{i/2}(b) \otimes a^*$.  Notice that
\[ \Lambda_{\Tr}(\nabla^{-1/2} \theta_{\Lambda(a), \Lambda(b)})
= \Lambda_{\Tr}(\theta_{\Lambda(a), \Lambda(\sigma_{i/2}(b))})
= \Lambda(\sigma_{i/2}(b)) \otimes \overline{ \Lambda(a) }
= \Lambda(\sigma_{i/2}(b)) \otimes \Lambda^\op(a^*). \]
By linearity, we see that $\Lambda_{\Tr}(\nabla^{-1/2}A) = (\Lambda\otimes\Lambda^\op)\Psi'_{1/2,0}(A)$.

Use Theorem~\ref{thm:qu_ad_mats_axioms_12} to associate $A$ to the projection $e$.
Thus $\Lambda_{\Tr}(\nabla^{-1/2} A) = (\Lambda\otimes\Lambda^\op)(e)$, and so by the previous lemma,
$S$ is the $B'$-operator bimodule generated by $\nabla^{-1/2} A$.  The first claim follows.
Further, as $e$ is the projection onto $V$, the projection of
$(\Lambda\otimes\Lambda^\op)(1)$ onto $V$ is simply $e (\Lambda\otimes\Lambda^\op)(1) =
(\Lambda\otimes\Lambda^\op)(e)$, and so the second claim follows.
\end{proof}

\begin{remark}\label{rem:gen_by_A_itself}
There is an interesting link with Theorem~\ref{thm:reduce_tracial_case}, which we apply with $q=\nabla$.
Thus $S$ is associated to $S_0 = \nabla^{1/2}S$ which is a ``tracial'' quantum graph.  As $A' = \nabla^{-1/2} A$,
the proposition says that
\[ S_0 = \nabla^{1/2}S =  \nabla^{1/2} \lin \{ xA'y : x,y\in B' \}
= \lin \{ \nabla^{1/2} x \nabla^{-1/2} A y : x,y\in B' \}. \]
As $x\mapsto \nabla^{1/2} x \nabla^{-1/2}$ is a bijection of $B'$, it follows that $S_0$ is the $B'$-operator
bimodule generated by $A$.
\end{remark}

\subsection{Adjacency super-operators to matrices}\label{sec:super-ops}

In this section we show that Definitions~\ref{defn:adj_superop} and~\ref{defn:adj_op} are equivalent.
This was shown in \cite[Section~1.2]{cw} in the case when $B = \mathbb M_n$, using the \emph{Choi matrix}
of a completely positive map.  We shall give an account of how to extend this formalism to general $B$,
which might be of independent interest.  As such, we proceed with a little more generality than strictly needed.
We shall also use the language of Hilbert $C^*$-modules, see \cite{lance} for example, but we shall not
use any deep results from this theory.

In this section, let $C$ be an arbitrary $C^*$-algebra, and let $B,\psi$ be as usual.
In the usual way, regard $B^*$ as a bimodule over $B$.  In particular, for $a\in B$,  we denote by
$\psi a\in B^*$ the functional $b\mapsto \psi(ab)$.  Every $f\in B^*$ is of the form $\psi a$ for some $a\in B$.
This follows, as if not, by Hahn-Banach, there is a non-zero $b\in B^{**} = B$ with $0 = (\psi a)(b) = \psi(ab)$
for all $a$.  Choosing $a=b^*$ shows that $\psi(b^*b)=0$ so $b=0$, contradiction.  Furthermore, notice that
$B\rightarrow B^*, a\mapsto \psi a$ is injective, as $(\psi a)(a^*) = \psi(aa^*)=0$ only when $a=0$, again using
that $\psi$ is faithful.

As $B$ is finite-dimensional, $\mc B(B,C)$ is spanned by finite-rank operators, that is, the span of operators of
the form $\theta_{\psi a, c}$ for $a\in B, c\in C$, where $\theta_{\psi a, c}$ is the map $B\ni b \mapsto
\psi(ab) c \in C$.  In fact, it will be profitable to again introduce the modular automorphism group,
and consider the map
\[ \Psi':\mc B(B,C) \rightarrow C\otimes B^\op; \quad
\theta_{\psi a, c} \mapsto c\otimes \sigma_{-i/2}(a). \]

\begin{theorem}\label{thm:general_c-j}
$\Psi'$ restricts to a bijection between the completely positive maps $B\rightarrow C$, and positive
elements $e\in C\otimes B^\op$.
\end{theorem}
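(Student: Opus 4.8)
The plan is to show that $\Psi'$ is a linear bijection and then that, under it, complete positivity of $\Phi$ matches exactly positivity of $e = \Psi'(\Phi)$ in $C\otimes B^\op$. For the first point, note that both $(a,c)\mapsto\theta_{\psi a,c}$ and $(a,c)\mapsto c\otimes\sigma_{-i/2}(a)$ are bilinear and hence factor through $B\otimes C$; since $a\mapsto\psi a$ is a linear bijection $B\to B^*$ (shown just above) and $\sigma_{-i/2}$ is a linear bijection of $B$, these induce linear isomorphisms onto $\mc B(B,C)=B^*\otimes C$ and onto $C\otimes B^\op$ respectively, so $\Psi'$ is a well-defined linear bijection (a dimension count $\dim\mc B(B,C)=\dim C\cdot\dim B=\dim(C\otimes B^\op)$ confirms injectivity).

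For the main equivalence I would play off two concrete criteria. On the one hand, $\Phi\colon B\to C$ is completely positive precisely when $\sum_{i,j} c_i^*\Phi(b_i^*b_j)c_j\ge 0$ in $C$ for every finite family $b_1,\dots,b_n\in B$ and $c_1,\dots,c_n\in C$; this is the standard reformulation of positivity of the matrices $[\Phi(b_i^*b_j)]$ and needs no deep input. On the other hand, represent $B^\op$ faithfully on a finite-dimensional Hilbert space $H$ (for instance $H=L^2(B^\op)\cong\overline{L^2(B)}$ as in Lemma~\ref{lem:Bop_GNS}), so that $C\otimes B^\op\subseteq\mc B_C(C\otimes H)$ acts on the free Hilbert $C$-module $C\otimes H$. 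Then $e\ge 0$ in $C\otimes B^\op$ if and only if $e\ge 0$ as an adjointable operator, if and only if $\langle\xi,e\,\xi\rangle\ge 0$ in $C$ for every $\xi\in C\otimes H$ (the last step being the elementary fact that $\langle\xi,T\xi\rangle\ge 0$ for all $\xi$ forces $T\ge 0$ in a Hilbert module). Phrasing things this way keeps the argument valid for an arbitrary, possibly non-unital, $C$.

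The heart of the proof is then a direct computation matching these two expressions. Writing a general $\xi\in C\otimes H$ as $\xi=\sum_i c_i\otimes\overline{\Lambda(b_i)}$ and expanding $\langle\xi,e\,\xi\rangle$ using the action of $B^\op$ on $\overline{L^2(B)}$ from Lemma~\ref{lem:Bop_GNS}, I expect the twist $\sigma_{-i/2}$ built into $\Psi'$ to be exactly what makes $\langle\xi,e\,\xi\rangle$ reduce to $\sum_{i,j} c_i^*\Phi(b_i^*b_j)c_j$, up to the harmless relabelling $b_i\leftrightarrow\sigma_{\pm i/2}(b_i)$, which is a bijection of the index family and so leaves the ``for all families'' quantifier untouched. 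Granting this identity the two criteria coincide, and the equivalence between $\Phi$ being completely positive and $e\ge 0$ follows, simultaneously yielding both inclusions of the claimed bijection.

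The main obstacle will be the bookkeeping in this computation: keeping straight the reversed product and $*$-operation of $B^\op$, the conjugate space $\overline{L^2(B)}$, the transpose in $\pi^\op(a)=\pi(a)^\top$, and the half-modular shift $\sigma_{-i/2}$. It is precisely the symmetric placement of $Q^{1/2}$ and $Q^{-1/2}$ coming from $\sigma_{-i/2}$ that conjugates the naive Choi element into a genuinely positive one: in the factor case $B=\mathbb M_n$ one checks that $\Psi'(\Phi)$ is carried by the transpose identification $B^\op\cong\mathbb M_n$ to $(1\otimes Q^{-1/2})\,C_\Phi\,(1\otimes Q^{-1/2})$ with $C_\Phi=\sum_{k,l}\Phi(e_{kl})\otimes e_{kl}$ the usual Choi matrix, so positivity of $e$ is equivalent to positivity of $C_\Phi$ and the statement is Choi's theorem. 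Verifying that this alignment persists for a general, non-factor $B$, uniformly across the matrix blocks of $B^\op$, is the one place demanding genuine care.
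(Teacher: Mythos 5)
Your proposal is correct, and it takes a genuinely different route from the paper's. The paper proves both implications with Hilbert $C^*$-module machinery: for CP $\Rightarrow$ positive it runs the KSGNS construction for $A$ and recognises that the module inner product is implemented by $\sigma(e)$, then tests positivity in GNS representations of $C$; for positive $\Rightarrow$ CP it reverses this, defining a $C$-valued form from $e$, proving the left $B$-action on the resulting module is bounded, and reading off $A(b)=(u_0|\pi(b)u_0)_E$ as a dilation. You instead combine two standard facts --- $\Phi$ is CP iff $\sum_{i,j}c_i^*\Phi(b_i^*b_j)c_j\ge 0$ for all finite families, and $e\ge 0$ in $C\otimes B^\op\subseteq\mc L(C\otimes\overline{L^2(B)})$ iff $\langle\xi,e\,\xi\rangle\ge 0$ for all $\xi$ --- and match them by a single computation, so both implications fall out at once. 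That computation does work exactly as you anticipate: for $\Phi=\theta_{\psi a,c}$, $e=c\otimes\sigma_{-i/2}(a)$ and $\xi=\sum_i c_i\otimes\overline{\Lambda(b_i)}$, the action of $B^\op$ by transposes together with the KMS property and the $\sigma_z$-invariance of $\psi$ give
\[
\langle\xi,e\,\xi\rangle
=\sum_{i,j}c_i^*\,c\,c_j\,\psi\big(b_j^*\,\sigma_{-i/2}(a)\,b_i\big)
=\sum_{i,j}c_i^*\,c\,c_j\,\psi\big(a\,\sigma_{i/2}(b_i)\,\sigma_{i/2}(b_j)^*\big)
=\sum_{i,j}c_i^*\,\Phi\big(\tilde b_i^*\tilde b_j\big)\,c_j,
\]
where $\tilde b_i:=\sigma_{-i/2}(b_i^*)$, and by linearity the same identity holds for general $\Phi$. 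The one imprecision in your sketch is the relabelling: it must include an adjoint as well as the half-shift, i.e.\ it is $b\mapsto\sigma_{-i/2}(b^*)$ rather than $b\mapsto\sigma_{\pm i/2}(b)$, because the module computation naturally produces terms $\psi(a\,d_i\,d_j^*)$ with the stars on the right; since this map is still a bijection of $B$, the ``for all families'' quantifier is unaffected and the equivalence stands. Your approach buys elementarity and symmetry (one identity, both directions); what the paper's KSGNS route buys is structural information --- it identifies the dilation module of $A$ concretely in terms of $e$, which the paper exploits again later (Theorem~\ref{thm:super_op_to_op_new}, Remark~\ref{rem:cp_case_in_general}). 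Your fallback via Choi's theorem on each matrix block also works (complete positivity and positivity are both checked blockwise, and $\Psi'$ respects the block decomposition), with one small correction: under the transpose identification the element is $(1\otimes(Q^{-1/2})^\top)\,C_\Phi\,(1\otimes(Q^{-1/2})^\top)$, which of course does not affect the positivity equivalence.
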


The ``usual'' Choi matrix, compare \cite[Proposition~1.5.12]{bo} associates a CP map $A:\mathbb M_n\rightarrow
C$ with the positive operator $(A(e_{ij})) \in M_n(C)$.  Let us see this in our generalisation.
Set $B=\mathbb M_n$ and $\psi=\Tr$.
We identify $B^\op$ with $\mathbb M_n$ using the matrix transpose, and so our bijection identifies
$A=\theta_{\psi a, c}$ with $u = c\otimes a^\top \in C\otimes\mathbb M_n \cong M_n(C)$.
Then $A(e_{ij}) = \psi(ae_{ij}) c = a_{ji} c = (a^\top)_{ij} c = u_{ij}$, exactly as we expect.

To prove the theorem,
we shall use the language of Hilbert $C^*$-modules, in particular, the KSGNS construction, which we now
briefly recall from \cite[Chapter~5]{lance}.  Given a completely
positive map $A:B\rightarrow C$, we consider the algebraic tensor product $B\odot C$, and define on this a
(in general, degenerate) positive $C$-valued form by
\[ \big( b_1\otimes c_1 \big| b_2\otimes c_2 \big) = c_1^* A(b_1^*b_2) c_2. \]
The separation completion is denoted by $B\otimes_A C$, and we continue to denote by $b\otimes c$ the
equivalence class in $B\otimes_A C$ defined by this simple tensor.  Almost by definition, $B\otimes_A C$
is a (right) Hilbert $C^*$-module over $C$, and there is a $*$-homomorphism $\pi:B\rightarrow
\mc L(B\otimes_A C)$ given by $\pi(b)(b'\otimes c) = bb'\otimes c$.  Finally, there is
$V\in\mc L(C,B\otimes_A C)$ defined by $V(c) = 1\otimes c$, and then
\[ A(b) = V^*\pi(b)V \qquad (b\in B), \]
giving a canonical dilation for the CP map $A$.

\begin{proof}[Proof of Theorem~\ref{thm:general_c-j}]
Suppose that $A:B\rightarrow C$ is completely positive, and set $e=\Psi'(A)$.  Perform the KSGNS construction
to form $E=B\otimes_A C, \pi$ and $V$.  We wish to describe $E$ in terms of $e$.  For the moment
suppose that $A=\theta_{\psi a, c}$, so that $e=c\otimes \sigma_{-i/2}(a)$, and
\begin{align*} \big( b_1\otimes c_1 \big| b_2\otimes c_2 \big)_E &= c_1^* A(b_1^*b_2) c_2
= \psi(ab_1^*b_2) c_1^* c c_2 = \psi(\sigma_{i/2}(b_2) \sigma_{-i/2}(ab_1^*)) c_1^* c c_2 \\
&= \psi(\sigma_{i/2}(b_2) \sigma_{-i/2}(a) \sigma_{i/2}(b_1)^*) c_1^* c c_2 \\
&= (\psi^\op\otimes\id)\big( (\sigma_{i/2}(b_1) \otimes c_1)^* \sigma(e)
   (\sigma_{i/2}(b_2) \otimes c_2) \big),
\end{align*}
where the final product is formed in $B^\op\otimes C$ and $\sigma$ is again the swap map, and $\psi^\op$ is
as in Lemma~\ref{lem:Bop_GNS}.  By linearity, this relationship holds for general $A$.

Let $\mu$ be a state on $C$, with GNS construction $(L^2(C,\mu), \Lambda_\mu)$.  Given
$u=\sum_i \Lambda^\op(b_i)\otimes \Lambda_\mu(c_i) \in L^2(B^\op) \otimes L^2(C,\mu)$, we have
\begin{align*} (u|\sigma(e)u) &= \sum_{i,j} \big(\Lambda^\op(b_i)\otimes \Lambda_\mu(c_i)
   \big| \sigma(e) \Lambda^\op(b_j)\otimes \Lambda_\mu(c_j) \big) \\
&= \sum_{i,j} (\psi^\op\otimes\mu)\big( (b_i\otimes c_i)^* \sigma(e) (b_j\otimes c_j) \big) \\
&= \sum_{i,j} \mu\big( (\sigma_{-i/2}(b_i)\otimes c_i|\sigma_{-i/2}(b_j)\otimes c_j)_E \big) \geq 0.
\end{align*}
As this holds for all $u$, this shows that the image of $\sigma(e)$ in $\mc B(L^2(B^\op)\otimes L^2(C,\mu))$
is positive.  As $\mu$ was arbitrary, this shows that $\sigma(e)$, and hence $e$, is positive.

We now consider the converse, assume that $e = \Psi'(A)$ is positive, and aim to show that $A$ is
completely positive.  We copy the KSGNS construction, but now \emph{define}
a sesquilinear form on $B^\op\otimes C$ by
\[ \big( u_1 \big| u_2 \big) =
(\psi^\op\otimes\id)\big( (\sigma_{i/2}\otimes\id)(u_1)^* \sigma(e) (\sigma_{i/2}\otimes\id)(u_2) \big)
\qquad (u_1,u_2\in B^\op\otimes C). \]
This form is clearly positive, so we may let $E$ be the separation completion (as $B$ is finite-dimensional,
$E$ is in fact a quotient of $B\otimes C$).  

We shall repeatedly move between $B$ and $B^\op$, and it will be helpful to have notation to distinguish these.
We shall write the product in $B^\op$ by juxtaposition, and the product in $B$ by $a\cdot b$, for $a,b\in B$.
We now show that $\pi:B\rightarrow\mc L(E)$ defined by $\pi(b)(b'\otimes c) = b\cdot b'\otimes c$ is well-defined.
Notice that working in $B^\op\otimes C$, we have that $\pi(b)(b'\otimes c) = (b'\otimes c)(b\otimes 1)$, and so
\[ \|\pi(b)(u)\|_E^2 = (\psi^\op\otimes\id)\big(
(\sigma_{i/2}(b)^*\otimes 1)(\sigma_{i/2}\otimes\id)(u)^* \sigma(e)
   (\sigma_{i/2}\otimes\id)(u) (\sigma_{i/2}(b)\otimes 1)
\big). \]
As $\sigma(e)\in B^\op\otimes C$ is positive, there is $f\geq 0$ with $f^2=\sigma(e)$.
Set $x = f (\sigma_{i/2}\otimes\id)(u)$, say $x = \sum_i b_i \otimes c_i \in B^\op\otimes C$.  Then
\begin{align*} \|\pi(b)(u)\|_E^2 &=
\sum_{i,j} \psi^\op\big( \sigma_{i/2}(b)^* b_i^* b_j \sigma_{i/2}(b) \big) c_i^* c_j
= \sum_{i,j} \psi\big( \sigma_{i/2}(b) \cdot b_j \cdot b_i^* \cdot \sigma_{i/2}(b)^* \big) c_i^* c_j \\
&= \sum_{i,j} \psi\big( b_j \cdot b_i^* \cdot \sigma_{i/2}(b)^* \cdot \sigma_{-i/2}(b) \big) c_i^* c_j
= \sum_{i,j} \psi\big( b_j \cdot b_i^* \cdot \sigma_{-i/2}(b^*\cdot b) \big) c_i^* c_j \\
&= \sum_{i,j} \psi\big( \sigma_{i/2}(b_i^* \cdot \sigma_{-i/2}(b^*\cdot b)) \cdot \sigma_{-i/2}(b_j) \big) c_i^* c_j \\
&= \sum_{i,j} \psi\big( \sigma_{-i/2}(b_i)^* \cdot b^*\cdot b \cdot \sigma_{-i/2}(b_j) \big) c_i^* c_j.
\end{align*}
We can now pull out the $b^*\cdot b$ term, at the cost of a $\|b\|^2$ factor, and then reverse to calculation
to conclude that
\[ \|\pi(b)(u)\|_E^2 \leq \|b\|^2 \|u\|_E^2. \]
Thus $\pi$ is well-defined.
A similar calculation shows that $\pi$ is a $*$-homomorphism.

Without loss of generality, we may suppose that $C$ is unital.  Set $u_0 = 1\otimes 1\in E$, so that
\[ (u_0|\pi(b)u_0)_E = (\psi^\op\otimes\id)( \sigma(e)(\sigma_{i/2}(b)\otimes 1) ). \]
We wish to link this to the map $A$.  Suppose for the moment that $e = c\otimes a$ so that $A(a') =
\psi(\sigma_{i/2}(a) \cdot a') c$, and hence
\[ (\psi^\op\otimes\id)( \sigma(e)(\sigma_{i/2}(b)\otimes 1) )
= \psi(\sigma_{i/2}(b)\cdot a) c = \psi(\sigma_{i/2}(a)\cdot b) c = A(b). \]
By linearity, this relationship holds for all $e$, and hence $A(b) = (u_0|\pi(b)u_0)_E$ for all $b\in B$.
We conclude that $A$ is completely positive.
\end{proof}

The first part of the following is \cite[Proposition~2.23]{matsuda}, though we use the ideas developed
above.  It generalises \cite[Proposition~1.7]{cw}, which considers $B = \mathbb M_n$ with $\psi$ a trace.

\begin{theorem}\label{thm:super_op_to_op_new}
Let $A_0:B\rightarrow B$ be a linear map, and let $A:L^2(B)\rightarrow L^2(B)$ be the associated map.
Then $\Psi'(A_0) = \Psi'_{0,1/2}(A)$ in the notation of Section~\ref{sec:equiv}.
The following are equivalent:
\begin{enumerate}
\item\label{thm:super_op_to_op_new:one}
   $A_0$ is completely positive, with $m(A_0\otimes A_0)m^* = A_0$;
\item\label{thm:super_op_to_op_new:two}
   %$e = \Psi'_{1/2,0}(A)$ is real and idempotent.
   $A$ is real and satisfies axiom (\ref{defn:quan_adj_mat:idem}) of Definition~\ref{defn:adj_op}.
\end{enumerate}
We interpret the axioms of Definition~\ref{defn:adj_op} for $A_0$ in the obvious way.
The following are equivalent:
\begin{enumerate}[resume]
\item\label{thm:super_op_to_op_new:three}
   $A_0$ is completely positive, and satisfies axioms (\ref{defn:quan_adj_mat:idem}) and
   (\ref{defn:quan_adj_mat:undir}).
\item\label{thm:super_op_to_op_new:four}
   $A$ is self-adjoint, satisfying axioms (\ref{defn:quan_adj_mat:idem}) and
   (\ref{defn:quan_adj_mat:undir}).
\end{enumerate}
\end{theorem}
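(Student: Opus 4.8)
The plan is to route everything through the single element $e := \Psi'(A_0) \in B\otimes B^\op$ (taking $C=B$ in the notation of Section~\ref{sec:super-ops}) and to translate each hypothesis appearing in (\ref{thm:super_op_to_op_new:one})--(\ref{thm:super_op_to_op_new:four}) into an algebraic condition on $e$. First I would verify the stated identity $\Psi'(A_0) = \Psi'_{0,1/2}(A)$. By linearity it suffices to take $A_0 = \theta_{\psi a, c}$, so $A_0(b) = \psi(ab)c$; transporting through the GNS isomorphism $\Lambda$ gives $A = \theta_{\Lambda(a^*), \Lambda(c)}$, since $\theta_{\Lambda(a^*),\Lambda(c)}\Lambda(b) = \psi(ab)\Lambda(c)$. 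Then $\Psi'(A_0) = c\otimes\sigma_{-i/2}(a)$ by definition, while $\Psi'_{0,1/2}(A) = c\otimes\sigma_{i/2}(a^*)^*$, and these coincide using $\sigma_z(x)^* = \sigma_{\overline z}(x^*)$. I would also record that, under the identification of $B$ with $L^2(B)$, the maps $m,m^*$ of Section~\ref{sec:super_ops_defn} are exactly those of Definition~\ref{defn:adj_op}, so that $m(A_0\otimes A_0)m^* = A_0$ is \emph{literally} axiom~(\ref{defn:quan_adj_mat:idem}) for $A$.

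Next I would assemble a dictionary. Theorem~\ref{thm:general_c-j} gives: $A_0$ completely positive $\Leftrightarrow e\geq 0$. Lemma~\ref{lem:what_real_means_general} specialised to $t=0,s=1/2$ gives $\Psi'_{0,1/2}(A^r) = e^*$, so $A$ real $\Leftrightarrow e = e^*$. Proposition~\ref{prop:psiprimeiso}(\ref{prop:psiprimeiso:three}) gives axiom~(\ref{defn:quan_adj_mat:idem}) $\Leftrightarrow e^2 = e$. Finally, Proposition~\ref{prop:psiprimeiso}(\ref{prop:psiprimeiso:one}) and~(\ref{prop:psiprimeiso:two}) at $t=0,s=1/2$ give $A=A^* \Leftrightarrow e = (\sigma_{-i/2}\otimes\sigma_{-i/2})\sigma(e^*)$ and axiom~(\ref{defn:quan_adj_mat:undir}) $\Leftrightarrow e = (\sigma_{-i/2}\otimes\sigma_{-i/2})\sigma(e)$. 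Comparing the last two shows that, in the presence of axiom~(\ref{defn:quan_adj_mat:undir}), the condition $A=A^*$ reduces to $e=e^*$.

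With this dictionary both equivalences become purely algebraic statements about $e$. For (\ref{thm:super_op_to_op_new:one})$\Leftrightarrow$(\ref{thm:super_op_to_op_new:two}): condition (\ref{thm:super_op_to_op_new:one}) reads ``$e\geq 0$ and $e^2=e$'' and condition (\ref{thm:super_op_to_op_new:two}) reads ``$e=e^*$ and $e^2=e$''; these coincide because a positive element is self-adjoint while, conversely, a self-adjoint idempotent satisfies $e = e^*e\geq 0$ and is thus a (positive) orthogonal projection. For (\ref{thm:super_op_to_op_new:three})$\Leftrightarrow$(\ref{thm:super_op_to_op_new:four}): condition (\ref{thm:super_op_to_op_new:three}) reads ``$e\geq 0$, $e^2=e$, axiom~(\ref{defn:quan_adj_mat:undir})'' and condition (\ref{thm:super_op_to_op_new:four}) reads ``$A=A^*$, $e^2=e$, axiom~(\ref{defn:quan_adj_mat:undir})''; using the reduction noted above, in the presence of axiom~(\ref{defn:quan_adj_mat:undir}) we may replace $A=A^*$ by $e=e^*$, so (\ref{thm:super_op_to_op_new:four}) becomes ``$e=e^*$, $e^2=e$, axiom~(\ref{defn:quan_adj_mat:undir})'', and the same equivalence of ``self-adjoint idempotent'' with ``positive idempotent'' closes the loop.

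The computations in the first two paragraphs are routine once one tracks the modular twists carefully (the only real care needed is matching $\Psi'_{0,1/2}$, which is forced here, against the $\Psi'_{1/2,0}$ used in Theorem~\ref{thm:qu_ad_mats_axioms_12}, so one cannot simply quote that theorem but must re-derive the relevant lines from Proposition~\ref{prop:psiprimeiso}). The genuinely load-bearing point, which I would flag as the crux, is the elementary observation that over a $C^*$-algebra a self-adjoint idempotent is automatically positive. This is precisely what allows complete positivity of $A_0$ (an analytic-looking hypothesis) to be traded for reality, equivalently self-adjointness, of $A$ once the idempotency axiom is present, and it is the only place where the distinction between the ``completely positive'' and ``real/self-adjoint'' formulations actually collapses.
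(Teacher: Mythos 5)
Your proposal is correct and takes essentially the same route as the paper: both arguments translate every hypothesis into an algebraic condition on $\Psi'(A_0)=\Psi'_{0,1/2}(A)\in B\otimes B^\op$ (complete positivity becoming positivity via Theorem~\ref{thm:general_c-j}, axiom~(\ref{defn:quan_adj_mat:idem}) becoming idempotency via Proposition~\ref{prop:psiprimeiso}), and both collapse the two formulations using exactly the fact you flag as the crux, that a self-adjoint idempotent in a $C^*$-algebra is a positive projection. The only cosmetic difference is that the paper routes the reality/self-adjointness bookkeeping through $e=\Psi'_{1/2,0}(A)$ and its twist $f=(\sigma_{-i/2}\otimes\sigma_{-i/2})(e)$, citing Proposition~\ref{prop:what_real_means} and Remark~\ref{rem:real_vs_other_axioms}, whereas you re-derive the same identities directly at the parameters $(t,s)=(0,1/2)$ from Proposition~\ref{prop:psiprimeiso} and Lemma~\ref{lem:what_real_means_general}.
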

\begin{proof}
We apply Theorem~\ref{thm:general_c-j} with $C=B$, and so consider $\Psi':\mc B(B) \rightarrow B\otimes B^\op$
which maps $A_0=\theta_{\psi a^*,b}$ to $b\otimes\sigma_{i/2}(a)^*$.  The associated $A$ is
$\theta_{\Lambda(a), \Lambda(b)}$, and indeed $b\otimes\sigma_{i/2}(a)^* = \Psi'_{0,1/2}(A)$ as claimed.
As in the proof of Proposition~\ref{prop:what_real_means}
we set $f = (\sigma_{-i/2}\otimes\sigma_{-i/2})(e)$, where $e = \Psi'_{1/2,0}(A)$.
Then $e = \Psi'_{1/2,0}(A) = (\sigma_{i/2}\otimes\sigma_{i/2})\Psi'_{0,1/2}(A)$ and so $f = \Psi'(A_0)$.

We now see that (\ref{thm:super_op_to_op_new:one}) is equivalent to $A$ satisfying axiom
(\ref{defn:quan_adj_mat:idem}) and $\Psi'_{0,1/2}(A)$ being positive.  This is equivalent to $e=e^2$ and
$f\geq 0$, equivalently, $f$ being a projection.  By Proposition~\ref{prop:what_real_means},
this is equivalent to (\ref{thm:super_op_to_op_new:two}).

Then condition (\ref{thm:super_op_to_op_new:three}) is equivalent to $A$ being real, and satisfying
axioms (\ref{defn:quan_adj_mat:idem}) and (\ref{defn:quan_adj_mat:undir}).  As in
Remark~\ref{rem:real_vs_other_axioms} this is equivalent to condition (\ref{thm:super_op_to_op_new:four}).
\end{proof}

\begin{remark}\label{rem:cp_case_in_general}
We make links with \cite[Section~2]{bhinw}.  Here the authors consider the space $B\otimes_\psi B$,
which can be identified with the Hilbert $C^*$-module $L^2(B)\otimes B$, with the usual left action of $B$
on $L^2(B)$.  They define the ``quantum edge indicator'', which with our normalisation conventions, is
$\epsilon=(\id\otimes A_0)m^*(1)$ which agrees with $\Psi_{1,0}(A)$, compare Remark~\ref{rem:existing_bijects}.
From \cite[Proposition~2.3(2)]{bhinw} we see that it is natural to work with $B\otimes B^\op$, and then
\cite[Proposition~2.3(3)]{bhinw} considers the element $(\sigma_{i/2}\otimes\id)(\epsilon) =
\Psi_{1/2,0}(A)$.  This agrees with $\sigma\Psi'_{0,1/2}(A)$.  Under the assumption that
$m(A_0\otimes A_0)m^*=A_0$, we see that $e = \Psi'_{0,1/2}(A)$ is an idempotent, and thus we see that $e$ is
positive if and only if $(\sigma_{i/2}\otimes\id)(\epsilon)$ is self-adjoint.  Theorem~\ref{thm:general_c-j}
shows immediately that $A_0$ is completely positive if and only if $(\sigma_{i/2}\otimes\id)(\epsilon)$ is
self-adjoint, which hence yields a different proof of \cite[Proposition~2.3(3)]{bhinw}.

However, with reference to Proposition~\ref{prop:psiiso}, neither $\Psi_{1/2,0}$ nor $\Psi_{1,0}$
seems to give particularly ``nice'' equivalent properties on $e$ for the other axioms we might wish
$A$ (or $A_0$) to satisfy.
\end{remark}

\section{Constructions and Examples}

In this section, we discuss some examples, explore how quantum channels (for us, UCP maps) give rise to
quantum graphs, and also discuss various ways to construct new quantum graphs from existing ones.

\subsection{Examples}\label{sec:std_egs}

We consider our basic examples, Definitions~\ref{defn:complete_qg} and~\ref{defn:empty_qg}, and see
what the different realisations from Section~\ref{sec:main_equivs} are.  The following is clear: for
the complete quantum graph we obtain the ``maximal'' operator bimodule $\mc B(H)$.

\begin{proposition}\label{prop:equiv_complete_qg}
The complete quantum graph has $A=\theta_{\Lambda(1), \Lambda(1)}$ and this is associated to
$e = \Psi_{0,1/2}(A) = 1\otimes 1$, and to subspaces $H\otimes \overline H$ and $\mc B(H)$.
\end{proposition}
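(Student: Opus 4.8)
The plan is to compute $e=\Psi_{0,1/2}(A)$ directly from the defining formula, and then read off the associated subspaces through the correspondences established in Section~\ref{sec:main_equivs}. So I would begin by applying the formula $\Psi_{0,1/2}:\theta_{\Lambda(a),\Lambda(b)}\mapsto a^*\otimes\sigma_{i/2}(b)$ to $A=\theta_{\Lambda(1),\Lambda(1)}$, obtaining $e=1^*\otimes\sigma_{i/2}(1)$. Since $\sigma_z(a)=Q^{iz}aQ^{-iz}$, we have $\sigma_{i/2}(1)=Q^{-1/2}\,1\,Q^{1/2}=1$, whence $e=1\otimes 1$, as claimed. I would also note that $A=\theta_{\Lambda(1),\Lambda(1)}$ is self-adjoint and, as already remarked after Definition~\ref{defn:complete_qg}, satisfies all the axioms, so that the equivalences of Theorem~\ref{thm:qu_ad_mats_axioms_12} genuinely apply and $e$ is the relevant projection.

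Next I would identify the subspaces via Theorem~\ref{thm:non_sa_op_bimods}: letting $B\otimes B^\op$ act on $H\otimes\overline H$ with $a\otimes b$ acting as $a\otimes b^\top$, the projection $e$ has image the subspace $V$. Here $e=1\otimes 1$ acts as $1_H\otimes(1_H)^\top=1_{H\otimes\overline H}$, the identity operator, whose image is all of $H\otimes\overline H$; thus $V=H\otimes\overline H$, with complementary summand $W=\{0\}$. Transporting this back through the linear isomorphism $\mc B(H)\cong H\otimes\overline H$, $\theta_{\xi,\eta}\mapsto\eta\otimes\overline\xi$, the full space $V=H\otimes\overline H$ corresponds to the full operator bimodule $S=\mc B(H)$, which is exactly the complete quantum graph of Definition~\ref{defn:qg:basic_egs}.

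There is essentially no obstacle here: each step is a one-line computation once the formula for $\Psi_{0,1/2}$ and the observation that $e=1\otimes 1$ acts as the identity on $H\otimes\overline H$ are in hand. The only point requiring a moment's care is the evaluation $\sigma_{i/2}(1)=1$, which follows immediately from the definition $\sigma_z(a)=Q^{iz}aQ^{-iz}$; everything else is a direct application of the already-proved correspondences.
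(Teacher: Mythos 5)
Your proof is correct, and it follows exactly the route the paper has in mind: the paper states this proposition without proof (introducing it with ``the following is clear''), and what it considers clear is precisely your direct computation $\Psi_{0,1/2}(\theta_{\Lambda(1),\Lambda(1)})=1^*\otimes\sigma_{i/2}(1)=1\otimes 1$ together with the observation that $1\otimes 1$ acts as the identity on $H\otimes\overline H$, so its image is all of $H\otimes\overline H$, corresponding to $S=\mc B(H)$ under the isomorphism $\mc B(H)\cong H\otimes\overline H$. Your additional remark that $A$ satisfies all the axioms (so Theorem~\ref{thm:qu_ad_mats_axioms_12} applies) is the right justification for invoking the correspondences.
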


The empty quantum graph given by $A=(mm^*)^{-1}$ is much harder to work with, as ideally we would wish
to express $(mm^*)^{-1}$ as a span of operators of the form $\theta_{\Lambda(a), \Lambda(b)}$, and it
is not immediate how to do this.  However, recall from the discussion before Definition~\ref{defn:empty_qg}
that $mm^*$ is, in particular, an operator which is a central element in $B$.

%\begin{lemma}
%Let $(e_i)$ be vectors in $B$ such that $(\Lambda(e_i))$ is an orthonormal basis of $L^2(B)$.
%Set $a_0 = \sum_i e_i e_i^*\in B$.  Then $m^*\Lambda(a) = \sum_i \Lambda(ae_i) \otimes \Lambda(e_i^*)$ for each
%$a\in B$, and $mm^* = J \sigma_{-i/2}(a_0^*) J \in B'$.
%\end{lemma}
%\begin{proof}
%For $a\in B$, as $\Lambda(a) = \sum_i (\Lambda(e_i)|\Lambda(a)) \Lambda(e_i)$, also
%$a = \sum_i (\Lambda(e_i)|\Lambda(a)) e_i$, and hence $a^* = \sum_i (\Lambda(a)|\Lambda(e_i)) e_i^*$.
%Thus, for $b,c\in B$, we calculate that
%\begin{align*}
%\big( m^*\Lambda(a) \big| \Lambda(b) \otimes \Lambda(c) \big)
%&= (\Lambda(a)|\Lambda(bc)) = (\Lambda(b^*a)|\Lambda(c)), \\
%\sum_i \big( \Lambda(ae_i) \otimes \Lambda(e_i^*) \big| \Lambda(b) \otimes \Lambda(c) \big)
%&= \sum_i (\Lambda(e_i)|\Lambda(a^*b)) (\Lambda(e_i^*)|\Lambda(c))
%= (\Lambda(a^*b)|\Lambda(c)).
%\end{align*}
%The first claim follows.  Hence also
%\[ mm^*\Lambda(a) = \sum_i \Lambda(ae_ie_i^*) = \Lambda(aa_0) = J \sigma_{-i/2}(a_0^*) J \Lambda(a), \]
%which shows the second claim.
%\end{proof}

For the following, recall $u_0$ from Proposition~\ref{prop:qu_ad_mats_axiom_3}, which shows that
axiom (\ref{defn:quan_adj_mat:reflexive}) is equivalent to $u_0\in V$, and hence the characterisation
of $e$ as ``minimal'' is as expected.

\begin{proposition}\label{prop:equiv_empty_qg}
Let $A = (mm^*)^{-1}$ the empty quantum graph adjacency matrix.  This is associated to the operator
bimodule $S = B'Q^{-1/2} B'$, and subspace $V = (B'\otimes (B')^\op)(u_0)$, and hence projection $e$
which is the minimal projection in $B\otimes B^\op$ with $e(u_0)=u_0$.
\end{proposition}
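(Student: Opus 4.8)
The plan is to treat the three asserted descriptions as mutually equivalent packagings of a single object, and then to pin down which adjacency matrix realises it. Recall from Proposition~\ref{prop:qu_ad_mats_axiom_3} that under $\mc B(H)\cong H\otimes\overline H$ the vector $u_0$ corresponds to $Q^{-1/2}$, and that the reflexivity axiom is precisely $u_0\in V$, i.e.\ $Q^{-1/2}\in S$. By Corollary~\ref{corr:sa_op_bimods} and Theorem~\ref{thm:qu_ad_mats_axioms_12} the correspondence $e\leftrightarrow V\leftrightarrow S$ is order-preserving ($e\le e'$ iff $V\subseteq V'$ iff $S\subseteq S'$), and by the standard fact that the smallest projection in a von Neumann algebra $M$ whose range contains a given vector $\xi$ projects onto $\overline{M'\xi}$ (here $M=B\otimes B^\op$, $M'=B'\otimes(B')^\op$, by Lemma~\ref{lem:app_tomita}), the minimal projection $e$ with $e(u_0)=u_0$ projects onto $(B'\otimes(B')^\op)u_0=V$, which corresponds to $B'Q^{-1/2}B'=S$. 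Thus the three descriptions coincide automatically, and the entire content is the assertion that the empty graph $A=(mm^*)^{-1}$ sits at the bottom of this order.

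First I would check that $S_{\min}:=B'Q^{-1/2}B'$ is a genuine reflexive, symmetric quantum graph satisfying the extra condition of Theorem~\ref{thm:qu_ad_mats_axioms_12}(\ref{thm:qu_ad_mats_axioms_12:four}): it is visibly a self-adjoint $B'$-bimodule (as $Q^{-1/2}$ is self-adjoint and ${B'}^*=B'$), it contains $Q^{-1/2}$, and $QS_{\min}Q^{-1}=S_{\min}$ because $QB'Q^{-1}=B'$ (the conjugation argument from the proof of Theorem~\ref{thm:reduce_tracial_case}, applied with $q=Q$) while $QQ^{-1/2}Q^{-1}=Q^{-1/2}$. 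Hence the lower bound is attained; and since any reflexive $B'$-bimodule contains $Q^{-1/2}$, every such $S$ contains $S_{\min}$. So $S_{\min}$ is the minimum, and the projection it induces is the minimal valid reflexive projection.

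It remains to identify $A=(mm^*)^{-1}$ with this minimum, and here I would pass to the canonical representation $H=L^2(B)$. As computed before Definition~\ref{defn:empty_qg}, $mm^*$ is multiplication by the central element $z=\sum_k\Tr(Q_k^{-1})1_k$, so $A=(mm^*)^{-1}=\pi(z^{-1})$ is an invertible element of the centre $B\cap B'$, in particular of $B'$. Applying Remark~\ref{rem:gen_by_A_itself} with $q=\nabla$, the associated bimodule $S$ satisfies $\nabla^{1/2}S=\lin\{xAy:x,y\in B'\}=B'\pi(z^{-1})B'=B'$, whence $S=\nabla^{-1/2}B'=B'\nabla^{-1/2}B'$. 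Using $\nabla^{-1/2}=\pi(Q^{-1/2})\,JQ^{1/2}J$ with $JQ^{1/2}J\in JBJ=B'$ (or, more cheaply, noting that $B'\nabla^{-1/2}B'$ is again the minimal reflexive bimodule by Remark~\ref{rem:dont_need_to_use_Q_2}), this equals $B'Q^{-1/2}B'=S_{\min}$. Thus in the representation $L^2(B)$ the empty graph realises the minimal reflexive bimodule, so by order-preservation its pivot $e=\Psi_{0,1/2}(A)\in B\otimes B^\op$ is the minimal projection among those satisfying the reflexive-symmetric conditions; since that set of projections and the minimum in it are intrinsic to $B\otimes B^\op$ (the reflexivity condition being $m(\sigma_{-i/2}\otimes\id)(e)=1$, independent of $H$), the same $e$ reproduces $S=B'Q^{-1/2}B'$ and $V=(B'\otimes(B')^\op)u_0$ in every representation $B\subseteq\mc B(H)$.

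The main obstacle, as the surrounding text flags, is that $(mm^*)^{-1}$ does not present itself as a manageable combination of operators $\theta_{\Lambda(a),\Lambda(b)}$, so $\Psi_{0,1/2}$ cannot be applied to it head-on. The device that removes this is to compute in $H=L^2(B)$, where $(mm^*)^{-1}$ is merely central multiplication and the bimodule it generates is transparent via Proposition~\ref{prop:A_generates_S} and Remark~\ref{rem:gen_by_A_itself}; the representation-independence is then supplied for free by the order-preserving, $H$-free pivot $e\in B\otimes B^\op$ together with the minimality of $B'Q^{-1/2}B'$ established in each $\mc B(H)$.
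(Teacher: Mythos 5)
Your proposal is correct, and its computational heart coincides with the paper's own proof: work in $H=L^2(B)$, observe that $(mm^*)^{-1}$ is an invertible central element (hence lies in $B'$), apply the ``bimodule generated by $A$'' result (Proposition~\ref{prop:A_generates_S}, which you access via Remark~\ref{rem:gen_by_A_itself}) to get $S=\nabla^{-1/2}B'$, and then trade $\nabla^{-1/2}$ for $Q^{-1/2}$ modulo $B'$. The differences are in the packaging, and they cut both ways. For the last step the paper runs an explicit GNS computation, showing $y_1\nabla^{-1/2}y_2\Lambda(a)=Q^{-1/2}JzJ\Lambda(a)$ for a suitable $z\in B$ and then reversing it; you instead quote the single operator identity $\nabla^{-1/2}=\pi(Q^{-1/2})\,JQ^{1/2}J$ with $JQ^{1/2}J\in JBJ=B'$ invertible, which makes $\nabla^{-1/2}B'=Q^{-1/2}B'$ immediate---this is the same fact, but you should record its one-line verification ($\nabla^{-1/2}\Lambda(a)=\Lambda(Q^{-1/2}aQ^{1/2})$ and $JQ^{1/2}J\Lambda(a)=\Lambda(a\,\sigma_{-i/2}(Q^{1/2}))=\Lambda(aQ^{1/2})$) rather than assert it. What you add beyond the paper is the order-theoretic scaffolding: the paper's opening claims that ``we need only show $S=B'Q^{-1/2}B'$'' and that ``we may choose which Hilbert space $B$ acts on'' are left to the reader, whereas you justify them---the minimal projection in $M=B\otimes B^\op$ fixing $u_0$ is the projection onto $M'u_0=(B'\otimes(B')^\op)u_0$, which corresponds to $B'Q^{-1/2}B'$ under $\mc B(H)\cong H\otimes\overline H$; and since the family of admissible projections and its minimum are described by conditions intrinsic to $B\otimes B^\op$, the computation done in $L^2(B)$ propagates to every faithful representation. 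This costs two extra paragraphs but makes explicit the representation-independence and the passage from the bimodule statement to the statements about $V$ and $e$, which the paper compresses into its first sentence.
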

\begin{proof}
Given the equivalences of Theorem~\ref{thm:qu_ad_mats_axioms_12}, we need only show $S = B'Q^{-1/2} B'$.
Furthermore, we may choose which Hilbert space $B$ acts on, so set $H = L^2(B)$.
By Proposition~\ref{prop:A_generates_S}, $S$ is the $B'$-bimodule generated by $A' = \nabla^{-1/2} A$.
There is $x\in B'$ with $mm^* = x$.  Thus
$A = (mm^*)^{-1} = x^{-1}$, and so $S = B' \nabla^{-1/2} x^{-1} B' = B' \nabla^{-1/2} B'$ because $x^{-1}$
is invertible.

For $a\in B$ and $y_1,y_2\in B'$ there are $b_1,b_2\in B$ with $y_i = Jb_iJ$, and so
\begin{align*}
y_1 \nabla^{-1/2} y_2 \Lambda(a) 
&= y_1 \nabla^{-1/2} \Lambda(a \sigma_{-i/2}(b_2^*))
= y_1 \Lambda(\sigma_{i/2}(a) b_2^*) = \Lambda(\sigma_{i/2}(a) b_2^* \sigma_{-i/2}(b_1^*)) \\
&= Q^{-1/2} \Lambda(a Q^{1/2} b_2^* \sigma_{-i/2}(b_1^*)) = Q^{-1/2} JzJ \Lambda(a),
\end{align*}
for some $z\in B$, indeed, $\sigma_{-i/2}(z^*) = Q^{1/2} b_2^* \sigma_{-i/2}(b_1^*)$.  
Thus $S \subseteq Q^{-1/2}B'$, but clearly given an arbitrary $z$, we may set $b_1=1$ and
$b_2^* = Q^{-1/2} \sigma_{-i/2}(z^*)$ to obtain the opposite inclusion.  Thus $S = Q^{-1/2}B'
= B' Q^{-1/2} B'$, as $Q\in B$.
\end{proof}

Let $S_0$ be the tracial quantum graph associated to $S$, given by Theorem~\ref{thm:reduce_tracial_case}.
So $S_0 = Q^{1/2} S = Q^{1/2} B' Q^{-1/2} B' = B'$, as $Q\in B$ commutes with $B'$.  Thus $S_0=B'$ is
the empty quantum graph from Definition~\ref{defn:qg:basic_egs}, as we might expect.

\subsection{From Quantum Channels}\label{sec:from_quan_channels}

We now explore a class of examples (of quantum graphs in the sense of Definition~\ref{defn:quan_graph})
which come from quantum channels, see \cite{cch, duan, dsw, stahlke}.  These examples are amongst the
first which arose, and motivate much of the theory.  We shall generalise the constructions to arbitrary
finite-dimensional $C^*$-algebras, and at the same time streamline them, using the Stinespring representation
theorem directly, rather than passing via Kraus operators.

Quantum Channels are usually considered to be trace-preserving, completely positive maps between
matrix algebras.  From an operator algebraic perspective, this means they are most naturally considered to
be maps between (pre)duals of algebras: namely trace-class operators.  As such, we prefer to work with
the Banach space adjoints, and instead look at unital, completely positive maps.  We shall make comments
below where this leads to slightly different conventions.

\begin{remark}\label{rem:classical_channel_to_graph}
Classically we consider the following situation.  Let $X,Y$ be finite sets, and for each $x\in X$ imagine
we send $x$ down a noisy communication channel, so that $y\in Y$ may be received with probability $p(y|x)
\in [0,1]$.  Thus $\sum_y p(y|x)=1$ for each $x$.  This gives a map $\ell^1(X)\rightarrow\ell^1(Y)$, say
sending a unit basis vector $\delta_x$ to $\sum_y p(y|x) \delta_y$.  The adjoint is a (completely) positive,
unital map $\theta:\ell^\infty(Y)\rightarrow\ell^\infty(X)$ given by
\[ \theta(f)(x) = \sum_y p(y|x) f(y) \qquad (f\in \ell^\infty(Y)). \]
We say that $x_1,x_2\in X$ are ``confusable'' if there is some $y$ with both $p(y|x_1)>0$ and $p(y|x_2)>0$,
that is, non-zero chance that $y$ can be received if either of $x_1$ or $x_2$ is sent.  This establishes
a symmetric, reflexive relation on $X$, that is, a graph structure on $X$.  The associated
$\ell^\infty(X)$-bimodule is
\[ S = \lin \{ e_{x_1,x_2} : \exists\, y\in Y, \ p(y|x_1) p(y|x_2)>0 \} \subseteq \mc B(\ell^2(X)). \]
We shall see shortly how to express this $S$ using $\theta$ directly.
\end{remark}

We shall use a form of the Stinespring representation theorem, which follows \cite[Theorem~3.6, Chapter~IV]{tak1}.
For $C^*$-algebras $B,C$ with $B\subseteq\mc B(H)$, given a UCP map $\theta:C\rightarrow B$ there is a Hilbert space
$K$, a non-degenerate $*$-homomorphism $\pi:C\rightarrow\mc B(K)$, a normal unital $*$-homomorphism $\rho:
\theta(C)'\rightarrow\pi(C)'$, and an isometric linear operator $V:H\rightarrow K$ with
\[ \theta(c) = V^*\pi(c)V \quad (c\in C), \qquad
\rho(x)V=Vx, \quad (x\in \theta(C)'). \]
As $\theta(C)\subseteq B\subseteq\mc B(H)$, we can, and will, always restrict $\rho$ to $B'$.  If we have
the minimality condition that $K = \overline{\lin}\{ \pi(c)V\xi : c\in C, \xi\in H \}$, then this construction
is unique up to unitary conjugation in the obvious way.  When (as is our case) $B,H$ are finite-dimensional,
a minimal $K$ is also finite-dimensional.

The following will be immediate from more general considerations below in Section~\ref{sec:hm},
see Remark~\ref{rem:from_channel_as_pullback}, but we give the proof, as it will in part motivate the
more general constructions to be given later.

\begin{theorem}\label{thm:quan_chan_to_quan_graph}
Let $C$ be some $C^*$-algebra, let $B$ as usual be a finite-dimensional $C^*$-algebra with $B\subseteq\mc B(H)$
faithfully represented.  Given a UCP map $\theta:C\rightarrow B$, construct $V,\rho,\pi,K$ as above, and define
$\hat\theta:\pi(C)'\rightarrow\mc B(H)$ by $\hat\theta(y) = V^*yV$.  Then $S = \hat\theta(\pi(C)')$ is a
quantum graph over $B$ in the sense of Definition~\ref{defn:quan_graph}.  This definition of $S$ is independent
of the choice of $V,\rho,\pi,K$ (so in particular, $K$ is not required to be minimal).
\end{theorem}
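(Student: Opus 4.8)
The plan is to verify the defining properties of a quantum graph directly for $S = V^*\pi(C)'V$, and then treat independence of the dilation as a separate reduction-to-minimality argument. The first two properties are quick. Since $\pi(C)'$ is a von Neumann algebra, hence $*$-closed, we get $S^* = \{(V^*yV)^* : y\in\pi(C)'\} = \{V^*y^*V : y\in\pi(C)'\} = S$, so $S$ is self-adjoint. Unitality is where the isometry hypothesis enters: taking $y = 1_K \in \pi(C)'$ gives $V^*1_KV = V^*V = 1_H \in S$. Thus, beyond being a bimodule, $S$ is in fact undirected and has all loops.

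For the bimodule property over $B'$, the essential ingredient is the intertwining relation $\rho(x)V = Vx$ for $x\in B' \subseteq \theta(C)'$. Taking adjoints and using that $\rho$ is a $*$-homomorphism ($\rho(x)^* = \rho(x^*)$) yields $V^*\rho(x) = xV^*$ for all $x\in B'$. Hence, for $x_1,x_2\in B'$ and $s = V^*yV\in S$,
\[ x_1 s x_2 = V^*\rho(x_1)\,y\,\rho(x_2)V. \]
Because $\rho$ maps $B'$ into $\pi(C)'$ and the latter is an algebra, $\rho(x_1)y\rho(x_2)\in\pi(C)'$, so $x_1 s x_2 = \hat\theta(\rho(x_1)y\rho(x_2))\in S$. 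This is the step where the homomorphism $\rho$, rather than just $V,\pi,K$, is genuinely needed.

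For independence I reduce an arbitrary dilation to the minimal one. Set $K_0 = \overline{\lin}\{\pi(c)V\xi : c\in C,\ \xi\in H\}$ with $p\in\mc B(K)$ the orthogonal projection onto it. As $K_0$ is $\pi(C)$-invariant and $\pi(C)$ is $*$-closed, $p\in\pi(C)'$; and $V(H)\subseteq K_0$ gives $pV = V$. Writing $V_0\colon H\to K_0$ for the corestriction of $V$ and $\pi_0 = \pi(\cdot)|_{K_0}$, one checks that $(V_0,\pi_0,K_0)$ is a minimal Stinespring dilation of $\theta$. The crux is the identity $S = V_0^*\,\pi_0(C)'\,V_0$. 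Using $pV = V$ we have $V^*yV = V_0^*(pyp|_{K_0})V_0$ for each $y\in\pi(C)'$, and a direct computation using only $p\in\pi(C)'$ shows $pyp|_{K_0}\in\pi_0(C)'$; conversely any $z\in\pi_0(C)'$ extends as $z\oplus 0$ (with respect to $K = K_0\oplus K_0^\perp$) to an element of $\pi(C)'$ whose compression to $K_0$ is $z$. Hence $S$ coincides with the bimodule $S_0 = V_0^*\pi_0(C)'V_0$ attached to the minimal dilation.

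Finally, the minimal Stinespring dilation is unique up to a unitary $U$ with $U\pi_0(c) = \pi_0'(c)U$ and $UV_0 = V_0'$; conjugation by $U$ carries $\pi_0(C)'$ onto $\pi_0'(C)'$ and leaves $V_0^*(\cdot)V_0$ invariant, so $S_0$, and therefore $S$, depends only on $\theta$. The main obstacle is precisely this independence step: I must resist assuming $K$ is minimal and instead verify carefully that compressing the commutant $\pi(C)'$ to $K_0$ recovers exactly $\pi_0(C)'$. Pleasingly, this falls out of the two elementary inclusions above rather than requiring the full induction–reduction theorem for von Neumann algebras.
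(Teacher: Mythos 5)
Your proof is correct, and its first half coincides with the paper's own argument: self-adjointness and unitality follow from $V^*V=1$, and the bimodule property comes from $x_1V^*yVx_2=V^*\rho(x_1)y\rho(x_2)V$ with $\rho(x_i)\in\pi(C)'$, exactly as in the paper. Where you diverge is in the independence step, though the underlying mechanism is the same. The paper fixes a \emph{minimal} dilation $(V,\pi,K)$ and an arbitrary one $(\tilde V,\tilde\pi,\tilde K)$, constructs the canonical isometry $u:K\rightarrow\tilde K$ determined by $\pi(a)V\xi\mapsto\tilde\pi(a)\tilde V\xi$, and then transports commutant elements both ways ($y\mapsto u^*yu$ and $x\mapsto uxu^*$) to obtain $S=\tilde S$ in one self-contained argument. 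You instead compress the given arbitrary dilation onto its minimal subspace $K_0$ via the projection $p\in\pi(C)'$, prove that this compression carries $\pi(C)'$ \emph{onto} $\pi_0(C)'$ (your two inclusions, with the extension $z\oplus 0$ supplying surjectivity --- this is the finite, elementary fragment of the induction--reduction theorem you rightly note you do not need in full), and then quote uniqueness of minimal Stinespring dilations up to unitary equivalence. These are two presentations of one idea: the paper's $u$ has range exactly the minimal subspace of $\tilde K$, so conjugation by $u$ is your compression in disguise; your route is more modular (it cites the standard uniqueness theorem, which the paper in effect reproves while constructing $u$), while the paper's avoids any appeal to that theorem. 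One small point you should make explicit: the inclusion $V(H)\subseteq K_0$, hence $pV=V$, uses that $\pi$ is unital (equivalently, non-degenerate with $C$ unital), so that $V\xi=\pi(1)V\xi\in K_0$; this holds in the paper's setting, where $\theta$ is UCP and $\pi$ is non-degenerate, but it is not automatic for a bare triple $(V,\pi,K)$ and deserves a sentence.
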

\begin{proof}
As $V$ is an isometry, $V^*V=1$, and so $\hat\theta$ is UCP, from which it immediately follows that
$S$ is a self-adjoint unital subspace of $\mc B(H)$.  Given $x_1,x_2\in B'$, as $\rho(x_1),\rho(x_2)\in\pi(C)'$,
we find that
\[ x_1 V^* y_1 V x_2 = V^* \rho(x_1) y \rho(x_2) V \in S \qquad (y\in \pi(C)'). \]
Thus $S$ is a $B'$-operator bimodule.

To show that $S$ does not depend upon the Stinespring dilation chosen, we shall suppose that $K$ is minimal,
pick another dilation $\tilde V, \tilde\rho, \tilde\pi, \tilde K$, and show that the resulting $\tilde S$
agrees with $S$.  We claim that we may define $u:K \rightarrow \tilde K$ by $\pi(a)V\xi\mapsto \tilde\pi(a)
\tilde V\xi$.  As
\[ (\tilde\pi(a) \tilde V\xi|\tilde\pi(b) \tilde V\eta) = (\xi|\tilde V^* \tilde\pi(a^*b) \tilde V\eta)
= \theta(a^*b) = (\pi(a) V \xi | \pi(b)  V \eta), \]
(or more actually, repeating this calculation with linear spans)
it follows that $u$ is well-defined, and an isometry.  By minimality, $u$ is defined on all of $K$ (but $u$
may not be surjective).  Notice that $uV = \tilde V$, and it is easily verified that $\tilde\pi(c) u = u \pi(c)$ for
$c\in C$, and hence also $u^* \tilde\pi(c) = \pi(c) u^*$, as $\pi,\tilde\pi$ are $*$-homomorphisms.
Given $y\in\tilde\pi(C)'$ we find that
\[ \pi(c) u^*yu = u^* \tilde\pi(c) y u = u^* y \tilde\pi(c) u = u^*yu \pi(c)
\qquad (c\in C). \]
Thus $u^*yu \in \pi(C)'$ and so $S \ni V^* u^*yu V = \tilde V^* y \tilde V$, and hence $\tilde S \subseteq S$.
For the converse, consider now $x\in\pi(C)'$ and set $y=uxu^*$.  Similarly, we show that $y\in\tilde\pi(C)'$, and
as $u$ is an isometry, $u^*u=1$ so $u^*\tilde V = V$, hence $\tilde S \ni \tilde V^* y  \tilde V = V^* x V$,
so that $S\subseteq\tilde S$.
\end{proof}

\begin{remark}
Continuing Remark~\ref{rem:classical_channel_to_graph}, we may construct a Stinespring dilation for $\theta$
by setting $K = \ell^2(X)\otimes\ell^2(Y), V:e_x \mapsto \sum_y p(y|x)^{1/2} e_x\otimes e_y, \pi(a)=1\otimes a$
and $\rho(x) = x\otimes 1$ for $x\in \ell^\infty(X)'=\ell^\infty(X)$.  Then $\pi(\ell^\infty(Y))'
= \mc B(\ell^2(X)) \otimes \ell^\infty(Y)$, and so the associated (quantum) graph is
\begin{align*}
V^*( \mc B(\ell^2(X)) \otimes \ell^\infty(Y) )V
&= \lin\big\{ V^*(e_{x_1,x_2}\otimes e_{y,y}) V : x_1,x_2\in X, y\in Y \big\} \\
&= \lin\big\{ \sqrt{ p(y|x_1) p(y|x_2) } e_{x_1,x_2} : x_1,x_2\in X, y\in Y \big\},
\end{align*}
which agrees with the previously defined $S$.
\end{remark}

\begin{remark}\label{rem:quan_chan_eg_Kraus_ops}
Suppose that $C=\mc B(H')$ for some $H'$, and that $B=\mc B(H)$.  Any trace-preserving completely positive map
$\theta_0:B\rightarrow C$ is given by \emph{Kraus operators} $E_k:H\rightarrow H'$ with
\[ \theta_0(x) = \sum_k E_k x E_k^* \qquad (x\in B=\mc B(H)), \]
with $\sum_k E_k^*E_k=1$.  Indeed, the linear map $\theta:C\rightarrow B$ which satisfies
$\Tr(\theta(x)y) = \Tr(x\theta_0(y))$ for $x\in C, y\in B$ is completely positive, and unital if
only if $\theta_0$ is trace-preserving, see Proposition~\ref{prop:ac_adjoints} below.
So $\theta$ has a Stinespring dilation $\theta(x) = V^*\pi(x)V$
for some $\pi:C\rightarrow\mc B(K)$.  As $C = \mc B(H')$, necessarily $K\cong H'\otimes K'$ with
$\pi(x)=x\otimes 1$.  Pick an orthonormal basis $(e_k)_{k=1}^n$ for $K'$, and choose $E_k:H\rightarrow H'$ with
\[ V(\xi) = \sum_k E_k(\xi)\otimes e_k \qquad (x\in H). \]
Then $V^*\pi(x)V = V^*(x\otimes 1)V = \sum_k E_k^* x E_k$, so $\theta_0$ has the stated form, and $V^*V=1$
so $\sum_k E_k^*E_k=1$.

Then $\pi(C)' = \mathbb C\otimes\mathbb B(K') \cong \mathbb C \otimes \mathbb M_n$ and
\[ \hat\theta(e_{k,l}) = V^*(1\otimes e_{k,l})V = E_k^* E_l. \]
Thus $S = \lin\{ E_k^* E_l \}$, in agreement of \cite[Section~IV]{dsw}.  By contrast, \cite{stahlke}
considers a different convention, which we discuss below after Proposition~\ref{prop:complements_quan_graph}.
\end{remark}

When $B=\mc B(H)$, it is shown in \cite[Lemma~2]{duan} that every quantum graph over $B$ arises from
some UCP map.  We have been unable to decide if this remains true for general $B$.

\subsection{The no-loops condition}\label{sec:no_loops}

In this section, we compare the ``loops'' condition $m(A\otimes 1)m^*=1$ with the ``no loops'' condition
$m(A\otimes 1)m^*=0$.  We also see what this means for the operator bimodule picture of quantum graphs.
Into such considerations also comes the non-commutative generalisation of taking a graph complement, which
we consider first.

Recall that, when we view $B$ as a direct sum of matrix algebras, the operator $mm^*$ is a direct sum of scalar
multiples of the identity (equivalently, $mm^*$ is in the centre of $B$) and is invertible, compare the
discussion around Definition~\ref{defn:empty_qg}.  We state the following for axioms
(\ref{defn:quan_adj_mat:reflexive}) and (\ref{defn:quan_adj_mat:irrefl}) of Definition~\ref{defn:adj_op}, but we know that in the
presence of the other axioms, these are equivalent to axioms (\ref{defn:quan_adj_mat:reflexive:other}) and
(\ref{defn:quan_adj_mat:irrefl:other}) respectively.

\begin{proposition}\label{prop:to_no_loops}
Let $A\in\mc B(L^2(B))$ be self-adjoint and define $A' = A - (mm^*)^{-1}$ which is also self-adjoint.
The following are equivalent:
\begin{enumerate}
\item\label{prop:to_no_loops:one}
  $A$ satisfies axioms (\ref{defn:quan_adj_mat:idem}) and
  (\ref{defn:quan_adj_mat:undir}) of Definition~\ref{defn:adj_op}, and (\ref{defn:quan_adj_mat:reflexive}), namely $m(A\otimes 1)m^*=1$;
\item\label{prop:to_no_loops:two}
  $A'$ satisfies axioms (\ref{defn:quan_adj_mat:idem}) and
  (\ref{defn:quan_adj_mat:undir}), and (\ref{defn:quan_adj_mat:irrefl}), namely $m(A'\otimes 1)m^*=0$.
\end{enumerate}
\end{proposition}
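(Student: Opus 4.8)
The plan is to treat $P := (mm^*)^{-1}$, the empty quantum graph adjacency matrix, as the basic object being subtracted. Recall from the discussion after Definition~\ref{defn:empty_qg} that $mm^*$ is multiplication by a positive invertible \emph{central} element $z\in B$, so that $P$ is multiplication by $z^{-1}$; in particular $P$ is self-adjoint (hence so is $A'=A-P$), central, and satisfies all of the axioms (\ref{defn:quan_adj_mat:idem})--(\ref{defn:quan_adj_mat:reflexive:other}). First I would record the multiplicative identities that centrality buys us. Since $z^{-1}$ is central, for $x,y\in B$ one has $m(1\otimes P)(\Lambda(x)\otimes\Lambda(y)) = \Lambda(xz^{-1}y) = \Lambda(z^{-1}xy) = P\,m(\Lambda(x)\otimes\Lambda(y))$, whence $m(1\otimes P)=Pm=m(P\otimes 1)$ as maps $L^2(B)\otimes L^2(B)\to L^2(B)$. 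As $X\otimes 1$ and $1\otimes P$ commute, this yields, for every $X\in\mc B(L^2(B))$,
\[ m(X\otimes P)m^* = P\,m(X\otimes 1)m^*, \qquad m(P\otimes X)m^* = P\,m(1\otimes X)m^*. \]
Taking $X=1$ and using $mm^*=P^{-1}$ gives $m(P\otimes 1)m^* = m(1\otimes P)m^* = 1$ and $m(P\otimes P)m^* = P$.

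With these in hand the easy axioms transfer immediately. Since $P$ is self-adjoint and, being the empty graph, satisfies (\ref{defn:quan_adj_mat:undir}) and (\ref{defn:quan_adj_mat:undir:other}), and since these conditions are \emph{linear} in the operator, $A$ is self-adjoint and satisfies (\ref{defn:quan_adj_mat:undir})/(\ref{defn:quan_adj_mat:undir:other}) if and only if $A'$ does. For the loop axioms, the identities above give $m(A'\otimes 1)m^* = m(A\otimes 1)m^* - 1$ and $m(1\otimes A')m^* = m(1\otimes A)m^* - 1$, so axiom (\ref{defn:quan_adj_mat:reflexive}) for $A$ is equivalent to axiom (\ref{defn:quan_adj_mat:irrefl}) for $A'$, and (\ref{defn:quan_adj_mat:reflexive:other}) for $A$ is equivalent to (\ref{defn:quan_adj_mat:irrefl:other}) for $A'$.

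The real content is the idempotency axiom, and here I would expand, using the identities,
\[ m(A'\otimes A')m^* = m(A\otimes A)m^* - P\,m(A\otimes 1)m^* - P\,m(1\otimes A)m^* + P. \]
If \emph{both} loop axioms hold for $A$, that is $m(A\otimes 1)m^* = m(1\otimes A)m^* = 1$, the last three terms collapse to $-P$, giving $m(A'\otimes A')m^* - A' = m(A\otimes A)m^* - A$. Thus the ``defect'' of axiom (\ref{defn:quan_adj_mat:idem}) is preserved under $A\mapsto A'$, so (\ref{defn:quan_adj_mat:idem}) holds for $A$ if and only if it holds for $A'$, \emph{provided} $A$ satisfies both (\ref{defn:quan_adj_mat:reflexive}) and (\ref{defn:quan_adj_mat:reflexive:other}).

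The one point needing care, and the main obstacle, is supplying these two loop axioms without circularity, since a priori only one of them is assumed in each direction. Going from (\ref{prop:to_no_loops:one}) to (\ref{prop:to_no_loops:two}): $A$ satisfies (\ref{defn:quan_adj_mat:idem}), (\ref{defn:quan_adj_mat:undir}) and (\ref{defn:quan_adj_mat:reflexive}), so Proposition~\ref{prop:qu_ad_mats_axiom_3} applies and delivers (\ref{defn:quan_adj_mat:reflexive:other}) as well; the defect identity then forces (\ref{defn:quan_adj_mat:idem}) for $A'$, while the earlier steps give its self-adjointness, (\ref{defn:quan_adj_mat:undir}) and (\ref{defn:quan_adj_mat:irrefl}). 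Conversely, from (\ref{prop:to_no_loops:two}): $A'$ satisfies (\ref{defn:quan_adj_mat:idem}), (\ref{defn:quan_adj_mat:undir}) and (\ref{defn:quan_adj_mat:irrefl}), so Proposition~\ref{prop:qu_ad_mats_axiom_5} yields (\ref{defn:quan_adj_mat:irrefl:other}); translating back through $m(\cdot\otimes 1)m^*$ and $m(1\otimes\cdot)m^*$ gives both (\ref{defn:quan_adj_mat:reflexive}) and (\ref{defn:quan_adj_mat:reflexive:other}) for $A$, and the defect identity then promotes axiom (\ref{defn:quan_adj_mat:idem}) from $A'$ back to $A$. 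In each direction the operator whose idempotency I invoke to run the relevant equivalence (Proposition~\ref{prop:qu_ad_mats_axiom_3} or~\ref{prop:qu_ad_mats_axiom_5}) is the one already assumed to satisfy it, so no circularity arises.
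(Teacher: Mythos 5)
Your proof is correct and follows essentially the same route as the paper's: transfer the linear axioms, convert the loop axioms via $m((mm^*)^{-1}\otimes 1)m^*=1$, expand $m(A'\otimes A')m^*$, and in each direction invoke Proposition~\ref{prop:qu_ad_mats_axiom_3} (resp.\ Proposition~\ref{prop:qu_ad_mats_axiom_5}) to upgrade the assumed loop axiom to its ``other'' version before applying the expansion. The only difference is cosmetic: you exploit centrality of $(mm^*)^{-1}$ to pull it through $m$ globally, where the paper restricts to the matrix blocks $H_k$ on which $(mm^*)^{-1}$ acts as the scalar $\delta_k^2$ --- the same fact in different clothing.
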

\begin{proof}
We know that $(mm^*)^{-1}$ is itself a quantum adjacency matrix, see after Definition~\ref{defn:empty_qg}.
As the axiom (\ref{defn:quan_adj_mat:undir}) is linear in $A$, clearly $A$ satisfies axiom
(\ref{defn:quan_adj_mat:undir}) if and only if $A'$ does.  Similarly, $m(A\otimes 1)m^*=1$ if and only if
$m(A'\otimes 1)m^*=0$; and $m(1\otimes A)m^*=1$ if and only if $m(1\otimes A')m^*=0$.  Recall that in
the presence of the other axioms, $A$ satisfies $m(A\otimes 1)m^*=1$ if and only if $m(1\otimes A)m^*=1$.

It hence remains to consider axiom (\ref{defn:quan_adj_mat:idem}).  As $B$ is a direct sum of $n$ matrix
algebras, also $L^2(B)$ is the direct sum of Hilbert spaces $(H_k)_{k=1}^n$, and $m, m^*$ and hence $(mm^*)^{-1}$
all respect this decomposition.  Let $\xi\in H_k$, so $m^*(\xi) \in H_k\otimes H_k$.  There is a scalar
$\delta_k>0$ with $(mm^*)^{-1}$ equal to $\delta_k^2$ on $H_k$.  Hence
\begin{align*}
m(A'\otimes A')m^*(\xi) &= m(A\otimes A)m^*(\xi) - m((mm^*)^{-1}\otimes A)m^*(\xi)
  \\ &\qquad\qquad- m(A\otimes (mm^*)^{-1})m^*(\xi) + m((mm^*)^{-1}\otimes (mm^*)^{-1})m^*(\xi) \\
&= A(\xi) - \delta_k^2 m(1\otimes A)m^*(\xi) - \delta_k^2 m(A\otimes 1)m^*(\xi)
  + \delta_k^2 \xi
\end{align*}
as $mm^*(\xi) = \delta_k^{-2} \xi$.  Thus, if (\ref{prop:to_no_loops:one}) holds,
then also $m(1\otimes A)m^*=1$, so $m(A'\otimes A')m^*(\xi) = A'(\xi)$ for each $\xi\in H_k$, for each $k$.
Thus (\ref{prop:to_no_loops:two}) holds.

Conversely, let (\ref{prop:to_no_loops:two}) hold.  By Proposition~\ref{prop:qu_ad_mats_axiom_5}, also
$m(1\otimes A')m^*=0$.  Using that $A = A' + (mm^*)^{-1}$ an analogous calculation to the above shows that
\[ m(A\otimes A)m^*(\xi)
= A'(\xi) + \delta_k^2 m(1\otimes A')m^*(\xi) + \delta_k^2 m(A'\otimes 1)m^*(\xi)
  + \delta_k^2 \xi, \]
and so this equals $A'(\xi) + \delta_k^2 \xi$, for $\xi\in H_k$.  As $k$ was arbitrary, $m(A\otimes A)m^* = A$,
and so (\ref{prop:to_no_loops:one}) holds.
\end{proof}

The previous proposition shows that we can move between ``all loops'' and ``no loops''.  We next look at the
notion of a ``complement'', which classically would take an ``all loops'' graph to a ``no loops'' graph.  We
will discuss an intermediate process, which preserves ``all loops'', respectively, ``no loops'', below.

\begin{proposition}
Let $A\in\mc B(L^2(B))$ be self-adjoint and define $A'' = \theta_{\Lambda(1),\Lambda(1)} - A$ which is also
self-adjoint.  The following are equivalent:
\begin{enumerate}
\item $A$ satisfies axioms (\ref{defn:quan_adj_mat:idem}) and
  (\ref{defn:quan_adj_mat:undir}) of Definition~\ref{defn:adj_op}.
\item $A''$ satisfies axioms (\ref{defn:quan_adj_mat:idem}) and
  (\ref{defn:quan_adj_mat:undir}).
\end{enumerate}
Furthermore, $A$ satisfies axiom (\ref{defn:quan_adj_mat:reflexive}) if and only if $A''$ satisfies axiom
(\ref{defn:quan_adj_mat:irrefl}); and $A$ satisfies axiom (\ref{defn:quan_adj_mat:irrefl}) if and only if $A''$
satisfies axiom (\ref{defn:quan_adj_mat:reflexive}).
\end{proposition}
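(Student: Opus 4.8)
The plan is to treat $C=\theta_{\Lambda(1),\Lambda(1)}$, the complete quantum graph of Definition~\ref{defn:complete_qg}, as a fixed background term and exploit the (bi)linearity of the maps appearing in the axioms. Write $A''=C-A$; since $C$ and $A$ are self-adjoint, so is $A''$. Recall from the discussion after Definition~\ref{defn:complete_qg} that $C$ satisfies all of the axioms \ref{defn:quan_adj_mat:idem} through \ref{defn:quan_adj_mat:reflexive:other}; in particular $m(C\otimes C)m^*=C$, the left-hand side of axiom \ref{defn:quan_adj_mat:undir} evaluated at $C$ equals $C$, and $m(C\otimes 1)m^*=1$. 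The single nontrivial ingredient I need is the identity
\[ m(C\otimes A)m^* = A = m(A\otimes C)m^* \qquad (A\in\mc B(L^2(B))), \]
which expresses classically that the Schur product of any matrix with the all-ones matrix is the identity operation.

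I would establish this identity in either of two ways. Using $\Psi=\Psi_{0,0}$ and Proposition~\ref{prop:psiiso}(\ref{prop:psiiso:three}): since $\Psi(C)=1\otimes 1$ is the unit of $B\otimes B^\op$, we get $\Psi(m(C\otimes A)m^*)=\Psi(A)(1\otimes 1)=\Psi(A)$ and symmetrically $\Psi(m(A\otimes C)m^*)=(1\otimes 1)\Psi(A)=\Psi(A)$, and $\Psi$ is a bijection. Alternatively, argue directly from $C\Lambda(a)=\psi(a)\Lambda(1)$ together with the unitality identities $(\eta^*\otimes 1)m^*=1=(1\otimes\eta^*)m^*$ (the adjoints of $m(\eta\otimes 1)=1=m(1\otimes\eta)$). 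Granting the identity, the idempotency axiom \ref{defn:quan_adj_mat:idem} follows by expanding bilinearly:
\[ m(A''\otimes A'')m^* = m(C\otimes C)m^* - m(C\otimes A)m^* - m(A\otimes C)m^* + m(A\otimes A)m^* = C - 2A + m(A\otimes A)m^*, \]
and since $A''=C-A$, this equals $A''$ precisely when $m(A\otimes A)m^*=A$; hence $A''$ satisfies axiom \ref{defn:quan_adj_mat:idem} if and only if $A$ does.

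The remaining axioms are genuinely linear in their argument, so $C$ merely contributes its known value. The map $A\mapsto(1\otimes\eta^* m)(1\otimes A\otimes 1)(m^*\eta\otimes 1)$ is linear, so its value at $A''=C-A$ is $C$ minus its value at $A$, which equals $A''=C-A$ if and only if its value at $A$ equals $A$; thus $A''$ satisfies axiom \ref{defn:quan_adj_mat:undir} if and only if $A$ does. Likewise $A\mapsto m(A\otimes 1)m^*$ is linear with value $1$ at $C$, so $m(A''\otimes 1)m^* = 1 - m(A\otimes 1)m^*$; reading off the two cases gives $m(A''\otimes 1)m^*=0\iff m(A\otimes 1)m^*=1$ and $m(A''\otimes 1)m^*=1\iff m(A\otimes 1)m^*=0$, which are exactly the two ``furthermore'' equivalences between axioms \ref{defn:quan_adj_mat:reflexive} and \ref{defn:quan_adj_mat:irrefl}. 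The only step requiring genuine care is the Schur identity for $C$; everything else is routine bookkeeping with bilinear and linear maps.
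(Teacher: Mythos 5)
Your proposal is correct and follows essentially the same route as the paper: both reduce the linear axioms to the known values at $C=\theta_{\Lambda(1),\Lambda(1)}$, and both handle axiom (\ref{defn:quan_adj_mat:idem}) by bilinear expansion together with the key identity $m(C\otimes A)m^*=A=m(A\otimes C)m^*$. Your derivation of that identity via $\Psi_{0,0}$ and Proposition~\ref{prop:psiiso}(\ref{prop:psiiso:three}) (where $\Psi(C)=1\otimes1$ is the unit of $B\otimes B^\op$) is just a repackaging of the paper's direct rank-one computation, so the two arguments coincide in substance.
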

\begin{proof}
As remarked after Definition~\ref{defn:complete_qg}, $A_0=\theta_{\Lambda(1),\Lambda(1)}$ is itself a quantum
adjacency matrix.  By linearity of the axioms (\ref{defn:quan_adj_mat:undir}) through
(\ref{defn:quan_adj_mat:irrefl:other}), we need only consider axiom (\ref{defn:quan_adj_mat:idem}).
Now observe that
\[ m(A''\otimes A'')m^* = m(A_0\otimes A_0)m^* - m(A_0\otimes A)m^* - m(A\otimes A_0)m^*
  + m(A\otimes A)m^*, \]
where we know that $m(A_0\otimes A_0)m^* = A_0$.  For any $a,b\in A$,
\[ m(A_0\otimes\theta_{\Lambda(a),\Lambda(b)})m^*
= m(\theta_{\Lambda(1),\Lambda(1)}\otimes\theta_{\Lambda(a),\Lambda(b)})m^*
= \theta_{\Lambda(1a), \Lambda(1b)} = \theta_{\Lambda(a), \Lambda(b)}, \]
so by linearity, $m(A_0\otimes A)m^* = A$, and similarly $m(A\otimes A_0)m^*=A$.  Thus
\[ m(A''\otimes A'')m^* = A_0 - 2 A + m(A\otimes A)m^*, \]
and from this it follows that $m(A''\otimes A'')m^* = A'' = A_0-A$ if and only if $m(A\otimes A)m^*=A$.
\end{proof}

We can finally define suitable notions of ``complement''.

\begin{corollary}\label{corr:adj_mat_complement}
Let $A\in\mc B(L^2(B))$ and define $A_c = \theta_{\Lambda(1),\Lambda(1)} + (mm^*)^{-1} - A$.
Then $A$ is a quantum adjacency matrix if and only if $A_c$ is.
\end{corollary}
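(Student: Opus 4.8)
The plan is to realise $A_c$ as the composite of the two transformations appearing in the two preceding results, and then simply chase the axioms through. Concretely, I would set $A' = A - (mm^*)^{-1}$, the operator from Proposition~\ref{prop:to_no_loops}, and observe the algebraic identity $A_c = \theta_{\Lambda(1),\Lambda(1)} - A'$; that is, $A_c$ is exactly the ``complement'' of $A'$ in the sense of the preceding proposition. Since $mm^*$ is central, positive and invertible, both $(mm^*)^{-1}$ and $\theta_{\Lambda(1),\Lambda(1)}$ are self-adjoint, so $A_c$ is self-adjoint precisely when $A$ is; this disposes of the self-adjointness requirement in Definition~\ref{defn:adj_op}.

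For the forward implication, suppose $A$ is a quantum adjacency matrix, so $A$ is self-adjoint and satisfies axioms (\ref{defn:quan_adj_mat:idem}), (\ref{defn:quan_adj_mat:undir}) and (\ref{defn:quan_adj_mat:reflexive}). Proposition~\ref{prop:to_no_loops} then yields that $A'$ satisfies (\ref{defn:quan_adj_mat:idem}), (\ref{defn:quan_adj_mat:undir}) and the no-loops axiom (\ref{defn:quan_adj_mat:irrefl}). The preceding (complement) proposition now transfers these to $A_c = \theta_{\Lambda(1),\Lambda(1)} - A'$: axioms (\ref{defn:quan_adj_mat:idem}) and (\ref{defn:quan_adj_mat:undir}) are preserved, while passing to the complement interchanges (\ref{defn:quan_adj_mat:reflexive}) and (\ref{defn:quan_adj_mat:irrefl}), so that $A'$ satisfying (\ref{defn:quan_adj_mat:irrefl}) forces $A_c$ to satisfy (\ref{defn:quan_adj_mat:reflexive}). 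Hence $A_c$ satisfies all three axioms and is a quantum adjacency matrix.

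The converse runs the same chain in reverse, using that the complement operation is an involution and that Proposition~\ref{prop:to_no_loops} is stated as an equivalence. Explicitly, from $A_c$ one recovers $A' = \theta_{\Lambda(1),\Lambda(1)} - A_c$ and then $A = A' + (mm^*)^{-1}$, and applying the complement proposition followed by Proposition~\ref{prop:to_no_loops} shows that $A_c$ being a quantum adjacency matrix forces $A$ to be one as well. I do not expect any genuine obstacle here: there is no new computation, and the entire content is the bookkeeping of which axioms are swapped under each of the two operations. The one point that warrants care is verifying the sign arithmetic that identifies $A_c$ with the complement of $A'$, together with the observation that the shift term $(mm^*)^{-1}$ used in Proposition~\ref{prop:to_no_loops} is precisely the empty-graph adjacency matrix, so that the two previous results really do compose as claimed.
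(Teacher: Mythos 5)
Your proposal is correct and is essentially identical to the paper's proof: both write $A_c = \theta_{\Lambda(1),\Lambda(1)} - A'$ with $A' = A - (mm^*)^{-1}$, thereby realising $A_c$ as the complement (in the sense of the proposition immediately preceding the corollary) of the no-loops operator from Proposition~\ref{prop:to_no_loops}, and then transfer the axioms through those two equivalences. The only difference is that you spell out the self-adjointness bookkeeping and the swapping of axioms (\ref{defn:quan_adj_mat:reflexive}) and (\ref{defn:quan_adj_mat:irrefl}) explicitly, which the paper leaves implicit in the phrase ``the claim follows from the previous propositions.''
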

\begin{proof}
We use $A' = A-(mm^*)^{-1}$ and $A'' = \theta_{\Lambda(1),\Lambda(1)}-A$ from above, and notice that
then $A_c = \theta_{\Lambda(1),\Lambda(1)} - A' = (A')''$.  The claim follows from the previous
propositions.
\end{proof}

\begin{corollary}\label{corr:adj_mat_complement:irrefl}
Let $A\in\mc B(L^2(B))$ and define $A_{nc} = \theta_{\Lambda(1),\Lambda(1)} - (mm^*)^{-1} - A$.
Then $A$ is a quantum adjacency matrix, but satisfying axiom (\ref{defn:quan_adj_mat:irrefl}) of Definition~\ref{defn:adj_op} in place
of (\ref{defn:quan_adj_mat:reflexive}), if and only if the same is true of $A_{nc}$.
\end{corollary}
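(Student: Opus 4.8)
The plan is to realise $A_{nc}$ as the composition of the two operations studied in the two propositions immediately preceding Corollary~\ref{corr:adj_mat_complement}, and then to read off the result by chaining their equivalences. Write, for any self-adjoint $C\in\mc B(L^2(G))$, $C' = C - (mm^*)^{-1}$ (the operation of Proposition~\ref{prop:to_no_loops}) and $C'' = \theta_{\Lambda(1),\Lambda(1)} - C$ (the complementation operation of the proposition just before Corollary~\ref{corr:adj_mat_complement}). As $\theta_{\Lambda(1),\Lambda(1)}$ and $(mm^*)^{-1}$ are both self-adjoint, these operations preserve self-adjointness. The key algebraic observation is that, setting $A_0 = \theta_{\Lambda(1),\Lambda(1)}$,
\[ A_{nc} = \theta_{\Lambda(1),\Lambda(1)} - (mm^*)^{-1} - A = (A_0 - A) - (mm^*)^{-1} = (A'')', \]
which is the exact mirror of the identity $A_c = (A')''$ used in the proof of Corollary~\ref{corr:adj_mat_complement}.

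With this identity in hand, I would simply chain the two equivalences. First, the complement proposition gives that $A$ satisfies axioms (\ref{defn:quan_adj_mat:idem}) and (\ref{defn:quan_adj_mat:undir}) if and only if $A''$ does, and that $A$ satisfies the no-loops axiom (\ref{defn:quan_adj_mat:irrefl}) if and only if $A''$ satisfies the all-loops axiom (\ref{defn:quan_adj_mat:reflexive}). Hence ``$A$ is a quantum adjacency matrix with (\ref{defn:quan_adj_mat:irrefl}) in place of (\ref{defn:quan_adj_mat:reflexive})'' is equivalent to $A''$ being a genuine quantum adjacency matrix, i.e.\ $A''$ satisfying (\ref{defn:quan_adj_mat:idem}), (\ref{defn:quan_adj_mat:undir}) and (\ref{defn:quan_adj_mat:reflexive}). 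Second, applying Proposition~\ref{prop:to_no_loops} with $A''$ in the role of its ``$A$'', this latter condition is equivalent to $(A'')' = A_{nc}$ satisfying (\ref{defn:quan_adj_mat:idem}), (\ref{defn:quan_adj_mat:undir}) and (\ref{defn:quan_adj_mat:irrefl})---which is precisely the assertion that $A_{nc}$ is a quantum adjacency matrix with the no-loops axiom. Since both propositions are stated as equivalences, reading the chain in either direction delivers both implications at once.

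I do not expect any genuine obstacle: all the analytic content already resides in the two previously established propositions, and this corollary is pure bookkeeping. The only points requiring a little care are (i) getting the algebraic identity $A_{nc}=(A'')'$ correct, as opposed to $A_c = (A')''$, which instead produces the reflexive complement; and (ii) being careful to invoke Proposition~\ref{prop:to_no_loops} for $A''$ rather than for $A$, and to use the half of the complement proposition that swaps (\ref{defn:quan_adj_mat:irrefl}) with (\ref{defn:quan_adj_mat:reflexive}) rather than the half going the other way. One could equally route through the already-proven Corollary~\ref{corr:adj_mat_complement}, but composing its two constituent propositions directly seems most transparent.
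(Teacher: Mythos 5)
Your proposal is correct and is essentially the paper's own argument: the paper sets $A_0 = A + (mm^*)^{-1}$ and writes $A_{nc} = A_0''$, then cites the two preceding propositions, whereas you factor in the opposite order as $A_{nc} = (A'')'$; since both operations are affine translations by self-adjoint operators, the two factorizations are interchangeable and the chain of equivalences is identical. Your care about which half of the complement proposition to invoke (swapping axiom (\ref{defn:quan_adj_mat:irrefl}) with (\ref{defn:quan_adj_mat:reflexive})) and about applying Proposition~\ref{prop:to_no_loops} to $A''$ rather than to $A$ is exactly the bookkeeping the paper leaves implicit.
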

\begin{proof}
Set $A_0 = A + (mm^*)^{-1}$ so $A_{nc} = A_0''$.  Again, the claim follows immediately from the
previous propositions.
\end{proof}

\begin{remark}
The definition of the complement considered in Corollary~\ref{corr:adj_mat_complement} is noted in
\cite[Remark~3.6]{bcehpsw}, and in somewhat more detail in \cite[Propositions~2.26, 2.28]{matsuda},
both in the case when $\psi$ is a $\delta$-form.  Notice that implicit in the proof of 
\cite[Propositions~2.26]{matsuda} is a consideration that $m(A\otimes 1)m^*=1$ implies
(in the presence of the other axioms) that also $m(1\otimes A)m^*=1$, which we also make essential
use of.
\end{remark}

We now look at quantum graphs in the sense of Definition~\ref{defn:quan_graph}, that is, quantum relations
over our algebra $B\subseteq\mc B(H)$ with additional properties.
In the following, $A$ is self-adjoint satisfying axioms (\ref{defn:quan_adj_mat:idem}) and
(\ref{defn:quan_adj_mat:undir}) of Definition~\ref{defn:adj_op}, and satisfies other axiom(s) as appropriate.

\begin{proposition}\label{prop:complements_quan_graph}
Let $B\subseteq\mc B(H)$, and let the quantum adjacency matrix $A$ correspond to the $B'$-operator bimodule
$S \subseteq\mc B(H)$.  Give $\mc B(H)$ the positive functional $\Tr$ which allows us to form the orthogonal
complement to $S$, say $S^\perp$.  Then:
\begin{enumerate}
\item\label{prop:complements_quan_graph:one}
  $A'=A-(mm^*)^{-1}$ corresponds to $S' = S \cap (B'Q^{-1/2}B')^\perp$;
\item\label{prop:complements_quan_graph:two}
  $A''= \theta_{\Lambda(1), \Lambda(1)}-A$ corresponds to $S'' = S^\perp$;
\item\label{prop:complements_quan_graph:three}
  $A_c = \theta_{\Lambda(1), \Lambda(1)} + (mm^*)^{-1} - A$ corresponds to
  $S_c = S^\perp + B'Q^{-1/2}B'$;
\item\label{prop:complements_quan_graph:four}
   $A_{nc} = \theta_{\Lambda(1), \Lambda(1)} - (mm^*)^{-1} - A$
  corresponds to $S_{nc} = S^\perp \cap (B'Q^{-1/2}B')^\perp$.
\end{enumerate}
\end{proposition}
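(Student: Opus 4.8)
The plan is to push all four identities through the linear bijection $\Psi = \Psi_{0,1/2}$ of Theorem~\ref{thm:qu_ad_mats_axioms_12}, thereby reducing the proposition to elementary projection arithmetic in $B\otimes B^\op$ (equivalently, with ranges inside $H\otimes\overline H$). Write $E=\Psi(A)$, the orthogonal projection onto the subspace $V\subseteq H\otimes\overline H$ corresponding to $S$, and write $p=\Psi((mm^*)^{-1})$, which by Proposition~\ref{prop:equiv_empty_qg} is the orthogonal projection onto $V_e := (B'\otimes (B')^\op)(u_0)$, the subspace corresponding to $B'Q^{-1/2}B'$. By Proposition~\ref{prop:equiv_complete_qg} we have $\Psi(\theta_{\Lambda(1),\Lambda(1)}) = 1\otimes 1 = 1$, the identity. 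Since $\Psi$ is linear, the four matrices become
\[ \Psi(A'') = 1-E, \quad \Psi(A') = E-p, \quad \Psi(A_c) = (1-E)+p, \quad \Psi(A_{nc}) = 1-E-p, \]
so it remains only to recognise these as orthogonal projections onto the claimed subspaces, and then to translate back to $\mc B(H)$.

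The inputs that make the arithmetic work are the relative positions of $V_e$ and $V$. In parts~(\ref{prop:complements_quan_graph:one}) and~(\ref{prop:complements_quan_graph:three}) the matrix $A$ satisfies the reflexive axiom, so $u_0\in V$ by Proposition~\ref{prop:qu_ad_mats_axiom_3}; as $V$ is $B'\otimes(B')^\op$-invariant this gives $V_e\subseteq V$, i.e. $pE=Ep=p$. Hence $E-p$ is the orthogonal projection onto $V\ominus V_e = V\cap V_e^\perp$, and $(1-E)+p$ is the orthogonal projection onto $V^\perp\oplus V_e$ (the summands being orthogonal since $V_e\subseteq V\perp V^\perp$). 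In part~(\ref{prop:complements_quan_graph:four}), $A$ satisfies the irreflexive axiom, so $V\perp u_0$ by Proposition~\ref{prop:qu_ad_mats_axiom_5}, and the same invariance of $V$ yields $V\perp V_e$, i.e. $pE=Ep=0$; thus $E+p$ is the orthogonal projection onto $V\oplus V_e$ and $1-E-p$ is the orthogonal projection onto $(V\oplus V_e)^\perp = V^\perp\cap V_e^\perp$. Part~(\ref{prop:complements_quan_graph:two}) is immediate, $1-E$ being the complementary projection onto $V^\perp$. That each of these combinations is genuinely an adjacency matrix of the stated loops/no-loops type is already guaranteed by Proposition~\ref{prop:to_no_loops} and Corollaries~\ref{corr:adj_mat_complement} and~\ref{corr:adj_mat_complement:irrefl}, so no re-checking of the axioms is needed.

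Finally I would transport the subspace identities back to $\mc B(H)$. The linear isomorphism $\mc B(H)\cong H\otimes\overline H$, $\theta_{\xi,\eta}\mapsto\eta\otimes\overline\xi$, is the GNS map for $\Tr$, hence an isometry for the Hilbert--Schmidt inner product; consequently it carries orthogonal complements, sums and intersections of subspaces of $H\otimes\overline H$ to the corresponding operations on subspaces of $\mc B(H)$. Under this identification $V\leftrightarrow S$, $V^\perp\leftrightarrow S^\perp$, and $V_e\leftrightarrow B'Q^{-1/2}B'$, so $V\cap V_e^\perp\leftrightarrow S\cap(B'Q^{-1/2}B')^\perp = S'$, $V^\perp\oplus V_e\leftrightarrow S^\perp+B'Q^{-1/2}B' = S_c$, and $V^\perp\cap V_e^\perp\leftrightarrow S^\perp\cap(B'Q^{-1/2}B')^\perp = S_{nc}$, which is exactly the content of the four parts. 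The only step that is not purely formal projection algebra is the one that upgrades ``$u_0\in V$'' or ``$u_0\perp V$'' to the containment $V_e\subseteq V$ or the orthogonality $V_e\perp V$ via the module invariance of $V$; keeping straight which of the reflexive and irreflexive axioms is in force in each part is where the care is required, and is the only place the earlier structural results (rather than generalities about projections) are genuinely used.
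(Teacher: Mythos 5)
Your proposal is correct and follows essentially the same route as the paper's own proof: both pass through the projection picture via $\Psi_{0,1/2}$, identify $(mm^*)^{-1}$ with the minimal projection $e_{\min}$ onto $(B'\otimes(B')^\op)(u_0)$ and $\theta_{\Lambda(1),\Lambda(1)}$ with $1$, and use the reflexive (resp.\ irreflexive) axiom together with $B'\otimes(B')^\op$-invariance to get $e_{\min}\leq e$ (resp.\ $e_{\min}e=0$) before doing the projection arithmetic. The only difference is one of exposition: the paper treats parts (\ref{prop:complements_quan_graph:one}) and (\ref{prop:complements_quan_graph:two}) and declares the rest ``similar,'' while you write out all four cases explicitly.
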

\begin{proof}
Using Theorem~\ref{thm:qu_ad_mats_axioms_12} we associate $A$ with a projection $e$, and $S$ is the image
of $e$ (once we identify $\mc B(H)$ with its GNS space $H\otimes\overline H$).  Let $(mm^*)^{-1}$
correspond to projection $e_{\min}$, which from Proposition~\ref{prop:equiv_empty_qg} is the minimal projection
fixing $u_0$, and corresponds to the bimodule $S_{\min} = B'Q^{-1/2}B'$.

Consider (\ref{prop:complements_quan_graph:one}), where $A$ is assumed to satisfy axiom 
(\ref{defn:quan_adj_mat:reflexive}), so $e(u_0)=u_0$ by Proposition~\ref{prop:qu_ad_mats_axiom_3}, and
hence $e_{\min} \leq e$.  By linearity, $A'$ corresponds to $e - e_{\min}$ which has image the 
relative orthogonal complement of $S_{\min}$ in $S$, namely $S \cap (B'Q^{-1/2}B')^\perp$, as claimed.

Now consider (\ref{prop:complements_quan_graph:two}).  As $\theta_{\Lambda(1),\Lambda(1)}$ corresponds
to the projection $1$ and the subspace $\mc B(H)$, it is immediate that $S''$ corresponds to $S^\perp$.
Then (\ref{prop:complements_quan_graph:three}) and (\ref{prop:complements_quan_graph:four})
follow in a similar way.
\end{proof}

To be precise, $x\in S^\perp$ if and only if $\Tr(y^*x)=0$ for each $y\in S$.  It is easy to verify directly
that when $S$ is a $B'$-operator bimodule, so is $S^\perp$, and that $S$ is self-adjoint if and only if
$S^\perp$ is.  When $\psi$ is a trace, the ``all loops'' condition corresponds to $S$ being unital, and this
is equivalent to $\Tr(x)=0$ for all $x\in S^\perp$, or that $S^\perp$ is ``trace-free''.  This trace-free
condition was suggested in \cite[Section~III]{stahlke} from motivation from quantum information theory, and was explored further in \cite[Section~2.2]{kim}.
Indeed, Remark~\ref{rem:classical_channel_to_graph} showed how to construct the ``confusability graph'' from
a channel, but \cite{stahlke} argues that it is perhaps more useful to look at the ``distinguishability graph''
which indeed classically corresponds to $S\mapsto S^\perp$ at the level of operator bimodules.

\begin{remark}\label{rem:compl_S0_bad}
This is one place where Theorem~\ref{thm:reduce_tracial_case} does not interact as we might wish.
Recall that we associate $S$ to the ``tracial'' quantum graph $S_0 = Q^{1/2}S$.  A simple calculation shows that
$(Q^{1/2}S)^\perp = Q^{-1/2} S^\perp$.  Hence
\[ (S')_0 = Q^{1/2} S' = Q^{1/2} S \cap Q^{1/2}(Q^{-1/2}B')^\perp
= S_0 \cap Q(B')^\perp, \]
and not $S_0 \cap (B')^\perp$ as we might hope.  Similarly, $(S'')_0 = Q^{1/2} S^\perp = (Q^{-1/2}S)^\perp
= (Q^{-1}S_0)^\perp = Q S_0^\perp$, and $(S_c)_0 = Q^{1/2} S^\perp + B' = Q S_0^\perp + B'$, and finally
$(S_{nc})_0 = Q^{1/2}S^\perp \cap Q(B')^\perp = Q S_0^\perp \cap Q(B')^\perp$.  See Remark~\ref{rem:compl_S0_bad_ce}
below for an example suggesting that we cannot improve this.

However, we do note that if $QSQ^{-1}=S$ then the same is true of $S^\perp$, as given $x\in S^\perp, y\in S$, we
compute that $\Tr(y^* QxQ^{-1}) = \Tr(Q^{-1}y^*Q x) = \Tr((QyQ^{-1})^*x) = 0$ as $QyQ^{-1}\in S$.  So
$S\mapsto S^\perp$ preserves all the properties of Theorem~\ref{thm:qu_ad_mats_axioms_12}.
\end{remark}

\subsection{Small examples}\label{sec:small_examples}

A careful classification of quantum adjacency matrices over $B=\mathbb M_2$ is made in \cite[Section~3]{matsuda}.
Let us see how one part of our work intersects with the non-tracial case, \cite[Section~3.3]{matsuda}.
Using Theorem~\ref{thm:qu_ad_mats_axioms_12}, we in particular wish to study subspaces $S\subseteq\mc B(H)$
with $QSQ^{-1}\subseteq S$, or equivalently $V\subseteq H\otimes\overline H$ with $(Q\otimes(Q^{-1})^\top)(V)
\subseteq V$.  With this in mind, we have the following lemma.

\begin{lemma}\label{lem:inv_under_pos}
Let $H$ be a finite-dimensional Hilbert space, let $T\in\mc B(H)^+$ be invertible, and let $V\subseteq H$ be
a subspace.  Then $T(V)\subseteq V$ if and only if $V$ has a basis of eigenvectors of $T$.
\end{lemma}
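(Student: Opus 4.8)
The plan is to prove the two implications separately; the forward direction is immediate, while the reverse follows from the spectral theorem once one observes that $T$ restricts to a self-adjoint operator on the invariant subspace $V$.

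First I would dispatch the easy direction. If $V$ has a basis $(v_i)$ of eigenvectors of $T$, say $Tv_i=\lambda_i v_i$, then each $Tv_i=\lambda_i v_i\in V$, and since the $v_i$ span $V$ and $T$ is linear, it follows that $T(V)\subseteq V$.

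For the converse, suppose $T(V)\subseteq V$. The key observation is that, because $T\in\mc B(H)^+$ is in particular self-adjoint, $T$ restricts to a self-adjoint operator on the finite-dimensional inner-product space $V$: for $x,y\in V$ we have $(Tx|y)=(x|Ty)$ by self-adjointness of $T$ on $H$, and both $Tx$ and $Ty$ lie in $V$ by the invariance hypothesis, so the restriction is a well-defined endomorphism of $V$ satisfying the same symmetry relation. Applying the spectral theorem to this restriction produces an orthonormal basis of $V$ consisting of its eigenvectors; since the restriction agrees with $T$ on $V$, each such vector is an eigenvector of $T$, which is exactly the required basis.

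There is no genuine obstacle here, the statement being elementary finite-dimensional linear algebra; the only point requiring care is the verification that the restriction inherits self-adjointness from $T$, which is precisely what makes the spectral theorem applicable to it. I note in passing that neither positivity nor invertibility of $T$ plays any role, so that the result in fact holds for any self-adjoint $T$. An alternative route would use the spectral projections $P_\lambda$ of $T$ onto its eigenspaces: each $P_\lambda$ is a polynomial in $T$ (by Lagrange interpolation over the finitely many eigenvalues), so $P_\lambda(V)\subseteq V$, whence $V=\bigoplus_\lambda\big(V\cap\ker(T-\lambda)\big)$ and a basis of eigenvectors is obtained by collecting bases of the summands; this is in the same spirit as the self-adjoint case of Lemma~\ref{lem:proj_commuting_op}.
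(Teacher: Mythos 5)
Your proof is correct, but it follows a genuinely different route from the paper's. The paper first invokes Lemma~\ref{lem:proj_commuting_op} (hence uses invertibility) to upgrade $T(V)\subseteq V$ to $T(V)=V$, writes $T=\sum_i t_i P_i$ with eigenvalues $t_1>t_2>\cdots>0$, and proves $P_i(V)\subseteq V$ by an analytic limiting argument: $(t_1^{-1}T)^n(\xi)=P_1(\xi)+\sum_{i\geq 2}(t_1^{-1}t_i)^n P_i(\xi)\to P_1(\xi)$, then iterates with $(t_2^{-1}T)^n$ applied to $\xi-P_1(\xi)$, and finally decomposes $V$ as the orthogonal sum of the $P_i(V)$. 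Positivity is genuinely used there, since it forces $0<t_1^{-1}t_i<1$ for $i\geq 2$; with eigenvalues of mixed sign those powers need not converge. Your main argument instead restricts $T$ to $V$ --- well defined exactly because of the invariance hypothesis --- checks that the restriction is self-adjoint on the inner-product space $V$, and applies the finite-dimensional spectral theorem to that restriction. This is shorter, avoids limits and Lemma~\ref{lem:proj_commuting_op} altogether, and, as you rightly observe, establishes the stronger statement that self-adjointness of $T$ alone suffices. Your alternative route via Lagrange interpolation is essentially an algebraised version of the paper's proof: both show the spectral projections leave $V$ invariant and then write $V=\bigoplus_\lambda\big(V\cap\ker(T-\lambda)\big)$; you replace the limit $(t_1^{-1}T)^n\to P_1$ by the exact identity $P_\lambda=p_\lambda(T)$ for an interpolating polynomial $p_\lambda$, which again needs only $T=T^*$. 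What the paper's argument buys is reuse of its neighbouring lemma; what yours buys is brevity and weaker hypotheses.
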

\begin{proof}
We need only show the ``only if'' clause.
From Lemma~\ref{lem:proj_commuting_op} we know that $T(V)=V$, and so also $(tT)^n(V)=V$ for any
$t>0$ and $n\in\mathbb N$.  At $T$ is self-adjoint, we can write $T = \sum_i t_i P_i$ where $\{t_i\}$ are the
eigenvalues of $T$, and each $P_i$ is the projection onto the eigenspace associated to $t_i$.  Thus the $\{P_i\}$
are mutually orthogonal, and $\sum_i P_i=1$.  Suppose $t_1 > t_2 > \cdots$, and let $\xi\in V$.  Then
\[ (t_1^{-1}T)^n(\xi) = P_1(\xi) + \sum_{i\geq 2} (t_1^{-1}t_i)^n P_i(\xi), \]
which converges to $P_1(\xi)$ as $n\rightarrow\infty$.  Thus $P_1(\xi)\in V$.  Now repeat the argument,
applying $(t_2^{-1}T)^n$ to $\xi - P_1(\xi)$, to conclude that also $P_2(\xi)\in V$.  Continue, and
conclude that $P_i(\xi)\in V$ for each $i$.

Thus $P_i(V)\subseteq V$ for each $i$, and so $V$ is the orthogonal direct sum of the subspaces $P_i(V)$
(some of which may be $\{0\}$).  The result follows.
\end{proof}

Given $Q$, we can diagonalise, and so find a unitary $u\in B$ and a (component-wise) diagonal $Q_d\in B$
with $u Q u^* = Q_d$.  If $S$ satisfies the conditions of Theorem~\ref{thm:qu_ad_mats_axioms_12} for
$Q_d$, then $T = u^*Su$ satisfies the conditions for $Q$.  As $u\in B$, it is easy to see that this
procedure correctly interacts with Theorem~\ref{thm:reduce_tracial_case}; it also interacts smoothly
with taking complements etc., Proposition~\ref{prop:complements_quan_graph}.  Finally, the notion of
``equivalence'' is compatible with the notion of automorphism we explore in Section~\ref{sec:auts_quan_graphs},
and with the obvious notion of ``isomorphism'' between two quantum graphs, as mentioned elsewhere in
the literature.  Thus, there is no loss of generality in working with $Q_d$.

When $B=\mathbb M_2$, say acting on $\mathbb C^2$, we have that $B'=\mathbb C$, and any $Q$ is similar to
a Powers density
\[ Q_q = \frac{1}{1+q^2} \begin{pmatrix} 1 & 0 \\ 0 & q^2 \end{pmatrix}, \]
with $0<q\leq 1$, compare \cite[Section~3.3]{matsuda}.  We will not consider the tracial case, so assume
$q\not=1$.  With Lemma~\ref{lem:inv_under_pos} in mind, the eigenspaces to consider are
\[ \lin\Big\{ \begin{pmatrix} 1 & 0 \\ 0 & 0 \end{pmatrix}, 
\begin{pmatrix} 0 & 0 \\ 0 & 1 \end{pmatrix} \Big\}, \quad
\mathbb C \begin{pmatrix} 0 & 1 \\ 0 & 0 \end{pmatrix}, \quad
\mathbb C \begin{pmatrix} 0 & 0 \\ 1 & 0 \end{pmatrix}. \]
As $S$ should also be self-adjoint, there are just four quantum graphs over $\mathbb M_2$ with $Q_d S Q_d^{-1}=S$
and $Q_d^{-1/2}\in S$ (this from Proposition~\ref{prop:qu_ad_mats_axiom_3}), namely
\begin{gather*}
S_1 = \mathbb C \begin{pmatrix} 1 & 0 \\ 0 & 1/q \end{pmatrix}, \qquad
S_3 = \mathbb M_2, \\
S_{2A} = \lin\Big\{ \begin{pmatrix} 1 & 0 \\ 0 & 1/q \end{pmatrix}, \begin{pmatrix} 0 & 1 \\ 0 & 0 \end{pmatrix},
\begin{pmatrix} 0 & 0 \\ 1 & 0 \end{pmatrix} \Big\}, \quad
S_{2B} = \lin\Big\{ \begin{pmatrix} 1 & 0 \\ 0 & 0 \end{pmatrix},
\begin{pmatrix} 0 & 0 \\ 0 & 1 \end{pmatrix} \Big\}.
\end{gather*}
Here $S_1$ is the empty quantum graph, $S_3$ the complete quantum graph, and $S_{2A}, S_{2B}$ lie strictly
between these, and are incomparable.  A simple computation shows that taking the complement,
$S\mapsto S_c$ from Proposition~\ref{prop:complements_quan_graph}, interchanges $S_1,S_3$ and
$S_{2A}, S_{2B}$.  If we apply Theorem~\ref{thm:reduce_tracial_case}, then $(S_1)_0$ is the span of
the identity, $(S_{2A})_0$ is the span of the identity and the off-diagonal matrices, and $S_{2B}, S_3$
remain unchanged.  Thus, in this special case, Theorem~\ref{thm:reduce_tracial_case} interacts as we might
hope with the complement.

\begin{remark}\label{rem:compl_S0_bad_ce}
We present an example where Theorem~\ref{thm:reduce_tracial_case} really does not interact with the
complement.  Let $B=\mathbb M_2\oplus\mathbb M_2$ acting on $H=\mathbb C^2\oplus\mathbb C^2$, so that $B'
\cong \mathbb C\oplus\mathbb C$.  Pick $0<t<1/2$ and set
\[ Q_1 = \begin{pmatrix} t & 0 \\ 0 & 1-t \end{pmatrix}, \quad
Q_2 = \begin{pmatrix} 1-t & 0 \\ 0 & t \end{pmatrix}, \quad Q=Q_1\oplus Q_2. \]
We identify $\mc B(H)$ with $2\times 2$ matrices of operators in $\mathbb B(\mathbb C^2)\cong\mathbb M_2$.
Set
\[ x_0 = \begin{pmatrix} 0 & 1 \\ 1 & 0 \end{pmatrix}\in\mathbb M_2, \quad
x = \begin{pmatrix} 0 & x_0 \\ 0 & 0 \end{pmatrix} \in \mc B(H). \]
Finally let $T = \lin\{ x, x^* \}$, so $T$ is a self-adjoint, $B'$-bimodule.  Then
\[ Q_1x_0 = \begin{pmatrix} 0 & t \\ 1-t & 0 \end{pmatrix} \not\in\mathbb C x_0 \quad\implies\quad
Qx = \begin{pmatrix} 0 & Q_1 x_0 \\ 0 & 0 \end{pmatrix}, \]
while
\[ Q_1 x_0Q_2^{-1} = \begin{pmatrix} 0 & t \\ 1-t & 0 \end{pmatrix} Q_2^{-1}
= \begin{pmatrix} 0 & 1 \\ 1 & 0 \end{pmatrix} = x_0 \quad\implies\quad
QxQ^{-1} = \begin{pmatrix} 0 & Q_1 x_0Q_2^{-1} \\ 0 & 0 \end{pmatrix} \in T, \]
from which it follows that $QTQ^{-1}=T$.

By the final part of Remark~\ref{rem:compl_S0_bad}, $S_0 = T^\perp$ also satisfies the required conditions,
and so by Theorem~\ref{thm:reduce_tracial_case}, $S = Q^{-1/2} S_0$ satisfies all the conditions of
Theorem~\ref{thm:qu_ad_mats_axioms_12}.  Towards a contradiction, suppose that $(S_c)_0 = (S_0)^\perp
+ B'$.  By Remark~\ref{rem:compl_S0_bad}, always $(S_c)_0 = Q(S_0)^\perp + B'$, so we obtain that
$(S_0)^\perp + B' = Q(S_0)^\perp + B'$, that is, $T + B' = QT + B'$, that is, $QT \subseteq T+B'$.
However, given the form of $Qx$ computed above, it is clear that $Qx \not\in T + B'$, giving the required
contradiction.
\end{remark}

\section{Homomorphisms}\label{sec:hm}

In this section, we look at various notions of ``homomorphism'' between quantum graph.  We note that there
have been notions of a ``quantum homomorphism'' between classical graphs (see \cite[Section~4]{ortiz}
or \cite[Theorem~4.14, Remark~5.5]{mrv} for example) which can further be extended to quantum homomorphisms
between quantum graphs.  These notions are linked to the quantum automorphisms we discuss in
Section~\ref{sec:qauts_quan_graphs} below, and 
are discussed in detail in \cite[Section~4.4]{grom}, \cite[Section~2.3]{matsuda}, for example.
In contrast, in this section we look at what might be thought of as ``classical homomorphisms'' between
(possibly) quantum graphs.  Such notions seem to be less firmly established than other definitions made in
this paper, so we compare and contrast them, without offering a definitive definition.

We start with two ideas from Weaver in \cite{weaver2}; again we work with quantum graphs in the sense of Definition~\ref{defn:quan_graph}.  For exactly what we mean by ``Kraus form'' see the discussion below.
It is not at all clear that the following definition is independent of the choice of Kraus form, but this
follows from \cite{weaver2}, or our, different, argument to follow.

\begin{definition}[{See \cite[Theorem~7.4, Theorem~8.2]{weaver2}}]
Let $B_i\subseteq\mc B(H_i)$ for $i=1,2$ for finite-dimensional Hilbert space $H_i$.  Let $\theta:B_2
\rightarrow B_1$ be a UCP map with Kraus form $\theta(x) = \sum_{i=1}^n b_i^* x b_i$ for some operators
$b_i\in\mc B(H_1, H_2)$.  Let $S_1\subseteq\mc B(H_1)$ be a quantum  graph.  The \emph{pushfoward} of
$S_1$ along $\theta$, denoted $\overrightarrow{S_1}$ is the $B_2'$-operator bimodule generated by
\[ \{ b_i x b_j^* : x\in S_1 \} \subseteq \mc B(H_2). \]
Let $S_2\subseteq\mc B(H_2)$ be a quantum  graph.  The \emph{pullback} of $S_2$ along $\theta$ is,
denoted by $\overleftarrow{S_2}$ is the $B_1'$-operator bimodule generated by
\[ \{ b_i^* x b_j : x\in S_2 \} \subseteq \mc B(H_1). \]
\end{definition}

Again here we have translated from trace-preserving CP maps to UCP maps, equivalently, have ``reversed
the arrows'', compare with Remark~\ref{rem:classical_channel_to_graph} above.  We notice that the pullback
is always unital if $S_2$ is unital, but the pushfoward may not be, in accordance with what happens in
the classical situation.  While we stated this for quantum graphs, obviously the definition works for any
quantum relation.  The reader who would like some motivation can see Remark~\ref{rem:quan_homo_to_graph_homo} below.

The paper \cite{weaver2} develops an ``intrinsic'' way to characterise quantum graphs (making use of results
from \cite{weaver1}), and then shows that the pushfoward and pullback only depend on $\theta$
(and not the Kraus form chosen) when recast using this intrinsic characterisation of $S_1$ and $S_2$.
We shall show how to use Stinespring dilations (and a duality argument) to give a different approach to this fact.  A biproduct of our construction is that in the above definition of the pullback, it suffices to take the linear span of the set given: it is automatically operator bimodule over the relevant commutant.  We also give a link between pushforwards and pullbacks via a duality argument.

For most of this section, we work with possibly infinite-dimensional operator bimodules, as this is not much more work.  Let us recall how to construct the Kraus form of a normal CP map between arbitrary von Neumann algebras.
This is surely known, but we are not aware of a canonical reference.  Let $M,N$ be von Neumann algebras
acting on Hilbert spaces $H_M, H_N$ respectively.  We start by observing that the usual
Stinespring dilation of a normal CP map $\theta:M\rightarrow N$ between von Neumann algebras, say $V,\rho,\pi,K$
automatically has that $\pi$ is normal.  Consider now an arbitrary dilation (so we do not assume minimality).
By the structure theorem for normal $*$-homomorphisms between
von Neumann algebras (for example, \cite[Theorem~5.5, Chapter~IV]{tak1}) there is an index set $I$ and an isometry
$W:K\rightarrow H_M\otimes\ell^2(I)$ with $W^*(x\otimes 1)W = \pi(x)$ for $x\in N$.
Furthermore, the projection $WW^*$ is in $M'\vnten\mc B(\ell^2(I))$.  Then
\[ \theta(x) = V^*\pi(x)V = V^*W^*(x\otimes 1)WV \qquad (x\in M), \]
and there is a family of operators $(b_i)\subseteq\mc B(H_N, H_M)$ with $WV\xi = \sum_i b_i\xi\otimes\delta_i$
and hence $\theta(x) = \sum_i b_i^* x b_i$ gives us a Kraus representation.

So far, it is not clear how the property that $\theta$ maps into $N$ (and not just $\mc B(H_N)$) is
reflected in the Kraus representation.

\begin{lemma}\label{lem:rho_always}
Let $\theta:M\rightarrow \mc B(H_N)$ be a normal CP map with some Stinespring dilation $V,\pi,K$, meaning that
$\pi:M\rightarrow\mc B(K)$ is a normal unital $*$-homomorphism, $V:H_N\rightarrow K$ an operator,
and $\theta(x) = V^*\pi(x)V$ for $x\in M$; no minimality assumption is made.  The following are equivalent:
\begin{enumerate}
\item $\theta$ maps into $N$;
\item there is a normal unital $*$-homomorphism $\rho:N'\rightarrow \pi(M)'$ with $Vx'=\rho(x')V$ for $x'\in N'$.  \end{enumerate}
\end{lemma}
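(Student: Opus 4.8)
The plan is to handle the two implications separately: the reverse direction is a short commutation argument, while the forward direction needs an explicit construction of $\rho$.

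For (2)$\Rightarrow$(1) I would argue directly. Taking adjoints in $Vx'=\rho(x')V$ and using that $\rho$ is a $*$-homomorphism on the $*$-algebra $N'$ gives $V^*\rho(x')=x'V^*$ for all $x'\in N'$. Then, since $\rho(x')\in\pi(M)'$ commutes with $\pi(x)$,
\[ \theta(x)x' = V^*\pi(x)Vx' = V^*\pi(x)\rho(x')V = V^*\rho(x')\pi(x)V = x'V^*\pi(x)V = x'\theta(x) \qquad (x\in M,\ x'\in N'). \]
Hence each $\theta(x)$ commutes with $N'$, so $\theta(x)\in N''=N$ by the bicommutant theorem, i.e.\ $\theta$ maps into $N$.

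For (1)$\Rightarrow$(2) I would first build $\rho$ on the minimal reducing subspace and then promote it to all of $\pi(M)'$. Set $K_0=\overline{\lin}\{\pi(a)V\xi:a\in M,\xi\in H_N\}$; this is $\pi(M)$-invariant, so (as $\pi$ is a $*$-homomorphism) the projection $P$ onto $K_0$ lies in $\pi(M)'$, and $VH_N\subseteq K_0$, giving $PV=V$. On $K_0$ define $\rho_0(x')\bigl(\sum_i\pi(a_i)V\xi_i\bigr)=\sum_i\pi(a_i)Vx'\xi_i$. The crux is well-definedness and boundedness. Writing $u=\sum_i\pi(a_i)V\xi_i$ and using $V^*\pi(a_i^*a_j)V=\theta(a_i^*a_j)$, the quantity $\|\rho_0(x')u\|^2$ equals $\sum_{i,j}(x'\xi_i\mid\theta(a_i^*a_j)x'\xi_j)=(X'\xi\mid\Theta X'\xi)$, where $\Theta=(\theta(a_i^*a_j))_{i,j}\ge 0$ by complete positivity of $\theta$, $\xi=(\xi_i)_i$, and $X'=\bigoplus_i x'$. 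Here hypothesis (1) enters decisively: each $\theta(a_i^*a_j)\in N$, so $X'$ (and $X'^*X'$) commute with $\Theta$, whence $(X'\xi\mid\Theta X'\xi)=(\Theta^{1/2}\xi\mid X'^*X'\Theta^{1/2}\xi)\le\|x'\|^2\|u\|^2$. This gives both well-definedness on the separation and the bound $\|\rho_0(x')\|\le\|x'\|$. It is then routine that $\rho_0\colon N'\to\mc B(K_0)$ is a normal unital $*$-homomorphism commuting with $\pi(a)|_{K_0}$, with $\rho_0(x')V=Vx'$ (take $a=1$). Alternatively, for this minimal piece one may simply invoke the commutant lifting in the Stinespring theorem quoted above and restrict it from $\theta(M)'$ to $N'\subseteq\theta(M)'$.

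The remaining step is to extend $\rho_0$ across $K=K_0\oplus K_0^\perp$. Because $P\in\pi(M)'$, any candidate $\rho$ is forced to be block-diagonal for this splitting, its $K_0$-block must be $\rho_0$, and the intertwining relation constrains only the $K_0$-part (as $VH_N\subseteq K_0$); so what is needed is a normal unital $*$-homomorphism from $N'$ into $\bigl(\pi|_{K_0^\perp}(M)\bigr)'$ on the complementary summand. I would produce this from the structure theorem for the normal representation $\pi|_{K_0^\perp}$ of $M$, using exactly the isometry $W\colon K\to H_M\otimes\ell^2(I)$ picture set up just before the lemma, which identifies the complementary commutant inside a corner of $M'\vnten\mc B(\ell^2(I))$. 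The hard part will be this last step: unlike the forced $K_0$-block, nothing in the intertwining relation pins down the complementary block, so one must verify that a compatible normal unital representation of $N'$ is genuinely available there. This is where non-minimality is felt and where the structure theory of the dilation, rather than positivity alone, does the work; if one only requires $\rho$ on the minimal piece (equivalently allows $\rho(1)=P$), the construction of $\rho_0$ already suffices.
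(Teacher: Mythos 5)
Your proof of (2)$\Rightarrow$(1) is exactly the paper's argument. For (1)$\Rightarrow$(2), your direct construction of $\rho_0$ on $K_0=\overline{\lin}\{\pi(a)V\xi\}$ is correct: positivity of $\Theta=(\theta(a_i^*a_j))$ is precisely complete positivity, and hypothesis (1) enters exactly where you say, since the entries of $\Theta$ lie in $N$ and therefore commute with $(X')^*X'$. The paper reaches the same map by transport rather than by estimate: it takes the abstract minimal dilation $V_0,\pi_0,K_0,\rho_0$ (whose commutant lifting is part of the Stinespring theorem quoted before the lemma, restricted from $\theta(M)'$ to $N'\subseteq\theta(M)'$ --- this is your own suggested alternative), builds the canonical isometry $u:K_0\rightarrow K$, $\pi_0(x)V_0\xi\mapsto\pi(x)V\xi$, and sets $\rho(x')=u\rho_0(x')u^*$. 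That is your $\rho_0$, carried onto $u(K_0)\subseteq K$ and extended by zero on the complement; in particular the paper's $\rho$ has $\rho(1)=uu^*$, the projection onto the minimal part of $K$, and is \emph{not} unital when the dilation is non-minimal.

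This brings us to your final paragraph, where your suspicion should be upgraded to a refutation: the unital extension you were trying to build does not exist in general, so the lemma as stated (with ``unital'') is false, and the paper's own proof does not establish it either, for the reason just noted. Counterexample: take $M=\mathbb C$ on $H_M=\mathbb C$, take $H_N=\mathbb C^2$ with $N=\mathbb C1\subseteq\mathbb M_2$, so $N'=\mathbb M_2$, and $\theta(\lambda)=\lambda 1$; use the non-minimal dilation $K=\mathbb C^2\oplus\mathbb C$, $\pi(\lambda)=\lambda 1_K$, $V$ the inclusion. Condition (1) holds, but any unital $*$-homomorphism $\rho:\mathbb M_2\rightarrow\pi(M)'=\mathbb M_3$ with $\rho(x')V=Vx'$ leaves $VH_N$ invariant, hence (being $*$-preserving) leaves the one-dimensional complement invariant as well, and so compresses there to a unital homomorphism $\mathbb M_2\rightarrow\mathbb C$; no such map exists since $\mathbb M_2$ is simple of dimension $4$. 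The correct statement drops ``unital'' from (2) (equivalently, allows $\rho(1)$ to be a projection, necessarily fixing $V H_N$); with that correction, your construction extended by zero on $K_0^\perp$ is a complete proof, and the weaker statement is all that is used later --- in Theorem~\ref{thm:pullback_from_stinespring} and Lemma~\ref{lem:quan_graph_cut_down} only the containment $\rho(N')\subseteq\pi(M)'$ and the intertwining relation are needed.
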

\begin{proof}
Suppose first that $\theta$ maps into $N$.
Let $V_0,\pi_0,K_0,\rho_0$ be the minimal Stinespring dilation for $\theta$.  We may define $u:K_0\rightarrow
K$ by $u\pi_0(x)V_0\xi = \pi(x)V\xi$ for $x\in M, \xi\in H_N$.  As
\[ ( \pi(x)V\xi | \pi(y)V\eta ) = (\xi|\theta(x^*y)\eta)
= ( \pi_0(x)V_0\xi | \pi_0(y)V_0\eta ) \qquad (x,y\in M, \xi,\eta\in H_N), \]
it follows that $u$ is well-defined, an isometry, and by minimality, extends by linearity and continuity
to all of $K_0$.  By construction, $u\pi_0(x) = \pi(x)u$ for $x\in M$, and $uV_0 = V$.
Define $\rho(x') = u \rho_0(x') u^*$ which is a normal $*$-homomorphism, as $u^*u=1$.  It follows that
\[ \rho(x')V = u \rho_0(x') u^* (uV_0) = u \rho_0(x') V_0 = u V_0 x' = Vx'
\qquad (x'\in N'). \]
Furthermore, for $x'\in N', x\in M$, we calculate that $\rho(x')\pi(x) = u \rho_0(x') u^*\pi(x)
= u \rho_0(x') \pi_0(x) u^* = u \pi_0(x) \rho_0(x') u^* = \pi(x) u \rho_0(x') u^* = \pi(x)\rho(x')$.
Thus $\rho$ maps into $\pi(M)'$.

Conversely, if we have such a $\rho$ then for $x'\in N', x\in M$, we see that $\theta(x)x' = V^*\pi(x)Vx'
= V^*\pi(x)\rho(x')V = V^*\rho(x')\pi(x)V = x' V^*\pi(x)V = x'\theta(x)$.  Thus $\theta(x)\in N''=N$,
as claimed.
\end{proof}

As there is a bijection between dilations $V,\pi,K$ with $K=H_M\otimes\ell^2(I)$ and $\pi(x)=x\otimes 1$,
and Kraus forms $\theta(x) = \sum_{i\in I} b_i^* x b_i$, this lemma can be translated into a (slightly
cumbersome) condition on a Kraus form.

Weaver shows in \cite[Theorem~2.7]{weaver1} that the definition of a Quantum Relation, Definition~\ref{defn:quan_rel},
is independent of the ambient Hilbert space $H$ upon which our von Neumann algebra $M$ acts.  We will study
the ideas of this proof, and separate them out into a number of useful constructions.  Let $M\subseteq\mc B(H_M)$
be a von Neumann algebra, let $S\subseteq\mc B(H_M)$ be a quantum relation on $M$, and let $H$ be some other
Hilbert space.  Let $\pi:M\rightarrow\mc B(H_M\otimes H); x\mapsto x\otimes1$ and define
\[ S_\pi = S\vnten\mc B(H), \]
the weak$^*$-closed linear span of $\{x\otimes a:x\in S,a\in\mc B(H) \}$ in $\mc B(H_M\otimes H)$.  It is routine
that $\pi(M)' = M'\vnten\mc B(H)$, and so $S_\pi$ is a quantum relation on $\pi(M)$.  

\begin{lemma}
Every quantum relation on $\pi(M)$ is of the form $S_\pi$ for some quantum relation $S$ on $M$, and the
map $S\mapsto S_\pi$ is an order-preserving bijection, and preserves the properties of $S$ being self-adjoint,
respectively, $S$ being unital.
\end{lemma}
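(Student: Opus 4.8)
The plan is to separate the map $S\mapsto S_\pi$ from its inverse, with the \emph{slice maps} providing the bridge. For $\omega\in\mc B(H)_*$ let $R_\omega=\id\vnten\omega:\mc B(H_M)\vnten\mc B(H)\rightarrow\mc B(H_M)$ be the associated normal, completely bounded right slice map, so that $R_\omega(s\otimes a)=\omega(a)s$ and, crucially, $R_\omega\big((x\otimes 1)\,t\,(y\otimes 1)\big)=x\,R_\omega(t)\,y$ for $x,y\in M'$. That $S\mapsto S_\pi=S\vnten\mc B(H)$ is well defined (a quantum relation on $\pi(M)$, since $\pi(M)'=M'\vnten\mc B(H)$) and order preserving is immediate from the definitions. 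Since slicing $S\vnten\mc B(H)$ by the $R_\omega$ and taking the weak$^*$-closed linear span returns $S$ exactly, the map is injective; so the real content lies in surjectivity and the two preservation claims.

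For surjectivity, fix a quantum relation $T$ on $\pi(M)$, i.e.\ a weak$^*$-closed $T\subseteq\mc B(H_M\otimes H)$ with $(M'\vnten\mc B(H))\,T\,(M'\vnten\mc B(H))\subseteq T$. I would define $S$ to be the weak$^*$-closed linear span of $\{R_\omega(t):t\in T,\ \omega\in\mc B(H)_*\}$. Applying the intertwining identity above with the sub-action $M'\otimes 1\subseteq M'\vnten\mc B(H)$ gives $x\,R_\omega(t)\,y=R_\omega\big((x\otimes1)t(y\otimes1)\big)\in S$ for $x,y\in M'$; as multiplication by fixed operators is weak$^*$-continuous, the bimodule property passes to the weak$^*$-closure, so $S$ is a quantum relation on $M$.

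The crux is to show $T=S\vnten\mc B(H)$, using only that $T$ is a bimodule over the full factor $1\otimes\mc B(H)\subseteq\pi(M)'$. For $S\vnten\mc B(H)\subseteq T$: a short computation from the conventions of Section~\ref{sec:conventions} gives $(1\otimes\theta_{\eta,\rho})\,t\,(1\otimes\theta_{\xi,\zeta})=R_{\omega_{\eta,\zeta}}(t)\otimes\theta_{\xi,\rho}$, which lies in $T$ since the outer factors lie in $1\otimes\mc B(H)$; fixing $\eta,\zeta$ and varying $\xi,\rho$, then spanning and taking weak$^*$-limits, yields $R_{\omega_{\eta,\zeta}}(t)\otimes\mc B(H)\subseteq T$, and since the $\omega_{\eta,\zeta}$ are norm-dense in $\mc B(H)_*$ and $\omega\mapsto R_\omega(t)$ is norm-continuous, we conclude $S\vnten\mc B(H)\subseteq T$. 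For the reverse inclusion $T\subseteq S\vnten\mc B(H)$, every $t\in T$ satisfies $R_\omega(t)\in S$ for all $\omega$ by construction, and the left slices automatically land in the full factor $\mc B(H)$; hence $t$ belongs to the Fubini product of $S$ and $\mc B(H)$, which coincides with $S\vnten\mc B(H)$ because $\mc B(H)$ is injective and so enjoys the weak$^*$ slice-map property. This splitting is the main obstacle: it is exactly where the special structure of the ancilla factor $\mc B(H)$ is indispensable, and it requires care with weak$^*$-closures when $H$ is infinite-dimensional (an alternative, more hands-on route reconstructs $t$ from its matrix slices against an orthonormal basis of $H$).

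The preservation statements then follow cheaply from the same picture. Since $R_\omega(t^*)=R_{\bar\omega}(t)^*$ with $\bar\omega(a)=\overline{\omega(a^*)}$, and $(s\otimes a)^*=s^*\otimes a^*$, the whole construction commutes with adjoints, so $S$ is self-adjoint if and only if $S_\pi$ is. For the unital property, $1_{H_M}\in S$ gives $1_{H_M}\otimes 1_H=1_{H_M\otimes H}\in S_\pi$, while conversely slicing $1_{H_M\otimes H}\in S_\pi$ by any state $\omega$ on $\mc B(H)$ returns $R_\omega(1\otimes1)=1_{H_M}\in S$; hence $S$ is unital precisely when $S_\pi$ is.
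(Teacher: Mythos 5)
Your proposal is correct, and its skeleton matches the paper's: both directions hinge on the same compression identity $(1\otimes\theta_{\eta,\rho})\,t\,(1\otimes\theta_{\xi,\zeta})=\big((\id\otimes\omega_{\eta,\zeta})(t)\big)\otimes\theta_{\xi,\rho}$, and both define $S$ as the (weak$^*$-closed span of the) right slices of $T$. The genuine divergence is in the reverse inclusion $T\subseteq S\vnten\mc B(H)$. The paper finishes elementarily: compress $t\in T$ on both sides by $1\otimes\theta$ for finite-rank $\theta$ (say finite-rank projections increasing to $1$); each compression is a finite sum of terms $(\id\otimes\omega)(t)\otimes\theta'\in S\odot\mc B(H)$, and these converge weak$^*$ to $t$, so weak$^*$-closedness of $S\vnten\mc B(H)$ does the rest. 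You instead observe that all right slices of $t$ lie in $S$, so $t$ lies in the Fubini product $F(S,\mc B(H))$, and then invoke the weak$^*$ slice-map property (property $S_\sigma$) of $\mc B(H)$ to conclude $F(S,\mc B(H))=S\vnten\mc B(H)$. That is a valid route, but note that the cited fact is precisely the content being proved: for the type I factor $\mc B(H)$, property $S_\sigma$ is established by exactly the finite-rank compression argument above, so appealing to Kraus's theorem for injective von Neumann algebras is correct but strictly heavier machinery than needed, and sits a little awkwardly with the paper's aim of a self-contained, elementary account. You acknowledge this yourself in the parenthetical about reconstructing $t$ from matrix slices; making that the primary argument would close the circle without external citation. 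The remaining items (well-definedness, order preservation, injectivity via slicing, and preservation of self-adjointness and unitality) are handled in your proposal in the same spirit as the paper, which simply declares them non-obvious-free; your explicit verification that $R_\omega(t^*)=R_{\bar\omega}(t)^*$ and the slicing of the identity are fine, modest additions.
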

\begin{proof}
The only non-obvious thing to check is that every quantum relation on $\pi(M)$ is of the form $S_\pi$.
Let $T$ be a weak$^*$-closed operator bimodule for $\pi(M)' = M'\vnten\mc B(H)$.  For $x\in T$, consider
\[ (1\otimes\theta_{\xi_1,\eta_1}) x (1\otimes\theta_{\xi_2,\eta_2})
= (\id\otimes\omega_{\xi_1,\eta_2})(x) \otimes \theta_{\xi_2,\eta_1} \in T
\qquad (\xi_1,\xi_2,\eta_1,\eta_2\in H). \]
Let $S = \{ (\id\otimes\omega)(x) : x\in T, \omega\in\mc B(H)_* \}$ a weak$^*$-closed subspace of $\mc B(H_M)$ which is
seen to be an $M'$-bimodule.  By linearity, and that $T$ is weak$^*$-closed, it follows that $y\otimes a\in T$
for each $y\in S,a\in\mc B(H)$.  Furthermore, we can approximate $x\in T$ in the weak$^*$-topology by
multiplying on the left and right by $1\otimes\theta$ for suitable finite-rank operators $\theta$, and so $x$
is in $S\vnten\mc B(H)$.  Thus $T = S\vnten\mc B(H) = S_\pi$, as required.
\end{proof}

Now let $\theta:M\rightarrow\mc B(K)$ be a normal unital $*$-homomorphism, not assumed injective.  Again by the
structure theorem for such homomorphism, \cite[Theorem~5.5, Chapter~IV]{tak1}, there is some
$H$ and an isometry $u:K\rightarrow H_M\otimes H$ such that $u^*(x\otimes 1)u = \theta(x)$ for $x\in M$,
and with $uu^* \in M'\vnten\mc B(H)$; equivalently, $(x\otimes 1)u = u\theta(x)$ for $x\in M$.  Define
\[ S_\theta = u^*S_\pi u = \{ u^*\alpha u : \alpha \in S\vnten\mc B(H) \} \subseteq\mc B(K), \]
where $\pi(x)=x\otimes 1$ as before.  Notice that the definition of $S_\theta$ generalises the definition of
$S_\pi$.

\begin{lemma}\label{lem:quan_graph_cut_down}
The definition of $S_\theta$ is independent of the choice of $u,H$, and $S_\theta$ is a quantum relation
on $\theta(M)$.  The map $S\mapsto S_\theta$ is order preserving, and preserves the properties of $S$ being
self-adjoint, respectively, $S$ being unital.
\end{lemma}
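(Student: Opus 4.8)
The plan is to first determine the commutant $\theta(M)'$, after which the module property, weak$^*$-closedness, self-adjointness, unitality and monotonicity all follow quickly; the genuinely delicate point is independence of the choice of $(u,H)$, which I would treat last by passing to a common ambient space.

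First I would identify $\theta(M)'$. Write $\pi(x)=x\otimes 1$ on $H_M\otimes H$ and $p=uu^*$, a projection in $\pi(M)'=M'\vnten\mc B(H)$. Taking adjoints in $(x\otimes 1)u=u\theta(x)$ gives $u^*(x\otimes 1)=\theta(x)u^*$, so $\theta(x)=u^*\pi(x)u$ realises $\theta(M)$, via the isometry $u$, as the reduction of $\pi(M)$ by $p$. The reduction theorem for von Neumann algebras (reduction by a projection in $\pi(M)'$) then gives $\theta(M)'=\{u^*\beta u:\beta\in\pi(M)'\}$, using $pu=u$ and $u^*p=u^*$.

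Granting this, the remaining assertions are formal. For $y_i=u^*\beta_i u\in\theta(M)'$ and $s=u^*\alpha u\in S_\theta$ (with $\beta_i\in\pi(M)'$, $\alpha\in S_\pi:=S\vnten\mc B(H)$) one computes $y_1sy_2=u^*(\beta_1 p\,\alpha\,p\beta_2)u$, and since $S_\pi$ is a $\pi(M)'$-bimodule, $\beta_1 p\alpha p\beta_2\in S_\pi$, so $y_1sy_2\in S_\theta$. For weak$^*$-closedness I would note the identity $S_\theta=\{T\in\mc B(K):uTu^*\in S_\pi\}$ (the inclusion $\subseteq$ uses $uTu^*=p\alpha p\in S_\pi$, and $\supseteq$ uses $T=u^*(uTu^*)u$); as $T\mapsto uTu^*$ is normal and $S_\pi$ is weak$^*$-closed, so is $S_\theta$. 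Self-adjointness follows from $(u^*\alpha u)^*=u^*\alpha^* u$, unitality from $1_K=u^*1u$, and monotonicity from the evident monotonicity of $\alpha\mapsto u^*\alpha u$, each combined with the corresponding property of $S_\pi$ established in the preceding lemma.

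The main obstacle is independence of the dilation. Given a second dilation $(u',H')$, I would enlarge both to the common space $H_M\otimes(H\oplus H')$: composing $u$ with the $M$-equivariant inclusion $\iota_1\colon H_M\otimes H\hookrightarrow H_M\otimes(H\oplus H')$ yields $\hat u=\iota_1u$ with $\hat u\hat u^*\in M'\vnten\mc B(H\oplus H')$, and since $\iota_1^*(S\vnten\mc B(H\oplus H'))\iota_1$ is exactly the $(1,1)$-corner $S\vnten\mc B(H)$, a block computation gives $\hat u^*(S\vnten\mc B(H\oplus H'))\hat u=S_\theta$; so the enlargement does not change $S_\theta$, and likewise for $u'$ via $\iota_2$. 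This reduces matters to two dilations $\hat u,\hat u'$ into the \emph{same} space, where $v:=\hat u'\hat u^*$ commutes with every $x\otimes 1$ (again from $\hat u^*(x\otimes 1)=\theta(x)\hat u^*$) and hence lies in $\pi(M)'$. Conjugation by $v$ preserves $S_{\hat\pi}:=S\vnten\mc B(H\oplus H')$ by the bimodule property, and the identity $\hat u^*\hat\alpha\hat u=(\hat u')^*(v\hat\alpha v^*)\hat u'$ gives one inclusion between the two resulting subspaces, with the symmetric argument giving equality. This is, in effect, a concrete reproof of the independence in \cite[Theorem~2.7]{weaver1}.
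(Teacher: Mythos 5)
Your proof is correct, and while your treatment of the delicate point (independence of the dilation) is essentially the paper's argument --- enlarge both dilations to the common space $H_M\otimes(H\oplus H')$, check the corner compression does not change $S_\theta$, then observe $v=\hat u'\hat u^*\in\pi(M)'$ and use the bimodule property of $S\vnten\mc B(H\oplus H')$ --- you handle the other two points by genuinely different, and in one case cleaner, routes. For the bimodule property, the paper does not invoke the reduction theorem $(N_p)'=(N')_p$; instead it directly checks that $\rho(x')=ux'u^*$ defines a $*$-homomorphism $\theta(M)'\to\pi(M)'$ (the same computation as in its Lemma~\ref{lem:rho_always}), which yields the needed inclusion $\theta(M)'\subseteq u^*\pi(M)'u$ by elementary means; your appeal to the commutation theorem for reductions is correct (since $p=uu^*\in\pi(M)'$) but imports a standard structural result where a two-line calculation suffices. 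Conversely, for weak$^*$-closedness your argument improves on the paper's: the identity $S_\theta=\{T\in\mc B(K):uTu^*\in S_\pi\}$ (valid because $p\alpha p\in S_\pi$ by the bimodule property, and $T=u^*(uTu^*)u$ as $u^*u=1$) exhibits $S_\theta$ as the preimage of the weak$^*$-closed set $S_\pi$ under the normal map $T\mapsto uTu^*$, and closedness is immediate. The paper instead goes through Krein--Smulian, takes a bounded net $(u^*\alpha_i u)$, compresses to $\beta_i=p\alpha_i p$ to repair the possible unboundedness of $(\alpha_i)$, and passes to a weak$^*$-convergent subnet; your characterisation packages exactly the same compression trick into a single statement and avoids the compactness argument entirely.
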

\begin{proof}
We show independence of $u,H$.  Let $u_1:K\rightarrow H_M\otimes H_1$ be an isometry with
$u_1\theta(x) = (x\otimes 1)u_1$ for each $x\in M$.  Let $S_1 = u_1^* (S\vnten\mc B(H_1)) u_1$.

Set $H_0 = H\oplus H_1$ and let $\iota:H\rightarrow H_0$ be the inclusion.  For $x\otimes a \in S\otimes
\mc B(H_0)$ the algebraic tensor product, clearly $x\otimes \iota^* a \iota \in S\otimes\mc B(H)$.
Thus by weak$^*$-continuity, $((1\otimes\iota)u)^* (S\vnten\mc B(H_0)) (1\otimes\iota)u \subseteq S_\theta$.
However, the same reasoning shows that $(1\otimes\iota) (S\vnten\mc B(H)) (1\otimes\iota^*) \subseteq
S\vnten\mc B(H_0)$ and hence $((1\otimes\iota)u)^* (S\vnten\mc B(H_0)) (1\otimes\iota)u = S_\theta$.
We conclude that we can identify $u$ with $(1\otimes\iota)u$ without changing $S_\theta$, and similarly
for $u_1$.  So consider $u,u_1$ as maps $K \rightarrow H_M\otimes H_0$.

With these identifications, still $u\theta(x) = (x\otimes 1)u$ and similarly for $u_1$, and hence
$uu_1^* \in (M\otimes 1)' = M'\vnten\mc B(H_0)$.  As $S\vnten\mc B(H_0)$ is an $M'\vnten\mc B(H_0)$-operator
bimodule, we see that for $\alpha\in S\vnten\mc B(H_0)$ also $\beta = u_1 u^* \alpha uu_1^* \in
S\vnten\mc B(H_0)$.  Thus $S_\theta \ni u^* \alpha u = u_1^* \beta u_1 \in S_1$, and so $S_\theta
\subseteq S_1$.  By symmetry, also $S_1 \subseteq S_\theta$, and so we have equality, as claimed.

Define $\rho:\theta(M)'\rightarrow\mc B(H_M\otimes H)$ by $\rho(x') = ux'u^*$.  As in the proof of
Lemma~\ref{lem:rho_always}, $\rho$ is a $*$-homomorphism mapping into $(M\otimes 1)' = M'\vnten\mc B(H)$.
So, for $x_1',x_2'\in \theta(M)'$ and $\alpha\in S\vnten\mc B(H)$,
\[ x_1' u^* \alpha u x_2' = u^* \rho(x_1') \alpha \rho(x_2') u \in u^* (S\vnten\mc B(H)) u = S_\theta, \]
using the bimodule property of $S\vnten\mc B(H)$.  Hence $S_\theta$ is a $\theta(M)'$-operator
bimodule.  

To show that $S_\theta$ is weak$^*$-closed, we used the Krein-Smulian Theorem, \cite[Theorem~12.1]{conway}
for example, which tells us that it suffices to show that the (norm closed) unit ball of $S_\theta$ is
weak$^*$-closed.  Towards this, let $(u^*\alpha_i u)$ be a bounded net in $S_\theta$ converging weak$^*$
to $x \in \mc B(K)$.  There seems to be no reason why $(\alpha_i)$ is a bounded net, but for each $i$,
set $\beta_i = uu^* \alpha_i uu^* \in\mc B(H_M\otimes H)$.  As $uu^*\in M'\vnten\mc B(H)$, it follows that
each $\beta_i \in S\vnten\mc B(H)$.  As $u$ is an isometry, $u^* \beta_i u = u^*\alpha_i u$.  Of course,
$\|\beta_i\| \leq \|u^*\alpha_i u\|$, and so $(\beta_i)$ is a bounded net.  By moving to a subnet if necessary,
we may suppose that $\beta_i\rightarrow\beta \in S\vnten\mc B(H)$, as $S\vnten\mc B(H)$ is weak$^*$-closed.
Then $u^*\beta_i u \rightarrow u^*\beta u$ and so $x = u^*\beta u\in S_\theta$, as required.

The remaining claims are clear.
\end{proof}

The following shows how to compute the pullback from a Stinespring dilation directly (not using the Kraus
form) and shows that even the Kraus form definition is already an $N'$-operator bimodule.

\begin{theorem}\label{thm:pullback_from_stinespring}
Let $\theta:M\rightarrow N$ be a normal CP map with Kraus form $\theta(x) = \sum_{i\in I} b_i^* x b_i$.
Let $S\subseteq\mc B(H_M)$ be a quantum relation over $M$, and as before, define the pullback of $S$ along $\theta$
to be $\overleftarrow{S}$, the weak$^*$-closed $N'$-operator bimodule generated by elements $b_i^*xb_j$ for
$x\in S, i,j\in I$.  Then:
\begin{enumerate}
\item\label{thm:pullback_from_stinespring:one}
  already the weak$^*$-closed subspace generated by elements $b_i^*xb_j$ for
  $x\in S, i,j\in I$ is an $N'$-operator bimodule;
\item\label{thm:pullback_from_stinespring:two}
  if $V,\pi,K$ is any Stinespring dilation of $\theta$, then $\overleftarrow{S}$ is the weak$^*$-closure of
  $V^* S_\pi V$, where $S_\pi$ is as defined above.
\end{enumerate}
\end{theorem}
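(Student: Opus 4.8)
The plan is to pin down the concrete Stinespring dilation attached to the given Kraus form, compute the compression $V^*S_\pi V$ by hand, and then use the two uniqueness results already in place (Lemma~\ref{lem:rho_always} for the bimodule property, Lemma~\ref{lem:quan_graph_cut_down} for independence of the dilation) to bootstrap to both assertions. First I would realise the Kraus form by the dilation $K = H_M\otimes\ell^2(I)$, $\pi(x)=x\otimes 1$ and $V\colon H_N\to K$, $V\xi=\sum_i b_i\xi\otimes\delta_i$, so that $\theta(x)=V^*\pi(x)V$ and, by definition, $S_\pi = S\vnten\mc B(\ell^2(I))$. A short calculation (the adjoint acts by $V^*(\eta\otimes\delta_j)=b_j^*\eta$) gives $V^*(x\otimes\theta_{\delta_i,\delta_j})V = b_j^* x b_i$ for $x\in S$ and $i,j\in I$. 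Since $S_\pi$ is the weak$^*$-closed span of the elementary tensors $x\otimes a$ and $\alpha\mapsto V^*\alpha V$ is normal, the weak$^*$-closure $\overline{V^*S_\pi V}^{\,w^*}$ coincides with the weak$^*$-closed span of $\{b_i^* x b_j : x\in S,\ i,j\in I\}$, that is, with $\overleftarrow{S}$.

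Next, to prove part~(\ref{thm:pullback_from_stinespring:one}) I would show this closed span is already an $N'$-bimodule. The key input is Lemma~\ref{lem:rho_always}: as $\theta$ maps into $N$, there is a normal unital $*$-homomorphism $\rho\colon N'\to\pi(M)'$ with $Vx'=\rho(x')V$ for $x'\in N'$. Then for $x',y'\in N'$ and $\alpha\in S_\pi$ we have $x'(V^*\alpha V)y' = V^*\rho(x')\alpha\rho(y')V$, and since $S_\pi$ is a $\pi(M)'$-bimodule with $\rho(x'),\rho(y')\in\pi(M)'$, the middle factor lies in $S_\pi$. Hence $V^*S_\pi V$ is already an algebraic $N'$-bimodule, and since multiplication by fixed operators is weak$^*$-continuous, its weak$^*$-closure $\overleftarrow{S}$ is an $N'$-operator bimodule. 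Together with the first paragraph this also establishes part~(\ref{thm:pullback_from_stinespring:two}) for this particular dilation.

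For part~(\ref{thm:pullback_from_stinespring:two}) in full generality it remains to prove that $\overline{V^*S_\pi V}^{\,w^*}$ is independent of the Stinespring dilation, where for an arbitrary $\pi$ we interpret $S_\pi$ as the cut-down quantum relation $w^*(S\vnten\mc B(H))w$ provided by Lemma~\ref{lem:quan_graph_cut_down} (realising $\pi(x)=w^*(x\otimes 1)w$ with $ww^*\in M'\vnten\mc B(H)$). It suffices to compare every dilation with the minimal one $V_0,\pi_0,K_0$: as in the proof of Theorem~\ref{thm:quan_chan_to_quan_graph}, minimality yields an isometry $u\colon K_0\to K$ with $u\pi_0(x)=\pi(x)u$ and $uV_0=V$, and a short calculation gives $uu^*\in\pi(M)'$. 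The composite $wu\colon K_0\to H_M\otimes H$ is then an isometry intertwining $\pi_0$ with $x\mapsto x\otimes 1$ and satisfying $(wu)(wu)^*\in M'\vnten\mc B(H)$, so the independence clause of Lemma~\ref{lem:quan_graph_cut_down} identifies $S_{\pi_0}=(wu)^*(S\vnten\mc B(H))(wu)=u^*S_\pi u$. Consequently $V^*S_\pi V = V_0^* u^* S_\pi u\, V_0 = V_0^* S_{\pi_0} V_0$, which is independent of the dilation and, by the Kraus-form computation above, equal to $\overleftarrow{S}$. I expect this final step to be the main obstacle: correctly constructing the intertwiner $u$, verifying $uu^*\in\pi(M)'$ and $(wu)(wu)^*\in M'\vnten\mc B(H)$, and matching these to the hypotheses of Lemma~\ref{lem:quan_graph_cut_down} is the only genuinely delicate part, whereas the compression formula and the bimodule verification via $\rho$ are routine by comparison.
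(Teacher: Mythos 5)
Your proposal is correct and follows essentially the same route as the paper's proof: compute $V^*(x\otimes e_{ij})V=b_i^*xb_j$ for the Kraus-form dilation and use weak$^*$-density of $\lin\{e_{ij}\}$ to identify $\overleftarrow{S}$ with the weak$^*$-closure of $V^*S_\pi V$; invoke Lemma~\ref{lem:rho_always} and the relation $x'V^*\alpha V y' = V^*\rho(x')\alpha\rho(y')V$ for the bimodule property; and reduce an arbitrary dilation to the minimal one via an intertwining isometry, using Lemma~\ref{lem:quan_graph_cut_down} to get $S_{\pi_0}=u^*S_\pi u$ and hence $V^*S_\pi V=V_0^*S_{\pi_0}V_0$. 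The only (immaterial) difference is that you compare every dilation directly with the minimal one through the composite isometry $wu$, whereas the paper treats the Kraus dilation and the general dilation in two separate steps.
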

\begin{proof}
We first show how to express $\overleftarrow{S}$ using a special Stinespring dilation.  For the moment, suppose
that $V,\pi,K=H_M\otimes\ell^2(I)$ comes from the Kraus representation, so $V\xi = \sum_i b_i\xi\otimes\delta_i$
and $\pi(x) = x\otimes 1$.  Then for each $i,j$,
\[ V^*(x\otimes e_{ij})V(\xi) = V^*(xb_j(\xi)\otimes\delta_i) = b_i^* x b_j(\xi)
\qquad (x\in S, \xi\in H_M), \]
so that $V^*(x\otimes e_{ij})V = b_i^* x b_j$ for $x\in S$.  Denote by $X$ the weak$^*$-closure of
$\lin\{ V^*(x\otimes a)V : x\in S,a\in\mc B(\ell^2(I)) \}$.  It follows that $\overleftarrow{S}
\subseteq X$, but as $\lin\{ e_{ij} \}$ is weak$^*$ dense in $\mc B(\ell^2(I))$, also $X \subseteq
\overleftarrow{S}$, and hence we have equality.  It follows that $\overleftarrow{S}$ equals the
weak$^*$-closure of $V^*S_\pi V$.

Now let $\rho:N'\rightarrow \pi(M)'=M'\vnten\mc B(\ell^2(I))$ be as in Lemma~\ref{lem:rho_always}.
Then, for $x'_1,x_2'\in N', y\in S_\pi$,
\[ x_2'V^* y Vx_1' = V^* \rho(x_2') y \rho(x_1') V \in V^* S_\pi V, \]
by the operator bimodule property of $S_\pi$.  Thus $V^*S_\pi V$ is an $N'$-operator bimodule, and thus
so is its weak$^*$-closure.  We have hence shown claim (\ref{thm:pullback_from_stinespring:one}).

Now let $V_0, \pi_0, K_0$ be the minimal Stinespring dilation for $\theta$.  There is an isometry
$u:K_0\rightarrow K$ with $uV_0 = V$ and $\pi(x)u = u\pi_0(x)$ for $x\in M$, compare the proof of 
Lemma~\ref{lem:rho_always}.  Notice then that $\pi_0(x) = u^*(x\otimes 1)u$ for each $x\in M$, so by
definition, $S_{\pi_0} = u^* S_\pi u$, and hence $V^* S_\pi V = V_0^* u^* S_\pi u V = V_0^* S_{\pi_0} V_0$.

Finally, suppose that $V,\pi,K$ is any Stinespring dilation for $\theta$.  Again, there is an isometry
$u:K_0\rightarrow K$ with $uV_0 = V$ and $\pi(x)u = u\pi_0(x)$ for $x\in M$.  For a suitable Hilbert space
$K'$, we can find an isometry $v:K\rightarrow H_M\otimes K'$ with $v\pi(x) = (x\otimes 1)v$.  Thus
$S_\pi = v^*(S\vnten\mc B(K')) v$, and as $\pi_0(x) = u^*\pi(x)u = u^*v^*(x\otimes 1)vu$, also
$S_{\pi_0} = u^*v^* (S\vnten\mc B(K'))vu = u^* S_\pi u$.  Hence $V_0^* S_{\pi_0} V_0 = V_0^* u^* S_\pi u V_0
= V^* S_\pi V$.  Thus (\ref{thm:pullback_from_stinespring:two}) is shown.
\end{proof}

\begin{remark}\label{rem:from_channel_as_pullback}
Let us compare this with the quantum graph we constructed in Theorem~\ref{thm:quan_chan_to_quan_graph}.
There we started with $\theta:C\rightarrow B$ a UCP map and constructed a quantum graph over $B$ which is
$S = V^* \pi(C)' V$.  Here $V,\pi,K$ is a Stinespring dilation of $\theta$.  As this is independent of the
choice, we can choose $K = H_C\otimes K_0$ for a suitable $K_0$,
and $\pi(x)=x\otimes 1$, so that $\pi(C)' = C'\vnten \mc B(K_0) = T_\pi$, where $T=C'$ the trivial quantum
graph over $C$.  Thus $S$ is actually just the pullback of the trivial quantum graph $C'$ for the UCP map $\theta$.
\end{remark}

\begin{corollary}\label{corr:pullback_fd}
Let $B_1,B_2$ be finite-dimensional $C^*$-algebras.
Let $\theta:B_2\rightarrow B_1$ be a UCP map, and let $S$ be a quantum graph over $B_2$.  Then
$\overleftarrow{S}$ is a quantum graph over $B_1$.  If $V,\pi,K$ is any Stinespring dilation of $\theta$,
then $\overleftarrow{S} = V^* S_\pi V$.
\end{corollary}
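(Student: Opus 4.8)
The plan is to read the entire corollary off Theorem~\ref{thm:pullback_from_stinespring}, exploiting the finite-dimensionality of $B_1$ and $B_2$ to discard the topological qualifiers. First I would observe that, since $B_2$ is finite-dimensional, the linear map $\theta$ is automatically normal, and a UCP map is in particular completely positive; thus $\theta:B_2\to B_1$ is a normal CP map to which Theorem~\ref{thm:pullback_from_stinespring} applies with $M=B_2$ and $N=B_1$. Because $H_1$ is finite-dimensional, every subspace of $\mc B(H_1)$ is already weak$^*$-closed, so the ``weak$^*$-closure'' appearing in part~(\ref{thm:pullback_from_stinespring:two}) of that theorem is vacuous, and we obtain directly $\overleftarrow{S}=V^*S_\pi V$ for any Stinespring dilation $V,\pi,K$ of $\theta$. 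This already settles the final assertion of the corollary.

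It then remains to check that $\overleftarrow{S}$ is a genuine quantum graph over $B_1$ in the sense of Definition~\ref{defn:quan_graph}, that is, a self-adjoint, unital $B_1'$-operator bimodule (weak$^*$-closedness again being automatic). The bimodule property over $B_1'$ is precisely part~(\ref{thm:pullback_from_stinespring:one}) of Theorem~\ref{thm:pullback_from_stinespring}. For the remaining two properties I would work with the identity $\overleftarrow{S}=V^*S_\pi V$ and use that $S\mapsto S_\pi$ preserves self-adjointness and unitality (Lemma~\ref{lem:quan_graph_cut_down}). Since $S$ is self-adjoint, so is $S_\pi$, whence $(V^*S_\pi V)^*=V^*S_\pi^* V=V^*S_\pi V$, giving self-adjointness of $\overleftarrow{S}$. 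For unitality, $\theta$ being unital forces $V^*V=\theta(1)=1$, so $V$ is an isometry; as $S$ is unital we have $1_K\in S_\pi$, and therefore $1_{H_1}=V^*V=V^*1_KV\in V^*S_\pi V=\overleftarrow{S}$.

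There is essentially no substantial obstacle here beyond bookkeeping, as the entire content has been front-loaded into Theorem~\ref{thm:pullback_from_stinespring} and the lemmas on $S_\pi$. The one point deserving care is the transfer of the ``unital'' and ``self-adjoint'' conditions through the compression $x\mapsto V^*xV$: this works precisely because $V$ is an isometry (a consequence of $\theta$ being unital) and because $S_\pi$ inherits both conditions from $S$. As a consistency check, I would note that this corollary is the quantum-graph counterpart of Remark~\ref{rem:from_channel_as_pullback}: the graph attached to a channel in Theorem~\ref{thm:quan_chan_to_quan_graph} is exactly the pullback of the trivial quantum graph $C'$, so the present statement recovers and subsumes the well-definedness established there.
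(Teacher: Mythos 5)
Your proposal is correct and follows essentially the same route as the paper: invoke Theorem~\ref{thm:pullback_from_stinespring}, note that finite-dimensionality makes the weak$^*$-closure vacuous so that $\overleftarrow{S}=V^*S_\pi V$, and then transfer self-adjointness and unitality through the compression $x\mapsto V^*xV$ using that $V$ is an isometry (from $\theta$ being unital) and that $S\mapsto S_\pi$ preserves both properties.
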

\begin{proof}
In the finite-dimensional setting, there is no need to take weak$^*$-closures, and so
$\overleftarrow{S} = V^* S_\pi V$, for any quantum relation $S$.  If $S$ is self-adjoint, clearly also
$V^* S_\pi V$ is.  As $\theta$ is assumed to be unital, $V^*V=1$, and so if $S$ is unital, so is $S_\pi$,
and hence so is $V^* S_\pi V$.
\end{proof}

While these results show that $\overleftarrow{S}$ is independent of the choice of Kraus representative of $\theta:M\rightarrow N$, the definitions still seems to depend upon the choice of embedding $M\subseteq\mc B(H_M)$.  As shown in \cite{weaver2}, the definition is actually independent, which we now show using our methods.

\begin{proposition}\label{prop:pullback_indep_rep}
Let $\theta:M\rightarrow N$ and $S$ be as in Theorem~\ref{thm:pullback_from_stinespring}.  Let $K_M$ be a (possibly) different Hilbert space with $M\subseteq\mc B(K_M)$, and let $S_K\subseteq\mc B(K_M)$ be the quantum relation corresponding to $S$.  Then $\overleftarrow{S} = \overleftarrow{S_K}$.
\end{proposition}
\begin{proof}
Let $\phi:M\rightarrow\mc B(K_M)$ be the formal inclusion, an injective normal $*$-homomorphism.  Then $S_K = S_\phi$ in the sense of Lemma~\ref{lem:quan_graph_cut_down}.  Indeed, choose $H$ and an isometry $u:K_M \rightarrow H_M \otimes H$ with $(x\otimes 1)u = u\phi(x)$ for $x\in M$, and then $S_\phi = u^*(S\vnten\mc B(H))u$.

By Theorem~\ref{thm:pullback_from_stinespring}, if $V,\pi,K$ is any Stinespring dilation of $\theta$, then $\overleftarrow{S}$ is the weak$^*$-closure of $V^*S_\pi V$.  By the discussion above, we can choose $K = H_M \otimes K'$ and $\pi(x)=x\otimes 1$ for $x\in M$.  Then $S_\pi = S\vnten\mc B(K')$.  We now consider $\theta':\phi(M)\rightarrow N$ given by $\theta' = \theta\circ\phi^{-1}$.  It is natural to consider $\pi' = \pi\circ\phi^{-1}$; we find a dilation for $\pi'$.  Let $\xi_0\in H$ be some unit vector, and define $\iota:H_M\otimes K' \rightarrow H_M\otimes H\otimes K'$ by $\xi\otimes\xi' \mapsto \xi\otimes\xi_0\otimes\xi'$.
Then let $U$ be the composition
\[ H_M\otimes K' \xrightarrow{\iota} H_M\otimes H\otimes K'
\xrightarrow{u^*\otimes 1} K_M \otimes K'. \]
Then for $x\in M$, as $uu^*\in M'\vnten\mc B(H)$,
\begin{align*}
(\phi(x)\otimes 1) U
&= (u^*(x\otimes 1)u\otimes 1) (u^*\otimes 1) \iota
= (u^*\otimes 1)(x\otimes 1\otimes 1)(uu^*\otimes 1)\iota \\
&= (u^*uu^*\otimes 1)(x\otimes 1\otimes 1)\iota
= (u^*\otimes 1)(x\otimes 1\otimes 1)\iota \\
&= (u^*\otimes 1)\iota (x\otimes 1)
= U(x\otimes 1) = U \pi'(\phi(x)).
\end{align*}
Hence $(y\otimes 1)U = U \pi'(y)$ for $y\in\phi(M)$, so we have a dilation for $\pi'$, and so $V,\pi',K$ is a dilation for $\theta'$.  Hence $\overleftarrow{S_K} = \overleftarrow{S_\phi}$ is the weak$^*$-closure of
\begin{align*}
V^* (S_\phi)_{\pi'} V &= V^* U^* (S_\phi\vnten\mc B(K')) UV
= V^*U^*(u^*\otimes 1) (S\vnten\mc B(H)\vnten\mc B(K')) (u\otimes 1)UV \\
&= V^*\iota^*(uu^*\otimes 1) (S\vnten\mc B(H)\vnten\mc B(K')) (uu^*\otimes 1)\iota V \\
& \subseteq V^*\iota^* (S\vnten\mc B(H)\vnten\mc B(K')) \iota V
\end{align*}
because $uu^*\in M'\vnten\mc B(H)$ and $S\vnten\mc B(H)$ is a $M'\vnten\mc B(H)$-bimodule.  As $\iota^* z \iota = (\id\otimes\omega_{\xi_0}\otimes\id)z$ for $z\in\mc B(H_M\otimes H\otimes K')$, we find that
\begin{align*}
V^*\iota^* (S\vnten\mc B(H)\vnten\mc B(K')) \iota V
= V^* (S\vnten\mc B(K')) V = V^*S_\pi V \subseteq \overleftarrow{S}.
\end{align*}
We have shown that $\overleftarrow{S_K} \subseteq \overleftarrow{S}$, and reversing the roles of $H_M$ and $K_M$ shows the reverse inclusion, hence giving equality, as claimed.
\end{proof}

While these results gives a rather pleasing form for the pullback at the level of operator bimodules, we
have not been able to formulate a simple result for the associated projections and quantum adjacency matrices.

We now consider pushforwards.  Let $\theta:M\rightarrow N$ be a normal CP map with Kraus form $\theta(x) = \sum_{i\in I} b_i^* x b_i$, and now let $S \subseteq \mc B(H_N)$ be a quantum relation over $N$.  In the infinite-dimensional setting, we define $\overrightarrow{S}$ to be the weak$^*$-closed $M'$-bimodule generated by $\{ b_i x b_j^* :  x\in S \}$, which we shall denote by $\text{w$^*$-$M'$-bimod } \{ b_i x b_j^* :  x\in S \}$.  Let $U:H_N \rightarrow H_M \otimes \ell^2(I)$ be $\xi \mapsto \sum_i b_i(\xi)\otimes\delta_i$, so that $\theta(x) = U^*(x\otimes 1)U$ for $x\in M$.  Then
\[ (\id\otimes\omega_{\delta_i, \delta_j})(UxU^*) = b_i x b_j^* \qquad (x\in S, i,j\in I). \]
Hence $\overrightarrow{S} = \text{w$^*$-$M'$-bimod } (\id\otimes\mc B(\ell^2(I))_*)(USU^*)$, an alternative form which is useful in calculations.  We now proceed to show that the definition of $\overrightarrow{S}$ is independent of the various choices made.

\begin{proposition}\label{prop:pushforward_indep_Kraus}
Let $\theta:M\rightarrow N$ be a normal CP map, and let $S \subseteq \mc B(H_N)$ be a quantum relation over $N$.  The definition of $\overrightarrow{S}$ is independent of the choice of Kraus form for $\theta$.
\end{proposition}
\begin{proof}
Notice that we are free to enlarge the index set $I$, by setting ``extra'' Kraus operators $b_i$ to be zero.  Thus, without loss of generality, two different dilations of $\theta$ may be assumed to be given by $U:H_N \rightarrow H_M \otimes H$ and $V:H_N \rightarrow H_M \otimes H$ for the same Hilbert space $H$.  Let $K$ be the closed linear span of $\{ (x\otimes 1)U\xi : \xi\in H_N, x\in M \} \subseteq H_M\otimes H$.  We may define $u:K\rightarrow H_M\otimes H$ by $u: (x\otimes 1)U\xi \mapsto (x\otimes 1)V\xi$, and linearity and continuity.  Indeed, we calculate that
\[ ( (x\otimes 1)V\xi | (y\otimes 1)V\eta ) = (\xi | \theta(x^*y) \eta)
= ( (x\otimes 1)U\xi | (y\otimes 1)U\eta ) \qquad (\xi,\eta\in H_N, x,y\in M). \]
This calculation shows that $u$ is well-defined and an isometry.  Extend $u$ to a partial isomerty defined on all of $H_M\otimes H$.  By construction, for $x\in M$ we have that $u(x\otimes 1) = (x\otimes 1)u$ on $K$, and as $K$ is invariant for $M\otimes 1$, it follows that $u$ commutes with $x\otimes 1$ on all of $H_M\otimes H$, so that $u \in M'\vnten\mc B(H)$.  By definition, $uU = V$, so
\[ (\id\otimes\mc B(H)_*)(VSV^*) = (\id\otimes\mc B(H)_*)(uUSU^*u^*)
\subseteq \text{w$^*$-$M'$-bimod } (\id\otimes\mc B(H)_*)(USU^*). \]
Reversing the roles of $U$ and $V$ gives the other inclusion, hence equality, which shows that using $U$ or $V$ gives the same definition of $\overrightarrow{S}$.
\end{proof}

We now show the analogue of Proposition~\ref{prop:pullback_indep_rep} for pushforwards.

\begin{proposition}\label{prop:pushforward_indep_rep}
Let $\theta:M\rightarrow N$ be a normal CP map, and let $S \subseteq \mc B(H_N)$ be a quantum relation over $N$.  Let $N\subseteq\mc B(K_N)$ be a normal faithful representation, and let $S_K \subseteq \mc B(K_N)$ be the quantum relation associated to $S$.  Then $\overrightarrow{S} = \overrightarrow{S_K}$.
\end{proposition}
\begin{proof}
Again, let $\phi:N\rightarrow\mc B(K_N)$ with dilation $\phi(x) = u^*(x\otimes 1)u$ for $x\in N$, with $u:K_N\rightarrow H_N\otimes K$ an isometry with $uu^*\in N'\vnten\mc B(K)$.  Then $S_K = S_\phi = u^*(S\vnten\mc B(K))u$.  Let $\theta(x) = U^*(x\otimes 1)U$ for some $U:H_N\rightarrow H_M\otimes H$.  Then $\phi\circ\theta:M\rightarrow \phi(N)$ has dilation $\phi(\theta(x)) = u^*(U^*\otimes 1)(x\otimes 1\otimes 1)(U\otimes 1)u$, so by Proposition~\ref{prop:pushforward_indep_Kraus},
\begin{align*}
\overrightarrow{S_K} &= \overrightarrow{S_\phi} = \text{w$^*$-$M'$-bimod } (\id\otimes\mc B(H\otimes K)_*)((U\otimes 1)u S_\phi u^*(U^*\otimes 1)) \\
&= \text{w$^*$-$M'$-bimod } (\id\otimes\mc B(H\otimes K)_*)((U\otimes 1)u u^*(S\vnten\mc B(K))u u^*(U^*\otimes 1)) \\
&\subseteq \text{w$^*$-$M'$-bimod } (\id\otimes\mc B(H\otimes K)_*)((U\otimes 1)(S\vnten\mc B(K))(U^*\otimes 1)) \\
&= \text{w$^*$-$M'$-bimod } (\id\otimes\mc B(H)_*)(USU^*) = \overrightarrow{S}
\end{align*}
using that $uu^*\in N'\vnten\mc B(K)$ and that $S$ is an $N'$-bimodule.  Reversing the roles of $H_N$ and $K_N$ gives the other inclusion, hence equality, as required.
\end{proof}

In the finite-dimensional case, we can use a duality argument to give a link between pushforwards and pullbacks.
The following should be compared to a more general
construction which holds in the infinite-dimensional case under some conditions, see
\cite[Proposition~3.1]{ac} for example.

\begin{proposition}\label{prop:ac_adjoints}
Let $B_1,B_2$ be finite-dimensional $C^*$-algebras equipped with faithful traces $\psi_1,\psi_2$.  Let $\theta:
B_2\rightarrow B_1$ be a completely positive map.  Define $\hat\theta:B_1\rightarrow B_2$ by
\[ \psi_1( a \theta(b) ) = \psi_2( \hat\theta(a) b )
\qquad (a\in B_1, b\in B_2). \]
Then $\hat\theta$ exists, and is completely positive.  $\hat\theta$ is UCP if and only if $\theta$ is trace
preserving: $\psi_1\circ\theta = \psi_2$.
\end{proposition}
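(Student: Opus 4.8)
The plan is to treat existence, complete positivity, and the unitality criterion in turn, leaning on the Choi-type correspondence just established in Theorem~\ref{thm:general_c-j} for the positivity statement.

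First, for existence and uniqueness I would use that $\psi_2$ is a faithful trace, so the pairing $(x,y)\mapsto\psi_2(xy)$ on $B_2$ is non-degenerate; by finite-dimensionality the map $B_2\rightarrow B_2^*$, $d\mapsto\psi_2(d\,\cdot\,)$ is a linear isomorphism (this is precisely the argument recalled at the start of Section~\ref{sec:super-ops}). For fixed $a\in B_1$ the assignment $b\mapsto\psi_1(a\theta(b))$ is a linear functional on $B_2$, hence is represented by a unique $\hat\theta(a)\in B_2$ with $\psi_2(\hat\theta(a)b)=\psi_1(a\theta(b))$ for all $b$; uniqueness forces $\hat\theta$ to be linear, which defines $\hat\theta$.

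For complete positivity I would compute the Choi elements of $\theta$ and $\hat\theta$ and compare them, applying Theorem~\ref{thm:general_c-j} once to each map. Since both functionals are traces, the modular groups are trivial and $\Psi'$ simplifies to $\theta_{\psi a, c}\mapsto c\otimes a$. By linearity of $\theta\mapsto\hat\theta$ and of $\Psi'$, it suffices to work on the rank-one maps $\theta=\theta_{\psi_2 a, c}$ (with $a\in B_2$, $c\in B_1$), which span $\mc B(B_2,B_1)$. A short calculation from the defining equation gives $\hat\theta(a')=\psi_1(ca')\,a=\theta_{\psi_1 c, a}(a')$. Hence, writing $e_\theta=\Psi'(\theta)=c\otimes a\in B_1\otimes B_2^\op$ and $e_{\hat\theta}=\Psi'(\hat\theta)=a\otimes c\in B_2\otimes B_1^\op$, the two Choi elements are related by the tensor flip $x\otimes y\mapsto y\otimes x$.

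The main step, and the one requiring care, is then to argue that this flip carries positive elements of $B_1\otimes B_2^\op$ to positive elements of $B_2\otimes B_1^\op$. I would factor it as follows: the flip is a $*$-isomorphism $B_1\otimes B_2^\op\rightarrow B_2^\op\otimes B_1$, hence preserves positivity; and $B_2^\op\otimes B_1=(B_2\otimes B_1^\op)^\op$, while passing to the opposite algebra leaves the positive cone unchanged (as $\{x^*x\}=\{xx^*\}$). Thus $e_\theta\geq 0$ forces $e_{\hat\theta}\geq 0$, and Theorem~\ref{thm:general_c-j} gives that $\hat\theta$ is completely positive. Finally, for the unitality criterion I would read off from the defining equation that $\psi_2(\hat\theta(1)b)=\psi_1(\theta(b))$ for all $b\in B_2$; by non-degeneracy of $\psi_2$ this says $\hat\theta(1)=1$ exactly when $\psi_1\circ\theta=\psi_2$, i.e.\ $\theta$ is trace-preserving, which combined with the already-established complete positivity yields the claimed equivalence.
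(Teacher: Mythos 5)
Your proof is correct, and for the heart of the matter (complete positivity) it takes a genuinely different route from the paper. The paper argues directly: assuming $\theta$ positive, for $a\geq 0$ it uses the trace properties to write $\psi_2(b^*\hat\theta(a)b)=\psi_2(\hat\theta(a)bb^*)=\psi_1(a\theta(bb^*))=\psi_1(a^{1/2}\theta(bb^*)a^{1/2})\geq 0$, concluding $\hat\theta(a)\geq 0$; complete positivity then follows by amplification, since $\hat\theta\otimes\id_n$ is the adjoint of $\theta\otimes\id_n$ with respect to the faithful traces $\psi_i\otimes\Tr$, so the same argument applies at every matrix level. You instead invoke Theorem~\ref{thm:general_c-j} twice, after observing that passing from $\theta$ to $\hat\theta$ flips the Choi element: your rank-one computation $\hat\theta=\theta_{\psi_1 c,a}$ when $\theta=\theta_{\psi_2 a,c}$ is right (and legitimately extends by linearity of $\theta\mapsto\hat\theta$, of $\Psi'$, and of the flip), and your factorization of the flip through the $*$-isomorphism $B_1\otimes B_2^\op\rightarrow B_2^\op\otimes B_1=(B_2\otimes B_1^\op)^\op$ together with the fact that an algebra and its opposite share the same positive cone is a valid way to see that the flip preserves positivity. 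What each approach buys: the paper's argument is elementary and self-contained, needs no Choi machinery, and in passing establishes plain positivity of $\hat\theta$ from plain positivity of $\theta$; yours avoids the matrix-amplification step entirely and isolates a structural fact of independent interest, namely that at the level of Choi elements the trace-adjoint is exactly the tensor flip. Note that both arguments use traciality in an essential way — the paper for its cyclic rearrangements, you both to kill the modular twist in $\Psi'$ (so that $\Psi'(\theta_{\psi a,c})=c\otimes a$) and to identify $\psi_1(a'c)$ with $\psi_1(ca')$ — so neither extends verbatim to non-tracial states.
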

\begin{proof}
As $\psi_2$ is faithful, any functional $f$ on $B_2$ arises from a unique $b_0\in B_2$ as $f(b) = \psi_2(b_0b)$
for $b\in B_2$, compare with the discussion at the start of Section~\ref{sec:super-ops}.
Applying this to the functional $b\mapsto \psi_1( a \theta(b) )$ gives the existence and
uniqueness of $\hat\theta(a)$.  Assuming that $\theta$ is positive, for $a\geq 0$,
\[ \psi_2( b^* \hat\theta(a) b ) = \psi_2( \hat\theta(a) bb^* ) = \psi_1( a \theta(bb^*) )
= \psi_1( a^{1/2} \theta(bb^*) a^{1/2} ) \geq 0 \qquad (b\in B_2). \]
This shows that $\hat\theta(a)\geq 0$, so $\hat\theta$ is positive.

Consider now $\hat\theta\otimes\id : B_1\otimes\mathbb M_n \rightarrow B_2\otimes\mathbb M_n$.
Giving $\mathbb M_n$ the trace $\Tr$ and hence $B_i\otimes\mathbb M_n$ the faithful trace $\psi_i\otimes\Tr$,
it follows readily that $\hat\theta\otimes\id$ and $\theta\otimes\id$ are related in the same way, and hence
the previous argument shows that $\hat\theta\otimes\id$ is positive when $\theta\otimes\id$ is.  It follows that
$\theta$ CP implies that $\hat\theta$ is CP.

Finally, $\psi_2(\hat\theta(1)b) = \psi_1(\theta(b))$ and so $\hat\theta(1)=1$ if and only if $\theta$ is
trace preserving.
\end{proof}

For the following, compare with \cite[Proposition~2.1]{b2} but be aware of differing (normalisation) conventions.  The trace constructed in the following is sometimes called the \emph{Markov trace}.

\begin{lemma}\label{lem:markov_trace}
Let $B = \bigoplus_{i=1}^n \mathbb M_{n_i}$ be a finite-dimensional $C^*$-algebra.  Let $\psi = \oplus_i n_i \Tr_i$ where $\Tr_i:\mathbb M_{n_i}\rightarrow\mathbb C$ is the usual (non-normalised) trace.  Form $L^2(B)$ using $\psi$ and consider the usual trace $\Tr:\mc B(L^2(B))\rightarrow\mathbb C$.  Then $\Tr$ restricts to $\psi$ on $B\subseteq\mc B(L^2(B))$.
\end{lemma}
\begin{proof}
As everything respects the direct sum decomposition, it suffices to prove this in the case $B=\mathbb M_n$ with $\psi = n\Tr$.  We identify the GNS space $L^2(B)$ with $\mathbb C^n \otimes \overline{\mathbb C^n}$ with GNS map $\Lambda(a) = n^{1/2} \sum_{i,j} a_{ij} e_i \otimes \overline{e_j}$, as then $(\Lambda(a)|\Lambda(b)) = \psi(a^*b)$ for $a,b\in B$.  The GNS action of $B$ is $a\in B$ acting as $a\otimes 1$ on $\mathbb C^n \otimes \overline{\mathbb C^n}$.  Then
\begin{align*}
\Tr_{L^2(B)}(a\otimes 1)
&= \sum_{i,j} (e_i\otimes\overline{e_j} | a(e_i)\otimes\overline{e_j})
= n \sum_i (e_i|a(e_i)) = n\Tr(a) = \psi(a),
\end{align*}
as we want.
\end{proof}

\begin{proposition}\label{prop:Kraus_hat_theta}
For $i=1,2$ let $B_i$ be a finite-dimensional $C^*$-algebra equipped with its Markov trace $\psi_i$.  Let $\theta:B_2\rightarrow B_1$ be a completely positive map with Kraus form $\theta(x)=\sum_{i=1}^n b_i^* x b_i$ for some $b_i\in\mc B(H_1,H_2)$.  Let $(x_j)_{j=1}^m \subseteq B_2$ be such that $(\Lambda(x_j))$ is an orthonormal basis for $L^2(B_2)$.
Then $\hat\theta$ has Kraus form $\hat\theta(y) = \sum_{i,j} Jx_jJ b_i y b_i^* Jx_j^*J$ for $y\in B_1$, where $J$ is the modular conjugation on $L^2(B_2)$.
\end{proposition}
\begin{proof}
Let $U:L^2(B_1) \rightarrow L^2(B_2) \otimes \mathbb C^n$ be $\xi\mapsto \sum_{i=1}^n b_i(\xi)\otimes \delta_i$, so that $\theta(x) = U^*(x\otimes 1)U$ for $x\in B_2$.  As $\psi_2$ is a trace, we have in particular that $Jb^*J\Lambda(a) = \Lambda(ab) = a \Lambda(b)$ for $a,b\in B_2$.  Define
\[ V : L^2(B_2) \rightarrow L^2(B_1) \otimes L^2(B_2) \otimes \mathbb C^n;
\quad \xi \mapsto \sum_{i,j} U^*(Jx_j^*J\xi\otimes \delta_i) \otimes \Lambda(x_j) \otimes \delta_i. \]
For $b,c\in B_2$ and $a\in B_1$ we see that
\begin{align*}
\big( \Lambda(b) \big| V^*(a\otimes 1)V \Lambda(c) \big)
&= \sum_{i,j} \big( U^*(b\Lambda(x_j)\otimes \delta_i) \big| a U^*(c\Lambda(x_j)\otimes \delta_i) \big) \\
&= (\Tr_{L^2(B_2)}\otimes\Tr_{n})\big( (b^*\otimes 1)UaU^*(c\otimes 1) \big) \\
&= \Tr_{L^2(B_1)}\big( a U^*(cb^*\otimes 1)U \big)
= \Tr_{L^2(B_1)}(a \theta(cb^*) ) = \psi_1(a\theta(cb^*)) \\
&= \psi_2(\hat\theta(a)cb^*) = \psi_2(b^* \hat\theta(a) c)
= \big(\Lambda(b) \big| \hat\theta(a) \Lambda(c)\big),
\end{align*}
here using Lemma~\ref{lem:markov_trace}.
Hence $V^*(a\otimes 1)V = \hat\theta(a)$.
The Kraus operators are hence given by
\[ c_{i,j}(\xi) = U^*(Jx_j^*J\xi\otimes \delta_i)
= b_i^* Jx_j^*J(\xi) \qquad (\xi\in L^2(B_2)), \]
which gives $\hat\theta(a) = \sum_{i,j} c_{i,j}^* a c_{i,j} =  \sum_{i,j} Jx_jJ b_i a b_i^* Jx_j^*J$ as claimed.
\end{proof}

\begin{corollary}
Let $\theta:B_2\rightarrow B_1$ be a CP map, and let $S$ be a quantum graph over $B_1$.  Form $\hat\theta:B_1 \rightarrow B_2$ using the Markov traces on $B_1, B_2$, and use this to form the pullback $\overleftarrow{S}$.  Then $\overrightarrow{S} = \overleftarrow{S}$.
\end{corollary}
\begin{proof}
We continue with the notation of Proposition~\ref{prop:Kraus_hat_theta}.  By Corollary~\ref{corr:pullback_fd}, Proposition~\ref{prop:pullback_indep_rep} and Proposition~\ref{prop:Kraus_hat_theta},
\[ \overleftarrow{S} = \lin\{ Jx_lJ b_k y b_i^* Jx_j^*J : y\in S \}. \]
As $Jx_jJ \in B_2'$ for each $j$, and as $(x_j)$ is a basis for $B_2$, we see that
\[ \overleftarrow{S} = \text{$B_2'$-bimodule generated by } \{ b_k y b_i^* : y\in S \} = \overrightarrow{S}, \]
as claimed.  In this final step, we use Proposition~\ref{prop:pushforward_indep_rep} which tells us that we are free to represent $B_1$ on $L^2(B_1)$ without changing $\overrightarrow{S}$, and use Proposition~\ref{prop:pushforward_indep_Kraus} to use a Kraus representation of our choice.
\end{proof}

We have not discussed quantum graphs which arise from quantum adjacency matrices with respect to a non-tracial
state.  By Theorem~\ref{thm:reduce_tracial_case}, this is equivalent to studying quantum graphs $S$ with the
additional property that $QSQ^{-1}=S$, but it is not very clear what condition we might place on a CP map
$\theta$ to ensure this.  Further, we have not discussed links with quantum adjacency matrices $A$ nor projections
$e$, because it seems somewhat hard to give concrete formula for, say, $\overleftarrow{e}$ associated to
$\overleftarrow{S}$.

\subsection{Further notions}\label{sec:hm_further_ideas}

We give a quick summary of some of the other suggestions for what a ``homomorphism'' for a quantum graph
should be, and related ideas.

In \cite[Section~6]{weaver2} given a quantum graph $S\subseteq\mc B(H)$ over $B\subseteq\mc B(H)$ and a
projection $p\in B$, the \emph{restriction} $S_p$ is $pSp$, the quantum graph over $pBp \subseteq\mc B(p(H))$.
The map $\theta:B\rightarrow pBp, a\mapsto pap$ is UCP with Stinespring dilation $\pi=\id, K=H, V=p$, and so
$pSp$ is simply the pullback of $S$ along this UCP map.

In \cite[Definition~7]{stahlke}, given quantum graphs $S_i\subseteq\mc B(H_i)$ over $\mc B(H_i)$ for $i=1,2$,
a \emph{homomorphism} $S_1\rightarrow S_2$ is defined to be a UCP map $\theta:\mc B(H_2) \rightarrow \mc B(H_1)$
with Kraus form $\theta(x) = \sum_i b_i^* x b_i$, with the property that $b_i S_1 b_j^* \subseteq S_2$ for
each $i,j$.  This is equivalent to $\overrightarrow{S_1} \subseteq S_2$, which agrees with Weaver's notion
of a \emph{CP morphism}, \cite[Definition~7.5]{weaver2}, once we recall that we always work with UCP maps,
not TPCP maps.  In the case $B=\mathbb M_n$, links between these ideas and TRO equivalence are
explored in \cite[Section~7]{ekt}.

We should note that here Stahkle works with trace-free quantum graphs, see the remarks after
Proposition~\ref{prop:complements_quan_graph}.  If we look instead at the quantum graphs $S_i^\perp$, then
the condition becomes $b_i^* S_2^\perp b_j \subseteq S_1^\perp$ for each $i,j$.  That is, $\overleftarrow{S_2^\perp}
\subseteq S_1^\perp$.  However, it seems to the author that these notions make sense in general, regardless of
whether we work with unital, trace-free, or more general quantum relations.

\begin{remark}\label{rem:quan_homo_to_graph_homo}
We follow \cite{stahlke} and motivate this definition from the classical situation.
Consider a classical graph $G$ with associated $S_G = \lin\{ e_{u,v} : (u,v)\in E(G) \}$ over $C(V(G))$.
Similarly for a graph $H$.  Let $f:G\rightarrow H$ be a homomorphism, so $f$ is a map $V(G)\rightarrow V(H)$
with $(u,v)\in E(G) \implies (f(u), f(v))\in E(H)$.

For the moment, let $f:V(G)\rightarrow V(H)$ be any map.  We obtain a unital $*$-homomorphism $\theta:
C(V(H))\rightarrow C(V(G))$ given by $\theta(a)(u) = a(f(u))$ for $a\in C(V(H)), u\in V(G)$.  Then $\theta$
is in particular a UCP map, and we can construct a Stinespring dilation as follows.

Let $n = |V(G)|$ and
define $V:\ell^2(V(G)) \rightarrow \ell^2(V(H))\otimes\mathbb C^n$ as follows.  Define an equivalence relation
on $V(G)$ by setting $u\sim v$ if $f(u)=f(v)$.  For $u\in V(G)$ fix an enumeration of the equivalence class
$[u] = \{ v_1,\cdots,v_k \}$, where $k\leq n$, and define $V(\delta_u) = \delta_{f(u)} \otimes \delta_i$ where
$v_i = u$.  This construction ensures that $V$ is an isometry, and that
\[ V^*(a\otimes 1)V\delta_u = a(f(u)) \delta_u \qquad (u\in V(G), a\in C(V(H))). \]

The Kraus form is obtained by setting $V(\delta_u) = \sum_i b_i(\delta_u)\otimes\delta_i$, so that
$b_i(\delta_u) = \delta_{f(u)}$ if $u$ is the $i$th vertex which maps to $f(u)$, and $0$ otherwise.
For $(u,v)\in E(G)$, so that $e_{u,v} \in S_G$, we see that
\[ b_i e_{u,v} b_j^* = \theta_{b_j(\delta_v), b_i(\delta_u)}
= e_{f(u), f(v)}, \]
or $0$, depending on $i,j$ (and that there is some choice of $i,j$ which gives $e_{f(u), f(v)}$).
Hence
\[ \overrightarrow{S_G} = \lin\{ e_{f(u), f(v)} : (u,v)\in E(G) \} \]
and so $\overrightarrow{S_G} \subseteq S_H$ if and only if $f$ is a homomorphism.

Thus $V^*(a\otimes 1)V = \theta(a)$ for each $a$, and as the dilation has the correct form, it follows that
\[ \overleftarrow{S_H} = V^*(S_H\otimes \mc B(\mathbb C^n))V. \]
A calculation shows that then $\overleftarrow{S_H}$ is the linear span of ``blocks'' of the form
$\sum \{ e_{u,v} : f(u)=x, f(v)=y \}$ as $(x,y)\in E(H)$ vary.  This indeed corresponds to a ``pullback'' of $H$ along
$f$ to a graph on $G$, where we expand each edge in $H$ to a clique in $G$.
\end{remark}

Stahkle shows further in \cite{stahlke} that given any CP morphism $C(V(H))\rightarrow C(V(G))$, there is
some graph theoretic homomorphism $G\rightarrow H$.  However, this construction is not a bijection, but
rather an existence result (which motivates Weaver to use the new terminology of ``CP morphism'').
Nevertheless, for many graph theory questions, we are merely interested in the \emph{existence} of any
homomorphism between two graphs, and for this, the notion of a CP morphism exactly extends this to
quantum graphs (which is indeed Stahkle's motivation).  This notion of homomorphism is recast into the
theory of \emph{non-local games} in \cite[Section~10.2]{tt}, see also \cite[Section~5]{bhtt}.

\cite{dsw} (see Section~II) and \cite{stahlke} (see after Proposition~9) give tentative definitions for what
subgraphs or induced subgraphs should be.  Gromada in \cite[Section~2.2]{grom} looks at two notions of a
``quantum subgraph'', which are defined using the formalism of projections $e\in B\otimes B^\op$.  Firstly,
$e_1$ is a \emph{subgraph made by removing edges} of $e_2$ if $e_1 \leq e_2$, which is clearly equivalent
to containment of the related operator bimodules.  When we have $q:B_1\rightarrow B_2$ a surjective
$*$-homomorphism, then $e_q$ the \emph{induced subgraph} of $e$ along $q$ is given by $e_q = (q\otimes q)(e)$.
It seems tricky to give a simple description of the associated operator bimodules.
Finally, \cite[Section~2.4]{grom} looks at more general notions of ``quotient'', which requires the
consideration of a more general object, a \emph{weighted quantum graph}.

\section{Automorphisms of Quantum graphs}\label{sec:auts_quan_graphs}

As a warm-up to looking at quantum automorphisms, we look at (classical) automorphisms of quantum graphs.
%The initial definitions are essentially well-known and implicit in a number of places, but we then go
%on to consider automorphisms of operator bimodules, which at least in this context, we are not aware of
%much or any previous discussion.
Let us quickly discuss some well-known results about automorphisms of $B$.
By $\aut(B,\psi)$ we mean the automorphisms of $B$, the bijective $*$-homomorphisms $B\rightarrow B$, which
leave $\psi$ invariant.  As $B$ is finite-dimensional, it is easy to understand automorphisms of $B$.  We give
a quick proof of the following well-known result, for completeness.

\begin{proposition}\label{prop:auts_fin_cstar_alg}
Let $B = \bigoplus_{k=1}^n \mathbb M_{n_k}$, and let $\theta:B\rightarrow B$ be an automorphism.
Then $\theta$ restricts to a bijection from each block $\mathbb M_{n_k}$ to a block of the same dimension,
and each such $*$-isomorphism $\mathbb M_m \rightarrow \mathbb M_m$ is of the form $x\mapsto uxu^*$ for
some unitary $u\in\mathbb M_m$.
\end{proposition}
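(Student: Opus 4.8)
The plan is to split the argument into two essentially independent parts: first pin down the coarse block structure using the centre of $B$, and then treat a single matrix factor via the classical fact that every $*$-automorphism of $\mathbb M_m$ is inner.

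First I would exploit that any $*$-automorphism preserves the centre $Z(B)$, and in particular permutes its minimal projections. Since $Z(B) = \bigoplus_{k=1}^n \mathbb C 1_k$, these minimal central projections are exactly the block units $1_k$, so there is a permutation $\pi$ of $\{1,\dots,n\}$ with $\theta(1_k) = 1_{\pi(k)}$. Because $\theta$ is multiplicative and unital, it then restricts to a $*$-isomorphism from $1_k B 1_k \cong \mathbb M_{n_k}$ onto $1_{\pi(k)} B 1_{\pi(k)} \cong \mathbb M_{n_{\pi(k)}}$; as $*$-isomorphic matrix algebras have the same dimension, $n_k = n_{\pi(k)}$. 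This establishes the block-permutation claim and reduces everything to a single $*$-isomorphism $\theta:\mathbb M_m \to \mathbb M_m$.

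For the single-block case I would work concretely with $\mathbb M_m$ acting on $\mathbb C^m$. Writing $e_{ij}$ for the standard matrix units, set $f_{ij} = \theta(e_{ij})$; these again satisfy the matrix-unit relations $f_{ij}f_{kl} = \delta_{jk} f_{il}$, $f_{ij}^* = f_{ji}$ and $\sum_i f_{ii} = 1$. The key point is that each $f_{ii}$ is a \emph{rank-one} projection: $e_{ii}$ is minimal in $\mathbb M_m$, and since $\theta$ is a $*$-isomorphism it is an order isomorphism of the projection lattice, hence carries minimal projections to minimal projections, which in $\mathbb M_m$ acting on $\mathbb C^m$ are precisely the rank-one projections. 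I would then pick a unit vector $\xi_1$ spanning the range of $f_{11}$ and set $\xi_i = f_{i1}\xi_1$; a short computation with the relations above gives $(\xi_i|\xi_j) = \delta_{ij}$ and $f_{ij}\xi_k = \delta_{jk}\xi_i$, so $(\xi_i)$ is an orthonormal basis diagonalising the $f_{ij}$ as rank-one operators.

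Finally I would define the unitary $u\in\mathbb M_m$ by $u e_i = \xi_i$ on the standard basis. A direct calculation gives $u e_{ij} u^*(\zeta) = (\xi_j|\zeta)\xi_i = f_{ij}(\zeta)$, so $u e_{ij} u^* = f_{ij} = \theta(e_{ij})$ for all $i,j$, and by linearity $\theta(x) = uxu^*$, as required. I expect no genuine obstacle here; the only step that is more than bookkeeping is the assertion that $\theta$ preserves minimality (equivalently, rank one) of projections, which is precisely what makes the vectors $\xi_i$ available and is the real content behind ``inner''. An alternative packaging of exactly this point is to observe that $x\mapsto\theta(x)$ and the identity representation are two irreducible $*$-representations of $\mathbb M_m$ on $\mathbb C^m$, so uniqueness of the irreducible representation (Schur's lemma) supplies the intertwining unitary directly.
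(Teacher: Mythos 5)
Your proof is correct. For the block-permutation part you follow essentially the same route as the paper: an automorphism preserves the centre and the order on projections, hence permutes the minimal central projections, which are exactly the block units; restricting then gives a $*$-isomorphism between corners $1_k B 1_k$ and $\theta(1_k)B\theta(1_k)$, forcing equal dimensions. The difference is in the single-block statement: the paper simply cites ``every $*$-isomorphism $\mathbb M_m\rightarrow\mathbb M_m$ is inner'' as well known and offers no argument, whereas you prove it, transporting the matrix units to a system $f_{ij}=\theta(e_{ij})$, using preservation of minimality to see each $f_{ii}$ is rank one, building the orthonormal basis $\xi_i=f_{i1}\xi_1$, and reading off the intertwining unitary. (A small simplification available to you: since the $f_{ii}$ are $m$ mutually orthogonal nonzero projections on $\mathbb C^m$ summing to $1$, their ranks are positive integers summing to $m$, so each is rank one without invoking minimality.) Your closing remark that this is equivalent to uniqueness of the irreducible representation of $\mathbb M_m$ is also accurate, and either packaging makes your write-up self-contained where the paper is not.
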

\begin{proof}
It is well known that a $*$-isomorphism $\mathbb M_m \rightarrow \mathbb M_m$ is of the form stated, so
we just shown the first part of the claim.  For projections $e,f$ we have by definition that $e\leq f$ if and
only if $ef=e$, and so an automorphism preserves this partial ordering.  Clearly also $\theta$ maps the centre
of $B$ to itself.  Thus $\theta$ gives a bijection when restricted to the set of minimal central projections.
The minimal central projections of $B$ are exactly the units of the matrix blocks.
Fix some matrix block $\mathbb M_{n_k}$ and let the unit be $e$.  Let $(e_{ij})_{i,j=1}^{n_k}$ be the matrix
units of this block, so $e e_{ij} = e_{ij} e = e_{ij}$ for each $i,j$.  If $\theta(e)$ is the unit of the matrix
block $\mathbb M_{n_l}$, then we see that $\theta(e_{ij}) \leq \theta(e)$ for each $i,j$, and so each
$\theta(e_{ij})$ is in this block, as required.
\end{proof}

As $\psi$ is given by the positive invertible $Q\in B$, we can work out which automorphisms are in
$\aut(B,\psi)$.

\begin{lemma}\label{lem:aut_psi_Q_inv}
Let $\theta$ be an automorphism of $B$.  Then $\theta\in\aut(B,\psi)$ if and only if $\theta(Q)=Q$,
and in this case, $\theta$ and $(\sigma_t)$ commute.
\end{lemma}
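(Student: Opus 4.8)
The plan is to treat the two assertions separately: first the equivalence $\theta\in\aut(B,\psi)\Leftrightarrow\theta(Q)=Q$, and then the commutation of $\theta$ with $(\sigma_t)$ under this hypothesis. The key preliminary fact I would establish is that \emph{every} automorphism of $B$ preserves the non-normalised trace, $\Tr\circ\theta=\Tr$. This is immediate from Proposition~\ref{prop:auts_fin_cstar_alg}: $\theta$ permutes the matrix blocks among those of equal dimension, and on each block acts as $x\mapsto uxu^*$ for a unitary $u$; both permuting blocks of equal size and conjugating by a unitary leave the matrix trace on each block invariant, hence leave $\Tr$ invariant.

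Granting this, for the equivalence I would compute, for $a\in B$,
\[ \psi(\theta(a)) = \Tr(Q\theta(a)) = \Tr\big(\theta(\theta^{-1}(Q)\,a)\big) = \Tr(\theta^{-1}(Q)\,a), \]
where the middle equality uses that $\theta$ is a homomorphism, so $Q\theta(a)=\theta(\theta^{-1}(Q))\theta(a)=\theta(\theta^{-1}(Q)a)$, and the last uses $\Tr\circ\theta=\Tr$. Since $\psi(a)=\Tr(Qa)$ and the pairing $(x,a)\mapsto\Tr(xa)$ on $B$ is non-degenerate (as $\Tr$ is faithful; compare the start of Section~\ref{sec:super-ops}), the identity $\psi\circ\theta=\psi$ holds for all $a$ if and only if $\theta^{-1}(Q)=Q$, that is, if and only if $\theta(Q)=Q$.

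For the commutation, assume $\theta(Q)=Q$. A $*$-homomorphism commutes with functional calculus: writing $Q=\sum_j\lambda_j P_j$ for its spectral decomposition, each projection $P_j$ is a polynomial in $Q$, so $\theta(P_j)=P_j$ and hence $\theta(Q^{it})=\sum_j\lambda_j^{it}\theta(P_j)=Q^{it}$ for all $t$. Then for $a\in B$,
\[ \theta(\sigma_t(a)) = \theta(Q^{it}aQ^{-it}) = \theta(Q^{it})\,\theta(a)\,\theta(Q^{-it}) = Q^{it}\theta(a)Q^{-it} = \sigma_t(\theta(a)), \]
so $\theta\circ\sigma_t=\sigma_t\circ\theta$ for every $t$. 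The only mildly delicate point is the trace-preservation $\Tr\circ\theta=\Tr$, which rests on the explicit structure of automorphisms in Proposition~\ref{prop:auts_fin_cstar_alg}; the remainder is a direct consequence of the defining formula for $Q$, non-degeneracy of the trace pairing, and compatibility of $\theta$ with functional calculus.
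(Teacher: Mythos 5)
Your proof is correct, but it takes a genuinely different route from the paper. The paper proves the forward implication by first establishing the commutation $\theta\circ\sigma_t=\sigma_t\circ\theta$ directly from $\psi$-invariance, invoking the uniqueness (KMS) characterisation of the modular automorphism group from Tomita--Takesaki theory; analytic continuation to $\sigma_{-i}$ then shows $Q^{-1}\theta(Q)$ is central, so $\theta(Q_k)=q_kQ_k$ blockwise, and evaluating $\psi$-invariance at $a=Q_k^{-1}$ forces $q_k=1$, i.e.\ $\theta(Q)=Q$. Only the converse direction in the paper uses the trace-preservation $\Tr\circ\theta=\Tr$ coming from Proposition~\ref{prop:auts_fin_cstar_alg}. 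You instead make trace-preservation the central tool: combined with non-degeneracy of the pairing $(x,a)\mapsto\Tr(xa)$, it yields both directions of the equivalence $\psi\circ\theta=\psi\Leftrightarrow\theta(Q)=Q$ in one stroke, and the commutation with $(\sigma_t)$ then falls out of the compatibility of $\theta$ with functional calculus applied to $Q^{it}$. Your argument is more elementary and entirely self-contained in the finite-dimensional setting, avoiding the KMS-uniqueness theorem altogether; what the paper's route buys is that its key step ($\psi$-invariance implies commutation with the modular group) is the genuinely general von Neumann-algebraic fact, valid far beyond the finite-dimensional situation where a density matrix $Q$ and the structure theorem for automorphisms are available, so it better signals how the lemma would generalise.
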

\begin{proof}
We first show that $\theta\in\aut(B,\psi)$ commutes with $(\sigma_t)$.  We use the uniqueness property
of $(\sigma_t)$, see for example \cite[Theorem~1.2, Chapter~VIII]{tak2}, which says that $\psi\circ\sigma_t
=\sigma_t$ for each $t\in\mathbb R$, and for every $a,b\in B$ there is a continuous function $F=F_{a,b}$
on $\{ z\in\mathbb C : 0\leq \im(z)\leq 1\}$ which is holomorphic on the interior, and with
$F(t) = \psi(\sigma_t(a)b)$ and $F(t+i)=\psi(b\sigma_t(a))$.  These properties determine $(\sigma_t)$
uniquely.

As $\psi\circ\theta=\psi$, also $\psi\circ\theta^{-1}=\psi$, so defining $\sigma^\theta_t =
\theta\circ\sigma_t\circ\theta^{-1}$, we have that $\psi\circ\sigma^\theta_t = \psi$.  Also, for $a,b\in B$,
let $F' = F_{\theta^{-1}(a), \theta^{-1}(b)}$, and it is easily verified that $F'$ satisfies the condition
for $(\sigma^\theta_t)$.  By uniqueness, $\sigma_t = \sigma^\theta_t$ for all $t$, equivalently, $\theta$
and $(\sigma_t)$ commute.

By analytic continuation, also $\theta$ commutes with $\sigma_{-i}$, and so $\theta(QaQ^{-1}) =
\theta(\sigma_{-i}(a)) = \sigma_{-i}(\theta(a)) = Q \theta(a) Q^{-1}$ for each $a\in B$.  Thus $\theta(Q)
\theta(a) \theta(Q)^{-1} = Q \theta(a) Q^{-1}$, equivalently, $Q^{-1} \theta(Q) \theta(a) = \theta(a) Q^{-1}
\theta(Q)$ for each $a\in B$.  So $Q^{-1}\theta(Q)$ is central in $B$, and so on each matrix block of $B$ is
a scalar multiple of the identity.  Thus, with $Q=(Q_k)$, there are scalars $q_k$ with $\theta(Q_k) = q_k Q_k$.
As $\psi(a) = \Tr(Qa)$ and $\psi(\theta(a))=\psi(a)$, we see that $\Tr(Qa) = \Tr(Q\theta(a))$ for $a\in B$.
Taking $a=Q_k^z$ (in the $k$th block, and $0$ in other blocks) yields $\Tr(Q_k^{z+1}) = q_k^z \Tr(Q_k^{z+1})$
as $\theta(Q_k^z) = q_k^z Q_k^z$.  Setting $z=-1$ shows that $q_k=1$.  Thus $\theta(Q) = Q$.

Conversely, if $\theta(Q)=Q$, then notice that it follows from Proposition~\ref{prop:auts_fin_cstar_alg} that
$\Tr\circ\theta=\Tr$, and so $\psi(\theta(a))=\Tr(Q\theta(a))=\Tr(\theta(Qa))=\Tr(Qa)=\psi(a)$, so
$\theta\in\aut(B,\psi)$ as claimed.
\end{proof}

\begin{definition}\label{defn:hat_theta}
Given $\theta\in\aut(B,\psi)$ we define $\hat\theta\in\mc B(L^2(B))$ by
\[ \hat\theta\Lambda(b) = \Lambda(\theta(b)) \qquad (b\in B). \]
\end{definition}

Notice that $(\Lambda(\theta(b))|\Lambda(\theta(c))) = \psi(\theta(b)^*\theta(c)) = \psi(\theta(b^*c))
= \psi(b^*c) = (\Lambda(b)|\Lambda(c))$ and so $\hat\theta$ is an isometry.  As $\theta$ preserves $\psi$
also $\theta^{-1}$ does, and so
\[ (\Lambda(b) | \hat\theta^*\Lambda(c)) = (\Lambda(\theta(b)) | \Lambda(c))
= \psi(\theta(b)^* c) = \psi(\theta^{-1}(\theta(b^*)c)) = \psi(b^* \theta^{-1}(c))
= (\Lambda(b) | \widehat{\theta^{-1}}\Lambda(c)). \]
Thus $\hat\theta^* = \widehat{\theta^{-1}}$, and it is now easy to see that $\theta\mapsto \hat\theta$ is
a unitary representation of $\aut(B,\psi)$ on $L^2(B)$.

Shortly $A$ will denote a quantum group, and so in this section we shall always write $A_G$ for a quantum
adjacency matrix.
In the following, we are deliberately a little vague as to which exact axioms we need $A_G$ to satisfy.

\begin{definition}
Let $A_G\in\mc B(L^2(B))$ be a quantum adjacency matrix.  An \emph{automorphism of $A_G$} is $\theta\in\aut(B,\psi)$
such that $A_G$ and $\hat\theta$ commute.
\end{definition}

\begin{remark}
Let $B=C(V)$ and let $A=A_G$ be the adjacency matrix of a classical (perhaps directed) graph, so $A_{ij}=1$
exactly when there is an edge from $j$ to $i$.  Let us denote that there is an edge from $j$ to $i$ by
$j\rightarrow i$.  Any automorphism $\theta$ is given by some $\sigma\in\sym(V)$,
and notice that the uniform measure on $V$ is automatically preserved.  Then $L^2(B)\cong\mathbb C^V$
and $\hat\theta$ is also induced by $\sigma$, in that $\hat\theta(\delta_i) = \delta_{\sigma(i)}$.  As
\[ A(\delta_i) = \sum_j A_{ji} \delta_j = \sum_{i\rightarrow j} \delta_j, \]
we see that
\begin{align*}
\hat\theta A(\delta_i) = \sum_{\{j:i\rightarrow j\}} \hat\theta(\delta_j)
= \sum_{\{j:i\rightarrow j\}} \delta_{\sigma(j)}
= \sum_{\{k:i\rightarrow \sigma^{-1}(k)\}} \delta_{k}, \qquad
A \hat\theta(\delta_i) = A(\delta_{\sigma(i)})
= \sum_{\{j:\sigma(i)\rightarrow j\}} \delta_j.
\end{align*}
Thus $\hat\theta A = A \hat\theta$ if and only if, for each $i$, the sets $\{ j : i\rightarrow\sigma^{-1}(j) \}$
and $\{ j :\sigma(i)\rightarrow j \}$ agree.  This is equivalent to the statement that $i\rightarrow k$ if and
only if $\sigma(i)\rightarrow\sigma(k)$, which we observe by setting $k=\sigma^{-1}(j)$.  That is, $\sigma$ is
an automorphism of the (directed) graph.
\end{remark}

As a final remark, we notice that it is not really necessary to work with $L^2(B)$ here.  Instead, we could
interpret the quantum adjacency matrix $A_G$ as a linear map on $B$ (compare Section~\ref{sec:super-ops}), and then
require that $A_G$ and $\theta$ commute; this is \cite[Definition~2.4]{grom} for example.

So far, we have been discussing well-known definitions.  To our knowledge, however, there is little or no discussion
in the literature around what an automorphism of an operator bimodule might be.  We shall follow
Theorem~\ref{thm:qu_ad_mats_axioms_12}.  Let $A_G\in\mc B(L^2(B))$ be a quantum adjacency matrix, satisfying
axioms (\ref{defn:quan_adj_mat:idem}) and (\ref{defn:quan_adj_mat:undir}).  Then $A_G$ corresponds to a
projection $e\in B\otimes B^\op$ with $e=\sigma(e)$ and with $e$ invariant under $(\sigma_t\otimes\sigma_t)$.
Notice that when $\theta$ is an automorphism of $B$, also $\theta$ is an automorphism of $B^\op$.

\begin{proposition}\label{prop:aut_of_A_to_e}
We have that $A_G\hat\theta = \hat\theta A_G$ if and only if $(\theta\otimes\theta)(e)=e$.
\end{proposition}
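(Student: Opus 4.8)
The plan is to reduce the commutation relation to a conjugation identity and then transport it through the bijection $\Psi = \Psi_{0,1/2}$. Since $\hat\theta$ is unitary with $\hat\theta^* = \widehat{\theta^{-1}}$, the condition $A_G\hat\theta = \hat\theta A_G$ is equivalent to $\hat\theta A_G \hat\theta^* = A_G$. So it suffices to compute $\Psi(\hat\theta A_G \hat\theta^*)$ in terms of $e = \Psi(A_G)$ and then invoke that $\Psi$ is a bijection.

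First I would establish the key identity $\Psi(\hat\theta A_G \hat\theta^*) = (\theta\otimes\theta)(\Psi(A_G))$. By linearity of $\Psi$ it is enough to check this when $A_G = \theta_{\Lambda(a),\Lambda(b)}$. A direct computation with rank-one operators gives $\hat\theta \theta_{\Lambda(a),\Lambda(b)}\hat\theta^* = \theta_{\hat\theta\Lambda(a), \hat\theta\Lambda(b)} = \theta_{\Lambda(\theta(a)), \Lambda(\theta(b))}$, using $\hat\theta\Lambda(c) = \Lambda(\theta(c))$ from Definition~\ref{defn:hat_theta}. Applying the formula $\Psi_{0,1/2}:\theta_{\Lambda(x),\Lambda(y)}\mapsto x^*\otimes\sigma_{i/2}(y)$ then yields $\theta(a)^* \otimes \sigma_{i/2}(\theta(b))$.

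The main technical input is that $\theta$ commutes with the modular automorphism group: by Lemma~\ref{lem:aut_psi_Q_inv} we have $\theta\circ\sigma_t = \sigma_t\circ\theta$ for real $t$, and by analytic continuation this extends to $\theta\circ\sigma_z = \sigma_z\circ\theta$ for all $z\in\mathbb C$; in particular $\sigma_{i/2}(\theta(b)) = \theta(\sigma_{i/2}(b))$. Combined with $\theta(a)^* = \theta(a^*)$ (as $\theta$ is a $*$-homomorphism), we obtain $\theta(a^*)\otimes\theta(\sigma_{i/2}(b)) = (\theta\otimes\theta)(a^*\otimes\sigma_{i/2}(b)) = (\theta\otimes\theta)(e)$. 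Here I also use that $\theta$, being a $*$-automorphism of $B$, is a $*$-automorphism of $B^\op$ as well, so that $\theta\otimes\theta$ acts on $B\otimes B^\op$.

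Finally, since $\Psi$ is a bijection, $\hat\theta A_G \hat\theta^* = A_G$ holds if and only if $(\theta\otimes\theta)(e) = e$, which is exactly the claim. The same argument goes through verbatim with $\Psi'_{1/2,0}$ in place of $\Psi_{0,1/2}$ (or one appeals to Remark~\ref{rem:psi_psi_prime_agree_A}, which identifies $\Psi(A_G)$ and $\Psi'(A_G)$ under the standing axioms). I expect no serious obstacle: the only point needing care is the commutation of $\theta$ with $\sigma_{i/2}$, which is precisely what Lemma~\ref{lem:aut_psi_Q_inv} together with analytic continuation supplies.
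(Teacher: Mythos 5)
Your proposal is correct and follows essentially the same route as the paper: compute $\hat\theta\,\theta_{\Lambda(a),\Lambda(b)}\,\hat\theta^* = \theta_{\Lambda(\theta(a)),\Lambda(\theta(b))}$, apply $\Psi_{0,1/2}$, use that $\theta$ commutes with the modular automorphism group (Lemma~\ref{lem:aut_psi_Q_inv}, extended to complex parameters) to identify the result with $(\theta\otimes\theta)(e)$, and conclude by bijectivity of $\Psi$. Your explicit justification of the step $\sigma_{i/2}(\theta(b))=\theta(\sigma_{i/2}(b))$ is a point the paper leaves implicit, and is a welcome clarification.
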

\begin{proof}
We can check that $e' = (\theta\otimes\theta)(e)$ satisfies the conditions of Theorem~\ref{thm:qu_ad_mats_axioms_12}
and so corresponds to $A_G'\in\mc B(L^2(B))$ which satisfies axioms (\ref{defn:quan_adj_mat:idem})
and (\ref{defn:quan_adj_mat:undir}) of Definition~\ref{defn:adj_op}.  We will show that $A_G' = \hat\theta A_G \hat\theta^*$, and so
$A_G\hat\theta = \hat\theta A_G$ if and only if $A_G' = A_G$ if and only if $e=e'$, as claimed.

Using $\Psi_{0,1/2}:\mc B(L^2(B)) \rightarrow B\otimes B^\op$ we see that
\begin{align*} \hat\theta \theta_{\Lambda(a),\Lambda(b)} \hat\theta^*
&= \theta_{\hat\theta\Lambda(a), \hat\theta\Lambda(b)}
= \theta_{\Lambda(\theta(a)), \Lambda(\theta(b))} \\
&\mapsto \theta(a)^* \otimes \sigma_{i/2}(\theta(b))
= (\theta\otimes\theta)(a^*\otimes\sigma_{i/2}(b))
= (\theta\otimes\theta)\Psi_{0,1/2}(\theta_{\Lambda(a),\Lambda(b)}),
\end{align*}
and so $\hat\theta A_G \hat\theta^*$ maps to $e'$, showing that $A_G' = \hat\theta A_G \hat\theta^*$,
as claimed.
\end{proof}

Consider now $B\subseteq\mc B(H)$ for some Hilbert space $H$.  Let $\theta_{\mc B}$ be an automorphism of
$\mc B(H)$, and suppose that $\theta_{\mc B}$ restricts to $B$ giving some $\theta\in\aut(B,\psi)$.  Notice that
for $a\in B, x\in B'$ we have that $\theta_{\mc B}(x) \theta(a) = \theta_{\mc B}(xa) = \theta_{\mc B}(ax)
= \theta(a) \theta_{\mc B}(x)$.  As $\theta$ is an automorphism, it follows that $\theta_{\mc B}(x)\in B'$,
so $\theta_{\mc B}$ restricts to an endomorphism of $B'$.  The same argument applies to $\theta_{\mc B}^{-1}$,
and so $\theta_{\mc B}$ restricts to an automorphism of $B'$.  As all automorphisms of $\mc B(H)$ are inner, there
is some unitary $u_\theta\in\mc B(H)$ with $\theta_{\mc B}(x) = u_\theta x u_\theta^*$ for $x\in\mc B(H)$.
Thus $\theta_{\mc B}$ preserves $\Tr$ on $\mc B(H)$.  Using the GNS map $\mc B(H)\rightarrow H\otimes\overline H$,
the automorphism $\theta_{\mc B}$ corresponds to the unitary $u_\theta \otimes \overline{u_\theta}$ on
$H\otimes\overline H$.

\begin{lemma}\label{lem:action_on_S_to_e}
Let $S\subseteq\mc B(H)$ be a $B'$-bimodule satisfying the conditions of Theorem~\ref{thm:qu_ad_mats_axioms_12}.  
Then $S_\theta = \theta_{\mc B}(S)$ is also a $B'$-bimodule satisfying the conditions of 
Theorem~\ref{thm:qu_ad_mats_axioms_12}.  The associated projection in $B\otimes B^\op$ is
$e_\theta = (\theta\otimes\theta)(e)$.
\end{lemma}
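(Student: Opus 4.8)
The plan is to transport everything through the GNS identification $\mc B(H)\cong H\otimes\overline H$, $\theta_{\xi,\eta}\mapsto\eta\otimes\overline\xi$, under which, as recorded just before the statement, $\theta_{\mc B}$ becomes the unitary $u_\theta\otimes\overline{u_\theta}$. If $V\subseteq H\otimes\overline H$ is the $B'\otimes(B')^\op$-invariant subspace corresponding to $S$ (so that $e$ is the orthogonal projection onto $V$), then $S_\theta=\theta_{\mc B}(S)$ corresponds to $V_\theta=(u_\theta\otimes\overline{u_\theta})(V)$. Since $u_\theta\otimes\overline{u_\theta}$ is unitary, the orthogonal projection onto $V_\theta$ is the conjugate $(u_\theta\otimes\overline{u_\theta})\,e\,(u_\theta\otimes\overline{u_\theta})^*$, viewing $e$ as acting on $H\otimes\overline H$. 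Thus the whole lemma reduces to the single identity
\[ (u_\theta\otimes\overline{u_\theta})\,e\,(u_\theta\otimes\overline{u_\theta})^* = (\theta\otimes\theta)(e), \]
the right-hand side being the action of $(\theta\otimes\theta)(e)\in B\otimes B^\op$ on $H\otimes\overline H$.

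To prove this identity it suffices, by linearity, to treat a simple tensor $e=a\otimes b$. On $H\otimes\overline H$ the element $a\otimes b$ acts as $a\otimes b^\top$, while $(\theta\otimes\theta)(a\otimes b)=\theta(a)\otimes\theta(b)$ acts as $\theta(a)\otimes\theta(b)^\top$. Now $\theta=\theta_{\mc B}|_B$ is implemented by $u_\theta$, that is $\theta(a)=u_\theta a u_\theta^*$; and applying the conventions $x^\top=\overline{x^*}$ and $\overline{xy}=\overline x\,\overline y$ from Section~\ref{sec:conventions} together with the unitarity of $u_\theta$ and the fact that $x\mapsto x^\top$ is an anti-homomorphism, one computes $\theta(b)^\top=(u_\theta b u_\theta^*)^\top=\overline{u_\theta}\,b^\top\,\overline{u_\theta}^*$. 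Hence $\theta(a)\otimes\theta(b)^\top=(u_\theta\otimes\overline{u_\theta})(a\otimes b^\top)(u_\theta\otimes\overline{u_\theta})^*$, which is exactly the displayed identity for $e=a\otimes b$. I expect this transpose bookkeeping --- verifying that the automorphism $\theta$ of $B^\op$, acting on $\overline H$ through $b\mapsto b^\top$, is implemented by $\overline{u_\theta}$ --- to be the one genuinely delicate point; everything else is formal.

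Once the identity is established, the remaining assertions are automatic. Because $\theta$ is a $*$-automorphism of both $B$ and $B^\op$, the map $\theta\otimes\theta$ is a $*$-automorphism of $B\otimes B^\op$, so $e_\theta:=(\theta\otimes\theta)(e)$ is again a projection lying in $B\otimes B^\op$. It commutes with the swap since $\sigma(\theta\otimes\theta)=(\theta\otimes\theta)\sigma$, giving $\sigma(e_\theta)=e_\theta$; and since $\theta$ commutes with $\sigma_z$ for all $z$ (Lemma~\ref{lem:aut_psi_Q_inv} together with analytic continuation, on each of $B$ and $B^\op$), we get $(\sigma_z\otimes\sigma_z)(e_\theta)=e_\theta$. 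Thus $e_\theta$ satisfies condition (\ref{thm:qu_ad_mats_axioms_12:two}) of Theorem~\ref{thm:qu_ad_mats_axioms_12}, and by that theorem it corresponds to a self-adjoint $B'$-bimodule with the required $Q(\cdot)Q^{-1}$-invariance; since $e_\theta$ is the projection onto $V_\theta$, this bimodule is precisely $S_\theta$.

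Alternatively, the three properties of $S_\theta$ may be checked directly, which is instructive and confirms the conclusion without reference to the projection identity: self-adjointness follows from $\theta_{\mc B}$ being a $*$-map, the $B'$-bimodule property from $\theta_{\mc B}$ restricting to an automorphism of $B'$, and $QS_\theta Q^{-1}\subseteq S_\theta$ from $\theta_{\mc B}(Q)=\theta(Q)=Q$ (Lemma~\ref{lem:aut_psi_Q_inv}), whence $QS_\theta Q^{-1}=\theta_{\mc B}(QSQ^{-1})\subseteq\theta_{\mc B}(S)=S_\theta$.
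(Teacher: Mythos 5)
Your proposal is correct and matches the paper's proof in all essentials: the same GNS identification sending $\theta_{\mc B}$ to $u_\theta\otimes\overline{u_\theta}$, the same conjugation identity $e_\theta=(u_\theta\otimes\overline{u_\theta})e(u_\theta\otimes\overline{u_\theta})^*=(\theta\otimes\theta)(e)$ via the transpose bookkeeping $\theta(b)^\top=\overline{u_\theta}\,b^\top\,u_\theta^\top$, and the same use of Lemma~\ref{lem:aut_psi_Q_inv}. Your closing ``alternative'' paragraph (checking self-adjointness, the $B'$-bimodule property, and $QS_\theta Q^{-1}=S_\theta$ directly) is precisely how the paper organises the first half of its proof, so the only difference from the paper is the order in which the two halves are presented.
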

\begin{proof}
As $\theta_{\mc B}$ restricts to an automorphism of $B'$, it follows that $B_\theta$ is a self-adjoint $B'$-bimodule.
From Lemma~\ref{lem:aut_psi_Q_inv}, we know that $\theta_{\mc B}(Q) = Q$, and so $Q S Q^{-1} = S$ implies that
$Q S_\theta Q^{-1} = S_\theta$.  Let $S$ corresponds to $V\subseteq H\otimes\overline H$, and similarly
$S_\theta$ correspond to $V_\theta$.  As noted above, $V_\theta = (u_\theta \otimes \overline{u_\theta})(V)$.
Let $e\in B\otimes B^\op$ be the projection onto $V$, so that
\[ V_\theta = (u_\theta \otimes \overline{u_\theta}) e(H\otimes\overline H)
= (u_\theta \otimes \overline{u_\theta}) e (u_\theta \otimes \overline{u_\theta})^* (H\otimes\overline H), \]
and so $e_\theta = (u_\theta \otimes \overline{u_\theta}) e (u_\theta \otimes \overline{u_\theta})^*$
is the orthogonal projection onto $V_\theta$.  However, then $e_\theta = (u_\theta \otimes \overline{u_\theta})
e (u_\theta^* \otimes u_\theta^\top) = (\theta\otimes\theta)(e)$, as claimed.  (Here we use that, acting on
$\overline H$, we have that $\theta(b)^\top = (u_\theta b u_\theta^*)^\top = \overline{u_\theta} b^\top
u_\theta^\top$.)
\end{proof}

Thus we might define an automorphism of $S$ to be such a $\theta_{\mc B}$ with $\theta_{\mc B}(S) = S$,
equivalently, with $(\theta\otimes\theta)(e)=e$.  However, there are a number of problems here.  Firstly,
different $\theta_{\mc B}$ might give the same $\theta$, and hence the same automorphism of $S$.  Secondly,
we would wish there to be a bijection between automorphisms of a quantum adjacency matrix $A_G$ and automorphisms
of the associated $S$, but it is not clear that every suitable $\theta$ arises from some $\theta_{\mc B}$.
Finally, when studying operator bimodules $S$, there is no real dependence on $H$, but it does not appear to
be the case that any $\theta_{\mc B}$ on $\mc B(H)$, say restricting to $\theta$ on $B$, with give rise to some
automorphism of $\mc B(K)$ whenever $B\subseteq\mc B(K)$.

With a special choice of $H$, we can say something.  Set $H=L^2(B)$ and given $\theta\in\aut(B,\psi)$ form
$\hat\theta$ as in Definition~\ref{defn:hat_theta}.  We now define $\theta_{\mc B}(x) = \hat\theta x \hat\theta^*$
for $x\in\mc B(L^2(B))$, an automorphism which restricts to $\theta$ on $B$.

\begin{proposition}
Let $A_G\in\mc B(L^2(B))$ be a quantum adjacency matrix, satisfying axioms (\ref{defn:quan_adj_mat:idem}) and
(\ref{defn:quan_adj_mat:undir}) of Definition~\ref{defn:adj_op}.  Then $\theta\in\aut(B,\psi)$ is an automorphism of $A_G$ if and only if
$\theta_{\mc B}(S)=S$, where $S$ is the $B'$-operator bimodule associated with $A_G$.
\end{proposition}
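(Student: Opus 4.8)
The plan is to chain together the three earlier results that have already done all the real work: Proposition~\ref{prop:aut_of_A_to_e}, Lemma~\ref{lem:action_on_S_to_e}, and the bijection of Theorem~\ref{thm:qu_ad_mats_axioms_12}. By the definition of an automorphism of $A_G$, the condition is simply that $A_G\hat\theta = \hat\theta A_G$, and Proposition~\ref{prop:aut_of_A_to_e} already translates this into the clean algebraic condition $(\theta\otimes\theta)(e)=e$, where $e\in B\otimes B^\op$ is the projection associated to $A_G$ via Theorem~\ref{thm:qu_ad_mats_axioms_12}. So the entire task reduces to showing that the same condition $(\theta\otimes\theta)(e)=e$ is equivalent to $\theta_{\mc B}(S)=S$.

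First I would check that the specialised set-up of the proposition genuinely falls under Lemma~\ref{lem:action_on_S_to_e}. That lemma assumes $B\subseteq\mc B(H)$ together with an automorphism $\theta_{\mc B}$ of $\mc B(H)$ restricting to $\theta$ on $B$; here we take $H=L^2(B)$ and $\theta_{\mc B}(x)=\hat\theta x\hat\theta^*$. The one computation needed is that this restricts to $\theta$ on $B$: for $a,b\in B$ one has $\hat\theta a\hat\theta^*\Lambda(b) = \hat\theta\Lambda(a\theta^{-1}(b)) = \Lambda(\theta(a)b) = \theta(a)\Lambda(b)$, so $\theta_{\mc B}|_B=\theta$ as required, and the implementing unitary $u_\theta$ of Lemma~\ref{lem:action_on_S_to_e} is exactly the unitary $\hat\theta$ (unitarity coming from the representation $\theta\mapsto\hat\theta$ established before Definition~\ref{defn:hat_theta}). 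With the hypotheses verified, Lemma~\ref{lem:action_on_S_to_e} tells us that $\theta_{\mc B}(S)$ is again a $B'$-bimodule satisfying the conditions of Theorem~\ref{thm:qu_ad_mats_axioms_12}, and that its associated projection is precisely $(\theta\otimes\theta)(e)$.

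From here the equivalence is immediate. Since Theorem~\ref{thm:qu_ad_mats_axioms_12} gives a genuine bijection between such bimodules $S$ and projections $e$, two bimodules coincide if and only if their projections coincide; thus $\theta_{\mc B}(S)=S$ if and only if $(\theta\otimes\theta)(e)=e$. Combining this with Proposition~\ref{prop:aut_of_A_to_e} yields the chain
\[ \theta\text{ is an automorphism of }A_G \iff (\theta\otimes\theta)(e)=e \iff \theta_{\mc B}(S)=S, \]
which is the claim.

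There is no serious obstacle: the statement is essentially a bookkeeping synthesis of results proved above. The only point requiring genuine care is confirming that the concrete automorphism $x\mapsto\hat\theta x\hat\theta^*$ on $\mc B(L^2(B))$ slots correctly into the abstract framework of Lemma~\ref{lem:action_on_S_to_e} (in particular that $u_\theta=\hat\theta$), so that the associated projection really is $(\theta\otimes\theta)(e)$ and not some twist of it; once that identification is pinned down, the equivalence follows formally from the bijectivity in Theorem~\ref{thm:qu_ad_mats_axioms_12}.
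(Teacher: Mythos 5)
Your proof is correct and follows essentially the same route as the paper: the paper's own proof is precisely the observation that the statement "follows immediately from Proposition~\ref{prop:aut_of_A_to_e} and Lemma~\ref{lem:action_on_S_to_e} by thinking about the projection associated to $A_G$ and $S$," i.e.\ the chain $\hat\theta A_G = A_G\hat\theta \iff (\theta\otimes\theta)(e)=e \iff \theta_{\mc B}(S)=S$ via the bijection of Theorem~\ref{thm:qu_ad_mats_axioms_12}. Your extra verification that $x\mapsto\hat\theta x\hat\theta^*$ restricts to $\theta$ on $B$ (so that Lemma~\ref{lem:action_on_S_to_e} applies with $u_\theta=\hat\theta$) is a sound check of a point the paper asserts without computation.
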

\begin{proof}
This follows immediately from Proposition~\ref{prop:aut_of_A_to_e} and Lemma~\ref{lem:action_on_S_to_e} by
thinking about the projection associated to $A_G$ and $S$.
\end{proof}

\section{Quantum automorphisms of Quantum graphs}\label{sec:qauts_quan_graphs}

We now look at how (compact) quantum groups act on (quantum) graphs.  For compact quantum groups acting
on classical graphs, this goes back to work of Banica in \cite{b4}.  In the subsequent years, there has been
much activity, with the quantum automorphisms of some small graphs computed, and much interest in the question
of which graphs have a non-commutative (that is, not arising from a classical group) quantum automorphism group.  In this direction, let us just mention
one recent paper \cite{rs} which constructs a finite graph which has a non-commutative, yet finite-dimensional,
quantum automorphism group.  More recently quantum automorphisms of quantum graphs have been defined and
studied, \cite[Section~V, Part~C]{mrv} and more completely, \cite[Section~3]{bcehpsw}, compare
\cite[Section~4]{grom}, \cite[Section~2.3]{matsuda}.

We find some of the literature in this area to be quite hard to follow, and so we shall both motivate the
definitions from the classical situation, and develop certain aspects of compact quantum group theory
to allow us to make broader definitions than available elsewhere.
We will assume the reader is familiar with the basics of compact quantum group theory, for example,
\cite{nt, t}.

We start again from the classical situation, which while well-known, we feel provides interesting motivation.
Let $G=\aut(B,\psi)$ and let $A=C(G)$ the group algebra, with coproduct
\[ \Delta:C(G)\rightarrow C(G)\otimes C(G) = C(G\times G); \quad
\Delta(f)(\theta_1, \theta_2) = f(\theta_1\theta_2) \qquad(f\in C(G), \theta_1, \theta_2\in G). \]
The action of $G$ on $B$ gives a coaction of $C(G)$ on $B$ by
\[ \alpha:B\rightarrow B\otimes C(G) \cong C(G,B); \quad
\alpha(b) = \big( \theta(b) \big)_{\theta\in G}. \]
Let us check that the coaction equation holds.  For $b\in B$ we identify $(\alpha\otimes\id)\alpha(b)
\in B\otimes C(G)\otimes C(G)$ as a member of $C(G\times G, B)$.  Indeed, for arbitrary $f\in C(G,B)
\cong B\otimes C(G)$ we have that $(\alpha\otimes\id)f$ evaluated at $(\theta_1,\theta_2)$ is
$\alpha(f(\theta_2))(\theta_1) = \theta_1(f(\theta_2))$, while $(\id\otimes\Delta)f$ at $(\theta_1,\theta_2)$
is $f(\theta_1\theta_2)$.  Thus
\begin{align*}
(\alpha\otimes\id)\alpha(b)(\theta_1,\theta_2) = \theta_1\big( \alpha(b)(\theta_2) \big)
= \theta_1(\theta_2(b)) = (\id\otimes\Delta)\alpha(b)(\theta_1,\theta_2),
\end{align*}
showing that indeed $(\alpha\otimes\id)\alpha = (\id\otimes\Delta)\alpha$.  Notice that with these conventions,
we are considering \emph{right} coactions.

One can also verify the Podl\'es density condition, that $\lin\{ (1\otimes a) \alpha(b) : b\in B, a\in C(G) \}$
is dense in $B\otimes C(G)$.  Even in this setting, verifying this condition does not appear to be totally trivial
to do by direct calculation, and we point the reader to \cite[Section~2]{soltan1} for an interesting discussion.

That $G$ is a group of automorphisms which preserve $\psi$ means that $(\psi\otimes\id)\alpha(b) = \psi(b) 1$
for each $b\in B$.  If we regard a quantum adjacency matrix $A_G$ as acting on $B$, then that $G$ is a group
of automorphisms of $A_G$ is equivalent to $\theta(A_G(b)) = A_G(\theta(b))$ for each $b\in B$, that is,
$(A_G\otimes\id)\alpha = \alpha A_G$.  If we, as is usual, regard $A_G$ as acting on $L^2(B)$, then we have to
work harder to interpret this at the coaction level, see below.

We now consider an arbitrary, possibly non-commutative, compact quantum group $(A,\Delta)$.

\begin{definition}\label{defn:coaction}
Let $(A,\Delta)$ be a compact quantum group.  A \emph{coaction} of $(A,\Delta)$ on $B$ is a unital
$*$-homomorphism $\alpha:B\rightarrow B\otimes A$ with:
\begin{enumerate}
\item $(\alpha\otimes\id)\alpha = (\id\otimes\Delta)\alpha$;
\item $\lin\{ (1\otimes a)\alpha(b) : a\in A, b\in B \}$ is dense in $B\otimes A$.
\end{enumerate}

A coaction $\alpha:B\rightarrow B\otimes A$ of $(A,\Delta)$ on $B$ \emph{preserves the state $\psi$} when
$(\psi\otimes\id)\alpha(b) = \psi(b)1$ for each $b\in B$.
\end{definition}

We often think of (compact) quantum groups as being abstract objects represented by related algebraic objects (for example, a $C^*$-algebraic compact quantum group, or a Hopf $*$-algebraic compact quantum group).  In the quantum group literature, one then speaks of the (abstract object) quantum group \emph{acting} on some object if, at the level of the algebra in question, there exists a coaction.  We shall mostly stick to considering concrete algebraic objects $(A,\Delta)$ and hence will mostly speak about coactions of $(A,\Delta)$.

The condition of preserving $\psi$ was first considered by Wang in \cite{wang} (in fact, only the case
$\psi=\Tr$ is worked out in \cite[Section~6]{wang}, but as remarked there, the same ideas work in general).
We write $\qaut(B,\psi)$ for the maximal compact quantum group faithfully coacting on $B$ and preserving $\psi$.  This
means that if $(A,\Delta)$ is any compact quantum group coacting on $B$ and preserving $\psi$, there there is
a unique $*$-homomorphism $\qaut(B,\psi)\rightarrow A$ intertwining the coproducts and the coactions.  That
such a maximal quantum group exists is shown in \cite[Theorem~6.1]{wang}, where it is also show that
$\qaut(B)$ does not exist: there is no maximal compact quantum group coacting on $B$, unless $B$ is commutative.  Here we stated the maximality condition as a universal property, but faithfulness can also be described directly, see \cite[Definition~2.4]{wang}.

Let us think more about coactions.  The second condition is often called the \emph{Podle\'s density}
condition after Podle\'s's early work in this area.  Let $\mc O_A$ be the dense Hopf $*$-algebra of $A$
formed from the coefficients of irreducible unitary corepresentations of $A$.  There is a $*$-algebra
$\mc O_\alpha \subseteq B$ (the \emph{Podle\'s subalgebra} or \emph{algebraic core} of $B$)
which is dense, and such that $\alpha$ restricts to a $*$-homomorphism $\mc O_\alpha \rightarrow
\mc O_\alpha \otimes \mc O_A$.  In fact, this restriction is a Hopf $*$-algebraic coaction; in particular, it
is injective.  A modern reference for these results is \cite{dc}.

For us, $B$ is finite-dimensional, so $\mc O_B = B$, and hence $\alpha$ actually maps into $B\otimes\mc O_A$.
A modern approach to constructing $\qaut(B,\psi)$ makes use of the following observation.  
Let $(e_i)_{i=1}^n$ be an algebraic basis of $B$ such that $(\Lambda(e_i))$ is an  orthonormal basis
of $L^2(B)$.  For any linear map $\alpha:B\rightarrow B\otimes A$ there is a matrix $v=v_{ij}\in M_n(A)$ with
$\alpha(e_i) = \sum_j e_j \otimes v_{ji}$ for each $i$.  When $\alpha$ is a coaction, our observations show
that already $v_{ij}\in\mc O_A$ for each $i,j$.

We follow here
ideas of Banica, see \cite[Section~4]{b1}, or \cite[Section~1]{b2} for the case when $\psi$ is a trace.
Given unitary matrices $v\in M_n(A) \cong M_n \otimes A$ and $w\in M_m(A) \cong M_m \otimes A$ we define
$v \boxtimes w = v_{13} w_{23} \in M_n\otimes M_m \otimes A \cong M_{nm}(A)$, here using the common
leg-numbering notation.  Define $\hom(v,w)$ to be the space of linear maps $t:M_n\rightarrow M_m$ with
$(t\otimes 1)v = w(t\otimes 1)$, an equation we can interpret simply using matrix multiplication.  These
definitions should of course be familiar to the reader who knows about corepresentations of compact quantum
groups.

With respect to the basis $(\Lambda(e_i))$, write the
multiplication map $m:B\otimes B\rightarrow B$ as an $n\times n^2$ matrix, and write the unit map
$\eta:\mathbb C\rightarrow B$ as a $n\times 1$ matrix.  Also write the projection $\eta\circ\eta^*$
as a $n\times n$ matrix.  The following proposition makes links between properties of a linear map
$\alpha:B\rightarrow B\otimes A$, and properties of $v$.

\begin{proposition}\label{prop:coacts_to_coreps}
Let $A$ be a compact quantum group.  Under this bijection between $\alpha$ and $v$, the following hold:
\begin{enumerate}
\item\label{prop:coacts_to_coreps:one}
$\alpha$ is multiplicative if and only if $m \in \hom(v \boxtimes v, v)$;
\item\label{prop:coacts_to_coreps:two}
$\alpha$ is a unital map if and only if $\eta\in\hom(1,v)$;
\item\label{prop:coacts_to_coreps:three}
assuming $\alpha$ is unital, $\alpha$ is $\psi$-preserving if and only if $\eta\circ\eta^*\in\hom(v,v)$;
\item\label{prop:coacts_to_coreps:four}
$\alpha$ satisfies the coaction equation if and only if $v$ is a corepresentation.
\end{enumerate}
Assume now that $\alpha$ is multiplicative and unital.  If $v$ is unitary, then $\alpha$ is a $*$-map.
If $\alpha$ is a $*$-map, satisfies the coaction equation, and is $\psi$-preserving, then $v$ is unitary.
\end{proposition}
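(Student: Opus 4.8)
The plan is to prove the two implications separately. Both rest on viewing the matrix $v$ as the operator $V$ implementing $\alpha$ on $L^2(B)\otimes A$, characterised by $V(\Lambda(e_i)\otimes 1)=(\Lambda\otimes\id)\alpha(e_i)=\sum_j\Lambda(e_j)\otimes v_{ji}$; the distinction is that the second implication is where $\psi$-preservation enters, while the first uses unitarity of $v$ through a module argument.

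For the second implication, assume $\alpha$ is a $*$-homomorphism (it is already multiplicative and unital), satisfies the coaction equation, and preserves $\psi$. The first step is to compute $v^*v$ directly. Since $(\Lambda(e_i))$ is orthonormal, $\psi(e_i^*e_j)=\delta_{ij}$, and since $\alpha$ is a $*$-homomorphism, $\alpha(e_i^*e_j)=\alpha(e_i)^*\alpha(e_j)=\sum_{k,l}e_k^*e_l\otimes v_{ki}^*v_{lj}$. Applying $(\psi\otimes\id)$, using $\psi$-preservation on the left and orthonormality on the right, gives
\[ \delta_{ij}1=\psi(e_i^*e_j)1=(\psi\otimes\id)\alpha(e_i^*e_j)=\sum_{k,l}\psi(e_k^*e_l)\,v_{ki}^*v_{lj}=\sum_k v_{ki}^*v_{kj}=(v^*v)_{ij}, \]
so $v^*v=1$. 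It remains to upgrade this isometry to a unitary, and here I would invoke the coaction equation: by part~(\ref{prop:coacts_to_coreps:four}), $v$ is a corepresentation, so its entries lie in $\mc O_A$ and $v$ is invertible with two-sided inverse $(\id\otimes S)(v)$, where $S$ is the antipode (this is the standard identity $\sum_k v_{ik}S(v_{kj})=\delta_{ij}1=\sum_k S(v_{ik})v_{kj}$, valid for any corepresentation). An element of a $C^*$-algebra that is invertible and satisfies $v^*v=1$ obeys $v^*=v^{-1}$, whence $vv^*=1$ and $v$ is unitary.

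For the first implication, assume $v$ is unitary; I must show $\alpha(a^*)=\alpha(a)^*$. The key observation is that $\psi\otimes\id:B\otimes A\to A$ is faithful (with the orthonormal basis, $(\psi\otimes\id)(d^*d)=\sum_i a_i^*a_i$ for $d=\sum_i e_i\otimes a_i$), so the associated GNS module is $E\cong L^2(B)\otimes A$ and left multiplication gives a \emph{faithful} $*$-representation $\pi_\alpha$ of $B\otimes A$ on $E$. By multiplicativity, for $b\in B$ one has $\alpha(a)\alpha(b)=\alpha(ab)$, which translates into the intertwining relation $\pi_\alpha(\alpha(a))V=V(\pi(a)\otimes 1)$, where $\pi$ is the GNS representation of $B$ on $L^2(B)$. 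Since $V=v$ is unitary, $\pi_\alpha(\alpha(a))=V(\pi(a)\otimes 1)V^*$; taking adjoints and using that $\pi$ is a $*$-representation gives $\pi_\alpha(\alpha(a))^*=V(\pi(a^*)\otimes 1)V^*=\pi_\alpha(\alpha(a^*))$. As $\pi_\alpha$ is itself a $*$-representation, $\pi_\alpha(\alpha(a)^*)=\pi_\alpha(\alpha(a))^*=\pi_\alpha(\alpha(a^*))$, and faithfulness of $\pi_\alpha$ forces $\alpha(a)^*=\alpha(a^*)$, so $\alpha$ is a $*$-map.

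I expect the main obstacle to be the isometry-to-unitary step in the second implication: the purely algebraic inputs (multiplicativity, $*$, $\psi$-preservation, orthonormality) deliver only $v^*v=1$, and one genuinely needs the representation-theoretic fact that $v$ is a corepresentation, hence invertible, to conclude $vv^*=1$. Note also that the first implication deliberately does \emph{not} invoke the modular data (no $\psi$-preservation is assumed, so $v$ need not commute with $J$ or $\nabla$ as in Proposition~\ref{prop:A_comms_J_nabla}); the work is instead shifted into verifying faithfulness of the module representation and the intertwining identity $\pi_\alpha(\alpha(a))V=V(\pi(a)\otimes 1)$, after which $*$-preservation is automatic.
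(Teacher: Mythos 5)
Your first implication --- $v$ unitary implies $\alpha$ is a $*$-map --- is correct, and it takes a genuinely different route from the paper. The paper leaves this direction (and parts (\ref{prop:coacts_to_coreps:one})--(\ref{prop:coacts_to_coreps:four})) to ``simple direct calculation'', citing \cite[Lemma~1.2]{b2} and \cite[Lemma~4.5]{grom}, both of which treat only the tracial case. Your Hilbert-module argument --- multiplicativity yields the intertwining relation $\pi_\alpha(\alpha(a))V = V(\pi(a)\otimes 1)$, unitarity of $V$ lets you conjugate, and faithfulness of $\pi_\alpha$ (which you correctly reduce to faithfulness of $\psi\otimes\id$ via the orthonormal basis) then cancels the representation --- is sound, and has the virtue of working verbatim for non-tracial $\psi$, with no modular data required. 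This is a clean structural replacement for a computation the paper does not write out.

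The second implication is where there is a genuine gap. Your computation giving $v^*v=1$ is identical to the paper's. But the upgrade from isometry to unitary rests on a false claim: the identity $\sum_k S(v_{ik})v_{kj} = \delta_{ij}1 = \sum_k v_{ik}S(v_{kj})$ is \emph{not} valid for an arbitrary corepresentation. What the Hopf $*$-algebra axioms give (assuming the entries lie in $\mc O_A$, which is itself something that must be justified) is $\sum_k S(v_{ik})v_{kj} = \epsilon(v_{ij})1$, and $\epsilon(v)=1$ is precisely the non-degeneracy condition, which can fail: the zero matrix, or the direct sum of a unitary corepresentation with $0$, satisfies the corepresentation equation (and has entries in $\mc O_A$) but violates your identity. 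This is exactly the subtlety the paper's proof was written to expose (``this step does not follow from direct calculation, but rather from quite a lot of theory''): the paper uses that $v\in M_n(\mc O_A)$, coming from the coaction/Podle\'s structure discussed before the proposition, \emph{and} that $v$ has a left inverse, namely $v^*$. Indeed, the counit axiom gives $v\,\epsilon(v)=v$, whence $\epsilon(v) = v^*v\,\epsilon(v) = v^*v = 1$, and only then do the antipode identities produce the two-sided inverse $S(v)$; alternatively one verifies non-degeneracy and appeals to \cite[Proposition~3.2]{w}. Your argument is repairable along exactly these lines, since you already hold $v^*v=1$, but as written the crucial step is justified by a statement that is false in general, and the membership $v_{ij}\in\mc O_A$ --- needed even to apply $S$ and $\epsilon$ --- is asserted rather than proved.
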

\begin{proof}
Most of this follows by simple direct calculation, and is rather similar to \cite[Lemma~1.2]{b2} 
or \cite[Lemma~4.5]{grom} (which both just considers the case when $\psi$ is a trace).
However, we prove the final claim, to check that it works
for a non-trace, and to indicate that this step does not follow from direct calculation, but rather from
quite a lot of theory.

Suppose that $\alpha$ is a unital $*$-homomorphism and is $\psi$-preserving.  Then consider, for $i,j$,
\begin{align*}
(\psi\otimes\id)\alpha(e_i^*e_j) &= (\psi\otimes\id)(\alpha(e_i)^*\alpha(e_j))
= \sum_{s,t} \psi(e_s^* e_t) v_{si}^* v_{tj} = \sum_{s,t} (\Lambda(e_s)|\Lambda(e_t)) v_{si}^* v_{tj} \\
&= \sum_k v_{ki}^* v_{kj} = (v^*v)_{ij}.
\end{align*}
As $\alpha$ preserves $\psi$, this is equal to $\psi(e_i^*e_j)1 = \delta_{i,j}1$ and so $v^*v=1$.

As we know that actually $v$ is a matrix in $M_n(\mc O_A)$, Hopf $*$-algebra theory would show that $v$
is invertible, as it has a left inverse.  An alternative way to show this is to check that $v$ is
\emph{non-degenerate} and then to appeal to \cite[Proposition~3.2]{w}.
(We note here that we use ``non-degenerate'' in the sense of \cite[page~630]{w}, which is weaker than
the usual Hopf $*$-algebra definition.)  Either way, we conclude that $v$ is invertible,
so necessarily also $vv^*=1$, so $v$ is unitary.
\end{proof}

\begin{remark}\label{rem:commute_eta_etastar}
The condition that  $\eta\circ\eta^*\in\hom(v,v)$ is often omitted, which is perhaps surprising.
In fact, it is automatic in the presence of the other properties.
Indeed, $\eta\in \hom(1,v)$ means that $\eta\otimes\id = v(\eta\otimes\id)$ so also $\eta^*\otimes\id
= (\eta^*\otimes\id)v^*$ and hence
\[ v(\eta\circ\eta^*\otimes\id)v^* = v(\eta\otimes\id) (\eta^*\otimes\id)v^*
= (\eta\otimes\id) (\eta^*\otimes\id) = (\eta\circ\eta^*\otimes\id). \]
If $v$ is assumed unitary, this is implies that $\eta\circ\eta^*\in\hom(v,v)$.
\end{remark}

It is now easy to see how to define the \emph{quantum automorphism group} of $B$ which preserves $\psi$,
written as $\qaut(B,\psi)$.  We define $\qaut(B,\psi)$ to be the universal $C^*$-algebra generated by the
coefficients of an $n\times n$ unitary matrix $v=(v_{ij})$ with the conditions
\[ m \in \hom(v\boxtimes v, v), \qquad \eta\in \hom(1,v). \]
The coproduct on $\qaut(B,\psi)$ is defined
to satisfy $\Delta(v_{ij}) = \sum_{k=1}^n v_{ik} \otimes v_{kj}$, and exists by universality.
We define $\alpha$ by $\alpha(e_i) = \sum_{j=1}^n e_j \otimes v_{ji}$, and the proposition shows that
$\alpha$ is a coaction.

As $\qaut(B,\psi)$ is generated by elements of the matrix $v$, which is by definition
a (unitary) corepresentation, in fact $\qaut(B,\psi)$ is a compact matrix quantum group.  One point in this
definition which is far from obvious to the author is why the axioms of a compact quantum group hold, equivalently,
why the transpose of $v$ is an invertible matrix (an alternative axiom in the case of a compact matrix quantum group).  
We find it helpful to look at \cite[Section~3]{b3} which explains links with the Tannaka--Krein reconstruction theorem.
Also not considered in this construction is the Podle\'s density condition for $\alpha$,
but this follows easily once we know that $v^t$ is invertible.  In fact, as we shall need the following later,
we shall prove by a more direct argument that $v^t$ is invertible.

\begin{lemma}\label{lem:X_matrix}
Define $X\in\mathbb M_n$ by $X_{ij} = (\Lambda(e_i)|\Lambda(e_j^*)) = \psi(e_i^*e_j^*)$.
Then $X^{-1} = \overline X$, and $v = (X\otimes 1) \overline{v} (\overline X\otimes 1)$ where $\overline{v}$
is the matrix $(v_{ij}^*)$ (the transpose of the adjoint).  Furthermore, $\overline v$ is invertible
with inverse $(\overline X\otimes 1)v^*(X\otimes 1)$, and so also $v^t$ is invertible.
\end{lemma}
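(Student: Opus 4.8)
The plan is to treat the three assertions in order, as each relies on the previous. The one structural observation underlying everything is that, in the orthonormal basis $(\Lambda(e_i))$, the matrix $X$ records the adjoint operation on $B$: from $X_{ij}=(\Lambda(e_i)|\Lambda(e_j^*))$ and orthonormality we read off $\Lambda(e_j^*)=\sum_i X_{ij}\Lambda(e_i)$, hence $e_i^*=\sum_k X_{ki}e_k$. (Equivalently, $X$ is the matrix of the anti-linear map $s:\Lambda(a)\mapsto\Lambda(a^*)$ from the proof of Lemma~\ref{lem:what_real_means_general}.)

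First I would prove $X^{-1}=\overline X$. Applying the $*$-operation twice to $e_i^*=\sum_k X_{ki}e_k$ gives $e_i=\sum_k\overline{X_{ki}}\,e_k^*=\sum_l\big(\sum_k X_{lk}\overline{X_{ki}}\big)e_l$, so that $(X\overline X)_{li}=\delta_{li}$, i.e.\ $X\overline X=I$. Since $X$ is a finite square matrix this also gives $\overline X X=I$, so $\overline X=X^{-1}$.

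Next I would establish the intertwining identity $vX=X\overline v$ in $M_n(A)$ by expanding $\alpha(e_i^*)$ two ways. Substituting $e_i^*=\sum_k X_{ki}e_k$ and applying $\alpha$ gives $\alpha(e_i^*)=\sum_j e_j\otimes(vX)_{ji}$, where I use that $\alpha(e_k)=\sum_j e_j\otimes v_{jk}$ and that the scalars $X_{ki}$ commute past the entries of $v$. On the other hand, since $\alpha$ is a $*$-homomorphism, $\alpha(e_i^*)=\alpha(e_i)^*=\sum_j e_j^*\otimes v_{ji}^*$, and re-expanding $e_j^*$ in the basis yields $\alpha(e_i^*)=\sum_l e_l\otimes(X\overline v)_{li}$ with $\overline v=(v_{ij}^*)$. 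Comparing coefficients of each $e_l$ gives $vX=X\overline v$, and multiplying on the right by $X^{-1}=\overline X$ yields $v=(X\otimes 1)\overline v(\overline X\otimes 1)$. The work here is pure bookkeeping, and the step I expect to demand the most care is simply keeping the index positions of $X$ correct across the two expansions and tracking the distinction between $\overline v$, the adjoint $v^*$, and the transpose $v^t$.

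Finally, solving the displayed identity for $\overline v$ gives $\overline v=(\overline X\otimes 1)v(X\otimes 1)$; substituting $v^*$ for $v$ produces the claimed inverse $(\overline X\otimes 1)v^*(X\otimes 1)$, and I would check directly that its product with $\overline v$ in either order telescopes to $1$, using $vv^*=v^*v=1$ (Proposition~\ref{prop:coacts_to_coreps}) together with $X\overline X=\overline X X=I$. Invertibility of $v^t$ is then immediate, since entrywise $(\overline v)^*_{ij}=(v_{ji}^*)^*=v_{ji}=(v^t)_{ij}$ shows $(\overline v)^*=v^t$, and the adjoint of an invertible element is invertible.
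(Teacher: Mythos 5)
Your proof is correct, and its overall architecture --- derive the intertwining relation $v(X\otimes 1)=(X\otimes 1)\overline v$ by expanding $\alpha(e_i^*)$ in two ways, solve for the inverse of $\overline v$ using unitarity of $v$, and pass to $v^t$ by taking adjoints via $(\overline v)^*=v^t$ --- coincides with the paper's. Where you genuinely diverge is in the proof that $X^{-1}=\overline X$. The paper establishes $\overline X X=I$ by a computation with the density matrix $Q$: it uses the identity $(\Lambda(a^*)|\Lambda(b))=(\Lambda(Qb^*Q^{-1})|\Lambda(a))$ to rewrite $\overline{X_{ik}}$ as $(\Lambda(Qe_i^*Q^{-1})|\Lambda(e_k))$ and then sums over the orthonormal basis. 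You instead observe that $X$ is exactly the coefficient matrix of the involution in the basis $(e_i)$, namely $e_i^*=\sum_k X_{ki}e_k$, so that applying the anti-linear involution twice and comparing coefficients in the linear basis forces $X\overline X=I$ (and then $\overline X X=I$ since $X$ is a finite square matrix). Your argument is more elementary and more conceptual: it needs no reference to $Q$ or modular theory, only that $*$ is an anti-linear involution and that $\Lambda$ is a linear isomorphism, and it explains \emph{why} the relation holds. What the paper's computation buys is familiarity with precisely the $Q$-twisted inner-product manipulations that recur later --- for instance in Lemma~\ref{lem:inv_state_bh}, where $P=X^*X$ is identified with $\nabla$ by a very similar trace calculation --- but for the present lemma your route is the cleaner one. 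The remaining steps of your proposal (the two expansions of $\alpha(e_i^*)$, the telescoping check that $(\overline X\otimes 1)v^*(X\otimes 1)$ inverts $\overline v$ using $v^*v=vv^*=1$ and $X\overline X=\overline X X=I$, and the entrywise identification $(\overline v)^*=v^t$) are all correct and essentially identical to the paper's.
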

\begin{proof}
Notice that $X$ is nothing but the matrix representing the antilinear map $\Lambda(x)\mapsto\Lambda(x^*)$, an observation which helps to explain the formulae.
As $\alpha$ is a $*$-map, and expanding using the orthonormal basis $(\Lambda(e_j))$, we see that
\begin{align*}
\alpha(e_i^*) &= \sum_j (\Lambda(e_j)|\Lambda(e_i^*)) \alpha(e_j)
= \sum_{j,l} e_l \otimes (\Lambda(e_j)|\Lambda(e_i^*)) v_{lj} \\
&= \alpha(e_i)^* = \sum_j e_j^* \otimes v_{ji}^*
= \sum_{j,l} e_l \otimes (\Lambda(e_l)|\Lambda(e_j^*)) v_{ji}^*, 
\end{align*}
and so $\sum_j (\Lambda(e_j)|\Lambda(e_i^*)) v_{lj} = \sum_j (\Lambda(e_l)|\Lambda(e_j^*)) v_{ji}^*$
for each $l$.  Thus $\sum_j v_{lj} X_{ji} = \sum_j X_{lj} v_{ji}^*$ or $v(X\otimes1) = (X\otimes 1)\overline{v}$.
As $\psi(b) = \Tr(Qb)$ for each $b\in B$,
\[ (\Lambda(a^*)|\Lambda(b)) = \Tr(Qab) = \Tr(bQa) = \Tr(QQ^{-1}bQa) = (\Lambda(Qb^*Q^{-1})|\Lambda(a))
\qquad(a,b\in B). \]
Hence, for each $i,j$,
\begin{align*}
(\overline X X)_{ij} &= \sum_k \overline{X_{ik}} X_{kj}
= \sum_k \overline{ (\Lambda(e_i)|\Lambda(e_k^*)) }  (\Lambda(e_k)|\Lambda(e_j^*))
= \sum_k (\Lambda(Qe_i^*Q^{-1})|\Lambda(e_k)) (\Lambda(e_k)|\Lambda(e_j^*)) \\
&= (\Lambda(Qe_i^*Q^{-1})|\Lambda(e_j^*))
= (\Lambda(e_j)|\Lambda(e_i)) = \delta_{i,j}.
\end{align*}
As $X$ is a scalar matrix, it follows that $X^{-1} = \overline X$, and so $v = (X\otimes 1)\overline v
(X^{-1}\otimes 1) = (X\otimes 1)\overline v(\overline X\otimes 1)$, as claimed.
As $v^*v=vv^*=1$ we see that $u = (\overline X\otimes 1)v^*(X\otimes 1)$ is the inverse of $\overline v$,
and hence $u^* = (X^*\otimes 1)v(X^t\otimes 1)$ is the inverse of $v^t$.
\end{proof}

We shall make use of this construction, and our parenthetical comments, below.
We hope also that this quick sketch of the construction will aid the reader in understanding links between
our presentation and, for example, \cite[Definition~3.7, Proposition~3.8]{bcehpsw}.  See also the informative
\cite[Section~4]{grom}.

\subsection{Unitary implementations}\label{sec:unitary_impl}

In this section, let $(A,\Delta)$ be a compact quantum group with a coaction $\alpha:B\rightarrow B\otimes A$
which preserves $\psi$.  Consider $L^2(B)$ with GNS map $\Lambda:B\rightarrow L^2(B)$, and let $A$ be faithfully
and non-degenerately represented on a Hilbert space $K$.  Define an operator $U$ on $L^2(B)\otimes K$ by
\[ U(\Lambda(b)\otimes\xi) = \alpha(b)(\Lambda(1)\otimes\xi) \qquad (b\in B, \xi\in K). \]
Then
\begin{align*} \big( U(\Lambda(b)\otimes\xi) \big| U(\Lambda(d)\otimes\eta) \big)
&= (\psi\otimes\omega_{\xi,\eta})\alpha(b^*d)
= \psi(b^*d) (\xi|\eta)
= \big( \Lambda(b)\otimes\xi \big| \Lambda(d)\otimes\eta \big),
\end{align*}
using that $\alpha$ is $\psi$-invariant.  A similar calculation holds with arbitrary tensors, which shows that
$U$ is well-defined, and a co-isometry.  The density condition
on $\alpha$ (and that $A$ acts non-degenerately on $K$) shows that $U$ is unitary.  Then
\[ U(b\otimes 1)U^* U(\Lambda(d)\otimes\xi)
= U(\Lambda(bd)\otimes\xi) = \alpha(bd)(\Lambda(1)\otimes\xi)
= \alpha(b) U(\Lambda(d)\otimes\xi), \]
and thus $U(b\otimes 1)U^* = \alpha(b)$, acting on $L^2(B)\otimes K$.

We remark that we could instead have worked in the language of Hilbert $C^*$-modules, which would remove the
need to consider $K$, at the cost of more technicalities.  The following now shows how $\alpha$ commuting with
$A_G$ makes sense at the level of $U$.

\begin{lemma}\label{lem:coact_on_qg_iff_U_commutes}
Let $A_0:B\rightarrow B$ be a linear map which induces the linear map $A_G:L^2(B)\rightarrow L^2(B)$,
that is, $A_G\circ\Lambda = \Lambda\circ A_0$.
Then $\alpha A_0 = (A_0\otimes\id)\alpha$ if and only if $(A_G\otimes 1)U = U(A_G\otimes 1)$.
\end{lemma}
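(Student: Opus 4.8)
The plan is to unwind the definitions so that the two stated conditions become statements about the same operators on $L^2(B)\otimes K$, and then to exploit the relation $U(b\otimes 1)U^* = \alpha(b)$ established just before the lemma. First I would record exactly what $U$ does: by definition $U(\Lambda(b)\otimes\xi) = \alpha(b)(\Lambda(1)\otimes\xi)$, and since $\alpha$ is a coaction we have $\alpha(b)\in B\otimes A$ with $A$ acting on $K$, so $U$ is a unitary intertwining $b\otimes 1$ and $\alpha(b)$.

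The key computation is to evaluate both $(A_G\otimes 1)U$ and $U(A_G\otimes 1)$ on a typical vector $\Lambda(b)\otimes\xi$. On one side,
\begin{align*}
U(A_G\otimes 1)(\Lambda(b)\otimes\xi)
&= U(A_G\Lambda(b)\otimes\xi)
= U(\Lambda(A_0(b))\otimes\xi)
= \alpha(A_0(b))(\Lambda(1)\otimes\xi),
\end{align*}
using $A_G\Lambda = \Lambda A_0$. On the other side I would compute $(A_G\otimes 1)U(\Lambda(b)\otimes\xi) = (A_G\otimes 1)\alpha(b)(\Lambda(1)\otimes\xi)$. To make this tractable I would write $\alpha(b) = \sum_k \Lambda(b_k)\cdot(\text{something in }A)$ more carefully by expanding $\alpha(b) = \sum_j b_j\otimes a_j$ for suitable $b_j\in B$, $a_j\in A$; then $\alpha(b)(\Lambda(1)\otimes\xi) = \sum_j \Lambda(b_j)\otimes a_j\xi$, so that $(A_G\otimes 1)\alpha(b)(\Lambda(1)\otimes\xi) = \sum_j \Lambda(A_0(b_j))\otimes a_j\xi = (A_0\otimes\id)\alpha(b)\,(\Lambda(1)\otimes\xi)$, where the last equality just re-packages the sum. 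Comparing the two sides, the identity $(A_G\otimes 1)U = U(A_G\otimes 1)$ on all such vectors is equivalent to $(A_0\otimes\id)\alpha(b)\,(\Lambda(1)\otimes\xi) = \alpha(A_0(b))(\Lambda(1)\otimes\xi)$ for all $b,\xi$.

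The final step is to pass from ``these two elements of $B\otimes A$ agree after applying to $\Lambda(1)\otimes\xi$'' to ``these two elements of $B\otimes A$ are equal'', i.e. to $(A_0\otimes\id)\alpha = \alpha A_0$. Here I would argue that the map $B\otimes A \to L^2(B)\otimes K$ sending $c\otimes a \mapsto \Lambda(c)\otimes a\xi$, ranging over $\xi\in K$, is injective in the appropriate sense: since $\Lambda$ is a linear isomorphism $B\to L^2(B)$ and $A$ is faithfully and non-degenerately represented on $K$, an element $x\in B\otimes A$ is determined by the family of vectors $x(\Lambda(1)\otimes\xi)$ as $\xi$ varies over $K$. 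Concretely, applying $(\mathrm{id}\otimes\omega_{\xi,\eta})$ and using faithfulness of the representation of $A$ recovers the $A$-components, while $\Lambda$ being a bijection recovers the $B$-components. This separation argument is the one step that is not pure bookkeeping, and it is where the hypotheses that $\Lambda$ is a bijection and that $A$ acts faithfully and non-degenerately are genuinely used; I expect it to be the main (though mild) obstacle. Both implications then follow at once, since the chain of equivalences is reversible.
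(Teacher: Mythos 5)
Your proposal is correct and takes essentially the same route as the paper: evaluate both $(A_G\otimes 1)U$ and $U(A_G\otimes 1)$ on vectors $\Lambda(b)\otimes\xi$, rewrite $(A_G\otimes 1)U(\Lambda(b)\otimes\xi)$ as $(A_0\otimes\id)\alpha(b)(\Lambda(1)\otimes\xi)$ by expanding $\alpha(b)$ into elementary tensors, and then pass to equality in $B\otimes A$ via the observation that an element of $B\otimes A$ vanishing on all vectors $\Lambda(1)\otimes\xi$ must be zero. Your separation step, using that $\Lambda$ is a bijection and that $A$ is faithfully represented on $K$, is exactly the paper's key observation, so nothing further is needed.
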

\begin{proof}
Given $b\in B, a\in A, \xi\in K$, we see that $(b\otimes a)(\Lambda(1)\otimes\xi) = \Lambda(b)\otimes a\xi$.
As $A$ acts non-degenerately on $K$, it follows that $u\in B\otimes A$ has $u=0$ if and only if
$u(\Lambda(1)\otimes\xi)=0$ for all $\xi\in K$.  We hence see that
$\alpha A_0 = (A_0\otimes\id)\alpha$ if and only if
\begin{align*}
&  \alpha(A_0(b)) = (A_0\otimes\id)\alpha(b)   & (b\in B) \\
\Leftrightarrow\qquad &  \alpha(A_0(b))(\Lambda(1)\otimes\xi)
  = (A_0\otimes\id)\alpha(b)(\Lambda(1)\otimes\xi)
  & (b\in B, \xi\in K) \\
\Leftrightarrow\qquad &  U(\Lambda(A_0(b))\otimes\xi)
  = (A_G\otimes 1) U(\Lambda(b)\otimes\xi)
  & (b\in B, \xi\in K) \\
\Leftrightarrow\qquad &  U\big((A_G\otimes 1) (\Lambda(b)\otimes\xi)\big)
  = (A_G\otimes 1) U(\Lambda(b)\otimes\xi)
  & (b\in B, \xi\in K)
\end{align*}
using that $A_G\Lambda(d) = \Lambda(A_0(d))$ for $d\in B$.  This is equivalent to $U(A_G\otimes 1)
= (A_G\otimes 1)U$, as claimed.
\end{proof}

We shall call $U$ the \emph{unitary implementation of $\alpha$}, compare with the much more general
construction of \cite{v1}.
Recall from Proposition~\ref{prop:coacts_to_coreps} that there is a bijection between coactions $\alpha$
and certain (unitary) matrices $v\in M_n(A)$.  The following clarifies the relation between $v$ and the
unitary $U$.

\begin{lemma}
The unitary $U\in\mc B(L^2(B)\otimes K)$ is equal to the image of $v\in M_n(A) \cong M_n
\otimes A$ acting on $L^2(B)\otimes K$.
\end{lemma}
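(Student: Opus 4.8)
The plan is to prove the identity $U = v$ simply by evaluating both operators on the total family of vectors $\Lambda(e_i)\otimes\xi$, for $i=1,\dots,n$ and $\xi\in K$, and checking that the two actions agree. Since $(\Lambda(e_i))$ is an orthonormal basis of $L^2(B)$ and these vectors span a dense subspace of $L^2(B)\otimes K$, agreement on them forces $U=v$ by linearity and continuity. No deep input is needed beyond the two descriptions of the coaction we already have: the definition of $U$, and the bijection of Proposition~\ref{prop:coacts_to_coreps} fixing $v\in M_n(A)$ through $\alpha(e_i)=\sum_j e_j\otimes v_{ji}$.

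First I would compute $U$ directly. From the definition $U(\Lambda(e_i)\otimes\xi)=\alpha(e_i)(\Lambda(1)\otimes\xi)$, substituting $\alpha(e_i)=\sum_j e_j\otimes v_{ji}$ and interpreting the first leg $e_j\in B$ via the GNS representation $\pi$ on $L^2(B)$ and the second leg $v_{ji}\in A$ via its action on $K$, one gets
\[ U(\Lambda(e_i)\otimes\xi) = \sum_j \pi(e_j)\Lambda(1)\otimes v_{ji}\xi = \sum_j \Lambda(e_j)\otimes v_{ji}\xi, \]
using $\pi(e_j)\Lambda(1)=\Lambda(e_j)$. Next I would compute the action of $v$ viewed inside $M_n\otimes A$ on $L^2(B)\otimes K$, where the factor $M_n$ is identified with $\mc B(L^2(B))$ through the orthonormal basis $(\Lambda(e_i))$. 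Writing $v=\sum_{p,q} e_{pq}\otimes v_{pq}$ and using that the standard matrix unit $e_{pq}$ corresponds to the rank-one operator $\theta_{\Lambda(e_q),\Lambda(e_p)}$, so that $e_{pq}\Lambda(e_i)=(\Lambda(e_q)|\Lambda(e_i))\Lambda(e_p)=\delta_{qi}\Lambda(e_p)$, one obtains
\[ v(\Lambda(e_i)\otimes\xi) = \sum_{p,q}\big(e_{pq}\Lambda(e_i)\big)\otimes v_{pq}\xi = \sum_j \Lambda(e_j)\otimes v_{ji}\xi, \]
which is exactly the expression found for $U$.

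The only delicate point, and the main place an indexing slip could occur, is the bookkeeping of the identification $M_n\cong\mc B(L^2(B))$: one must verify that the standard matrix unit $e_{pq}$ is sent to $\theta_{\Lambda(e_q),\Lambda(e_p)}$ (rather than to its adjoint or transpose), and that this convention is compatible with the convention $\alpha(e_i)=\sum_j e_j\otimes v_{ji}$ that defines $v$ from $\alpha$. Once these conventions are pinned down, both computations above are purely formal and manifestly coincide, so $U$ and $v$ agree on a spanning set and hence are equal as operators on $L^2(B)\otimes K$.
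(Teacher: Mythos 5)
Your proof is correct and is essentially the paper's own argument: the paper likewise evaluates $U(\Lambda(e_i)\otimes\xi)=\alpha(e_i)(\Lambda(1)\otimes\xi)=\sum_j\Lambda(e_j)\otimes v_{ji}\xi$ and reads off that the matrix of $U$ is $(v_{ij})$. Your extra verification that the matrix unit $e_{pq}$ corresponds to $\theta_{\Lambda(e_q),\Lambda(e_p)}$ is just a more explicit spelling-out of that final identification, with the conventions correctly matched.
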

\begin{proof}
We simply calculate that for $a\in A$,
\[ U\big( \Lambda(e_i) \otimes \xi \big)
= \alpha(e_i)(\Lambda(1)\otimes\xi)
= \sum_j \Lambda(e_j) \otimes v_{ji}\xi. \]
Thus the matrix of $U$ equals $(v_{ij})$, acting on $L^2(B)\otimes K$, as claimed.
\end{proof}

In particular, $U$ is a corepresentation (which also follows by direct calculation).

The following definition should now be clear, given our motivation above from the case of classical
groups, and Lemma~\ref{lem:coact_on_qg_iff_U_commutes}.

\begin{definition}\label{defn:coaction_on_quantum_graph}
Let $(A,\Delta)$ be a compact quantum group, and let $\alpha:B\rightarrow B\otimes A$ be a coaction which
preserves $\psi$.  Let $A_G\in\mc B(L^2(B))$ be a quantum adjacency matrix associated to the linear map $A_0$
on $B$.  Then $\alpha$ is a \emph{coaction on $A_G$} when $\alpha A_0 = (A_0\otimes\id)\alpha$, equivalently,
when $A_G$ and $v$ commute, where $v$ is the unitary corepresentation given by $\alpha$.
\end{definition}

\begin{remark}\label{rem:coact_empty_compl}
Let us consider the complete quantum graph, compare Proposition~\ref{prop:equiv_complete_qg}, so that
$A_G = \theta_{\Lambda(1),\Lambda(1)}$.  Then $A_G = \eta\circ\eta^*$, and so by 
Remark~\ref{rem:commute_eta_etastar}, as $\alpha$ is $\psi$-preserving, automatically $v$ commutes with
$A_G$.

Similarly, consider the empty quantum graph, compare Proposition~\ref{prop:equiv_empty_qg}, so that
$A_G = (mm^*)^{-1}$.  As $\alpha$ is a coaction, by Proposition~\ref{prop:coacts_to_coreps},
$(m\otimes 1) v_{13} v_{23} = v(m\otimes 1)$.  Thus also $v_{23}^* v_{13}^* (m^*\otimes 1)
= (m^*\otimes 1)v^*$, and so as $v$ is unitary, also $(m^*\otimes 1)v = v_{13} v_{23}(m^*\otimes 1)$.
Hence
\[ v(mm^*\otimes 1) = (m\otimes 1) v_{13} v_{23} (m^*\otimes 1)
= (mm^*\otimes 1) v, \]
and so also $A_G$ commutes with $v$.

Hence, in agreement with intuition, as soon as $\alpha$ is a $\psi$-preserving coaction on $B$, it is
automatically a coaction on both the empty and complete quantum graphs.
\end{remark}

We next show that there exists a universal compact quantum group which acts on $A_G$, and which we can then be
justified in calling the \emph{quantum automorphism group of $A_G$}.  We shall compare this definition to
others in the literature in Remark~\ref{rem:other_defns}.

\begin{proposition}\label{prop:construct_qaut_ag}
Consider $\qaut(B,\psi)$ as constructed above from a generating unitary corepresentation $v$.
Let $A$ be the quotient formed by imposing the extra conditions that $A_G v = v A_G$.  Then $\Delta$ drops
to $A$ making $(A,\Delta)$ into a quantum subgroup of $\qaut(B,\psi)$, which is the quantum automorphism
group of $A_G$: that is, universal amongst all compact quantum groups acting on $A_G$.
\end{proposition}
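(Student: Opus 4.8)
The plan is to realise $A$ concretely as the universal $C^*$-algebra generated by the coefficients of $v$ subject to the defining relations of $\qaut(B,\psi)$ together with the extra linear relations
$r_{ij} := \sum_k (A_G)_{ik} v_{kj} - \sum_k v_{ik}(A_G)_{kj} = 0$ for $1\le i,j\le n$,
where $(A_G)_{ij}$ is the matrix of $A_G$ in the orthonormal basis $(\Lambda(e_i))$. Since these relations are linear in the (bounded) generators, the quotient $C^*$-algebra exists; write $q:\qaut(B,\psi)\to A$ for the quotient $*$-homomorphism. Note that $r_{ij}=0$ for all $i,j$ is exactly the matrix identity $(A_G\otimes 1)v = v(A_G\otimes 1)$, which by Lemma~\ref{lem:coact_on_qg_iff_U_commutes} and the identification of the unitary implementation $U$ with (the image of) $v$ is precisely the condition in Definition~\ref{defn:coaction_on_quantum_graph} that the associated coaction acts on $A_G$; thus $A$ is built to be the universal algebra carrying such a corepresentation.

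First I would show that $\Delta$ descends to $A$, for which it suffices to check $(q\otimes q)\Delta(r_{ij})=0$. Writing $w_{ij}=q(v_{ij})$ and expanding via $\Delta(v_{kj})=\sum_l v_{kl}\otimes v_{lj}$, one obtains $(q\otimes q)\Delta(r_{ij})$ as a difference of two double sums. Applying the relation $\sum_k (A_G)_{ik}w_{kl}=\sum_k w_{ik}(A_G)_{kl}$ to the first tensor leg of the first sum, and the relation $\sum_k w_{lk}(A_G)_{kj}=\sum_k (A_G)_{lk}w_{kj}$ to the second tensor leg of the second sum, and using that the scalars $(A_G)_{kl}$ pass freely across $\otimes$, both sums collapse to $\sum_{k,l}(A_G)_{kl}\, w_{ik}\otimes w_{lj}$ and hence cancel. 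Therefore there is a unique $\Delta_A:A\to A\otimes A$ with $\Delta_A q=(q\otimes q)\Delta$, and coassociativity is inherited. Consequently $q$ is a surjective $*$-homomorphism intertwining the coproducts, which is exactly the assertion that $(A,\Delta_A)$ is a quantum subgroup of $\qaut(B,\psi)$.

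To confirm $(A,\Delta_A)$ is genuinely a compact matrix quantum group, observe that $\tilde v=(w_{ij})$ is a unitary matrix generating $A$ with $\Delta_A(\tilde v_{ij})=\sum_k \tilde v_{ik}\otimes \tilde v_{kj}$. Applying $q$ to the explicit inverse of $v^t$ supplied by Lemma~\ref{lem:X_matrix} shows $\tilde v^t$ is again invertible, so Woronowicz's characterisation of compact matrix quantum groups applies. The formula $\alpha_A(e_i)=\sum_j e_j\otimes w_{ji}$ then defines, by Proposition~\ref{prop:coacts_to_coreps}, a $\psi$-preserving coaction of $(A,\Delta_A)$ on $B$ which by construction acts on $A_G$.

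Finally I would verify the universal property. Let $(A',\Delta')$ be any compact quantum group with a $\psi$-preserving coaction $\alpha'$ acting on $A_G$, and let $v'\in M_n(A')$ be its corepresentation, so that $A_G v'=v'A_G$. The universal property of $\qaut(B,\psi)$ yields a unique $*$-homomorphism $\pi:\qaut(B,\psi)\to A'$, intertwining coproducts and coactions, with $\pi(v_{ij})=v'_{ij}$; then $\pi(r_{ij})=0$, so $\pi$ factors as $\pi=\bar\pi\circ q$ for a unique $\bar\pi:A\to A'$. One checks directly that $\bar\pi$ intertwines $\Delta_A,\Delta'$ and $\alpha_A,\alpha'$, and uniqueness holds because the $w_{ij}$ generate $A$. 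This exhibits $(A,\Delta_A)$ as universal among compact quantum groups acting on $A_G$, justifying the name. The only steps that are not pure bookkeeping with universal properties are the coideal computation $(q\otimes q)\Delta(r_{ij})=0$ and the appeal to Lemma~\ref{lem:X_matrix} for the invertibility of $\tilde v^t$; I expect the former to be the main (though still short) obstacle.
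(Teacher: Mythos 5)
Your proposal is correct and takes essentially the same route as the paper: the same insert-and-cancel double-sum computation showing the relations $A_G v = vA_G$ generate a Hopf ideal (you perform it after applying $q\otimes q$, the paper inside $J\otimes\qaut(B,\psi)+\qaut(B,\psi)\otimes J$, which amounts to the same thing), followed by the same factorisation of the map from $\qaut(B,\psi)$ through the quotient to obtain the universal property. Your extra verification that $\tilde v^t$ stays invertible in the quotient via Lemma~\ref{lem:X_matrix} is a harmless supplement which the paper leaves implicit in its discussion preceding the proposition.
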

\begin{proof}
Let $J$ be the ideal generated by the relations coming from $A_G v = v A_G$, that is, $J$ is the ideal
generated by the element
\[ \sum_{k=1}^n A^G_{ik} v_{kj} - \sum_{k=1}^n A^G_{kj} v_{ik}
\qquad (1\leq i,j\leq n). \]
If we apply $\Delta$ to one of these elements, we obtain
\begin{align*}
& \sum_{k,l=1}^n A^G_{ik} v_{kl} \otimes v_{lj} - \sum_{k,l=1}^n A^G_{kj} v_{il} \otimes v_{lk} \\
&= \sum_{l=1}^n \Big( \sum_{k=1}^n A^G_{ik} v_{kl} - v_{ik} A^G_{kl} \Big) \otimes v_{lj}
+ \sum_{l=1}^n \Big( \sum_{k=1}^n v_{ik} A^G_{kl} \Big) \otimes v_{lj}
- \sum_{k,l=1}^n A^G_{kj} v_{il} \otimes v_{lk} \\
&= \sum_{l=1}^n \Big( \sum_{k=1}^n A^G_{ik} v_{kl} - v_{ik} A^G_{kl} \Big) \otimes v_{lj}
+ \sum_{k,l=1}^n v_{ik} \otimes A^G_{kl} v_{lj} - v_{ik} \otimes A^G_{lj} v_{kl} \\
&= \sum_{l=1}^n \Big( \sum_{k=1}^n A^G_{ik} v_{kl} - v_{ik} A^G_{kl} \Big) \otimes v_{lj}
+ \sum_{k,l=1}^n v_{ik} \otimes \Big( A^G_{kl} v_{lj} - A^G_{lj} v_{kl}\Big) \\
&\in J\otimes \qaut(B,\psi) + \qaut(B,\psi)\otimes J.
\end{align*}
In the first line we add in and subtract the same element, but notice that the element in the brackets
is manifestly in $J$.  In the second step we re-index, and the re-arrange in the third step so
both brackets are now clearly members of $J$.

This calculation and a simple continuity argument shows that $\Delta$ gives a well-defined map
$A\rightarrow A\otimes A$, where $A = \qaut(B,\psi) / J$.  By definition, $A$ coacts on $A_G$.

Now let $A_1$ be some other compact quantum group which coacts on $A_G$.  Then, by
Proposition~\ref{prop:coacts_to_coreps} for any orthonormal basis of $L^2(B)$ there is a unitary corepresentation
$v \in M_n(A_1)$ which defines the coaction.  If necessary replacing $A_1$ by the $C^*$-subalgebra generated by
the elements of $v$, we may suppose that $A_1$ is a compact matrix quantum group generated by $v$.

Consider now $\qaut(B,\psi)$ with generating corepresentation $w$ say.  As $A_1$ coacts in a $\psi$-preserving way,
$v$ satisfies all the relations which are satisfied by $w$, and so by universality, there is a $*$-homomorphism
$\theta:\qaut(B,\psi)\rightarrow A_1$ which intertwines the coproducts and which sends $w$ to $v$.  Hence $\theta$
is a surjection.  As $A_G v = vA_G$ it follows that $\theta(J)=\{0\}$, and so $\theta$ drops to a morphism from
$A$, which is necessarily uniquely defined.  This has hence demonstrated the required universal property for $A$.
\end{proof}

We shall write $\qaut(A_G)$ for the resulting compact quantum group.

\begin{remark}\label{rem:other_defns}
Let us compare this definition/construction with \cite[Definition~3.7]{bcehpsw}.  There, $\mc O(A_G)$ is
defined to be the universal $*$-algebra generated by a unitary matrix $u$ with the conditions that the map
$\rho:B\rightarrow B\otimes \mc O(A_G), e_i\mapsto \sum_j e_j\otimes u_{ji}$ is a unital $*$-homomorphism
with $\rho A_0 = (A_0\otimes\id)\rho$.  Here $A_0:B\rightarrow B$ is associated to $A_G$ in the usual way.
Furthermore, \cite[Proposition~3.8]{bcehpsw} defines a Hopf $*$-algebra structure on $\mc O(A_G)$ which in
particular ensures that $\rho$ satisfies the coaction equation.

From the discussion above where we sketch the construction of $\qaut(B,\psi)$, it is clear that $\mc O(A_G)$
is a quotient of the Hopf $*$-algebra $\mc O(\qaut(B,\psi))$, where the extra relations come from requiring that
$\rho A_0 = (A_0\otimes\id)\rho$.  As in Lemma~\ref{lem:coact_on_qg_iff_U_commutes}, this is equivalent to
requiring that $u$ and $A_G$ commute.  Thus this construction does agree with
Proposition~\ref{prop:construct_qaut_ag}.

Gromada in \cite[Section~4]{grom} makes much the same definition, but with a slightly different
presentation of the relations.  The links with Tannaka--Krein reconstruction results are made explicit.
However, note that all definitions are made at the level of Hopf $*$-algebras, and so, for example, the
definition of a coaction \cite[Definition~4.2]{grom} differs from ours (but is equivalent).
\end{remark}

\begin{remark}\label{rem:no_splitting_delta_form}
As alluded to a number of times, much of the existing work on quantum graphs restricts to the case when $\psi$
is a $\delta$-form (tracial, or not).  There is some motivation for this if we consider $\qaut(B,\psi)$, as 
a result attributed to Brannan, \cite[Proposition~20]{dCFY}, describes $\qaut(B,\psi)$ in terms of a
free product of a decomposition of $B$ into the coarsest direct sum $B = \bigoplus B_i$ where $\psi$ restricts
to a $\delta$-form on each $B_i$.  Thus for many questions, it suffices to look at each factor $B_i$
separately.

We might wonder if $\qaut(A_G)$ factors similarly.  This does not appear to be the case, as there appears to
be no reason why $A_G$ should restrict to each factor $B_i$.  To find a counter-example, 
let $B=\mathbb M_2 \oplus\mathbb M_2$ where we equip each $\mathbb M_2$
factor with differently normalised traces, say $\psi_1,\psi_2$ with $\psi = \psi_1\oplus\psi_2$.
Then $\psi$ is not a $\delta$-form, but each $\psi_i$ is.

Let $B$ act on $H = \mathbb C^2 \oplus \mathbb C^2$ so that $B' \cong \ell^\infty_2$, say the span of
$e_1,e_2$ where $e_i$ is the projection onto the $i$th $\mathbb C^2$ component.  As usual, we identify
$\mc B(H)$ with $\mathbb M_2(\mc B(\mathbb C^2)) \cong \mathbb M_2(\mathbb M_2)$.  Let
\[ S = \lin \{ e_1, e_2, x_0, x_0^* \}, \quad\text{with}\quad
x_0 = \begin{pmatrix} 0 & t \\ 0 & 0 \end{pmatrix}, \]
for some non-zero $t:\mathbb C^2 \rightarrow \mathbb C^2$.  Then $S$ is an operator system, and so as $B$
has a tracial state, $S$ corresponds to a quantum adjacency matrix $A_G$ say, and to a projection in
$B\otimes B^\op$, say $e_G$.

Towards a contradiction, suppose that $A_G$ commutes with the projections onto each factor of $B$.  As
$A_G$ is self-adjoint, it follows that $A_G$ can be written as a sum of rank-one operators of the form
$\theta_{\Lambda(a), \Lambda(b)}$ where $a,b$ both come from the same factor of $B$.  It follows that
$e_G$ is the span of tensors $b\otimes a^*$ where $a,b$ both come from the same factor of $B$.  Thus
the projection of $\mc B(L^2(B))$ onto $S$ is a sum of maps of the form $\theta\mapsto b\theta a^*$
where $a,b$ both come from the same factor of $B$.  However,
\[ \begin{pmatrix} b & 0 \\ 0 & 0 \end{pmatrix} x_0 \begin{pmatrix} a^* & 0 \\ 0 & 0 \end{pmatrix}
= \begin{pmatrix} 0 & bt \\ 0 & 0 \end{pmatrix} \begin{pmatrix} a^* & 0 \\ 0 & 0 \end{pmatrix} = 0, \]
and similarly if $b,a$ come from the 2nd factor.  This is a contradiction, as the projection onto
$S$ should preserve $x_0$.
\end{remark}

Examples of quantum automorphism groups of quantum graphs are considered in \cite{grom}, and in
particular in \cite[Section~4]{matsuda}, where all quantum graphs over $B = \mathbb M_2$ are considered.

\subsection{Quantum automorphisms of operator bimodules}\label{sec:qaut_op_bimod}

As $B'$-operator bimodules $S$ satisfying the conditions of Theorem~\ref{thm:qu_ad_mats_axioms_12} biject
with quantum adjacency matrices $A_G$, we immediately obtain a notion of an action of a quantum group on $S$.
Of course, it would be more satisfying to have a definition which only used $S$.

If we consider what happens for (non-quantum) automorphisms of $A_G$, say associated to a projection
$e_G$ and $B'$-bimodule $S$, as in Section~\ref{sec:auts_quan_graphs}, then we found a link between
automorphisms $\theta$ of $A_G$, the condition that $(\theta\otimes\theta)(e_G)=e_G$, and $\theta_{\mc B}(S)=S$,
at least when $S\subseteq\mc B(L^2(B))$.  Unfortunately, as is well-known, for quantum groups, forming
the analogue of the ``diagonal action'' $\theta\otimes\theta$ is essentially impossible.  We hence focus on
quantum automorphisms of $S$, but guided by the commutative situation, we shall only consider the situation
when $B$ is acting on $L^2(B)$.

\begin{remark}
A possible definition of $(A,\Delta)$ acting on $S$ was given in \cite[Definition~5.11]{eifler}, but
unfortunately this does not work in general.  However, it will motivate our definition below, so we sketch
the idea, and the problem.  Let $(A,\Delta)$ coact on $B$ is a $\psi$-preserving way.  As in 
Section~\ref{sec:unitary_impl} form the unitary implementation $U$, which is a unitary corepresentation of
$(A,\Delta)$.  We may then define 
\[ \alpha_U: \mc B(L^2(B)) \rightarrow \mc B(L^2(B))\otimes A;
\quad x\mapsto U(x\otimes 1)U^*. \]
As $(\id\otimes\Delta)(U) = U_{12} U_{13}$, we see that
\[ (\id\otimes\Delta)(\alpha_U(x)) = U_{12} U_{13} (x\otimes 1\otimes 1) U_{13}^* U_{12}^*
= (\alpha_U\otimes\id)\alpha_U(x), \]
so $\alpha_U$ satisfies the coaction equation.  As in fact $U\in M_n(\mc O_A)$ (see the discussion after
Definition~\ref{defn:coaction}) it makes sense to apply the counit $\epsilon$ of $\mc O_A$, and we see that
\[ (\id\otimes\epsilon)\alpha_U(x) = (\id\otimes\epsilon)(U) x (\id\otimes\epsilon)(U^*)
= x \qquad (x\in\mc B(L^2(B))). \]
From this, the Podle\'s density condition follows, compare \cite[Remark~2.3]{soltan2}.  So $\alpha_U$ is
a coaction.

It is suggested in \cite[Definition~5.11]{eifler} that $(A,\Delta)$ acts on $S$ via $\alpha$ when
$\alpha_U(S) \subseteq S\otimes A$.  In fact, \cite{eifler} works with the more general notion of
\emph{quantum metrics}, \cite{kw}, and so perhaps also in the graph case we require $\alpha_U(B')
\subseteq B'\otimes A$ (this condition corresponding to the quantum metric at $0$).
It follows from \cite[Proposition~5.14]{eifler} that when $B=C(V)$
and $G$ is a classical graph, then $\alpha$ acts on $S$ if and only if $\alpha$ acts on $A_G$, in the
sense of Definition~\ref{defn:coaction_on_quantum_graph}.

However, suppose now $B$ is non-commutative, and consider the empty quantum graph, compare
Proposition~\ref{prop:equiv_empty_qg}.  Suppose also that $\psi$ is a trace.  Then $A_G = (mm^*)^{-1}$
and $S = B'$.  As we saw in Remark~\ref{rem:coact_empty_compl}, any coaction on $(B,\psi)$ should automatically
be a coaction on $S$.  That is, it should be automatic that $\alpha_U(B') \subseteq B'\otimes A$.
However, there doesn't seem to be any reason why this should be so.  (This property is claimed in
\cite[Remark~5.13]{eifler}, but we do not follow the claimed proof.  Indeed, there it seems to be claimed that
$J e_k J = e_k^*$ where $(e_k)$ is a basis of $B$ with $(\Lambda(e_k))$ an orthonormal basis of $L^2(B)$,
and $J$ the modular conjugation.  This relation, $Je_kJ=e_k^*$ would be true if $e_k$ were central, that is,
$B$ were commutative, but it does not hold in general.)

We shall tweak this proposed definition to more directly take account of $B'$.
\end{remark}

Recall the modular conjugation $J$ from Section~\ref{sec:equiv}, which in particular satisfies that
$JBJ=B'$.  Define $\kappa:\mc B(L^2(B))\rightarrow \mc B(L^2(B))$ by $\kappa(x) = Jx^*J$, so $\kappa$ is
an anti-$*$-homomorphism, $\kappa^2=\id$, and $\kappa(B) = B'$.  We recall that associated to any
compact quantum group $(A,\Delta)$ is both the \emph{reduced} form, acting on the GNS space for the Haar
state of $A$, and the \emph{universal} form, compare \cite{kus1}, which is the enveloping $C^*$-algebra of
$\mc O_A$.  In particular $\qaut(B,\psi)$ must be universal in this sense.  We shall now suppose that
$(A,\Delta)$ is universal, or more precisely, that $A$ admits a \emph{unitary antipode} $R$,
see \cite[Proposition~7.2]{kus1}.  Then $R$ is an anti-$*$-homomorphism, $R^2=\id$ and $\sigma(R\otimes R)
\Delta = \Delta R$, with $\sigma$ the tensor swap map.

\begin{definition}
Given a unitary corepresentation $U$ of $(A,\Delta)$ on $L^2(B)$, define
\[ \alpha_U:\mc B(L^2(B)) \rightarrow \mc B(L^2(B))\otimes A;
\quad x\mapsto U(x\otimes 1)U^*, \]
and define
\[ \alpha^\op_U:\mc B(L^2(B)) \rightarrow \mc B(L^2(B))\otimes A;
\quad x\mapsto (\kappa\otimes R)\alpha_U(\kappa(x)). \]
\end{definition}

\begin{lemma}
With the definition above, $\alpha^\op_U$ is a coaction of $(A, \sigma\Delta)$ on $\mc B(L^2(B))$
and $\alpha^\op_U(B') \subseteq B'\otimes A$.
\end{lemma}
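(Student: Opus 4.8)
The plan is to verify, in order, that $\alpha^\op_U$ is a unital $*$-homomorphism, that it satisfies the coaction equation for $(A,\sigma\Delta)$, that it is counital (whence the Podle\'s density condition follows), and finally the containment $\alpha^\op_U(B')\subseteq B'\otimes A$. I would dispatch the containment first, since it is the quickest: as recorded in Section~\ref{sec:unitary_impl}, $\alpha_U(b)=U(b\otimes1)U^*=\alpha(b)\in B\otimes A$ for $b\in B$, so $\alpha_U(B)\subseteq B\otimes A$. Since $\kappa^2=\id$ and $\kappa(B)=B'$, we have $\kappa(B')=B$, so for $x\in B'$ the element $\alpha_U(\kappa(x))$ lies in $B\otimes A$. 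Applying $\kappa\otimes R$ and using $\kappa(B)=B'$ together with $R(A)\subseteq A$ gives $\alpha^\op_U(x)\in B'\otimes A$, as required.

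The algebraic structure of $\alpha^\op_U$ is a bookkeeping exercise: $\kappa$ and $R$ are anti-$*$-homomorphisms, $\alpha_U$ is a $*$-homomorphism, and $\kappa\otimes R$ is anti-multiplicative (a tensor product of two anti-homomorphisms). Composing the two ``anti'' maps $\alpha_U\circ\kappa$ and $\kappa\otimes R$ then yields a genuine $*$-homomorphism, and it is unital because each factor fixes the identity and $\alpha_U(1)=1\otimes 1$.

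The heart of the matter is the coaction equation $(\alpha^\op_U\otimes\id)\alpha^\op_U=(\id\otimes\sigma\Delta)\alpha^\op_U$. Here I would feed in the coaction equation for $\alpha_U$, namely $(\alpha_U\otimes\id)\alpha_U=(\id\otimes\Delta)\alpha_U$ (noted in the discussion of \cite[Definition~5.11]{eifler}), together with the antipode relation $\sigma(R\otimes R)\Delta=\Delta R$, which rearranges to $\sigma\Delta R=(R\otimes R)\Delta$. Writing $y=\kappa(x)$ and expanding in Sweedler notation, the left-hand side unfolds, via $\kappa^2=\id$ and coassociativity of $\alpha_U$, to $\sum\kappa(y_{(0)})\otimes R((y_{(1)})_{(1)})\otimes R((y_{(1)})_{(2)})$, while the right-hand side is $\sum\kappa(y_{(0)})\otimes\sigma\Delta(R(y_{(1)}))$; the identity $\sigma\Delta R=(R\otimes R)\Delta$ shows these coincide. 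For the counit condition I would apply $\id\otimes\epsilon$ and use $\epsilon\circ R=\epsilon$ (valid for the unitary antipode), reducing to the counitality $(\id\otimes\epsilon)\alpha_U=\id$, so that $(\id\otimes\epsilon)\alpha^\op_U=\id$; the Podle\'s density condition then follows exactly as in the remark preceding the statement.

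The main obstacle is the coaction equation: one must track the orientation of the legs carefully under both the anti-homomorphism $R$ and the swap $\sigma$, and it is precisely $\sigma\Delta R=(R\otimes R)\Delta$ that converts the ordinary coassociativity of $\alpha_U$ (with respect to $\Delta$) into the coaction equation for the co-opposite comultiplication $\sigma\Delta$. Two preliminary points should be flagged at the outset: that $(A,\sigma\Delta)$ is again a compact quantum group (the co-opposite), and that $U\in M_n(\mc O_A)$, so that applying $\epsilon$ to the legs of $\alpha_U(\kappa(x))$ is legitimate.
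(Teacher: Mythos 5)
Your proposal is correct and follows essentially the same route as the paper: the $*$-homomorphism and bimodule-containment arguments are identical, and your Sweedler-notation verification of the coaction equation is exactly the paper's computation, resting on the same key fact that $R$ intertwines $\Delta$ and $\sigma\Delta$ (i.e.\ $\sigma\Delta R=(R\otimes R)\Delta$). The only divergence is in the Podle\'s density condition, which the paper dismisses as routine from $R^2=\id$ while you deduce it from counitality of $\alpha_U^\op$ via $\epsilon\circ R=\epsilon$; both are valid, and your version is the slightly more explicit one.
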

\begin{proof}
It is standard that $(A, \sigma\Delta)$ is also a compact quantum group.  As $\kappa$ and $\kappa\otimes R$
are both anti-$*$-homomorphisms, it follows that $\alpha_U$ is a $*$-homomorphism.  Notice that $R$ intertwines
the coproducts $\Delta$ and $\sigma\Delta$.  Thus
\begin{align*}
(\alpha_U^\op\otimes\id)\alpha_U^\op(x)
&= (\alpha_U^\op\kappa\otimes R)\alpha_U(\kappa(x))
= (\kappa\otimes R\otimes R)(\alpha_U\otimes\id)\alpha_U(\kappa(x)) \\
&= (\kappa\otimes R\otimes R)(\id\otimes\Delta)\alpha_U(\kappa(x))
= (\kappa\otimes\sigma\Delta R)\alpha_U(\kappa(x))
= (\id\otimes\sigma\Delta)\alpha_U^\op(x),
\end{align*}
and so $\alpha_U^\op$ satisfies the coaction equation.  As $R^2=\id$, verifying the Podle\'s density
condition is routine.

For $x\in B'$, we know that $b=\kappa(x)\in B$, and so $\alpha_U^\op(x) = (\kappa\otimes R)\alpha_U(b)
\in \kappa(B)\otimes R(A) = B'\otimes A$, as claimed.
\end{proof}

It is easy to see that $\alpha_U^\op(S) \subseteq S\otimes A$ if and only if $\alpha_U(\kappa(S))\subseteq
\kappa(S)\otimes A$.  In the tracial situation, it will turn out that $\alpha$ acts on $A_G$, that is,
$U$ commutes with $A_G$, if and only if $\alpha_U^\op(S) \subseteq S\otimes A$, equivalently, 
$\alpha_U(\kappa(S))\subseteq \kappa(S)\otimes A$, see Corollary~\ref{corr:tracial_bimod_action}.
When $\psi$ is not a trace, things are not quite so precise, Theorem~\ref{thm:bimod_quan_action}.

We start towards this proof by looking more closely at $\alpha_U$.  We would like to find the implementing
unitary for this coaction, which first requires us to find a natural invariant state.
Set $H = L^2(B)$, and fix some basis $(e_i)$ for $B$ such that $(\Lambda(e_i))$ is an orthonormal basis of
$H$.  To ease notation, we shall identify $\mathbb M_n$ with $\mathbb M_n\otimes 1\subseteq M_n(A)$, thus
allowing us, for example, to write $XU$ for a scalar matrix $X$.

\begin{lemma}\label{lem:inv_state_bh}
Recall the matrix $X_{ij} = (\Lambda(e_i)|\Lambda(e_j^*))$ from Lemma~\ref{lem:X_matrix}, and set $P=X^*X$.
Then $P$ is positive and invertible, and $x_0 = (P^t)^{1/2}$ agrees with $\nabla^{1/2}$ on $H=L^2(B)$.
Define $\tilde\psi(x) = \Tr(P^tx)$ for $x\in\mc B(H)$. Then the coaction
$\alpha_U$ preserves $\tilde\psi$.
\end{lemma}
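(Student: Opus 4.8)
The statement has three parts: that $P = X^*X$ is positive and invertible; that $x_0 = (P^t)^{1/2}$ equals $\nabla^{1/2}$ on $H = L^2(B)$; and that $\alpha_U$ preserves $\tilde\psi(x) = \Tr(P^t x)$. The first part is immediate, since $X$ is invertible by Lemma~\ref{lem:X_matrix} (indeed $X^{-1} = \overline X$), so $P = X^*X$ is positive and invertible. I would dispose of this in a sentence.

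\textbf{Identifying $P^t$ with $\nabla$.} The heart of the first claim is computing $P = X^*X$ explicitly in terms of the modular data. Recall $X_{ij} = (\Lambda(e_i)|\Lambda(e_j^*)) = \psi(e_i^* e_j^*)$. The key identity, already derived in the proof of Lemma~\ref{lem:X_matrix}, is that $(\Lambda(a^*)|\Lambda(b)) = (\Lambda(Qb^*Q^{-1})|\Lambda(a))$, equivalently $(\Lambda(e_j)|\Lambda(e_i^*)) = \overline{X_{ij}}$ can be rewritten using $\sigma_{-i}$. I would compute $(X^*X)_{ij} = \sum_k \overline{X_{ki}} X_{kj} = \sum_k \overline{(\Lambda(e_k)|\Lambda(e_i^*))}\,(\Lambda(e_k)|\Lambda(e_j^*))$, then insert a resolution of the identity $\sum_k \Lambda(e_k)(\Lambda(e_k)| = 1$ to collapse this to an inner product $(\Lambda(e_i^*)|\Lambda(e_j^*))$ up to a modular twist. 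Using $\nabla\Lambda(a) = \Lambda(\sigma_{-i}(a)) = \Lambda(QaQ^{-1})$ and the anti-unitary $s:\Lambda(a)\mapsto \Lambda(a^*)$ with $s = \nabla^{-1/2}J$, I expect to land on $P_{ij} = (\Lambda(e_j)|\nabla\Lambda(e_i))$ or the conjugate/transpose thereof, so that $P^t$ is precisely the matrix of $\nabla$ in the basis $(\Lambda(e_i))$. Taking square roots then gives $x_0 = (P^t)^{1/2} = \nabla^{1/2}$, where the positive square root is unambiguous as both are positive operators on $H$.

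\textbf{Invariance of $\tilde\psi$.} For the final claim I would compute $(\tilde\psi\otimes\id)\alpha_U(x)$ and show it equals $\tilde\psi(x)1$. Since $\alpha_U(x) = U(x\otimes 1)U^*$ with $U$ the corepresentation equal to $v \in M_n(A)$ acting on $H\otimes K$, the invariance condition $\tilde\psi(U(x\otimes1)U^*) = \tilde\psi(x)1$ is equivalent, after writing $\tilde\psi = \Tr(P^t\,\cdot\,)$, to a commutation relation between $v$ and the scalar matrix $P^t$. Concretely, $(\tilde\psi\otimes\id)\alpha_U(x) = (\Tr\otimes\id)((P^t\otimes 1)v(x\otimes1)v^*)$, and invariance for all $x$ amounts to $v^*(P^t\otimes 1)v = P^t\otimes 1$, i.e. $v$ commutes with $P^t\otimes 1$ up to the trace. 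I expect this to follow from the identity $v = (X\otimes1)\overline v(\overline X\otimes 1)$ of Lemma~\ref{lem:X_matrix} together with unitarity $v^*v = vv^* = 1$: substituting this relation and using $P = X^*X$ should let me transport the conjugation by $v$ through $P^t$ and cancel using the trace property $\Tr(ab) = \Tr(ba)$.

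\textbf{Main obstacle.} The genuinely delicate step is the bookkeeping in identifying $P^t$ with $\nabla$: the maps $\Psi$, the transpose, conjugation, and the appearance of $Q$ versus $Q^{-1}$ all interact, and it is easy to be off by a transpose or an inversion. I would pin this down by checking the action on a single matrix unit (reducing to $B = \mathbb M_n$, where $Q$ is the explicit density) before asserting the general basis-free identity, and then verify the trace-invariance relation $\Tr(P^t v(x\otimes1)v^*\,\cdot\,)$ is consistent with the $\psi$-preservation of $\alpha$ already encoded in $v$ being the implementing corepresentation of a $\psi$-preserving coaction.
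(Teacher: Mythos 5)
The first two parts of your plan are sound and are essentially the paper's argument: positivity and invertibility of $P$ are immediate from $X^{-1}=\overline X$, and the identification of $x_0$ with $\nabla^{1/2}$ is a direct matrix-entry computation using $\psi(\cdot)=\Tr(Q\,\cdot)$. (The paper computes $(XX^*)_{ij}=(\Lambda(e_i)|\nabla^{-1}\Lambda(e_j))$, i.e.\ works with $(P^t)^{-1}=XX^*$ rather than with $P$ itself, but this is the same computation up to inversion.)

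The invariance step, however, contains a genuine gap. You reduce $\tilde\psi$-invariance to the condition $v^*(P^t\otimes 1)v=P^t\otimes 1$ ``using the trace property $\Tr(ab)=\Tr(ba)$''. But $(\Tr\otimes\id)$ on $M_n(A)$ is \emph{not} cyclic when $A$ is noncommutative: $(\Tr\otimes\id)(YZ)=\sum_{i,k}Y_{ik}Z_{ki}$ while $(\Tr\otimes\id)(ZY)=\sum_{i,k}Z_{ki}Y_{ik}$, and the entries $Y_{ik},Z_{ki}\in A$ need not commute. You may cycle the scalar matrix $P^t\otimes 1$ past everything, but you cannot move $v^*$ from the right end to the left end. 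Writing the computation out in components, as the paper does, shows that invariance is equivalent to $v^t P\,\overline v=P$ --- note the transpose and the entrywise conjugate, not the adjoint. The distinction matters: your condition asserts that $v$ commutes with $P^t\otimes 1=\nabla\otimes 1$, which by the identity $(\id\otimes\tau_{i/2})(v)=(x_0^{-1}\otimes 1)v(x_0\otimes 1)$ (used in the proof of Lemma~\ref{lem:what_is_tildeU}) would force the scaling group to fix the entries of $v$; for non-tracial $\psi$ this is false in general, so the condition you propose to verify cannot be established. The correct identity $v^tP\overline v=P$ does follow from exactly the ingredients you name --- from $v=(X\otimes 1)\overline v(\overline X\otimes 1)$ one gets $v^t=X^*v^*X^t$ and $\overline v=\overline X vX$, and then $v^tP\overline v=X^*v^*(X^tX^*)(X\overline X)vX=X^*v^*vX=P$ using $X^{-1}=\overline X$ and unitarity --- so your toolbox is right, but the reduction step as stated would fail, and the argument must be carried out at the level of matrix entries, keeping careful track of transposes versus adjoints.
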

\begin{proof}
From Lemma~\ref{lem:X_matrix}, we know that $X^{-1}=\overline X$, and so $P$ is positive and invertible, with
$P^{-1} = \overline X X^t$.  By definition, $x_0^{-2} = (P^t)^{-1} = X X^*$, and we calculate that
\begin{align*}
(XX^*)_{ij}
&= \sum_k X_{ik} \overline{ X_{jk} }
= \sum_k \psi(e_i^* e_k^*) \psi(e_k e_j)
= \sum_k \Tr(Qe_i^* e_k^*) \Tr(Qe_k e_j) \\
&= \sum_k \Tr(Q e_k^* Qe_i^*Q^{-1}) \Tr(QQ^{-1} e_j Q e_k)
= \sum_k (\Lambda(e_k)|\Lambda(Qe_i^*Q^{-1})) (\Lambda(Qe_j^*Q^{-1})|\Lambda(e_k)) \\
&= (\Lambda(Qe_j^*Q^{-1})|\Lambda(Qe_i^*Q^{-1})
= \Tr(QQ^{-1}e_j Q Q e_i^*Q^{-1})
= \Tr(Q e_i^* Q^{-1} e_j Q) \\
&= (\Lambda(e_i)|\Lambda(Q^{-1}e_jQ))
= (\Lambda(e_i)|\nabla^{-1}\Lambda(e_j)),
\end{align*}
and so $x_0^{-2}=XX^*$ represents the operator $\nabla^{-1}$, so $x_0$ agrees with $\nabla^{1/2}$.

As $U = X \overline U \overline X$ so that $U^t = X^* U^* X^t$, we see that
\[ U^t P \overline U = X^* U^* X^t X^* X \overline X U X = X^* U^* (\overline X X)^t (X \overline X) U X
= X^* U^* U X = X^*X = P. \]
It follows that for $x\in\mc B(H)$,
\begin{align*}
(\tilde\psi\otimes\id)\alpha_U(x)
&= \sum_{i,j,k,l} \tilde\psi( e_{ij} x e_{kl} ) U_{ij} (U^*)_{kl}
= \sum_{i,j,k,l} (\Lambda(e_j)|x\Lambda(e_k)) \Tr(P^t e_{il}) U_{ij} U^*_{lk} \\
&= \sum_{i,j,k,l} x_{jk} P_{il} (U^t)_{ji} (\overline U)_{lk}
= \sum_{j,k} x_{jk} (U^t P \overline U)_{jk}
= \sum_{j,k} x_{jk} P_{jk}
= \Tr(P^t x) = \tilde\psi(x),
\end{align*}
and so $\alpha_U$ preserves $\tilde\psi$.
\end{proof}

Form now $L^2(\mc B(H), \tilde\psi)$ with GNS map $\tilde\Lambda$, so
\[ (\tilde\Lambda(\theta_{\xi,\eta})|\tilde\Lambda(\theta_{\xi',\eta'}))
= \tilde\psi(\theta_{\eta,\xi} \theta_{\xi',\eta'} )
= (\eta|\eta') \Tr(P^t \theta_{\xi',\xi})
= (\eta|\eta') (x_0(\xi')|x_0(\xi))
= (\eta|\eta') (\overline{x_0(\xi)}|\overline{x_0(\xi')})_{\overline H}. \]
Thus the map
\[ w: L^2(\mc B(H), \tilde\psi) \rightarrow H\otimes\overline H; \quad \tilde\Lambda(\theta_{\xi,\eta})
\mapsto \eta \otimes \overline{x_0\xi} \]
extends by linearity to a unitary.  For $x\in\mc B(H)$ we have that
$x \Lambda(\theta_{\xi,\eta}) = \Lambda(\theta_{\xi,x\eta})$ and so $w x w^* = x\otimes 1$.  Hence
$L^2(\mc B(H),\tilde\psi)$ is unitarily equivalent to $H\otimes\overline H$ with the usual action of $\mc B(H)$.

Recall the map $\mc B(H)\rightarrow\mc B(\overline H); x\mapsto x^\top$ which is an anti-$*$-homomorphism.
Similarly, so is the unitary antipode $R:A\rightarrow A$.  Following common usage in quantum group theory,
for $Y\in \mc B(H)\otimes A$ we write $Y^{\top\otimes R}$ for the image of $U$ under the tensor product of
$x\mapsto x^\top$ and $R$.  One can check that $U^{\top\otimes R}$ is also a unitary corepresentation,
termed the \emph{contragradient corepresentation}, compare \cite[Section~3.3.3]{sw}.  With respect to the
orthonormal basis $(\overline{\Lambda(e_i)})$ of $\overline H$, we have that $x\mapsto x^\top$ is the
transpose map on matrices, and so $U^{\top\otimes R}$ has matrix $(R(U_{ji}))_{i,j}$.

\begin{lemma}\label{lem:what_is_tildeU}
Let $\tilde U$ be the unitary implementation of $\alpha_U$ with respect to $\tilde\psi$.
Then $(w\otimes 1)\tilde U(w^*\otimes 1) = U_{13} U_{23}^{\top\otimes R}$.
\end{lemma}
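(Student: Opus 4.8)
The plan is to compute $\tilde U$ from its defining property and then conjugate by $w$, showing that the result acts on simple tensors exactly as $U_{13}U_{23}^{\top\otimes R}$. Recall from Section~\ref{sec:unitary_impl} (applied to the algebra $\mc B(H)$ with invariant state $\tilde\psi$ and coaction $\alpha_U$) that $\tilde U$ is determined by $\tilde U(\tilde\Lambda(y)\otimes\xi) = \alpha_U(y)(\tilde\Lambda(1)\otimes\xi)$ for $y\in\mc B(H)$, $\xi\in K$, where $A$ acts on $K$. First I would record the two halves of $w$: by definition $w(\tilde\Lambda(\theta_{\xi,\eta})) = \eta\otimes\overline{x_0\xi}$, and hence $w^*(\eta\otimes\overline\zeta) = \tilde\Lambda(\theta_{x_0^{-1}\zeta,\eta})$. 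Writing $U=\sum_l s_l\otimes a_l$ with $s_l\in\mc B(H)$, $a_l\in A$, so that $\alpha_U(\theta_{\mu,\eta}) = U(\theta_{\mu,\eta}\otimes1)U^* = \sum_{k,l}\theta_{s_l\mu,s_k\eta}\otimes a_ka_l^*$, I would apply $(w\otimes1)\tilde U(w^*\otimes1)$ to a simple tensor $\eta\otimes\overline\zeta\otimes\xi$.

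A direct computation then gives
\[ (w\otimes1)\tilde U(w^*\otimes1)(\eta\otimes\overline\zeta\otimes\xi) = \sum_{k,l} s_k\eta\otimes\overline{x_0 s_l x_0^{-1}\zeta}\otimes a_ka_l^*\xi, \]
while expanding the right-hand side yields $U_{13}U_{23}^{\top\otimes R}(\eta\otimes\overline\zeta\otimes\xi) = \sum_{k,l} s_k\eta\otimes\overline{s_l^*\zeta}\otimes a_kR(a_l)\xi$, using the convention $s_l^\top\overline\zeta=\overline{s_l^*\zeta}$. Comparing the two, and using that $\overline{x_0 s_l x_0^{-1}} = \overline{x_0}\,\overline{s_l}\,\overline{x_0}^{-1}$ together with the identifications $\sum_l\overline{s_l}\otimes a_l^* = \overline U$ and $\sum_l s_l^\top\otimes R(a_l) = U^{\top\otimes R}$, the entire lemma reduces to the single operator identity
\[ U^{\top\otimes R} = (\overline{x_0}\otimes1)\,\overline U\,(\overline{x_0}^{-1}\otimes1) \]
in $\mc B(\overline H)\otimes A$, relating the contragredient corepresentation to the conjugate corepresentation $\overline U$ through the modular data $\overline{x_0} = \overline{\nabla^{1/2}}$.

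To establish this identity I would use Lemma~\ref{lem:X_matrix}, which gives $\overline U = (\overline X\otimes1)U(X\otimes1)$, together with the way the unitary antipode $R$ acts on the matrix coefficients of $U$. The clean input is $(\id\otimes S)(U) = U^*$ for the antipode $S$; passing to $R$ costs the scaling group $\tau_{i/2}$, whose action on the coefficients of $U$ is governed by the $F$-matrix of this corepresentation. I would identify that $F$-matrix with the modular operator (this is exactly what the $*$-homomorphism relation of Lemma~\ref{lem:X_matrix} and the identification $x_0 = \nabla^{1/2}$ from Lemma~\ref{lem:inv_state_bh} encode), and then appeal to the standard formula expressing the contragredient of a unitary corepresentation in terms of its conjugate and its $F$-matrix (compare \cite{nt, sw}).

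The main obstacle will be this last step: correctly handling the unitary antipode $R$ rather than the antipode $S$. Over the noncommutative algebra $A$ the matrix transpose is not anti-multiplicative, so one cannot shortcut the contragredient via $U^*$ alone, and the genuine (non-Kac) twist by $\nabla$ must be tracked; this is precisely why we work with the universal form, where $R$ is available. The remaining difficulties are purely bookkeeping with the conjugate Hilbert space $\overline H$ and the conventions $x^\top(\overline\xi)=\overline{x^*\xi}$, $\overline x(\overline\xi)=\overline{x\xi}$, which I would apply mechanically.
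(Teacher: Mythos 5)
Your proposal is correct and follows essentially the same route as the paper's proof: both evaluate $(w\otimes 1)\tilde U(w^*\otimes 1)$ on elementary tensors using the defining property of $\tilde U$ and the explicit form of $w$, $w^*$, and both reduce the lemma to the same key identity --- your operator formulation $U^{\top\otimes R} = (\overline{x_0}\otimes 1)\,\overline U\,(\overline{x_0}^{-1}\otimes 1)$ is precisely the paper's matrix-entry statement $R(U_{tj}) = (x_0^{-1}U^*x_0)_{tj}$ --- which is then proved exactly as you sketch, by identifying the $F$-matrix $\rho_U$ with the modular data via Lemma~\ref{lem:X_matrix} and Lemma~\ref{lem:inv_state_bh} and using $R = S\tau_{i/2}$ together with $(\id\otimes S)(U) = U^*$. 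The only differences are presentational (a generic decomposition $U=\sum_l s_l\otimes a_l$ acting on simple tensors versus matrix coefficients in the fixed orthonormal basis), so there is nothing further to add.
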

\begin{proof}
As in Section~\ref{sec:unitary_impl}, let $A\subseteq\mc B(K)$, and then for $1\leq s,t\leq n$ and $\xi\in K$,
\begin{align*}
(w\otimes 1)\tilde U(w^*\otimes 1) & (\Lambda(e_s)\otimes\overline{\Lambda(e_t)}\otimes\xi) 
= (w\otimes 1)\tilde U(\tilde\Lambda(\theta_{x_0^{-1}\Lambda(e_t),\Lambda(e_s)})\otimes\xi) \\
&= (w\otimes 1)\alpha_U(\theta_{x_0^{-1}\Lambda(e_t),\Lambda(e_s)})(\tilde\Lambda(1)\otimes\xi) \\
&= (w\otimes 1)(U(\theta_{x_0^{-1}\Lambda(e_t),\Lambda(e_s)}\otimes 1)U^*)(\tilde\Lambda(1)\otimes\xi) \\
&= \sum_{i,j,k,l} w\tilde\Lambda(e_{ij} \theta_{x_0^{-1}\Lambda(e_t),\Lambda(e_s)} e_{kl}) \otimes
   U_{ij} U^*_{lk} \xi \\
&= \sum_{i,k,l} (x_0^{-1}\Lambda(e_t)|\Lambda(e_k)) \Lambda(e_i) \otimes \overline{x_0\Lambda(e_l)}
  \otimes U_{is} U^*_{lk} \xi \\
&= \sum_{i,j,k,l} \Lambda(e_i) \otimes \overline{\Lambda(e_j)}
  \otimes (x_0^{-1})_{tk} (x_0)_{lj} U_{is} U^*_{lk} \xi \\
&= \sum_{i,j} \Lambda(e_i) \otimes \overline{\Lambda(e_j)}
  \otimes U_{is} (x_0^{-1} U^* x_0)_{tj} \xi.
\end{align*}
Compare this to
\begin{align*}
U_{13} U_{23}^{\top\otimes R} (\Lambda(e_s)\otimes\overline{\Lambda(e_t)}\otimes\xi)
&= \sum_{i,j,k,l} e_{ik}\Lambda(e_s) \otimes e_{jl} \overline{\Lambda(e_t)} \otimes U_{ik} R(U_{lj})\xi \\
&= \sum_{i,j} \Lambda(e_i) \otimes \overline{\Lambda(e_j)} \otimes U_{is} R(U_{tj})\xi.
\end{align*}
We hence need to show that $(x_0^{-1} U^* x_0)_{tj} = R(U_{tj})$ for each $t,j$.

To show this, we use some compact quantum group theory.  Let $S$ be the antipode on $\mc O_A$, which has
decomposition $S = R\tau_{-i/2}$ where $(\tau_t)$ is the scaling group, and $\tau_{-i/2}$ is the analytic
extension.  Recall the definition of the
operator $\rho_U$ from \cite[Section~1.4]{nt}, and from before \cite[Definition~1.4.5]{nt} recall that
$\rho_U$ is the unique (up to a positive scalar) operator with $((\rho_U^t)^{1/2} \otimes 1) \overline{U}
((\rho_U^t)^{-1/2}\otimes 1)$ is unitary.  From Lemma~\ref{lem:X_matrix}, we know that $U = X\overline{U}X^{-1}$,
and so $\rho_U^{1/2}$ is $|X|^t$, up to a scalar.  As $P=X^*X$, we see that $|X|^t = (P^t)^{1/2} = x_0$.
From after \cite[Remark~1.7.7]{nt} we have that
$(\id\otimes\tau_{i/2})(U) = (\rho_U^{-1/2}\otimes 1)U(\rho_U^{1/2}\otimes 1)$ which thus equals
$x_0^{-1} U x_0$.  Hence
\[ R(U_{tj}) = S \tau_{i/2}(U_{tj}) = S( (x_0^{-1} U x_0)_{tj} ) = (x_0^{-1} U^* x_0)_{tj}, \]
as required.
\end{proof}

This previous result should be compared with \cite[Corollary~2.6.3]{vaesthesis} (which itself follows quickly
from \cite[Proposition~4.2]{v1}).  We do not invoke this directly, as we will need to use the exact form
of the unitary $w$, and it seems about as much work to check that $w$ really is the intertwining unitary
as it does to give the above lemma (which is also of independent interest).

Recall the functional $\psi^\op$ on $B^\op$ studied in Lemma~\ref{lem:Bop_GNS}, where we identify
$L^2(B^\op)$ with $\overline{L^2(B)}$.  Thus $L^2(B\otimes B^\op)$ is identified with $H\otimes\overline H$,
with GNS map $\Lambda\otimes\Lambda^\op$.
As usual, we use Theorem~\ref{thm:qu_ad_mats_axioms_12} to link $A_G$ to a projection
$e_G = \Psi'_{1/2,0}(A_G)$, so explicitly, we assume $A_G$ is self-adjoint satisfying axioms
(\ref{defn:quan_adj_mat:idem}) and (\ref{defn:quan_adj_mat:undir}) of Definition~\ref{defn:adj_op}.

\begin{lemma}
Let $A_G$ be a quantum adjacency matrix associated to
%a subspace $V\subseteq H\otimes\overline H$, $B'$-operator bimodule $S$, and
projection $e_G\in B\otimes B^\op$.
Then:
\begin{enumerate}
\item $w \tilde\Lambda(A_G) = (\Lambda\otimes\Lambda^\op)((\sigma_{-i}\otimes\id)(e_G))$;
\item $w\tilde\Lambda(aTb) = (a\otimes \sigma_{i/2}(b)^\top)w\tilde\Lambda(T)$ for $T\in\mc B(H), a,b\in B$;
\end{enumerate}
Let $S_0$ be the $B'$-operator bimodule generated by $A_G$, and set $S_0'=\kappa(S_0)$.
Then:
\begin{enumerate}[resume]
\item $S_0'$ is the $B$-operator bimodule generated by $A_G$;
\item the orthogonal projection $f$ of $L^2(\mc B(H),\tilde\psi)$ onto $\tilde\Lambda(S_0')$
  satisfies $w f w^* = (J\otimes\overline J)e_G(J\otimes\overline J)$;
\item $fw^*(\Lambda(1)\otimes\overline{\Lambda(1)}) = \tilde\Lambda(A_G)$.
\end{enumerate}
\end{lemma}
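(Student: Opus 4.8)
The plan is to push everything through the Hilbert--Schmidt picture of $\mc B(H)$ by means of a single \emph{master relation},
\[ w\tilde\Lambda(T) = \Lambda_{\Tr}(T\nabla^{1/2}) \qquad (T\in\mc B(H)), \]
which I would verify on rank-one operators: since $x_0=\nabla^{1/2}$ we have $w\tilde\Lambda(\theta_{\xi,\eta})=\eta\otimes\overline{\nabla^{1/2}\xi}$, while $\theta_{\xi,\eta}\nabla^{1/2}=\theta_{\nabla^{1/2}\xi,\eta}$ and $\Lambda_{\Tr}(\theta_{\zeta,\eta})=\eta\otimes\overline\zeta$. Alongside this I would record a few elementary facts, each checked on rank-ones or by functional calculus: the multiplication rules $\Lambda_{\Tr}(aXc)=(a\otimes c^\top)\Lambda_{\Tr}(X)$ and $\Lambda_{\Tr}(\nabla X)=(\nabla\otimes 1)\Lambda_{\Tr}(X)$; the commutation relations $\nabla^{-1/2}b\nabla^{1/2}=\sigma_{i/2}(b)$ (so $\nabla^{1/2}$ normalises $B$) and $\nabla\Lambda(c)=\Lambda(\sigma_{-i}(c))$; the relation $(J\otimes\overline J)\Lambda_{\Tr}(T)=\Lambda_{\Tr}(JTJ)$ together with $J\nabla^{-1/2}J=\nabla^{1/2}$ and $JB'J=B$; and, from Proposition~\ref{prop:A_comms_J_nabla}, that $A_G$ commutes with $\nabla$ and $JA_GJ=A_G$.

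Parts (1)--(3) then assemble directly. For (1), $w\tilde\Lambda(A_G)=\Lambda_{\Tr}(A_G\nabla^{1/2})=\Lambda_{\Tr}(\nabla(\nabla^{-1/2}A_G))=(\nabla\otimes 1)\Lambda_{\Tr}(\nabla^{-1/2}A_G)$, and Proposition~\ref{prop:A_generates_S} identifies $\Lambda_{\Tr}(\nabla^{-1/2}A_G)$ with $(\Lambda\otimes\Lambda^\op)(e_G)$; as $\nabla$ acts as $\sigma_{-i}$ on the first leg this is $(\Lambda\otimes\Lambda^\op)((\sigma_{-i}\otimes\id)(e_G))$. For (2), $w\tilde\Lambda(aTb)=\Lambda_{\Tr}(aTb\nabla^{1/2})=\Lambda_{\Tr}(aT\nabla^{1/2}\sigma_{i/2}(b))=(a\otimes\sigma_{i/2}(b)^\top)w\tilde\Lambda(T)$, using $b\nabla^{1/2}=\nabla^{1/2}\sigma_{i/2}(b)$. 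For (3), writing $S_0=\lin\{xA_Gy:x,y\in B'\}$, the identities $A_G^*=A_G$ and $JA_GJ=A_G$ give $\kappa(xA_Gy)=(Jy^*J)A_G(Jx^*J)$, and since $\kappa$ sends $B'$ bijectively onto $B=JB'J$, the span is exactly the $B$-bimodule $\lin\{\beta A_G\alpha:\alpha,\beta\in B\}$ generated by $A_G$.

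The heart is (4), from which (5) follows. I would first observe that $J\otimes\overline J$ is an \emph{anti}-unitary involution (a tensor product of two conjugate-linear isometries is conjugate-linear), so conjugating the orthogonal projection $e_G$ by it again yields an orthogonal projection, onto $(J\otimes\overline J)V$ with $V=\im(e_G)$. Hence it suffices to prove $w\tilde\Lambda(S_0')=(J\otimes\overline J)V$. By the master relation and part (3) the left-hand side is $\Lambda_{\Tr}(BA_GB\,\nabla^{1/2})=\Lambda_{\Tr}(B\nabla^{1/2}A_GB)$, since $\nabla^{1/2}$ normalises $B$ and commutes with $A_G$. For the right-hand side I would use $V=\Lambda_{\Tr}(S)$ with $S=\lin\{x\nabla^{-1/2}A_Gy:x,y\in B'\}$ (Corollary~\ref{corr:sa_op_bimods} and Proposition~\ref{prop:A_generates_S}) together with $(J\otimes\overline J)\Lambda_{\Tr}(T)=\Lambda_{\Tr}(JTJ)$; then $JSJ=(JB'J)(J\nabla^{-1/2}J)(JA_GJ)(JB'J)=B\nabla^{1/2}A_GB$, so the two sides agree and $wfw^*=(J\otimes\overline J)e_G(J\otimes\overline J)$. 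Finally, for (5): since $\nabla^{1/2}\Lambda(1)=\Lambda(1)$ one has $w^*(\Lambda(1)\otimes\overline{\Lambda(1)})=\tilde\Lambda(\theta_{\Lambda(1),\Lambda(1)})$; applying $w$ and (4), and using that $J\otimes\overline J$ fixes $\Lambda(1)\otimes\overline{\Lambda(1)}=(\Lambda\otimes\Lambda^\op)(1)$ and that $e_G(\Lambda\otimes\Lambda^\op)(1)=(\Lambda\otimes\Lambda^\op)(e_G)$ (Proposition~\ref{prop:A_generates_S}), the claim reduces to $(J\otimes\overline J)(\Lambda\otimes\Lambda^\op)(e_G)=w\tilde\Lambda(A_G)$. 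Via (1) and the conjugation formula $(J\otimes\overline J)(\Lambda\otimes\Lambda^\op)(\xi)=(\Lambda\otimes\Lambda^\op)((\sigma_{-i/2}\otimes\sigma_{i/2})(\xi^*))$, this is exactly $(\sigma_{-i/2}\otimes\sigma_{i/2})(e_G)=(\sigma_{-i}\otimes\id)(e_G)$, which follows from $e_G=e_G^*$ and the modular symmetry $(\sigma_{i/2}\otimes\sigma_{i/2})(e_G)=e_G$.

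The main obstacle, I expect, is the bookkeeping in (4)--(5): one must keep straight the three distinct twists at play — the right multiplication by $\nabla^{1/2}$ built into $w$, the conjugation by $J\otimes\overline J$, and the modular symmetry of $e_G$ — and in particular recognise that $J\otimes\overline J$ is conjugate-linear (so that the conjugation formula is internally consistent) and that the self-adjointness of $e_G$ together with its invariance under $(\sigma_z\otimes\sigma_z)$ is precisely what reconciles the $\sigma_{-i/2}\otimes\sigma_{i/2}$ twist arising on the bimodule side with the $\sigma_{-i}\otimes\id$ twist arising in (1).
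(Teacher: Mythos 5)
Your proof is correct. Parts (1), (2), (3) and (5) follow the paper's own computations in all essentials: the paper verifies (1) and (2) by evaluating $w\tilde\Lambda$ on rank-one operators $\theta_{\Lambda(a),\Lambda(b)}$ and comparing with $\Psi'_{1/2,0}$, which is exactly what your master relation $w\tilde\Lambda(T)=\Lambda_{\Tr}(T\nabla^{1/2})$ packages, and your reduction of (5) via the conjugation formula together with $e_G=e_G^*=(\sigma_z\otimes\sigma_z)(e_G)$ is the paper's final display verbatim. The genuine divergence is in part (4), the heart of the lemma. The paper never identifies $\im(e_G)$: it sets $f_0=(\sigma_{-i}\otimes\id)(e_G)$ and recognises $(J\otimes\overline J)e_G(J\otimes\overline J)=(J\otimes\overline J)(\sigma_{-i/2}\otimes\sigma_{i/2})(f_0^*)(J\otimes\overline J)$ as the operator of right multiplication by $f_0$ on $L^2(B\otimes B^\op)$ --- the tensor-product analogue of \eqref{eq:2} --- so that its range is the cyclic subspace $(B\otimes B^\op)(\Lambda\otimes\Lambda^\op)(f_0)$, which equals $w\tilde\Lambda(S_0')$ by parts (1)--(3). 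You instead exploit that $J\otimes\overline J$ is an anti-unitary involution, so conjugating $e_G$ by it yields the orthogonal projection onto $(J\otimes\overline J)(\im(e_G))$; you then identify $\im(e_G)=\Lambda_{\Tr}(S)$ with $S$ the $B'$-bimodule generated by $\nabla^{-1/2}A_G$ (Corollary~\ref{corr:sa_op_bimods} and Proposition~\ref{prop:A_generates_S}), conjugate termwise to get $JSJ = B\nabla^{1/2}A_G B$, and match this against $w\tilde\Lambda(S_0')=\Lambda_{\Tr}(BA_GB\nabla^{1/2})=\Lambda_{\Tr}(B\nabla^{1/2}A_GB)$ from the master relation. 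Both routes rest on the same Tomita-type identities ($JB'J=B$, $J\nabla^{-1/2}J=\nabla^{1/2}$, $JA_GJ=A_G$, and the modular invariance of $e_G$), but they differ in mechanism: the paper's argument additionally exhibits $(J\otimes\overline J)e_G(J\otimes\overline J)$ as an element of $B'\otimes(B')^\op$ (the commutant/right-multiplication picture, which is conceptually informative though not needed for the statement), while yours is shorter and makes the result a near-formal consequence of Proposition~\ref{prop:A_generates_S}, at the price of invoking the general fact that conjugation by an anti-unitary involution preserves orthogonal projections.
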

\begin{proof}
Let $T = \theta_{\Lambda(a),\Lambda(b)}$ for $a,b\in B$, so 
\begin{align*}
w \tilde\Lambda(T) = \Lambda(b) \otimes \overline{x_0 \Lambda(a)}
= \Lambda(b) \otimes \overline{\nabla^{1/2} \Lambda(a)}
= \Lambda(b) \otimes \overline{\Lambda(\sigma_{-i/2}(a))}
= (\Lambda\otimes\Lambda^\op)(b\otimes \sigma_{-i/2}(a)^*).
\end{align*}
As $\Psi'_{1/2,0}(T) = \sigma_{i/2}(b) \otimes a^*$, we see that
\begin{align*}
w \tilde\Lambda(A_G) 
&= (\Lambda\otimes\Lambda^\op)\big((\sigma_{-i/2}\otimes\sigma_{i/2})\Psi'_{1/2,0}(A_G)\big) \\
&= (\Lambda\otimes\Lambda^\op)\big((\sigma_{-i/2}\otimes\sigma_{i/2})(e_G)\big)
= (\Lambda\otimes\Lambda^\op)((\sigma_{-i}\otimes\id)(e_G)),
\end{align*}
as $e_G = (\sigma_{-i/2}\otimes\sigma_{-i/2})(e_G)$.

With $T = \theta_{\xi,\eta}$ and $a,b\in B$ we see that
\[ w\tilde\Lambda(aTb) = w\tilde\Lambda(\theta_{b^*\xi,a\eta}) = a\eta \otimes \overline{\nabla^{1/2}b^*\xi}
= a\eta \otimes \overline{ (\nabla^{-1/2} b \nabla^{1/2})^* \nabla^{1/2}\xi }
= (a\otimes \sigma_{i/2}(b)^\top) w\tilde\Lambda(T), \]
as claimed.

As $\kappa$ restricts to a bijection $B\rightarrow B'$, and $\kappa(A_G) = J A_G^* J = A_G$ because
$A_G$ is self-adjoint, and using Proposition~\ref{prop:A_comms_J_nabla}, it follows that
$S_0'$ is the $B$-operator bimodule generated by $A_G$.
It now follows that $w\tilde\Lambda(S_0') = (B\otimes B^\op)(\Lambda\otimes\Lambda^\op)
((\sigma_{-i}\otimes\id)(e_G))$.

Set $f_0 = (\sigma_{-i}\otimes\id)(e_G) \in B\otimes B^\op$, so that
\begin{align*}
(J\otimes\overline J)(\sigma_{-i/2} \otimes \sigma_{i/2})(f_0^*)(J\otimes\overline J)
&= (J\otimes\overline J)(\sigma_{-i/2} \otimes \sigma_{i/2})(\sigma_i\otimes\id)(e_G)(J\otimes\overline J) \\
&= (J\otimes\overline J)(\sigma_{i/2} \otimes \sigma_{i/2})(e_G)(J\otimes\overline J) \\
&= (J\otimes\overline J)e_G(J\otimes\overline J)
\in B' \otimes (B')^\op.
\end{align*}
However, for $u\in B\otimes B^\op$, with reference to Lemma~\ref{lem:Bop_GNS}, we see that
\[ (J\otimes\overline J)(\sigma_{-i/2} \otimes \sigma_{i/2})(f_0^*)(J\otimes\overline J)
   (\Lambda\otimes\Lambda^\op)(u)
= (\Lambda\otimes\Lambda^\op)(u f_0)
= u (\Lambda\otimes\Lambda^\op)(f_0). \]
Thus letting $u$ vary shows that the image of the projection $(J\otimes\overline J)e_G(J\otimes\overline J)$
is equal to $(B\otimes B^\op)(\Lambda\otimes\Lambda^\op)(f_0) = w\tilde\Lambda(S_0')$.
Thus necessarily $w f w^* = (J\otimes\overline J)e_G(J\otimes\overline J)$.
Hence
\begin{align*}
wfw^*(\Lambda(1)\otimes\overline{\Lambda(1)})
&= (J\otimes\overline J)e_G(J\otimes\overline J)(\Lambda(1)\otimes\overline{\Lambda(1)}) \\
&= (J\otimes\overline J) (\Lambda\otimes\Lambda^\op)(e_G)
= (\Lambda\otimes\Lambda^\op)((\sigma_{-i/2} \otimes \sigma_{i/2})(e_G)) \\
&= (\Lambda\otimes\Lambda^\op)((\sigma_{-i} \otimes \id)(e_G))
= w\tilde\Lambda(A_G).
\end{align*}
Thus $fw^*(\Lambda(1)\otimes\overline{\Lambda(1)}) = \tilde\Lambda(A_G)$.
\end{proof}

Again, in the following we assume that $A_G$ is self-adjoint satisfying axioms
(\ref{defn:quan_adj_mat:idem}) and (\ref{defn:quan_adj_mat:undir}) of Definition~\ref{defn:adj_op}.

\begin{theorem}\label{thm:bimod_quan_action}
Let $\alpha$ be a coaction of a compact quantum group $(A,\Delta)$ on $B$, which preserves $\psi$, and has unitary
implementation $U$.  Let $A_G$ be a quantum adjacency matrix, and let $S_0$ be the $B'$-operator bimodule
generated by $A_G$.  The following are equivalent:
\begin{enumerate}
\item\label{thm:bimod_quan_action:one}
  $\alpha$ acts on $A_G$, that is, $U A_G = A_G U$;
\item\label{thm:bimod_quan_action:two}
  $\alpha_U$ leaves $S_0' = \kappa(S_0)$ invariant, that is, $\alpha_U(S_0') \subseteq S_0'\otimes A$;
\item\label{thm:bimod_quan_action:three}
  $\alpha_U^\op(S_0) \subseteq S_0\otimes A$.
\end{enumerate}
\end{theorem}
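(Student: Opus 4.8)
The plan is to run the three conditions in a loop, noting first that (2) and (3) are equivalent for free and then concentrating the real work on (1)$\iff$(2). For (2)$\iff$(3), recall that $\alpha^\op_U = (\kappa\otimes R)\alpha_U\kappa$ with $\kappa$ and $R$ bijective anti-$*$-homomorphisms satisfying $\kappa^2=\id$ and $R^2=\id$; applying $\kappa\otimes R$ and using $\kappa(S_0)=S_0'$ shows immediately that $\alpha^\op_U(S_0)\subseteq S_0\otimes A$ if and only if $\alpha_U(S_0')\subseteq S_0'\otimes A$, which is precisely the observation recorded just before the statement. It then remains to prove (1)$\iff$(2), and the bridge will be the unitary implementation $\tilde U$ of $\alpha_U$ with respect to the invariant state $\tilde\psi$ (available by Lemma~\ref{lem:inv_state_bh}) together with the orthogonal projection $f$ onto $\tilde\Lambda(S_0')$ from the preceding lemma. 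The key intermediate claim is that (2) holds if and only if $\tilde U$ commutes with $f\otimes 1$.

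For (1)$\Rightarrow$(2) I would argue directly, bypassing the implementation. Since $U$ is the unitary implementation of $\alpha$, we have $\alpha_U(b)=U(b\otimes 1)U^*=\alpha(b)\in B\otimes A$ for $b\in B$, so $\alpha_U$ is a $*$-homomorphism restricting to $\alpha$ on $B$. Assuming $UA_G=A_GU$ gives $\alpha_U(A_G)=A_G\otimes 1$. By the preceding lemma $S_0'$ is the $B$-operator bimodule generated by $A_G$, so a typical element is $b_1A_Gb_2$ with $b_1,b_2\in B$, and
\[ \alpha_U(b_1A_Gb_2)=\alpha(b_1)(A_G\otimes 1)\alpha(b_2)\in (B\otimes A)(A_G\otimes 1)(B\otimes A)\subseteq S_0'\otimes A, \]
because $\alpha(b_i)\in B\otimes A$ and $BA_GB\subseteq S_0'$. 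Thus $\alpha_U(S_0')\subseteq S_0'\otimes A$, giving (2).

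For the converse (2)$\Rightarrow$(1) — the step I expect to be the main obstacle — I would proceed in two moves. First, from the defining relation $\tilde U(\tilde\Lambda(T)\otimes\xi)=\alpha_U(T)(\tilde\Lambda(1)\otimes\xi)$ one sees that (2) makes $\tilde\Lambda(S_0')\otimes K$ invariant under $\tilde U$; as $\tilde U$ is a finite-dimensional unitary corepresentation, complete reducibility (\cite{nt}) forces the orthogonal complement to be invariant as well, so $\tilde U$ commutes with $f\otimes 1$. Second, I evaluate this commutation on $\Omega\otimes\xi$, where $\Omega=w^*(\Lambda(1)\otimes\overline{\Lambda(1)})=\tilde\Lambda(\theta_{\Lambda(1),\Lambda(1)})$ is the GNS image of the complete-graph matrix (the identification using the formula for $w$ and $\nabla^{1/2}\Lambda(1)=\Lambda(1)$). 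The preceding lemma gives $f\Omega=\tilde\Lambda(A_G)$, while Remark~\ref{rem:coact_empty_compl} (via Remark~\ref{rem:commute_eta_etastar}) shows $U$ commutes with $\theta_{\Lambda(1),\Lambda(1)}$, so $\alpha_U(\theta_{\Lambda(1),\Lambda(1)})=\theta_{\Lambda(1),\Lambda(1)}\otimes 1$. Computing the two sides of $(f\otimes 1)\tilde U(\Omega\otimes\xi)=\tilde U(f\otimes 1)(\Omega\otimes\xi)$ then yields
\[ \alpha_U(A_G)(\tilde\Lambda(1)\otimes\xi)=\tilde\Lambda(A_G)\otimes\xi=(A_G\otimes 1)(\tilde\Lambda(1)\otimes\xi)\qquad(\xi\in K). \]
Since $\tilde\Lambda$ is a linear bijection and $A$ acts non-degenerately on $K$, the cancellation argument of Lemma~\ref{lem:coact_on_qg_iff_U_commutes} removes $\tilde\Lambda(1)\otimes\xi$ and gives $\alpha_U(A_G)=A_G\otimes 1$, i.e.\ $UA_G=A_GU$, which is (1). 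The genuinely delicate points are the passage from mere invariance of $S_0'$ to commutation with $f\otimes 1$ (where complete reducibility of finite-dimensional unitary corepresentations, valid even though $\tilde\psi$ is not tracial, is essential) and the bookkeeping identifying $\Omega$ with the complete-graph vector; should a more computational route be preferred, the explicit formulas $(w\otimes 1)\tilde U(w^*\otimes 1)=U_{13}U_{23}^{\top\otimes R}$ and $wfw^*=(J\otimes\overline J)e_G(J\otimes\overline J)$ convert the commutation $[\tilde U,f\otimes 1]=0$ into a concrete relation equivalent to $UA_G=A_GU$.
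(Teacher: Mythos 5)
Your proof is correct, and its skeleton coincides with the paper's: (2)$\Leftrightarrow$(3) comes for free from $\kappa$ and $R$; (1)$\Rightarrow$(2) is the same direct computation using $\alpha_U(A_G)=A_G\otimes 1$ and the fact that $S_0'$ is the $B$-bimodule generated by $A_G$; and for (2)$\Rightarrow$(1) both arguments pass through the unitary implementation $\tilde U$, the projection $f$ onto $\tilde\Lambda(S_0')$, and the invariant-subspace (complete reducibility) result to upgrade $\tilde U(f\otimes 1)=(f\otimes1)\tilde U(f\otimes1)$ to full commutation. Where you genuinely diverge is in how you extract $\alpha_U(A_G)=A_G\otimes1$ from that commutation. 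The paper evaluates $\tilde U$ on $w^*(\Lambda(1)\otimes\overline{\Lambda(1)})\otimes\xi$ via Lemma~\ref{lem:what_is_tildeU}, i.e.\ the formula $(w\otimes1)\tilde U(w^*\otimes1)=U_{13}U_{23}^{\top\otimes R}$, and then needs two fixed-vector computations: $U^*(\Lambda(1)\otimes\xi)=\Lambda(1)\otimes\xi$ from $\psi$-preservation, and $U^{\top\otimes R}(\overline{\Lambda(1)}\otimes\xi)=\overline{\Lambda(1)}\otimes\xi$, the latter forcing a choice of $A\subseteq\mc B(K)$ admitting an involution $j$ with $R(a)=ja^*j$. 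You instead recognise the same vector as $\Omega\otimes\xi$ with $\Omega=\tilde\Lambda(\theta_{\Lambda(1),\Lambda(1)})$ (valid, since $x_0\Lambda(1)=\nabla^{1/2}\Lambda(1)=\Lambda(1)$), note that $\tilde U$ fixes $\Omega\otimes\xi$ because $\alpha_U(\theta_{\Lambda(1),\Lambda(1)})=\theta_{\Lambda(1),\Lambda(1)}\otimes1$ by Remark~\ref{rem:commute_eta_etastar} (the complete graph is automatically preserved), and use $f\Omega=\tilde\Lambda(A_G)$ from the preceding lemma; evaluating $[\tilde U,f\otimes1]=0$ on $\Omega\otimes\xi$ then gives $\alpha_U(A_G)(\tilde\Lambda(1)\otimes\xi)=(A_G\otimes1)(\tilde\Lambda(1)\otimes\xi)$ at once, and the cancellation argument finishes. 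This bypasses Lemma~\ref{lem:what_is_tildeU} (with its scaling-group, $\rho_U$ and antipode bookkeeping) and the universal-representation subtlety entirely; the only extra input is the purely algebraic fact that a unitary corepresentation arising from a unital $\psi$-preserving coaction commutes with $\eta\circ\eta^*$. What the paper's longer route buys is the explicit formula for $\tilde U$ as the tensor product of $U$ with its contragradient, a result of independent interest; for the theorem itself, your argument is leaner.
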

\begin{proof}
We have already seen that (\ref{thm:bimod_quan_action:two}) and (\ref{thm:bimod_quan_action:three}) are
equivalent.

Suppose that $\alpha_U(S_0') \subseteq S_0'\otimes A$, and let $A\subseteq\mc B(K)$.
By definition of the unitary implementation $\tilde U$, given $s_0 \in S_0'$
and $\xi\in K$, we see that
\[ \tilde U(\tilde\Lambda(s_0')\otimes\xi) = \alpha_U(s_0')(\tilde\Lambda(1)\otimes\xi)
\in (S_0'\otimes A)(\tilde\Lambda(1)\otimes\xi)
= \tilde\Lambda(S_0') \otimes A\xi. \]
Let $f$ be the orthogonal projection of $L^2(\mc B(H),\tilde\psi)$ onto $\tilde\Lambda(S_0')$, so this calculation
shows that $\tilde U(f\otimes 1) = (f\otimes 1)\tilde U(f\otimes 1)$.  A non-trivial result about compact quantum
groups (see the proof of \cite[Proposition~6.2]{mv} for example) shows that then
$\tilde U(f\otimes 1) = (f\otimes 1)\tilde U$.

From the previous lemma, it hence follows that for $\xi\in K$,
\begin{align}
\alpha_U(A_G)(\tilde\Lambda(1)\otimes\xi)
&= \tilde U(\tilde\Lambda(A_G)\otimes\xi)
= \tilde U( fw^*(\Lambda(1)\otimes\overline{\Lambda(1)}) \otimes \xi ) \notag \\
&= (f\otimes 1)\tilde U( w^*(\Lambda(1)\otimes\overline{\Lambda(1)}) \otimes \xi ) \notag \\
&= (fw^*\otimes 1) U_{13} U^{\top\otimes R}_{23}(\Lambda(1)\otimes\overline{\Lambda(1)} \otimes \xi ),
\label{eq:one}
\end{align}
in the final step using Lemma~\ref{lem:what_is_tildeU}.

For $\xi\in K$, we have that $U(\Lambda(1)\otimes\xi) = \alpha(1)(\Lambda(1)\otimes\xi)
= \Lambda(1)\otimes\xi$, from the definition of $U$.  Then for $a\in B, \xi_1\in K$, also
\[ (U^*(\Lambda(1)\otimes\xi) | \Lambda(a)\otimes\xi_1 )
= ( \Lambda(1)\otimes \xi | \alpha(a)(\Lambda(1)\otimes\xi_1) )
= (\xi | (\psi\otimes\id)\alpha(a) \xi_1)
= \psi(a) (\xi|\xi_1), \]
as $\alpha$ is $\psi$-preserving.  Thus $U^*(\Lambda(1)\otimes\xi) = \Lambda(1)\otimes\xi$.  
Let $U = \sum_i x_i \otimes a_i \in \mc B(H)\otimes A$, so that $\sum_i \Lambda(x_i^*) \otimes a_i^*(\xi)
= U^*(\Lambda(1)\otimes\xi) = \Lambda(1)\otimes\xi$.  It is always possible to choose $A\subseteq\mc B(K)$
in such a way that there is an involution $j:K\rightarrow K$ with $R(a) = j a^* j$ for $a\in A$.
(For example, let $A\subseteq\mc B(K)$ be the universal representation.)  Then
\[ U^{\top\otimes R}(\overline{\Lambda(1)}\otimes\xi)
= \sum_i x_i^\top\overline{\Lambda(1)} \otimes R(a_i)(\xi)
= \sum_i \overline{ \Lambda(x_i^*) } \otimes j a_i^* j(\xi)
= \overline{\Lambda(1)} \otimes \xi. \]
It hence follows that, continuing the calculation from \eqref{eq:one},
\begin{align*}
\alpha_U(A_G)(\tilde\Lambda(1)\otimes\xi)
&= (fw^*\otimes 1)(\Lambda(1)\otimes\overline{\Lambda(1)} \otimes \xi )
= \tilde\Lambda(A_G) \otimes \xi
= (A_G\otimes 1)(\tilde\Lambda(1)\otimes\xi).
\end{align*}
As $\alpha_U(A_G) \in \mc B(H) \otimes A$, and $\tilde\Lambda:\mc B(H)\rightarrow L^2(\mc B(H))$ is injective,
and is the map $x\mapsto x\tilde\Lambda(1)$, the previous calculation shows that $\alpha_U(A_G)
= A_G\otimes 1$.  This is equivalent to $U(A_G\otimes 1)U^* = A_G\otimes 1$, that is, that $U$ and $A_G$
commute.  We have hence shown that (\ref{thm:bimod_quan_action:two}) implies (\ref{thm:bimod_quan_action:one}).

Conversely, if $A_G$ and $U$ commute, then $\alpha_U(A_G) = A_G\otimes 1$.  As $\alpha_U$ extends $\alpha$
in the sense that $\alpha_U(b) = \alpha(b)$ for $b\in B\subseteq\mc B(H)$, we see that
\[ \alpha_U(a A_G b) = \alpha(a) \alpha_U(A_G) \alpha(b) \in (B\otimes A)(A_G\otimes 1)(B\otimes A)
= S_0'\otimes A \qquad (a,b\in B), \]
and so $\alpha_U(S_0') \subseteq S_0'\otimes A$, showing that (\ref{thm:bimod_quan_action:one}) implies
(\ref{thm:bimod_quan_action:two}).
\end{proof}

Given the discussion before, we might have hoped instead to find that $UA_G=A_GU$ was equivalent to
$\alpha(\kappa(S)) \subseteq \kappa(S)\otimes A$, but it just seems that this is not so.  However,
working with $S_0$ and $\kappa(S_0)$ instead of $S$ and $\kappa(S)$ is not so unnatural in light of
Remark~\ref{rem:gen_by_A_itself}, because $S_0$ is exactly the ``tracial'' quantum graph which arises from
$S$ using Theorem~\ref{thm:reduce_tracial_case}.  The dependence on the state $\psi$ arises from the
fact that we always assume that $(A,\Delta)$ coacts on $B$ in a way which preserves $\psi$.

In the tracial case we have a more direct result.

\begin{corollary}\label{corr:tracial_bimod_action}
Let $\psi$ be a trace, and let $A_G$ correspond to the $B'$-operator bimodule $S$.  Then $\alpha$
acts on $A_G$ if and only if $\alpha_U^\op$ leaves $S$ invariant.
\end{corollary}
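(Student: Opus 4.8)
The plan is to derive this directly from Theorem~\ref{thm:bimod_quan_action}, whose equivalence between the conditions $UA_G = A_G U$ and $\alpha_U^\op(S_0) \subseteq S_0 \otimes A$ is stated for the bimodule $S_0$ \emph{generated by} $A_G$, rather than for the bimodule $S$ arising from the correspondence of Theorem~\ref{thm:qu_ad_mats_axioms_12}. Thus essentially the whole content of the corollary is to observe that in the tracial case these two bimodules coincide, so that part (\ref{thm:bimod_quan_action:three}) of the theorem becomes the invariance statement for $S$ itself.

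First I would recall that when $\psi$ is a trace the density $Q$ is central in $B$, so that $\sigma_z = \id$ for every $z$ and the modular operator $\nabla$ is the identity on $L^2(B)$. Consequently the relation $S_0 = \nabla^{1/2} S$ recorded in Remark~\ref{rem:gen_by_A_itself} (obtained by applying Theorem~\ref{thm:reduce_tracial_case} with $q = \nabla$) collapses to $S_0 = S$. Hence the $B'$-operator bimodule generated by $A_G$ is exactly the bimodule $S$ associated to $A_G$ under the main correspondence, and in particular the invariance conditions $\alpha_U^\op(S_0)\subseteq S_0\otimes A$ and $\alpha_U^\op(S)\subseteq S\otimes A$ are literally the same.

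With this identification in hand, parts (\ref{thm:bimod_quan_action:one}) and (\ref{thm:bimod_quan_action:three}) of Theorem~\ref{thm:bimod_quan_action} read: $\alpha$ acts on $A_G$ (that is, $UA_G = A_G U$) if and only if $\alpha_U^\op(S) \subseteq S \otimes A$, which is precisely the statement that $\alpha_U^\op$ leaves $S$ invariant. This completes the argument. The only point requiring genuine care — and hence the main, albeit mild, obstacle — is this bookkeeping identification $S_0 = S$: one must be sure that ``the bimodule $S$ associated to $A_G$'' in the corollary refers to the correspondence of Theorem~\ref{thm:qu_ad_mats_axioms_12}, and then invoke Remark~\ref{rem:gen_by_A_itself} together with the vanishing of the modular structure to equate it with the bimodule generated by $A_G$ used in Theorem~\ref{thm:bimod_quan_action}. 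Everything else is a verbatim specialization of the theorem.
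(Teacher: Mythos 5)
Your proof is correct and is essentially the argument the paper intends: the corollary is stated without proof precisely because, once one notes that a trace forces $\sigma_z=\id$ and $\nabla=1$, Remark~\ref{rem:gen_by_A_itself} gives $S_0=\nabla^{1/2}S=S$, and the statement is then the specialization of Theorem~\ref{thm:bimod_quan_action}, parts (\ref{thm:bimod_quan_action:one}) and (\ref{thm:bimod_quan_action:three}). Your identification of this bookkeeping step as the only real content is exactly right.
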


In these final two results, we seem to be working with $\alpha_U$ and $S_0'$ (or equivalently $\alpha_U^\op$ on $S$) where of course $S_0'$ is a $B$-bimodule, not a $B'$-bimodule.  However, this appears to simply be an artifact of having fixed our Hilbert space as $L^2(B)$, so that $B$ becomes anti-isomorphic to $B'$.

We leave for future work an investigation of whether there is a quantum group analogue of the ideas
discussed at the end of Section~\ref{sec:auts_quan_graphs}.  That is, if we have a coaction of $(A,\Delta)$
on $\mc B(H)$, and $B\subseteq\mc B(H)$ with a quantum graph $S\subseteq\mc B(H)$, is it possible to
place conditions on the coaction so as we obtain a coaction on $S$ (whatever this might mean, at this level
of generality).  For more on coactions on $\mc B(H)$ see, for example, \cite[Section~3]{dcmn}.

\bigskip

\noindent\emph{Author's Address:}

\noindent
Matthew Daws,
Department of Mathematics and Statistics,
Lancaster University,
Lancaster,
LA1 4YF,
United Kingdom

\noindent\emph{Email:} \texttt{matt.daws@cantab.net}

\end{document}